\newenvironment{NB}{
\color{red}{\bf NB}. \footnotesize 
}{}
\newenvironment{NB2}{
\color{blue}{\bf NB}. \footnotesize
}{}
\theoremstyle{plain}
 \newtheorem{thm}{Theorem}[section]
 \newtheorem{lem}[thm]{Lemma}
 \newtheorem{prop}[thm]{Proposition}
 \newtheorem{cor}[thm]{Corollary}
\theoremstyle{definition}
 \newtheorem{defn}{Definition}[section]
\theoremstyle{remark}
 \newtheorem{rem}{Remark}[section]
 \newtheorem{ex}{Example}[section]
\def\Bbb{\mathbb}
\def\cal{\mathcal}
\newcommand{ \Supp}{\operatorname{Supp}}
\newcommand{\Ext}{\operatorname{Ext}}
\newcommand{\Hom}{\operatorname{Hom}}
\newcommand{\codim}{\operatorname{codim}}
\newcommand{\rk}{\operatorname{rk}}
\newcommand{\NS}{\operatorname{NS}}
\newcommand{\coker}{\operatorname{coker}}
\newcommand{\Pic}{\operatorname{Pic}}
\newcommand{\ch}{\operatorname{ch}}
\newcommand{\td}{\operatorname{td}}
\newcommand{\Alb}{\operatorname{Alb}}
\newcommand{\Hilb}{\operatorname{Hilb}}
\newcommand{\Quot}{\operatorname{Quot}}
\newcommand{\Coh}{\operatorname{Coh}}
\newcommand{\Div}{\operatorname{Div}}
\newcommand{\id}{\operatorname{id}}
\newcommand{\topo}{\operatorname{top}}
\newcommand{\NSf}{\operatorname{NS_{\mathrm{f}}}}
\font\b=cmr10 scaled \magstep5
\def\bigzerou{\smash{\lower1.7ex\hbox{\b 0}}}
\numberwithin{equation}{section}
\begin{document}

\title{Moduli spaces of stable sheaves on Enriques surfaces}
\author{K\={o}ta Yoshioka}
\address{Department of Mathematics, Faculty of Science,
Kobe University,
Kobe, 657, Japan
}
\email{yoshioka@math.kobe-u.ac.jp}

\thanks{
The author is supported by the Grant-in-aid for 
Scientific Research (No.\ 26287007,\ 24224001), JSPS}
\keywords{Enriques surfaces, stable sheaves}
%\subjclass[2010]{Primary 14D20}

\begin{abstract}
We shall study the existence condition of $\mu$-stable
sheaves on Enriques surfaces.
We also give a different proof of the irreducibility
of the moduli spaces of rank 2 stable sheaves.
\end{abstract}

\maketitle
%\tableofcontents

\renewcommand{\thefootnote}{\fnsymbol{footnote}}
\footnote[0]{2010 \textit{Mathematics Subject Classification}. 
Primary 14D20.}

\section{Introduction}
Let $X$ be an Enriques surface defined over an algebraically 
closed field $k$ of characteristic $\ne$ 2.
Moduli spaces of stable sheaves on $X$ are studied by
various people. 
%\cite{Hauzer},\ \cite{Kim1},\ \cite{Kim2},\ \cite{Kim3},
%\cite{Kim:excep},\ \cite{Kim4},\ \cite{N},\ \cite{Yamada}
%\cite{Y:twist1}, \cite{Y:Enriques}.
In particular, the non-emptiness of the moduli
spaces is completely determined 
(\cite{Kim4},\ \cite{N},\cite{Enriques}) and the irreducibility of the 
moduli spaces was proved if $X$ is unnodal and the associated
Mukai vector  is primitive (\cite{Kim4}, \cite{N}, \cite{Y:twist1}).
In this note, we shall discuss the existence problem of
$\mu$-stable sheaves
on Enriques surfaces.
We also give a remark on the irreducibility of the moduli spaces.

For a coherent sheaf $E$ on $X$ or an element $E$ of $K(X)$,
let $v(E):=\ch(E)\sqrt{\td_X} \in H^*(X,{\Bbb Q})$
be the Mukai vector of $E$.
%We also call an element of $v(K(X))$ by a Mukai vector. 
We introduce the Mukai pairing on $H^*(X,{\Bbb Q})$ by
$\langle x,y \rangle:=-\int_X x^{\vee} \wedge y$,
where for $x=(x_0,x_1,x_2) \in H^*(X,{\Bbb Q})$, $x^{\vee}:=(x_0,-x_1,x_2)$.
Then  
\begin{equation}
(v(K(X)),\langle \;\;,\;\;\rangle) 
\end{equation}
is the Mukai lattice of $X$.
For a Mukai vector $v$,
we use the following two expressions
$$
v=(r,\xi,a)=r+\xi+a \varrho_X, \;
r \in {\Bbb Z}, \xi \in \NSf(X),\frac{r}{2}+a  \in {\Bbb Z},
$$
where $\varrho_X \in H^4(X,{\Bbb Z})$ is the fundamental class of $X$
and $\NSf(X)$ the torsion free quotient of $\NS(X)$, 
that is, $\NSf(X)=\NS(X)/{\Bbb Z}K_X$.

For the Mukai vector $v \in H^*(X,{\Bbb Q})$
of a torsion free sheaf,
we assume that a polarization $H$ is general with 
respect to $v$ (see Definition \ref{defn:mu-stability}).
%If $v$ is primitive,
%we shall classify the Mukai vector $v$. 
We would like to remark that
the problem of constructing 
$\mu$-stable locally free sheaves
was studied by Kim in the rank 2 case, and by  
Nuer \cite{N} in the rank 4 case.

For a Mukai vector $v$,
${\cal M}(v)$ denotes the moduli stack of coherent sheaves $E$ with
$v(E)=v$.
Let $H$ be an  ample divisor on $X$.
${\cal M}_H(v)^{ss}$ (resp.  ${\cal M}_H(v)^{s}$)
denotes the substack of ${\cal M}(v)$
consisting of (Gieseker) semi-stable sheaves
(resp. stable sheaves). 
Let $\overline{M}_H(v)$ be the moduli scheme of $S$-equivalence
classes of semi-stable sheaves and
$M_H(v)$ the open subscheme consisting of stable sheaves.
If $v=(r,\xi,a)$ with $r>0$, then 
${\cal M}_H(v)^{\mu ss}$ (resp.  ${\cal M}_H(v)^{\mu s}$)
denotes the substack of ${\cal M}(v)$
consisting of $\mu$-semi-stable sheaves
(resp. $\mu$-stable sheaves). 
As in \cite{Y:twist1},
we also introduce ${\cal M}_H(v,L)^{ss}$ 
(resp. ${\cal M}_H(v,L)^{s}$, $\overline{M}_H(v,L), M_H(v,L)$) 
as the locus of ${\cal M}_H(v)^{ss}$
(resp. ${\cal M}_H(v)^{s}$, $\overline{M}_H(v), M_H(v)$)
consisting of $E$ with $c_1(E)=L$ in $\NS(X)$,
where $[L \mod K_X] =\xi$,

For a K3 surface, 
the existence condition of  $\mu$-stable sheaves
was completely described in \cite{Y:irred}.
For an Enriques surface, we get a similar result.
\begin{thm}[{Theorem \ref{thm:mu-s}}]\label{thm:intro:mu-s}
Let $v=(lr,l\xi,\frac{s}{2})$ be a Mukai
vector such that $\gcd(r,\xi)=1$ and $\langle v^2 \rangle \geq 0$.
%We assume that $v \ne (2,0,-1)e^\eta$ ($\eta \in \NSf(X)$).
Let $H$ be a general polarization with respect to $v$.
Then 
%${\cal M}_H(lr,L,\frac{s}{2})$ 
${\cal M}_H(v,L)^{ss}$ contains a $\mu$-stable
sheaf if and only if 
\begin{enumerate}
\item[(i)]
There is no stable sheaf $E$ such that 
$v(E)=(r,\xi,b)$ and 
$\langle v(E)^2 \rangle=-1,-2$,
and
$\langle v^2 \rangle \geq 0$ or
\item[(ii)]
There is a stable sheaf $E$ such that 
$v(E)=(r,\xi,b)$ and  
$\langle v(E)^2 \rangle=-1$, and
$\langle v^2 \rangle \geq l^2$ or
\item[(iii)]
There is a stable sheaf $E$ such that 
$v(E)=(r,\xi,b)$ and $\langle v(E)^2 \rangle=-2$, and
$\langle v^2 \rangle \geq 2l^2$.
\end{enumerate}
Moreover if $lr>1$, then
under the same condition,
${\cal M}_H(v,L)^{ss}$ contains a $\mu$-stable locally free
sheaf.
\end{thm}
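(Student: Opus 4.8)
The plan is to prove the existence of a $\mu$-stable sheaf by stratifying the Gieseker moduli scheme $\overline{M}_H(v,L)$ according to $\mu$-stability type, and then to upgrade to a locally free sheaf by a second dimension count. The organizing observation is that, because $\gcd(r,\xi)=1$ and $H$ is general with respect to $v$, any saturated subsheaf $F$ of a Gieseker-stable $E$ with $v(E)=v$ satisfying $\mu(F)=\mu(E)$ must have $v(F)$ proportional to the primitive vector $(r,\xi)$ in its rank and $c_1$ components; thus every $\mu$-stable Jordan--H\"older factor of such an $E$ has a Mukai vector of the shape $(jr,j\xi,\ast)$ with $1\le j\le l$. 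A Gieseker-stable sheaf therefore fails to be $\mu$-stable precisely when it admits a proper same-slope subsheaf of this form, and the whole problem is to decide when the stratum cut out by these destabilizations is not all of $\overline{M}_H(v,L)$.

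First I would record the Enriques-specific consequence of Serre duality: for a stable sheaf $E$ one has $\dim\Ext^2(E,E)=\dim\Hom(E,E\otimes K_X)$, and since $v(E\otimes K_X)=v(E)$ in $H^*(X,{\Bbb Q})$ the sheaves $E$ and $E\otimes K_X$ are Mukai-equivalent. Hence $\dim\Ext^1(E,E)=1+\langle v(E)^2\rangle$ when $E\not\cong E\otimes K_X$ and $\dim\Ext^1(E,E)=2+\langle v(E)^2\rangle$ when $E\cong E\otimes K_X$. Consequently a \emph{rigid} stable sheaf with Mukai vector $(r,\xi,b)$ has $\langle v(E)^2\rangle=-1$ exactly when $E\not\cong E\otimes K_X$ and $\langle v(E)^2\rangle=-2$ exactly when $E\cong E\otimes K_X$. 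This identifies the trichotomy (i)/(ii)/(iii) with the three possibilities for the least self-intersection attained by a stable sheaf of slope type $(r,\xi)$, and it explains the factor $2$ between the bounds $l^2$ and $2l^2$: in case (iii) a single $K_X$-invariant rigid factor already carries the pairing $\langle v_0,v_0\rangle=-2$, whereas in case (ii) the factor $E_0$ and its non-isomorphic but Mukai-equivalent twist $E_0\otimes K_X$ each contribute $\langle v_0,v_0\rangle=-1$. As a cross-check, and to reduce the non-emptiness input to the K3 case, I would pass to the covering K3 surface $\pi\colon\widetilde X\to X$, where $\langle\pi^\ast w,\pi^\ast w\rangle=2\langle w,w\rangle$, so that a $(-1)$-class on $X$ pulls back to a spherical $(-2)$-class and a $(-2)$-class pulls back to a $(-4)$-class which cannot support a stable sheaf and so must arise from a split configuration.

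The core construction is inductive on $l$. Writing $v=lv_0+m\varrho_X$ with $v_0=(r,\xi,b)$ the rigid vector of case (ii) or (iii), I would build a $\mu$-stable $E$ as a generic extension $0\to E_0\to E\to F\to 0$ (or its dual) with $v(F)=v-v_0$ and $F$ a $\mu$-stable sheaf produced at the previous inductive step, arranging that the destabilizing sub-Mukai-vectors $(jr,j\xi,\ast)$ cannot split off a generic extension. The bounds enter through the identities $\langle v^2\rangle=l^2\langle v_0^2\rangle-2lmr$ and $\langle v,v_0\rangle=l\langle v_0^2\rangle-mr$, which give $\langle v-v_0,v_0\rangle=\langle v,v_0\rangle-\langle v_0^2\rangle>0$ exactly when $\langle v^2\rangle\ge l^2$ (case (ii)) or $\langle v^2\rangle\ge 2l^2$ (case (iii)); positivity of this pairing forces $\dim\Ext^1(F,E_0)>0$, so nontrivial extensions exist, and the same numerics run the induction down to the base case. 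In case (i), where no rigid sheaf of slope type $(r,\xi)$ exists, every same-slope $\mu$-stable factor has $\langle\,\cdot^2\rangle\ge 0$; a direct count then shows the non-$\mu$-stable stratum has positive codimension in $\overline{M}_H(v,L)$ as soon as $\langle v^2\rangle\ge 0$, so a generic Gieseker-stable sheaf is already $\mu$-stable, the non-emptiness of $\overline{M}_H(v,L)$ being supplied by the results quoted in the introduction.

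For the locally free refinement under $lr>1$, I would argue that the non-locally-free $\mu$-stable sheaves form a locus of positive codimension. If $E$ is $\mu$-stable and not locally free, then $0\to E\to E^{\ast\ast}\to T\to 0$ with $T$ of finite length $n\ge 1$ and $E^{\ast\ast}$ locally free of the same rank and $c_1$; since $\mu$-stability depends only on the codimension-one behaviour, $E^{\ast\ast}$ is again $\mu$-stable. Because $\langle v(E)^2\rangle=\langle v(E^{\ast\ast})^2\rangle+2lr\,n$, the sheaves $E$ obtained by elementary modification of a fixed $E^{\ast\ast}$ sweep out, via the relative $\Quot$ scheme of length-$n$ quotients, a family of dimension at most $(\langle v^2\rangle-2lr\,n+1)+lr\,n=(\langle v^2\rangle+1)-lr\,n=\dim M_H(v)-lr\,n$, hence of codimension at least $lr\,n\ge lr\ge 2$. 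Thus for $lr>1$ the generic $\mu$-stable sheaf is locally free, while for $lr=1$ the reflexive hull is a line bundle and forces $\langle v^2\rangle=-1$, explaining the hypothesis. The step I expect to be the main obstacle is the genericity analysis in the inductive extension --- verifying that a generic extension is genuinely $\mu$-stable rather than merely $\mu$-semistable, i.e.\ that no same-slope sub-Mukai-vector $(jr,j\xi,\ast)$ splits off --- since it is exactly there that the sharp bounds $l^2$ and $2l^2$, together with the $K_X$-invariance dichotomy, must be consumed.
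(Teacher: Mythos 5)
Your necessity direction is sound and coincides with the paper's: for a $\mu$-stable $E$ and a rigid stable sheaf $F$ with $v(F)=v_0=(r,\xi,b)$, $\mu$-stability forces $\Hom(F,E)=\Hom(E,F(K_X))=0$, hence $\chi(F,E)\leq 0$, which is exactly the bound $\langle v^2\rangle\geq l^2$ (resp. $2l^2$); this is Lemma \ref{lem:l^2} (resp. Lemma \ref{lem:l^2-2}). Your plan for case (i) (generic Gieseker-stable sheaves are $\mu$-stable, by a codimension count) also matches Proposition \ref{prop:stable}. The fatal problem is the core of your existence argument in cases (ii) and (iii): no extension $0\to E_0\to E\to F\to 0$ with $v(E_0)=(r,\xi,b)$ and $v(F)=v-v_0$ can ever be $\mu$-stable, however generic the extension class is chosen, because $E_0$ is then a subsheaf of $E$ with $0<\rk E_0<\rk E$ and $(c_1(E_0),H)/\rk E_0=(c_1(E),H)/\rk E$; by Definition \ref{defn:mu-stability} this already violates $\mu$-stability, and whether the subsheaf ``splits off'' is irrelevant (the same applies to the dual extension). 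Such extensions produce at best Gieseker-stable, properly $\mu$-semistable sheaves. In the paper these extension spaces are used in precisely the opposite direction: via Lemma \ref{lem:A-est1} they parametrize the Jordan--H\"{o}lder strata of the properly $\mu$-semistable locus, and the hypotheses $\langle v^2\rangle\geq l^2$, $\geq 2l^2$ are consumed in proving that this locus has dimension $<\langle v^2\rangle=\dim{\cal M}_H(v)^{ss}$ (Lemma \ref{lem:quot} and its Case C analogue); $\mu$-stable sheaves are then found as generic members of the nonempty Gieseker moduli, not constructed by extensions. So your cases (ii)/(iii) need the same stratification count you propose for case (i), with the rigid factors $E_0$, $E_0(K_X)$ tracked through the filtration; they are the source of the term $\frac{(k+1)k}{2}$ in \eqref{eq:l-estimate} and of the distinction between $l^2$ and $2l^2$.

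Two further points. First, in your locally-free refinement the Quot scheme of length-$n$ quotients of a locally free sheaf of rank $lr$ has dimension $(lr+1)n$ (the paper's \eqref{eq:quot}), not $lrn$; the resulting codimension of the non-locally-free locus is $(lr-1)n$, still positive for $lr>1$ but equal to $1$ when $lr=2$, $n=1$, not $\geq 2$. Second, and more seriously, blanket genericity genuinely fails in a case covered by the theorem: for $v=(2,0,-1)$ and determinant $K_X$ (case (ii) with $r=1$, $l=2$, $\langle v^2\rangle=l^2=4$), the kernels of ${\cal O}_X\oplus{\cal O}_X(K_X)\to{\Bbb C}_x\oplus{\Bbb C}_y$ form an entire irreducible component of ${\cal M}_H(v,K_X)^{ss}$ consisting of non-locally-free, non-$\mu$-stable sheaves (see the Remark following Lemma \ref{lem:quot}), so on that component both ``the generic Gieseker-stable sheaf is $\mu$-stable'' and ``the generic $\mu$-stable sheaf is locally free'' are false. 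The paper must produce a $\mu$-stable locally free sheaf there by the separate construction of Lemma \ref{lem:quot}(3), the extension $0\to{\cal O}_X\to E\to I_Z(K_X)\to 0$; any argument along your lines must likewise isolate and handle this degenerate case, as well as the isotropic corrections (the $\delta$ in \eqref{eq:nlf}, coming from Proposition \ref{prop:isotropic}) when $v+b\varrho_X$ is proportional to the isotropic vector $w$.
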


In the second part, we shall study the irreducibility of the
moduli spaces ${\cal M}_H(v,L)^{ss}$.
The irreducibility of these moduli spaces on an arbitrary 
surfaces was proved by
Gieseker and Li \cite{G-L} and O'Grady \cite{O} when the expected dimension
$d$ is larger than a constant $N(r)$ that depends only on the rank $r$.
We can expect a better estimate for $N(r)$ in the Enriques case, as occurs for
K3 and abelian surfaces, 
since an Enriques surface also has a numerically trivial canonical divisor.
Let $v=(r,\xi,\frac{s}{2})$ be a primitive Mukai vector on an Enriques surface.
If $r$ is odd, then
the irreducibility of ${\cal M}_H(v,L)^{ss}$ was proved in \cite{Y:twist1}.  
If $r=2$, then
the irreducibility was investigated by
Kim \cite{Kim4} and Nuer \cite{N} if $X$ is unnodal.
Kim reduced the problem to the cases
where $s=1,2$ and proved the irreducibility
for $s=1$.
For the second case, Nuer 
reduced to the first case.
Then by using Bridgeland stability, Nuer \cite[Thm. 1.1]{N}
 completed the proof of irreducibility 
for the even rank case.
In this note, we shall give a different proof of the 
irreducibility for $r=2$. 
%by reducing to the moduli space of stable
%1-dimensional sheaves.
Combining a deformation argument with results in \cite{Y:twist1} 
and \cite{Enriques},
we get the following result.
\begin{thm}\label{thm:intro:irred}
Let $v=(r,\xi,\frac{s}{2})$ 
be a primitive Mukai vector on an Enriques surface $X$ and
$L$ a divisor on $X$ with $[L \mod K_X]=\xi$.
\begin{enumerate}
\item[(1)]
${\cal M}_H(v,L)^{ss}$ is connected for a general $H$.
\item[(2)]
If  $X$ is unnodal or $\langle v^2 \rangle \geq 4$, 
then ${\cal M}_H(v,L)^{ss}$ is irreducible for a general $H$.
\end{enumerate}
\end{thm}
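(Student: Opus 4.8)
The plan is to combine two independent ingredients: a direct proof of irreducibility when $X$ is unnodal, and a deformation of $X$ inside the moduli of Enriques surfaces that transports connectedness (and, in the good range, irreducibility) to an arbitrary, possibly nodal, $X$. Throughout I fix the canonical \'etale double cover $\pi\colon Y\to X$ by a K3 surface, with covering involution $\sigma$, and I use the standard dichotomy for a stable sheaf $E$ with $v(E)=v$: either $E\not\cong E\otimes K_X$, in which case $\pi^*E$ is stable and $\sigma$-invariant, or $E\cong E\otimes K_X$, in which case $E=\pi_*G$ for a stable $G$ on $Y$ with $G\not\cong\sigma^*G$ and $\pi^*E=G\oplus\sigma^*G$. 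Since $X$ is an Enriques surface, $H^2(\mathcal{O}_X)=0$, so for stable $E$ the reduced obstruction space is $\Ext^2(E,E)_0=\Hom(E,E\otimes K_X)^\vee$, which vanishes in the first case and is one-dimensional in the second. Hence on the stable locus $M_H(v,L)$ the deformation theory is governed by an at most one-dimensional obstruction: $M_H(v,L)$ is cut out locally by a single equation in a smooth ambient space, so it is a local complete intersection of pure dimension $\langle v^2\rangle+1$, smooth exactly along the open set $U$ where $E\not\cong E\otimes K_X$, and singular at worst along the closed ``$K_X$-invariant'' locus $Z$ of sheaves with $E\cong E\otimes K_X$.

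First I would prove part (2) for unnodal $X$. For odd $r$ this is already \cite{Y:twist1}, so the new content is even $r$ (in particular $r=2$). Here I would identify $U$ with an \'etale double cover of (a union of components of) the $\sigma$-fixed locus of the moduli space $M_{\pi^*H}(\pi^*v)$ on the K3 surface $Y$, the two sheets over a point being $E$ and $E\otimes K_X$. The K3 moduli space is irreducible by the known results for K3 surfaces, and on an unnodal $X$ the absence of $(-2)$-curves pins down the $\sigma$-action so that the relevant fixed locus is connected and the double cover non-split; the input of \cite{Y:twist1} (via a twist by a nontrivial Brauer class of the cover, which converts the even-rank descent into an odd/twisted problem already handled there) together with the existence results of \cite{Enriques} are what make the even-rank case go through. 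Meanwhile $Z$ decomposes into strata $M_Y(w)/\langle\sigma^*\rangle$ indexed by decompositions $\pi^*v=w+\sigma^*w$, and the relation $\langle v^2\rangle=\langle w^2\rangle+\langle w,\sigma^*w\rangle$ shows that such a stratum has codimension $\langle w,\sigma^*w\rangle-1$ in $M_H(v,L)$. As the K3 Mukai pairing is even, these codimensions lie in $\{-1,1,3,\dots\}$, and on an unnodal $X$ the $\sigma$-structure of the lattice excludes the strata with $\langle w,\sigma^*w\rangle\le 2$, giving $\codim_{M_H(v,L)}Z\ge 2$. Thus $M_H(v,L)$ is lci and regular in codimension one, hence normal; being connected it is irreducible, and the strictly semistable boundary lies in its closure by the existence theorem, so $\mathcal{M}_H(v,L)^{ss}=\overline{M}_H(v,L)$ is irreducible.

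For an arbitrary $X$ I would run the deformation argument. Choose a smooth connected curve $T$, a point $0\in T$ with $\mathcal{X}_0=X$, a family $\mathcal{X}\to T$ of Enriques surfaces whose generic fibre is unnodal, and a relative polarization $\mathcal{H}$ general with respect to $v$ on every fibre. The relative moduli space $\overline{\mathcal{M}}\to T$ of semistable sheaves with the prescribed $(v,L)$ is proper; by the lci computation above every fibre is nonempty of pure dimension $\langle v^2\rangle+1$, so $\overline{\mathcal{M}}\to T$ is flat. Its generic fibre is irreducible by the unnodal case, hence $\overline{\mathcal{M}}$ is irreducible, and since $T$ is normal, Stein factorization forces every fibre, in particular $\overline{\mathcal{M}}_0=\mathcal{M}_H(v,L)^{ss}$, to be connected. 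This proves (1). For (2) with $\langle v^2\rangle\ge 4$ I would re-examine the stratification of $Z$ on the now possibly nodal $X$: a lattice count (again using parity of the K3 pairing) shows that $\langle v^2\rangle\ge 4$ excludes all strata with $\langle w,\sigma^*w\rangle\le 2$, so $\codim Z\ge 2$ even in the presence of $(-2)$-curves; normality together with the connectedness just obtained yields irreducibility.

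The main obstacle is the unnodal even-rank irreducibility, because the Fourier--Mukai and reflection reductions that trivialise the odd-rank case in \cite{Y:twist1} cannot lower an even rank on an Enriques surface. Making the descent from the K3 cover precise---showing the relevant component of the $\sigma$-fixed locus is connected and that the double cover $U\to M_{\pi^*H}(\pi^*v)^{\sigma}$ does not split---is the delicate point, and is exactly where unnodality (no $(-2)$-curves to break up the fixed locus) is used. A secondary difficulty is the lattice bookkeeping that pins the threshold at $\langle v^2\rangle\ge 4$, namely excluding the low strata $\langle w,\sigma^*w\rangle\in\{0,2\}$ of $Z$, and verifying flatness of $\overline{\mathcal{M}}\to T$ at the $K_X$-invariant sheaves while keeping $\mathcal{H}$ general with respect to $v$ along the whole family; both follow from the lci structure and the existence results of \cite{Enriques}, but require care at the loci where $X_t$ degenerates to a nodal surface.
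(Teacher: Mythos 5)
Your two-step architecture (unnodal case first, then a deformation to handle nodal $X$) matches the paper's, but both of your key steps contain genuine gaps, and the first one is fatal. For the unnodal even-rank case you reduce everything to two assertions about the covering K3 surface $\widetilde{X}$: that the relevant components of the fixed locus of the involution on $M_{\varpi^*H}(\varpi^*v)$ are connected, and that the \'etale double cover of that fixed locus by the locus $U=\{E \mid E\not\cong E(K_X)\}$ does not split. You give no argument for either: ``the absence of $(-2)$-curves pins down the $\sigma$-action'' is not a proof, and \cite{Y:twist1} contains no Brauer-twisted mechanism that converts even-rank descent into an odd-rank problem --- the ``twist'' there is twisted Gieseker stability, and its irreducibility results are confined to odd rank precisely because the Fourier--Mukai and reflection reductions used there cannot change the parity of the rank on an Enriques surface. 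Connectedness of an antisymplectic fixed locus in a K3 moduli space is not a known general fact (fixed loci of involutions on hyperk\"{a}hler manifolds are typically disconnected), and it is essentially equivalent to the statement being proved, so this step is circular. The paper's actual route is entirely different: using the Hodge-polynomial identities from \cite{Enriques} it reduces to the two Mukai vectors $v=(2,e_2+ne_1+\delta,a)$ and $v=(4,2(e_2+(n+1)e_1),1)$, applies a relative Fourier--Mukai transform along an elliptic fibration to turn these into moduli of pure $1$-dimensional sheaves $w=(0,\xi,a)$, and then proves irreducibility through the support map to $|L|$: over integral curves one has a family of irreducible compactified Jacobians by \cite{AIK}, and the loci supported on non-integral or non-reduced curves are eliminated by explicit dimension estimates (Propositions \ref{prop:flat}, \ref{prop:case(1)}, \ref{prop:case(2-1)}, \ref{prop:case(2-2)}).

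The deformation step also has two concrete problems. First, the inference ``every fibre is nonempty of pure dimension $\langle v^2\rangle+1$, so $\overline{\cal M}\to T$ is flat'' is false: a proper family can have all fibres pure of the same dimension and still contain an irreducible component lying inside one fibre (e.g.\ a product family $T\times {\Bbb P}^d$ together with a disjoint extra copy of ${\Bbb P}^d$ over $0\in T$), in which case the total space is reducible and the special fibre need not be connected. What is actually needed is that the open locus of unobstructed sheaves, $\Hom(E,E(K_X))=0$, is dense in \emph{every} fibre; this is exactly what the paper's Lemma \ref{lem:s} and Lemma \ref{lem:pss} provide, and their proofs rest on the geometric bounds of Kim \cite{Kim2} and Yamada \cite{Yamada} on the dimension of the locus $E\cong E(K_X)$, not on formal lci considerations. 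Second, your claimed ``lattice count using parity of the K3 pairing'' cannot deliver $\codim Z\geq 2$ when $\langle v^2\rangle\geq 4$: parity only shows $\langle w,\iota^*w\rangle$ is even, and the codimension-one stratum $\langle w,\iota^*w\rangle=2$ (equivalently $\langle w^2\rangle=\langle v^2\rangle-2$) is not excluded by any parity or evenness consideration once $\langle v^2\rangle\geq 4$. Ruling it out for primitive $v$ is again geometric input: the paper uses that equality in Kim's bound $\dim {\cal M}_H(v,L)^s_{sing}\leq \tfrac{\langle v^2\rangle}{2}+1$ forces $2\mid c_1$, $L\equiv \tfrac{r}{2}K_X \bmod 2$, and $\langle v^2\rangle\equiv 0 \bmod 8$ for primitive $v$ (Remark \ref{rem:rho=10}, \cite{Kim2}, \cite{Yamada}, \cite{Sacca}), which is what yields normality of the stable locus and hence, combined with connectedness, irreducibility at the threshold $\langle v^2\rangle\geq 4$.
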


The strategy of our proof is the same as our proof for the similar problem
on K3 surfaces \cite[Thm. 3.18]{Y:twist1}.
Thus we reduce the problem to the moduli of 
stable 1-dimensiional sheaves by a relative Fourier-Mukai transform
associated to an elliptic fibration.
Then by a detailed estimate of the locus 
of stable sheaves whose supports are 
reducible or nonreduced,
we show that the moduli space is birationally equivalent
to an abelian fiber space over an open subset of a projective space.    
Unlike the case of K3 surfaces, 
we need to require the Mukai vector to be primitive.  
Indeed if the Mukai vector is of the form
$v=m(r,\xi,\frac{s}{2})$ ($m,r \in {\Bbb Z}_{>0}$, $\xi \in \NS(X)$,
$2 \mid r-s$ and $2 \nmid r$),
then we can not reduce to the rank 0 case.
Moreover it is not so easy to study 
stable 1-dimensional sheaves on non-reduced curves.
Hence we can only treat 1-dimensional sheaves on non-reduced
curves of multiplicity 2,
which is sufficient to treat the primitive case.
We give partial generalizations in Remark \ref{rem:non-prim1}
and Remark \ref{rem:non-prim2}.
In the course of the proof, we also show that 
\cite[Assumption 2.16]{Sacca} holds for $v=(0,\xi,\frac{s}{2})$
such that $\xi$ is primitive (Proposition \ref{prop:sacca}). 
In particular $b_2(M_H(v,L))=11$
if $X$ satisfies \eqref{eq:rho=10} and $v=(r,\xi,\frac{s}{2})$ is
a primitive Mukai vector such that $2 \mid r$, $2 \nmid \xi$ and
$\langle v^2 \rangle \geq 4$ by \cite[Thm. 5.1]{Sacca} and
\cite{Enriques}.

\vspace{1pc}

{\it Acknowledgement.}
I would like to thank Howard Nuer for answering my question
on $\mu$-stable locally free sheaves of rank 2 and valuable and many 
comments
on the first version of this article.

\vspace{1pc}

{\it Notation.}

For an Enriques surface $X$, 
let $\varpi:\widetilde{X} \to X$ be the covering K3 surface
and $\iota:\widetilde{X} \to \widetilde{X}$
the covering involution.
For a primitive Mukai vector $v=(r,c_1,\frac{s}{2})$,
we set $\ell(v):=\gcd(r,c_1,s)$.
Then $\ell(v)=1,2$.

\subsection{Stabilities and their moduli stacks}\label{subsect:def}

Let $X$ be a smooth projective surface and $H$ an ample
divisor on $X$.

\begin{defn}\label{defn:mu-stability}
\begin{enumerate}
\item[(1)]
A torsion free sheaf $E$ is $\mu$-semi-stable (resp. $\mu$-stable)
if 
\begin{equation}
\frac{(c_1(F),H)}{\rk F} \underset{(<)}{\leq} \frac{(c_1(E),H)}{\rk E}
\end{equation}
for any subsheaf $F$ of $E$ with $0<\rk F <\rk E$. 
\item[(2)]
A polarization $H$ is general with respect to $v$,
if  
for any $\mu$-semi-stable sheaf $E$ with $v(E)=v$
and any subsheaf $F$ of $E$,
\begin{equation}
\frac{(c_1(F),H)}{\rk F}=\frac{(c_1(E),H)}{\rk E}
\Longleftrightarrow 
\frac{c_1(F)}{\rk F}=\frac{c_1(E)}{\rk E}.
\end{equation}
\end{enumerate}
\end{defn}

\begin{defn}
Let $G$ be an element of $K(X)$ with $\rk G>0$.
\begin{enumerate}
\item[(1)]
A torsion free sheaf $E$ is $G$-twisted semi-stable
(resp. $G$-twisted stable),
if
\begin{equation}
\frac{\chi(G,F(nH))}{\rk F} 
\underset{(<)}{\leq} \frac{\chi(G,E(nH))}{\rk E}
\;\; (n \gg 0)
\end{equation}
for any subsheaf $F$ of $E$ with $0<\rk F <\rk E$.
\item[(2)]
A purely 1-dimensional sheaf $E$ is 
$G$-twisted semi-stable
(resp. $G$-twisted stable),
if
\begin{equation}
\frac{\chi(G,F)}{(c_1(F),H)} \underset{(<)}{\leq} 
\frac{\chi(G,E)}{(c_1(E),H)}
\end{equation}
for any proper subsheaf $F \ne 0$ of $E$.
\item[(3)]
Since $G$-twisted semi-stability depends only on
$v(G)$,
we also define $w$-twisted semi-stability as 
a $G$-twisted semi-stability, where $v(G)=w$.  
\item[(4)]
${\cal M}_H^G(v)^{ss}$ (resp. ${\cal M}_H^G(v)^s$)
denotes the moduli stack of $G$-twisted semi-stable sheaves
(resp. $G$-twisted stable sheaves). 
\end{enumerate}
\end{defn}

\begin{rem}
\begin{enumerate}
\item[(1)]
$G$-twisted semi-stability depends only on 
$c_1(G)/\rk G$. 
\item[(2)]
If $H$ is general with respect to $v$,
then
$G$-twisted semi-stability is independent of the choice of
$G$.
\end{enumerate}
\end{rem}

\begin{defn}
For a coherent sheaf $E$ on $X$,
$\rk E$ denotes the rank of $E$.
For a purely 1-dimensional sheaf $E$ on $X$,
$\Div(E)$ denotes the effective divisor $C$ such
that $E$ is an ${\cal O}_C$-module and $c_1(E)=C$.
For a purely 1-dimensional sheaf $E$ on a smooth projective surface
$X$, $\Div(E)$ denotes the scheme-theoretic support of $E$.
Let 
$$
0 \to V_{-1} \overset{\varphi}{\to} V_0 \to E \to 0 
$$
be a locally free resolution of $E$. Then
the Cartier divisor $\det \varphi$ is $\Div(E)$.
\end{defn}

Let us recall a quotient stack description 
of ${\cal M}_H(v)^{\mu ss}$ and its open substacks.
For an ample divisor $H'$ on $X$,
let $Q(mH',v)$ be the open subscheme of 
the quot-scheme $\Quot_{{\cal O}_X(-mH')^{\oplus N}/X}$
consisting of points
\begin{equation}\label{eq:quotient}
\lambda:{\cal O}_X(-mH')^{\oplus N} \to E
\end{equation}
such that
\begin{enumerate}
\item
$v(E)=v$,
\item
$\lambda$ induces an isomorphism
$H^0(X,{\cal O}_X^{\oplus N}) \cong H^0(X,E(mH'))$,
\item
$H^i(X,E(mH'))=0$, $i>0$.
\end{enumerate}
Let ${\cal O}_{Q(mH',v) \times X}(-mH')^{\oplus N} \to {\cal Q}_v$
be the universal quotient.
We set $V_v:={\cal O}_X(-mH')^{\oplus N}$.
For our purpose, the choice of $mH'$ is not so important.
Hence we simply denote $Q(mH',v)$ by $Q(v)$.
Let ${\cal M}(v)$ be the moduli stack of coherent sheaves
$E$ with $v(E)=v$ and
$q_v:Q(v) \to {\cal M}(v)$ be the natural map.
We denote the pull-backs $q_v^{-1}({\cal M}_H(v)^{\mu ss}),
 q_v^{-1}({\cal M}_H(v)^{ss}),\ldots$ by
$Q(v)^{\mu ss}, Q(v)^{ss}, \dots $ respectively.
If we choose a suitable $Q(v)$, then
$q_v:Q(v)^{\mu ss} \to {\cal M}_H(v)^{\mu ss}$ is 
surjective and 
${\cal M}_H(v)^{\mu ss}$ is a quotient stack of $Q(v)^{\mu ss}$
by the natural action of $GL(N)$:
\begin{equation}
 {\cal M}_H(v)^{\mu ss} \cong \left[Q(v)^{\mu ss}/GL(N) \right].
\end{equation}
From now on, we assume that $q_v:Q(v)^{\mu ss} \to {\cal M}_H(v)^{\mu ss}$
is surjective.
We have 
\begin{equation}
\dim {\cal M}_H(v)^{\mu ss}=\dim Q(v)^{\mu ss}-\dim GL(N).
\end{equation}
\begin{rem}
Since $PGL(N)$ acts freely on $Q_H(v)^s$, we have
$\dim {\cal M}_H(v)^s=\dim M_H(v)-1$.
\end{rem}
%The following claim is well-known.
\begin{lem}\label{lem:stack-dim}
Let ${\cal M}$ be an irreducible component of
${\cal M}_H(v)^{\mu ss}$. Then
$\dim {\cal M} \geq \langle v^2 \rangle$.
\end{lem}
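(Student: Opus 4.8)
The plan is to exploit the quotient presentation ${\cal M}_H(v)^{\mu ss} \cong [Q(v)^{\mu ss}/GL(N)]$ together with the dimension formula $\dim {\cal M}_H(v)^{\mu ss} = \dim Q(v)^{\mu ss} - \dim GL(N)$ recalled just above the statement, and to bound the local dimension of $Q(v)^{\mu ss}$ from below by deformation theory of the Quot scheme. Concretely, I would pick a closed point $\lambda : V_v \to E$ lying on an irreducible component of $Q(v)^{\mu ss}$ that dominates ${\cal M}$, and set $K := \ker \lambda$. Since the Quot scheme $\Quot_{V_v/X}$ has Zariski tangent space $\Hom(K,E)$ and obstruction space $\Ext^1(K,E)$ at this point, one gets at every such $\lambda$
\[
\dim_\lambda Q(v)^{\mu ss} \geq \dim \Hom(K,E) - \dim \Ext^1(K,E) = \chi(K,E) - \dim \Ext^2(K,E).
\]
Thus the whole proof reduces to computing $\chi(K,E)$ and to proving the vanishing $\Ext^2(K,E)=0$.

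For the Euler characteristic, I would apply the additive functor $\chi(-,E)$ to the defining sequence $0 \to K \to V_v \to E \to 0$, giving $\chi(K,E) = \chi(V_v,E) - \chi(E,E)$. By conditions (2) and (3) in the definition of $Q(v)$ we have $H^i(E(mH'))=0$ for $i>0$ and $\dim H^0(E(mH'))=N$, so $\chi(V_v,E) = N\,\chi(E(mH')) = N^2$. On the other hand, Riemann--Roch on the Enriques surface, where $\td_X = (1,0,1)$ in $H^*(X,{\Bbb Q})$, together with the Mukai pairing convention, yields $\chi(E,E) = -\langle v^2\rangle$. Hence $\chi(K,E) = N^2 + \langle v^2\rangle$.

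It remains to prove $\Ext^2(K,E)=0$. Here I would first observe that $E$ is torsion free, so on the smooth surface $X$ its homological dimension is at most $1$ and the sheaf $\mathcal{E}xt^1(E,E)$ is supported on the zero-dimensional non-locally-free locus of $E$; the local-to-global spectral sequence then forces $\Ext^3(E,E) = H^2(X,\mathcal{E}xt^1(E,E)) = 0$. Applying $\Hom(-,E)$ to the short exact sequence and using $\Ext^2(V_v,E) = H^2(E(mH'))^{\oplus N}=0$ (condition (3) again) produces an injection $\Ext^2(K,E) \hookrightarrow \Ext^3(E,E) = 0$, so $\Ext^2(K,E)=0$. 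Combining the two computations gives $\dim_\lambda Q(v)^{\mu ss} \geq N^2 + \langle v^2\rangle$, and since $\dim GL(N) = N^2$ and this bound holds at every closed point of the component dominating ${\cal M}$, we conclude $\dim {\cal M} \geq \langle v^2\rangle$.

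The main obstacle is precisely the vanishing $\Ext^2(K,E)=0$: it is what guarantees that the signed "expected dimension" $\chi(K,E)$ is genuinely a lower bound for $\dim Q(v)^{\mu ss}$ rather than merely an Euler characteristic. This is the one step that is not pure bookkeeping, since it forces one to combine the cohomological normalization built into $Q(v)$ (the vanishing of $H^{>0}(E(mH'))$) with the surface-specific fact that torsion-free sheaves have homological dimension at most one; everything else is a routine application of Riemann--Roch.
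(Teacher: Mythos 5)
Your proof is correct and follows essentially the same route as the paper: the quot-scheme presentation of the stack, the tangent-minus-obstruction bound $\dim_\lambda Q(v)^{\mu ss} \geq \dim\Hom(\ker\lambda,E)-\dim\Ext^1(\ker\lambda,E)$, and the Riemann--Roch computation $\chi(\ker\lambda,E)=N^2+\langle v^2\rangle$. The only difference is that the paper merely asserts $\Ext^2(\ker\lambda,E)=0$, whereas you supply the standard justification (injection into $\Ext^3(E,E)$, which vanishes since a torsion-free sheaf on a smooth surface has homological dimension at most one), which is a welcome but not substantively different addition.
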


\begin{proof}
We take a quotient \eqref{eq:quotient}.
Then we see that $\Ext^2(\ker \lambda,E)=0$.
By the deformation for the quot-scheme,
the Zariski tangent space of 
the quot-scheme at \eqref{eq:quotient} is $\Hom(\ker \lambda, E)$ and
the obstruction space is $\Ext^1(\ker \lambda,E) \cong \Ext^2(E,E)$.
Hence the dimension of an irreducible component
of $Q(v)^{\mu ss}$ containing the point
\eqref{eq:quotient}
is at least of 
$$
\dim \Hom(\ker \lambda,E)-\dim\Ext^1(\ker \lambda,E)
=N^2-\chi(E,E)=
\langle v^2 \rangle+\dim GL(N).
$$ 
Hence we get the claim.
\end{proof}

The following formula is used frequently in this paper.
%is a modification of \cite[Lem. 5.2]{K-Y} to 
%an arbitrary smooth projective surface.
\begin{lem}[{\cite[Lem. 5.2]{K-Y}}]\label{lem:A-est1}
Let ${\cal F}(v_1,v_2)$ be the stack of filtrations
$0 \subset E_1 \subset E$ such that
$E_1$ is a coherent sheaf with $v(E_1)=v_1$ and
$E_2:=E/E_1$ is a coherent sheaf with $v(E_2)=v_2$.
We have a morphism 
$p_{v_1,v_2}:{\cal F}(v_1,v_2) \to {\cal M}(v_1) \times {\cal M}(v_2)$
by sending  $E_1 \subset E$ to $(E_1,E/E_1)$.
We set
\begin{equation}
 \begin{split}
  {\cal N}^n(v_1,v_2):&=\{(E_1,E_2) \in {\cal M}(v_1) \times 
  {\cal M}(v_2)|
  \dim \Hom(E_1,E_2(K_X))=n \},\\
  {\cal F}^n(v_1,v_2):&=p_{v_1,v_2}^{-1}({\cal N}^n(v_1,v_2))\\
  &=\{(F_1 \subset E) \in {\cal F}(v_1,v_2)| \dim \Hom(F_1,(E/F_1)(K_X))=n \}.
 \end{split}
\end{equation}
Then
\begin{equation}
 \dim {\cal F}^n(v_1,v_2)=\dim {\cal N}^n(v_1,v_2)+
 \langle v_1,v_2 \rangle+n.
\end{equation} 
\end{lem}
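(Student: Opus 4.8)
The plan is to analyze the forgetful morphism $p_{v_1,v_2}$ restricted to a map ${\cal F}^n(v_1,v_2) \to {\cal N}^n(v_1,v_2)$ and to compute its relative dimension fiberwise, since for a morphism of stacks with constant fiber dimension the dimension of the source equals the dimension of the target plus the relative dimension. First I would describe the fiber of $p_{v_1,v_2}$ over a point $(E_1,E_2) \in {\cal N}^n(v_1,v_2)$: it is the groupoid of extensions $0 \to E_1 \overset{\iota}{\to} E \overset{\pi}{\to} E_2 \to 0$, the relevant morphisms being isomorphisms of $E$ preserving $E_1$ and inducing the identity on the graded pieces. The set of such extensions is classified by $\Ext^1(E_2,E_1)$, while the relative automorphisms are exactly the maps $\id_E + \iota\circ\phi\circ\pi$ with $\phi \in \Hom(E_2,E_1)$. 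Hence the fiber is the quotient stack $[\Ext^1(E_2,E_1)/\Hom(E_2,E_1)]$ with $\Hom(E_2,E_1)$ acting trivially, whose dimension is $\dim \Ext^1(E_2,E_1) - \dim \Hom(E_2,E_1)$.

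Next I would rewrite this relative dimension in topological terms. Using $\dim \Ext^1(E_2,E_1) - \dim\Hom(E_2,E_1) = -\chi(E_2,E_1) + \dim \Ext^2(E_2,E_1)$ together with the Riemann--Roch identity $\chi(E_2,E_1) = -\langle v_2,v_1\rangle = -\langle v_1,v_2\rangle$ (the Mukai pairing being symmetric, and the correction terms involving the numerically trivial class $K_X$ contributing nothing to $\int_X$), the relative dimension becomes $\langle v_1,v_2\rangle + \dim\Ext^2(E_2,E_1)$. Finally, Serre duality on the Enriques surface gives $\Ext^2(E_2,E_1) \cong \Hom(E_1,E_2(K_X))^{\vee}$, so that $\dim \Ext^2(E_2,E_1) = \dim\Hom(E_1,E_2(K_X)) = n$ precisely on the stratum ${\cal N}^n(v_1,v_2)$. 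Therefore the relative dimension of $p_{v_1,v_2}$ over ${\cal N}^n(v_1,v_2)$ is the constant $\langle v_1,v_2\rangle + n$, which yields the asserted formula.

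The main obstacle I anticipate is the stack-theoretic bookkeeping rather than any deep geometry: one must justify that the relative automorphism group contributes exactly $-\dim\Hom(E_2,E_1)$ to the fiber dimension, and --- crucially --- that the relative dimension is genuinely \emph{constant} along ${\cal N}^n(v_1,v_2)$. The latter is not obvious because $\dim\Ext^1(E_2,E_1)$ and $\dim\Hom(E_2,E_1)$ jump individually; what saves the argument is that their difference equals $-\chi(E_2,E_1) + \dim\Ext^2(E_2,E_1)$, where $\chi$ is a topological invariant, hence locally constant, and $\dim\Ext^2(E_2,E_1) = n$ is exactly the condition cutting out the stratum. To make the fiber-dimension count rigorous I would work with the local quotient presentation $[Q(v)/GL(N)]$ recalled above, realizing ${\cal F}(v_1,v_2)$ through a relative Quot scheme over ${\cal M}(v_1)\times{\cal M}(v_2)$ and computing its tangent--obstruction theory, so that the vector-space dimension counts are upgraded to honest dimensions of the schemes presenting the stacks.
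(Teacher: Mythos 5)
Your proof is correct and is essentially the argument the paper relies on: the paper's own proof simply invokes \cite[Lem. 5.2]{K-Y}, whose content is precisely your extension-stack fiber computation $\dim[\Ext^1(E_2,E_1)/\Hom(E_2,E_1)]=-\chi(E_2,E_1)+\dim\Ext^2(E_2,E_1)$, and then records the one ingredient specific to the Enriques case — Serre duality giving $\dim \Ext^2(E_2,E_1)=\dim \Hom(E_1,E_2(K_X))=n$ on the stratum ${\cal N}^n(v_1,v_2)$ — which is exactly your final step. Nothing further is needed.
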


\begin{proof}
Since $\dim \Ext^2(E_2,E_1)=\dim \Hom(E_1,E_2(K_X))=n$,
the same proof of \cite[Lem. 5.2]{K-Y} works.
\end{proof}

\begin{NB}
Let
$v=(lr,l\xi,\frac{s}{2})$ $(l,r,s \in {\Bbb Z}, \xi \in \NS(X))$ 
be a Mukai vector such that $\gcd(r,\xi)=1$.
Then $v$ is primitive if and only if
\begin{enumerate}
\item
$r$ is odd, $l-s$ is even and
$\gcd(l,\frac{l-s}{2})=1$ or
\begin{NB2}
We set $r=2k+1$, then
$\gcd(l,\frac{lr-s}{2})=\gcd(l,\frac{l-s}{2})$.
If $lr$ is odd, then we may 
state that $s$ is odd and $\gcd(l,s)=1$.
We set  $l=s+2k$. Then
$\gcd(l,s)=\gcd(2k,s)=\gcd(k,s)$, since $s$ is odd.
By $\gcd(l,\frac{l-s}{2})=\gcd(l,k)=\gcd(s,k)$,
$\gcd(l,s)=\gcd(l,\frac{l-s}{2})$.
\end{NB2}
\item
$r$ is even and
$\gcd(l,\frac{s}{2})=1$.
\end{enumerate}

\begin{thm}
Let $v=(lr,l\xi,\frac{s}{2})$ be a primitive Mukai
vector such that $\gcd(r,\xi)=1$ and $\langle v^2 \rangle \geq 0$.
For $L \in \NS(X)$ with $[L \mod K_X] =l \xi$,
%${\cal M}_H(lr,L,\frac{s}{2})$ 
${\cal M}_H(v,L)^{ss}$ contains a $\mu$-stable
sheaf if and only if 
\begin{enumerate}
\item[(i)] 
There is a stable sheaf $E$ such that 
$v(E)=(r,\xi,b)$ and  
$\langle v(E)^2 \rangle=-1$, and
$\langle v^2 \rangle \geq l^2$ or
\item[(ii)] 
There is a stable sheaf $E$ such that 
$v(E)=(r,\xi,b)$ and $\langle v(E)^2 \rangle=-2$, and
$\langle v^2 \rangle \geq 2l^2$ or
\item[(iii)] 
There is no stable sheaf $E$ such that 
$v(E)=(r,\xi,b)$ and 
$\langle v(E)^2 \rangle=-1,-2$
and
$\langle v^2 \rangle \geq 0$.
\end{enumerate}
Moreover if $lr>1$, then
under the same condition,
${\cal M}_H(v,L)^{ss}$ contains a $\mu$-stable locally free
sheaf.
\end{thm}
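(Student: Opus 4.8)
The plan is to characterize the $\mu$-stable locus through a dimension estimate of the strictly $\mu$-semistable locus, deriving the numerical thresholds from a Riemann--Roch computation for the necessity and from Lemma~\ref{lem:A-est1} for the sufficiency. Throughout, write $v_0:=(r,\xi,b)$ for the relevant primitive direction and, given a stable $E_0$ with $v(E_0)=v_0$, expand $v=lv_0+a\varrho_X$ with $a=\tfrac{s}{2}-lb$; since $\langle v_0,\varrho_X\rangle=-r$ and $\langle\varrho_X^2\rangle=0$, we have $\langle v^2\rangle=l^2\langle v_0^2\rangle-2lar$. Because $H$ is general and $\gcd(r,\xi)=1$, every proper saturated subsheaf of a $\mu$-semistable sheaf with Mukai vector $v$ and the same slope has $(\rk,c_1)$ a multiple of $(r,\xi)$; this is the structural fact that makes both directions tractable.

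For the necessity I would argue as follows. Suppose $E_0$ is stable with $v(E_0)=v_0$ and $\langle v_0^2\rangle=-1$ (the case $-2$ being identical). If $E$ were $\mu$-stable with $v(E)=v$, then $E_0$ and $E_0\otimes K_X$ share the slope of $E$, so any nonzero homomorphism $E_0\to E$ or $E\to E_0\otimes K_X$ would exhibit a subsheaf or quotient of $E$ of equal slope, contradicting $\mu$-stability; hence $\Hom(E_0,E)=\Hom(E,E_0\otimes K_X)=0$. By Serre duality $\dim\Ext^2(E_0,E)=\dim\Hom(E,E_0\otimes K_X)=0$, so $\chi(E_0,E)=-\dim\Ext^1(E_0,E)\le 0$. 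On the other hand $\chi(E_0,E)=-\langle v_0,v\rangle=-l\langle v_0^2\rangle+ar=l+ar$, whence $ar\le -l$. Substituting $ar=-(\langle v^2\rangle+l^2)/(2l)$ gives $\langle v^2\rangle\ge l^2$, which is condition~(ii); the same computation with $\langle v_0^2\rangle=-2$ yields $2l+ar\le 0$ and $\langle v^2\rangle\ge 2l^2$, which is condition~(iii). When no such rigid $E_0$ exists, the only constraint is $\langle v^2\rangle\ge 0$, giving~(i).

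For the sufficiency I would show that, under the stated threshold, a general member of a suitable irreducible component of ${\cal M}_H(v,L)^{ss}$ is $\mu$-stable. Non-emptiness of ${\cal M}_H(v,L)^{ss}$ for $\langle v^2\rangle\ge 0$ is available from the existence results in \cite{Enriques}, and by Lemma~\ref{lem:stack-dim} every component of ${\cal M}_H(v)^{\mu ss}$ has dimension at least $\langle v^2\rangle$. It then suffices to bound the strictly $\mu$-semistable locus. Stratifying by the Mukai vector $v_1=(kr,k\xi,\ast)$ of a saturated equal-slope subsheaf ($1\le k<l$) and $v_2=v-v_1$, Lemma~\ref{lem:A-est1} gives $\dim{\cal F}^n(v_1,v_2)=\dim{\cal N}^n(v_1,v_2)+\langle v_1,v_2\rangle+n$ with $n=\dim\Hom(E_1,E_2\otimes K_X)=\dim\Ext^2(E_2,E_1)$. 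Bounding $\dim{\cal N}^n$ by $\langle v_1^2\rangle+\langle v_2^2\rangle$ (the $\mu$-stable case, the general case being handled by induction on $l$) and comparing with $\langle v^2\rangle=\langle v_1^2\rangle+\langle v_2^2\rangle+2\langle v_1,v_2\rangle$, the codimension of each stratum is at least $\langle v_1,v_2\rangle-n$. The extremal strata, where the $\mu$-stable factors are forced to be the rigid sheaves $E_0,E_0\otimes K_X$ (square $-1$) or $E_0\cong E_0\otimes K_X$ (square $-2$), are exactly those for which $\langle v_1,v_2\rangle-n$ is smallest, and the thresholds $\langle v^2\rangle\ge l^2,\,2l^2$ are precisely what forces this quantity to stay positive; when no rigid $E_0$ exists every stratum already has positive codimension for $\langle v^2\rangle\ge 0$. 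I would organize the argument as an induction on $l$, building $E$ from $\mu$-stable sheaves of vectors $kv_0$ by extensions and reaching the general moduli point by adding the $a$ point-like parameters.

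Finally, for $lr>1$ I would upgrade to a locally free representative by showing that the non-locally-free $\mu$-stable sheaves form a locus of dimension smaller than $\langle v^2\rangle$: such a sheaf is an elementary modification $0\to E\to E^{\vee\vee}\to Q\to 0$ of a $\mu$-stable locally free sheaf along a finite length quotient $Q$, and counting the modification parameters against the gain in $\langle\,\cdot^2\rangle$ shows (using $lr>1$, so that rank-one torsion-free sheaves are excluded) that the locally free locus is dense in the $\mu$-stable locus. The main obstacle is the sufficiency estimate: making the bound $\dim{\cal N}^n\le\langle v_1^2\rangle+\langle v_2^2\rangle$ uniform over all strata and inductive steps, and tracking the Enriques-specific contribution $\Ext^2(E_2,E_1)=\Hom(E_1,E_2\otimes K_X)$ through Lemma~\ref{lem:A-est1}, is delicate precisely because the square $-1$ and square $-2$ rigid classes behave differently under the $K_X$-twist; getting the constants $l^2$ and $2l^2$ sharp on the sufficiency side, so that they match the clean necessity computation, is the crux of the proof.
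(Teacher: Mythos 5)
Your overall strategy coincides with the paper's: your necessity computation is exactly Lemma~\ref{lem:l^2} and Lemma~\ref{lem:l^2-2} (Hom-vanishing from $\mu$-stability, Serre duality, Riemann--Roch), and your sufficiency is the paper's plan of bounding the non-$\mu$-stable loci via Lemma~\ref{lem:A-est1}, combined with non-emptiness of ${\cal M}_H(v,L)^{ss}$ and the lower bound of Lemma~\ref{lem:stack-dim}. But there are genuine gaps. First, in the necessity, your claim that any nonzero map $E_0\to E$ or $E\to E_0\otimes K_X$ contradicts $\mu$-stability of $E$ fails when $l=1$: then $\rk E=\rk E_0$, the image can have full rank, and no slope contradiction arises (indeed $E\cong E_0$ is possible). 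The paper treats this case separately (``If $\rk E=r$, then $l=1$ and $b\le 0$'' in Lemma~\ref{lem:l^2}), using rigidity of $E_0$ rather than slopes.

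Second, and more seriously, your sufficiency is asserted rather than proved, and the assertions fail precisely where the paper has to work hardest. (a) In the no-rigid-classes case, the bound $\dim{\cal N}^n\le\langle v_1^2\rangle+\langle v_2^2\rangle$ is false for strata whose factors have isotropic Mukai vector: by Proposition~\ref{prop:isotropic}, the semistable stack for $l'w$ with $\langle w^2\rangle=0$ can have dimension up to $l'$, not $0$. The paper rescues the count using the divisibility $r\mid\langle u,w\rangle$ (Lemma~\ref{lem:w-v}) together with the fact that this case forces $r\ge 2$; nothing in your outline supplies this. (b) In the rigid cases, the terms $n=\dim\Hom(E_1,E_2(K_X))$ are controlled in the paper by a case analysis on whether $\Ext^2(E_j,E_i)$ vanishes for some pair of Jordan--H\"{o}lder factors, and this analysis genuinely breaks down at $r=1$, $l=2$, i.e.\ $v=(2,0,-1)e^{\xi}$ with determinant congruent to $K_X$: there the dimension estimate only gives codimension $\ge 0$ at the threshold $\langle v^2\rangle=4$, and in fact the moduli space is reducible with an entire irreducible component consisting of non-locally-free sheaves (see the Remark following Lemma~\ref{lem:quot}), so no ``general member is $\mu$-stable locally free'' density argument of your kind can work for that component. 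The paper instead exhibits a $\mu$-stable locally free sheaf by the explicit extension $0\to{\cal O}_X\to E\to I_Z(K_X)\to 0$ in Lemma~\ref{lem:quot}(3); the same exceptional case undercuts your final claim that the locally free locus is dense in the $\mu$-stable locus. (A minor point: your labels (i)/(ii)/(iii) follow the introduction's numbering, Theorem~\ref{thm:intro:mu-s}, not the numbering of the statement you were asked to prove.)
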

\end{NB}

\section{The dimension of moduli stacks}

In this section, we assume that $X$ is an Enriques surface,
and we shall estimate the dimension of various
substacks of ${\cal M}_H(v)$.
We also showed that
${\cal M}_H(v)^{ss}$ is a reduced stack if $\langle v^2 \rangle>0$
or $v$ is a primitive and isotropic Mukai vector.

\begin{lem}\label{lem:s}
Let $v=(r,\xi,\frac{s}{2})$ 
be a Mukai vector with $\langle v^2 \rangle>0$. Then
\begin{enumerate}
\item[(1)]
${\cal M}_H(v,L)^s$ is reduced and  
$\dim {\cal M}_H(v,L)^s=\langle v^2 \rangle$.
\item[(2)]
${\cal M}_H(v,L)^s$ is normal, unless
\begin{enumerate}
\item[(i)] $v=2v_0$ with $\langle v_0^2 \rangle=1$ and 
$L \equiv \frac{r}{2}K_X \mod 2$
or
\item[(ii)] $\langle v^2 \rangle=2$.
\end{enumerate}
\end{enumerate}
\end{lem}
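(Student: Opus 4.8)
The plan is to reduce everything to the deformation theory of a stable sheaf $E$ with $v(E)=v$. Since $E$ is stable it is simple, so $\dim\Hom(E,E)=1$; the Zariski tangent space of the stack at $[E]$ is $\Ext^1(E,E)$ and the obstructions lie in $\Ext^2(E,E)$. On an Enriques surface Serre duality gives $\Ext^2(E,E)\cong\Hom(E,E\otimes K_X)^\vee$, and because $E$ and $E\otimes K_X$ are stable with the same reduced Hilbert polynomial ($K_X$ being numerically trivial), any nonzero homomorphism between them is an isomorphism. Hence $\dim\Ext^2(E,E)\in\{0,1\}$, and it equals $1$ exactly on the locus $\mathcal Z:=\{[E]\mid E\cong E\otimes K_X\}$. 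Unlike the K3 case, the trace map $\Ext^2(E,E)\to H^2(\mathcal O_X)=0$ vanishes, so these obstructions are not automatically killed; this is the source of the possible singularities. Using $H^1(\mathcal O_X)=0$ (so fixing $c_1=L$ does not change the deformation spaces) and $\chi(E,E)=-\langle v^2\rangle$, one gets $\dim\Ext^1(E,E)=\langle v^2\rangle+1$ off $\mathcal Z$ and $\langle v^2\rangle+2$ on $\mathcal Z$.

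Off $\mathcal Z$ the stack is smooth of dimension $\langle v^2\rangle$. Everywhere, by the computation in the proof of Lemma \ref{lem:stack-dim}, the obstruction on the atlas $Q(v)^s$ is $\Ext^1(\ker\lambda,E)\cong\Ext^2(E,E)$, of dimension $\le 1$; thus $Q(v)^s$, and hence the stack, is locally the zero locus of at most one equation in a smooth scheme, so it is a local complete intersection, in particular Cohen--Macaulay and $S_2$ (so $S_1$). For (1) it then suffices to prove $\dim\mathcal Z<\langle v^2\rangle$: by Lemma \ref{lem:stack-dim} every component has dimension $\ge\langle v^2\rangle>\dim\mathcal Z$, so its generic point is smooth, giving $R_0$; combined with $S_1$ this yields reducedness and purity of dimension $\langle v^2\rangle$. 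For (2), normality $=R_1+S_2$, and since $S_2$ holds and the singular locus lies in $\mathcal Z$, it suffices to bound $\codim\mathcal Z\ge 2$, i.e. $\dim\mathcal Z\le\langle v^2\rangle-2$; note that for normality only this upper bound is needed, so one need not analyze the quadratic obstruction at points of $\mathcal Z$.

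Both bounds reduce to a count of $\dim\mathcal Z$ through the covering K3 surface. I would use the standard fact that a stable $E$ with $E\cong E\otimes K_X$ has the form $E=\varpi_*\widetilde E$ for a stable $\widetilde E$ on $\widetilde X$ with $\iota^*\widetilde E\not\cong\widetilde E$; this forces $r$ even, so $\mathcal Z=\emptyset$ and the stack is smooth whenever $r$ is odd. Writing $\widetilde v:=v(\widetilde E)$ one has $v=\varpi_*\widetilde v$, and the identities $\varpi^*\varpi_*=1+\iota^*$ and $\langle\varpi^*x,\varpi^*y\rangle_{\widetilde X}=2\langle x,y\rangle_X$ give $\langle v^2\rangle=\langle\widetilde v^2\rangle+\langle\widetilde v,\iota^*\widetilde v\rangle$. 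As the moduli of such $\widetilde E$ on the K3 surface is smooth of the expected dimension and $\widetilde E\mapsto\varpi_*\widetilde E$ is generically finite, $\dim\mathcal Z=\langle\widetilde v^2\rangle+1$, whence $\codim\mathcal Z=\langle\widetilde v,\iota^*\widetilde v\rangle-1$. Everything thus comes down to a lower bound on $\langle\widetilde v,\iota^*\widetilde v\rangle=\tfrac12\langle v^2\rangle-\tfrac14 D^2$, where $w:=\widetilde v-\iota^*\widetilde v=(0,D,0)$ with $D\in\NS(\widetilde X)$ anti-invariant. Since the polarization descends from $X$, it is $\iota$-invariant and orthogonal to $D$, so by the Hodge index theorem $D^2\le 0$ with equality iff $D=0$; and integrality of $\langle\widetilde v,\iota^*\widetilde v\rangle$ (with $\langle v^2\rangle$ even once $r$ is even) forces $D^2\equiv 0\pmod 4$. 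If $D\ne 0$ then $D^2\le-4$, so $\langle\widetilde v,\iota^*\widetilde v\rangle\ge\tfrac12\langle v^2\rangle+1$, which is $\ge 2$ always and $\ge 3$ whenever $\langle v^2\rangle\ge 4$; the borderline $\langle v^2\rangle=2$ is exactly case (ii). If $D=0$ then $\widetilde v=\tfrac12\varpi^*v$ forces $v=2v_0$ with $\widetilde v=\varpi^*v_0$, and $\langle\widetilde v,\iota^*\widetilde v\rangle=\langle\widetilde v^2\rangle=2\langle v_0^2\rangle$, which is $\ge 2$ always and $\ge 4$ unless $\langle v_0^2\rangle=1$ --- exactly case (i); the parity condition $L\equiv\tfrac r2 K_X\bmod 2$ is precisely what guarantees that this invariant lift contributes to $\mathcal M_H(v,L)$ rather than to $\mathcal M_H(v,L+K_X)$.

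The step I expect to be the main obstacle is this final numerical analysis: correctly setting up the push--pull formalism between the Mukai lattices of $X$ and $\widetilde X$ (including the $\sqrt{\td}$ normalizations and the action of $\iota^*$ on $H^0\oplus H^4$), verifying that $\varpi_*\widetilde E$ is genuinely stable with $c_1=L$, and pinning down the divisibility and integrality constraints that single out $\langle v^2\rangle=2$ and $v=2v_0$ with $\langle v_0^2\rangle=1$ as the only two ways $\codim\mathcal Z$ can drop to $1$. Establishing the equivalence $E\cong E\otimes K_X\Leftrightarrow E=\varpi_*\widetilde E$ and the generic finiteness of the pushforward map is the other technical point, but it is standard for the étale double cover; the Hodge index and evenness inputs then make the exceptional cases fall out cleanly.
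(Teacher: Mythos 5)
Your proposal is correct, and its skeleton is exactly the paper's: smoothness off the locus $\mathcal Z=\{E\cong E(K_X)\}$, the quot-scheme obstruction computation from Lemma \ref{lem:stack-dim} making the stack a local complete intersection (hence $S_2$), reducedness from $R_0+S_1$ once $\dim\mathcal Z<\langle v^2\rangle$, and normality from $\codim\mathcal Z\geq 2$ via Serre's criterion. The one genuine difference is how the bound on $\dim\mathcal Z$ is obtained: the paper simply quotes Kim \cite{Kim2} (via Remark \ref{rem:Kim}) and Yamada \cite{Yamada} for ``$\dim{\cal M}_H(v,L)^s_{sing}$ is odd and $\leq\frac{\langle v^2\rangle}{2}+1$, with equality forcing $2\mid c_1$ and $L\equiv\frac r2 K_X\bmod 2$'' (the parity claim being proved in Remark \ref{rem:rho=10} by the same $\varpi_*$ computation you sketch), and then uses oddness to squeeze the case $\langle v^2\rangle=4$; you instead re-derive the input through the K3 cover, and your refinement $\langle\widetilde v,\iota^*\widetilde v\rangle=\frac12\langle v^2\rangle-\frac14 D^2$ with the Hodge-index/integrality step $D^2\leq -4$ for $D\neq 0$ is a clean substitute that recovers both the cited bound (its maximum $\frac12\langle v^2\rangle+1$ occurring exactly at $D=0$, i.e. $v=2v_0$) and the oddness (since $\langle\widetilde v^2\rangle$ is even), and even improves on the paper at $\langle v^2\rangle=2$, where you get $\dim\mathcal Z\leq\frac12\langle v^2\rangle$ directly rather than invoking parity. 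Two small repairs: state $\dim\mathcal Z\leq\langle\widetilde v^2\rangle+1$ as an inequality (only the upper bound is used, and equality presumes nonemptiness and dominance of the pushforward map); and since $\varpi^*H$ may sit on a wall of $\widetilde X$, do not insist that $\widetilde E$ be $\varpi^*H$-stable --- either use twisted stability as the paper does in Proposition \ref{prop:isotropic}, or, more cheaply, note that $\dim\Hom(\varpi^*E,\varpi^*E)=\dim\Hom(E,E\oplus E(K_X))=2$ forces $\widetilde E$ simple with $\iota^*\widetilde E\not\cong\widetilde E$, so Mukai's smooth moduli of simple sheaves on the K3 (of stack dimension $\langle\widetilde v^2\rangle+1$), together with boundedness of the family $\{\varpi^*E\mid E\in\mathcal Z\}$, already gives the dimension bound you need.
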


\begin{proof}
We set 
\begin{equation}
{\cal M}_H(v,L)_{sing}^s:=\{E \in {\cal M}_H(v,L)^s \mid E \cong E(K_X) \}.
\end{equation}
We set $v=(r,c_1,\frac{s}{2})$.
If $r$ is odd, then ${\cal M}_H(v,L)_{sing}^s= \emptyset$.
Hence we assume that $r$ is even.
By \cite{Kim2} (see also Remark \ref{rem:Kim}) or \cite{Yamada},
$\dim {\cal M}_H(v,L)_{sing}^s$ is odd and
$\dim {\cal M}_H(v,L)_{sing}^s \leq \frac{\langle v^2 \rangle}{2}+1$.
Moreover if the equality holds, then
$2 \mid c_1$ and $L \equiv \frac{r}{2}K_X \mod 2$, and
if $v$ is primitive, $\langle v^2 \rangle \equiv 0 \mod 8$ 
(see Remark \ref{rem:rho=10}. 
In particular, we have 
$$
\langle v^2 \rangle-\dim {\cal M}_H(v,L)_{sing}^s
\geq 
\begin{cases}
\frac{\langle v^2 \rangle}{2}-1,& \langle v^2 \rangle \equiv 0 \mod 4,\\
\frac{\langle v^2 \rangle }{2},& \langle v^2 \rangle \equiv 2 \mod 4.
\end{cases}
$$
Since ${\cal M}_H(v,L)^s \setminus {\cal M}_H(v,L)_{sing}^s$ is smooth of
dimension $\langle v^2 \rangle$,
by the proof of Lemma \ref{lem:stack-dim},
we see that ${\cal M}_H(v,L)^s$ is a locally complete intersection stack of
$\dim {\cal M}_H(v,L)^s =\langle v^2 \rangle$.
In particular, ${\cal M}_H(v,L)^s$ is reduced.

(2) In order to prove the normality of
${\cal M}_H(v,L)^s$, it is sufficient to prove
$\langle v^2 \rangle-\dim {\cal M}_H(v,L)_{sing}^s \geq 2$.
If $\langle v^2 \rangle \geq 6$, then
obviously $\langle v^2 \rangle-\dim {\cal M}_H(v,L)_{sing}^s \geq 2$.
If $\langle v^2 \rangle=4$ and $v$ does not satisfy (i), then 
%$2 \mid c_1$ implies
%$8 \mid \langle v^2 \rangle$. Hence 
$\dim {\cal M}_H(v,L)_{sing}^s<
\frac{\langle v^2 \rangle}{2}+1=3$, which implies
$\dim {\cal M}_H(v,L)_{sing}^s \leq 1$.
In particular, $\langle v^2 \rangle-\dim {\cal M}_H(v,L)_{sing}^s \geq 3$.
Therefore (2) holds.
\begin{NB}
If $4 \mid \langle v^2 \rangle$ and
$\dim {\cal M}_H(v)_{sing}^s=\langle v^2 \rangle/2+1$, then
$\ell(v)=2$. In particular,
if $v$ isisotropic, then $\dim {\cal M}_H(v)^s=1$ implies
$\ell(v)=2$. 
\end{NB}
\begin{NB}
Assume that $v=2v_0$ with $\langle v_0^2 \rangle=1$.
\end{NB}
\begin{NB}
If $\ell(v)=1$, then ${\cal M}_H(v)^{ss}$ is smooth of dimension 
$\langle v^2 \rangle$ by \cite{N}.
If $\ell(v)=2$, then
we see that $8 \mid \langle v^2 \rangle$.
$\dim {\cal M}_H(v)^{ss}=\langle v^2 \rangle$.
If $v$ is not primitive, then
$\langle v^2 \rangle \geq 4$. By using Lemma \ref{lem:yamada} again,
we have $\dim {\cal M}_H(v)^s =\langle v^2 \rangle$.
\end{NB}
\end{proof}

\begin{rem}[{Nuer \cite{N}, Sacca \cite{Sacca}}]\label{rem:rho=10}
Assume that 
\begin{equation}\label{eq:rho=10}
\varpi^*(\Pic(X))=\Pic(\widetilde{X}),
\end{equation} 
thus
$\iota$ acts on $\Pic(\widetilde{X})$ trivially.

Let $v:=(r,\xi,\frac{s}{2})$ be a primitive Mukai vector.
Then ${\cal M}_H(v,L)^s$ is smooth of 
$\dim {\cal M}_H(v,L)^s=\langle v^2 \rangle$, unless
$\ell ((r,\xi,\frac{s}{2}))=2$ and 
$L \equiv \frac{r}{2}K_X \mod 2$.

%Assume that $\ell ((r,\xi,\frac{s}{2}))=2$.
%For $L \in \NS(X)$ with 
%$L=\xi$ in $\NS(X)_f$, ${\cal M}_H(r,L,\frac{s}{2})$ is smooth if
%$L \not \equiv \frac{r}{2}K_X$, where $H$ is a general polarization.
\begin{proof}
By using $(-1)$-reflection (see Remark \ref{rem:Kim}),
we may assume that ${\cal M}_H(v,L)^s$ consists of
$\mu$-stable locally free sheaves.
Assume that $E \cong E(K_X)$.
Then $r$ is even and
there is a locally free sheaf $F$ such that
$\varpi_*(F)=E$. Then $\varpi^*(E)\cong F \oplus \iota^*(F)$.
By our assumption on $X$,
$\iota^*(c_1(F))=c_1(F)$ and $c_1(F)=c_1(\varpi^*(L))$, 
where $L \in \Pic(X)$.
Hence $\det F=\varpi^*(L)$ and
$\varpi^*(c_1(E))=c_1(F)+\iota^*(c_1(F))=2\varpi^*(c_1(L))$.
Then 
$$
c_1(E)=c_1(\varpi_*(F))=c_1(\varpi_*(\det(F)))
+\left(\frac{r}{2}-1 \right)K_X=
2c_1(L)+\frac{r}{2}K_X
$$
by \cite[Lem. 3.5]{Enriques}.
Hence $c_1(E) \equiv \frac{r}{2}K_X \mod 2$.
\end{proof}
\end{rem}

\begin{rem}\label{rem:Kim}
In Kim's paper \cite{Kim2},
it is assumed that $E \in {\cal M}_H(v)_{sing}^s$
is locally free.
We can reduce the general case to the case of 
$\mu$-stable locally free sheaves
by using $(-1)$-reflection 
(see \cite[Rem. 2.19]{Enriques}).
\end{rem}

\begin{NB}
If $r$ is odd, then ${\cal M}_H(v)^s$ is smooth.
If $r$ is even, then $\langle v^2 \rangle$ is even.
If $\langle v^2 \rangle=2$, then 
$\dim {\cal M}_H(v)_{sing}^s \leq \langle v^2 \rangle=2$.
\end{NB}

\begin{lem}\label{lem:pss}
Let $v$ be a Mukai vector with $\langle v^2 \rangle>0$.
%(we don't assume the primitivity of $v$).
We set
\begin{equation}
{\cal M}_H(v)^{pss}:=\{E \in {\cal M}_H(v)^{ss} \mid
\text{$E$ is properly semi-stable }\}.
\end{equation}
Assume that $H$ is general with respect to $v$.
Then
\begin{enumerate}
\item[(1)]
$\dim {\cal M}_H(v)^{pss} \leq \langle v^2 \rangle-1$.
Moreover $\dim {\cal M}_H(v)^{pss} \leq \langle v^2 \rangle-2$ unless
$v=2v_0$ with $\langle v_0^2 \rangle=1$.
 \item[(2)]
${\cal M}_H(v)^{s} \ne \emptyset$ and 
$\dim {\cal M}_H(v)^{ss}=\langle v^2 \rangle$.
\end{enumerate}
\end{lem}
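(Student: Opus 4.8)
The plan is to prove (1) and (2) together by induction on the divisibility $m$ of $v$, writing $v=mv_0$ with $v_0$ primitive; the properly semistable locus is covered by filtration stacks whose dimension is estimated by Lemma \ref{lem:A-est1}, and (2) is then extracted from (1) via Lemma \ref{lem:stack-dim}.

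First I would reduce the combinatorics of Jordan--H\"older filtrations. Since $H$ is general with respect to $v$, every stable factor of a properly semistable $E$ has reduced Hilbert polynomial equal to that of $E$, and Definition \ref{defn:mu-stability} then forces its Mukai vector to be proportional to $v$ over ${\Bbb Q}$ (for $r=0$, the rank is replaced by $(c_1,H)$ throughout). As $\langle v^2\rangle=m^2\langle v_0^2\rangle>0$ gives $\langle v_0^2\rangle\ge 1$, the admissible two-step decompositions are exactly $v=v_1+v_2$ with $v_i=m_iv_0$, $m_1+m_2=m$ and $m_i\ge 1$. Taking $E_1$ to be a stable subsheaf with $v(E_1)=v_1$ and $E_2=E/E_1$ the semistable quotient, ${\cal M}_H(v)^{pss}$ is the union over these finitely many decompositions of the images of $p_{v_1,v_2}\colon {\cal F}(v_1,v_2)\to{\cal M}_H(v)$, so $\dim{\cal M}_H(v)^{pss}\le\max_{v_1+v_2=v}\dim{\cal F}(v_1,v_2)$. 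For $m=1$ this union is empty, which is the base case.

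Next I would run the dimension estimate. By Lemma \ref{lem:A-est1}, $\dim{\cal F}^n(v_1,v_2)=\dim{\cal N}^n(v_1,v_2)+\langle v_1,v_2\rangle+n$, and $\dim{\cal N}^n\le\dim{\cal M}(v_1)^s+\dim{\cal M}(v_2)^{ss}$. Here $\langle v_i^2\rangle=m_i^2\langle v_0^2\rangle>0$, so Lemma \ref{lem:s} gives $\dim{\cal M}(v_1)^s=\langle v_1^2\rangle$, while $m_2<m$ lets the induction hypothesis (2) supply $\dim{\cal M}(v_2)^{ss}=\langle v_2^2\rangle$. Using $\langle v^2\rangle=\langle v_1^2\rangle+2\langle v_1,v_2\rangle+\langle v_2^2\rangle$, the $n=0$ stratum obeys $\dim{\cal F}^0\le\langle v^2\rangle-\langle v_1,v_2\rangle$; the strata $n\ge 1$ satisfy the same bound once one checks $\dim{\cal N}^n+n\le\dim{\cal N}^0$, i.e. that the jumping locus $\{\dim\Hom(E_1,E_2(K_X))\ge n\}$ has codimension $\ge n$ in ${\cal N}$. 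The decisive point is the elementary identity $\langle v_1,v_2\rangle=m_1m_2\langle v_0^2\rangle$: it is a positive integer, equal to $1$ precisely when $m_1=m_2=1$ and $\langle v_0^2\rangle=1$ (that is, $v=2v_0$ with $\langle v_0^2\rangle=1$), and at least $2$ otherwise. Substituting into $\dim{\cal F}(v_1,v_2)\le\langle v^2\rangle-\langle v_1,v_2\rangle$ and maximizing over decompositions yields (1).

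Finally, for (2) I would apply the quot-scheme argument of Lemma \ref{lem:stack-dim} to $Q(v)^{ss}$ (Gieseker semistability implies $\mu$-semistability, so $Q(v)^{ss}\subset Q(v)^{\mu ss}$): every irreducible component of ${\cal M}_H(v)^{ss}$ has dimension $\ge\langle v^2\rangle$. Since (1) bounds the properly semistable locus by $\langle v^2\rangle-1$, no component is contained in it, so, granting the known non-emptiness of ${\cal M}_H(v)^{ss}$ for $\langle v^2\rangle>0$, each component meets ${\cal M}_H(v)^s$; thus ${\cal M}_H(v)^s\ne\emptyset$, and since it is smooth of dimension $\langle v^2\rangle$ (Lemma \ref{lem:s}) and dense, $\dim{\cal M}_H(v)^{ss}=\langle v^2\rangle$. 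The main obstacle is the bookkeeping for the strata $n\ge 1$, namely the inequality $\dim{\cal N}^n+n\le\dim{\cal N}^0$, which I would settle by presenting $\Ext^\bullet(E_1,E_2(K_X))$ as a two-term complex of vector bundles over ${\cal N}$ and bounding the codimension of its degeneracy loci; the key inequality $\langle v_1,v_2\rangle=m_1m_2\langle v_0^2\rangle\ge 1$ is then immediate.
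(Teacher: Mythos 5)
Your overall skeleton coincides with the paper's: cover ${\cal M}_H(v)^{pss}$ by two-step filtrations with stable sub $E_1$, $v(E_1)=l_1v_0$, and semistable quotient $E_2$, $v(E_2)=l_2v_0$; estimate via Lemma \ref{lem:A-est1}; exploit $\langle v_1,v_2\rangle=l_1l_2\langle v_0^2\rangle$, which is $\geq 2$ except when $v=2v_0$ with $\langle v_0^2\rangle=1$; and deduce (2) from (1) by Lemma \ref{lem:stack-dim} together with the known non-emptiness (the paper cites \cite{N}). The genuine gap is in your treatment of the jumping strata $n\geq 1$. You need $\dim{\cal N}^n(v_1,v_2)+n\leq\dim{\cal N}^0(v_1,v_2)$, i.e.\ that the locus $\{\dim\Hom(E_1,E_2(K_X))\geq n\}$ has codimension $\geq n$ in ${\cal M}_H(v_1)^s\times{\cal M}_H(v_2)^{ss}$, and you propose to prove this by presenting $\Ext^{\bullet}(E_1,E_2(K_X))$ as a two-term complex of bundles and ``bounding the codimension of its degeneracy loci''. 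This cannot work as stated: degeneracy loci of a morphism of bundles carry only an \emph{upper} bound on their codimension (the expected value), never an a priori lower bound; a lower bound is exactly the transversality statement you would have to establish by geometric arguments about the actual moduli. Moreover the inequality you need is tight — for $v_1=v_2=v_0$ the locus $\{(E_1,E_1(K_X))\}$ has codimension exactly $1=n$, which is precisely what produces the exceptional case $v=2v_0$, $\langle v_0^2\rangle=1$ — and for $l_1=1$ and $l_2$ large the jump $n$ can be as large as $l_2/l_1$ while the only codimension visible from the product structure is $\langle v_1^2\rangle=\langle v_0^2\rangle$, so no soft argument will close this step.

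The paper closes exactly this step by a different mechanism, which you should adopt. By \cite[Lem.~3.1]{K-Y}, if $\dim\Hom(E_1,E_2(K_X))=k\geq 1$ then the evaluation map embeds $E_1(K_X)^{\oplus k}$ into $E_2$ with semistable quotient of Mukai vector $v_2-kv_1$; hence $E_1$ is determined by $E_2$, up to finitely many choices, as a factor of a Jordan--H\"{o}lder filtration of $E_2$. Consequently the stratum $H_k$ satisfies $\dim H_k\leq\dim{\cal M}_H(v_2)^{ss}=\langle v_2^2\rangle$ (inductively on $l$, as in your setup), and for $k\geq 2$ one gains an extra $-1$ because $E_2$ is then properly semistable, so the inductive bound (1) for $v_2$ applies. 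Feeding this into Lemma \ref{lem:A-est1} yields $\dim H_k+\langle v_1,v_2\rangle+k\leq\langle v^2\rangle-(\langle v_1,v_2\rangle+\langle v_1^2\rangle+1-k)$ for $k\geq 2$, and since $k\leq l_2/l_1\leq l_1l_2\langle v_0^2\rangle=\langle v_1,v_2\rangle$ the right-hand side is $\leq\langle v^2\rangle-2$; the case $k=1$ is handled similarly, with the sole surviving exception $l_1=l_2=1$, $\langle v_0^2\rangle=1$, i.e.\ $v=2v_0$. Note that this bookkeeping is \emph{not} of the form ``codimension $\geq k$ in the product'': it trades $\langle v_1^2\rangle$ and the properly-semistable gain against $k$, which is what actually makes the estimate go through. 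Your remaining steps (base case, positivity of $\langle v_1,v_2\rangle$, and the deduction of (2)) agree with the paper; one small inaccuracy is that Lemma \ref{lem:s} gives reducedness and the dimension of ${\cal M}_H(v,L)^s$, not smoothness, but you only use the dimension, so this is harmless.
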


\begin{proof}
We set $v=lv_0$ where $v_0$ is primitive and $l \in {\Bbb Z}_{>0}$.
We first note that the first claim of (1) 
implies (2) by Lemma \ref{lem:stack-dim},
Lemma \ref{lem:s} and \cite{N} (see also \cite[Thm. 3.1]{Enriques}).
The proof of (1) is almost the same as of \cite[Lem. 3.2]{K-Y}.
So I only remark that \cite[Lem. 5.1]{K-Y} is replaced by
Lemma \ref{lem:stack-dim} and
\cite[(3.4)]{K-Y} is replaced by
$$
\dim J(v_1,v_2) \leq \langle v^2 \rangle-
(\langle v_1,v_2 \rangle-\max\{l_2/l_1-1,0\}),
$$
where $J(v_1,v_2)$ is the substack whose member $E$ 
fits in an exact sequence
\begin{equation}
0 \to E_1 \to E \to E_2 \to 0
\end{equation}
such that $E_1$ is a stable sheaf with 
$v_1:=l_1 v_0$ and $E_2$ is a semi-stable sheaf with 
$v_2:=l_2 v_0$.  
\begin{NB}
For a general stable sheaf $E_1$,
$\Hom(E_1,E_2(K_X))=0$.
\end{NB}
We first assume that $\langle v_0^2 \rangle \geq 2$.
%$v=lv_0$ with $\langle v_0^2 \rangle \geq 2$,
%where $v_0$ is primitive.
Then
$$
\langle v_1,v_2 \rangle-\max\{l_2/l_1-1,0\}=
l_1 l_2 \langle v_0^2 \rangle-\max\{l_2/l_1-1,0\}
\geq 2.
$$
Hence 
$\dim J(v_1,v_2) \leq \langle v^2 \rangle-2$, so the second claim of (1) holds if
$\langle v_0^2 \rangle>1$, and a fortiori the
first claim.
So we may assume that $\langle v_0^2 \rangle=1$. Then
the first claim of (1) clearly holds from the dimension estimate on
$J(v_1,v_2)$, so let us prove the second claim.
%We next assume that $v=lv_0$ with $\langle v_0^2 \rangle =1$.
%In this case, we still have
%$\dim J(v_1,v_2) \leq \langle v^2 \rangle-1$.
%Hence we have $\dim {\cal M}_H(v)^{pss} \leq \langle v^2 \rangle-1$.
%Thus the first claim of (1) holds.
%Moreover we see that the second claim of (1) also holds if
%$\langle v_0^2 \rangle >1$. 
%Hence 
%we assume that $\langle v_0^2 \rangle =1$ in order to prove
%the second claim.
We set 
$$
H_k:=\{ (E_1,E_2) \mid E_1 \in {\cal M}_H(v_1)^s, 
E_2 \in {\cal M}_H(v_2)^{ss}, \dim \Hom(E_1,E_2 (K_X))=k \}.
$$
For $(E_1,E_2) \in H_k$, 
$E_2/(E_1(K_X)^{\oplus k})$
is a semi-stable sheaf
with the Mukai vector $v_2-k v_1$ by \cite[Lem. 3.1]{K-Y}.
Hence $E_1$ is determined by $E_2$ 
as a factor of a Jordan-H\"{o}lder filtration of $E_2$. 
Moreover if $k \geq 2$, then 
$E_2$ is properly semi-stable.
Therefore $\dim H_1 \leq \langle v_2^2 \rangle$
and $\dim H_k \leq \langle v_2^2 \rangle-1$ for $k>1$. 
If $k \geq 2$, then
\begin{equation}
\begin{split}
\langle v_1,v_2 \rangle+k+\dim H_k 
\leq & \langle v^2 \rangle-
(\langle v_1,v_2 \rangle+\langle v_1^2 \rangle+1-k)\\
\leq & \langle v^2 \rangle-2.
\end{split}
\end{equation}
If $k=1$ and $l_1 l_2 \geq 1$, then
\begin{equation}
\begin{split}
\langle v_1,v_2 \rangle+k+\dim H_k 
\leq & \langle v^2 \rangle-
(\langle v_1,v_2 \rangle+\langle v_1^2 \rangle-1)\\
\leq & \langle v^2 \rangle-2.
\end{split}
\end{equation}
Therefore 
$\dim J(v_1,v_2) \leq 
\max_k (\dim H_k+\langle v_1,v_2 \rangle+k)
\leq \langle v^2 \rangle-2$ if $l_1 l_2 \geq 2$.
The remaining case is $v=2v_0$, which is excluded.
Therefore (1) holds.
\end{proof}

\begin{cor}\label{cor:normal/reduced}
Let $v=(r,\xi,\frac{s}{2})$ 
be a Mukai vector with $\langle v^2 \rangle>0$. Then
for a general polarization $H$, we have the following.
\begin{enumerate}
\item[(1)]
${\cal M}_H(v,L)^{ss}$ is reduced and  
$\dim {\cal M}_H(v,L)^{ss}=\langle v^2 \rangle$.
\item[(2)]
${\cal M}_H(v,L)^{ss}$ is normal, unless
\begin{enumerate}
\item[(i)] $v=2v_0$ with $\langle v_0^2 \rangle=1$ and 
$L \equiv \frac{r}{2}K_X \mod 2$ or
\item[(ii)] $\langle v^2 \rangle=2$.
\end{enumerate}
\end{enumerate}
\end{cor}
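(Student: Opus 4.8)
The plan is to present ${\cal M}_H(v,L)^{ss}$ as the union of the open stable locus ${\cal M}_H(v,L)^{s}$ and the closed properly semistable locus ${\cal M}_H(v,L)^{pss}$, and to feed the structural information of Lemma \ref{lem:s} and Lemma \ref{lem:pss} into Serre's criteria through the quot-scheme presentation of the stack.

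For (1) I would first show that the whole stack is a local complete intersection of pure dimension $\langle v^2 \rangle$. At a quotient $\lambda\colon V_v \to E$ the scheme $Q(v)$ is cut out in a smooth scheme of dimension $\dim \Hom(\ker \lambda,E)$ by the $\dim \Ext^2(E,E)$ obstruction equations, and the computation in the proof of Lemma \ref{lem:stack-dim} shows that the local dimension at every point is at least the expected value $N^2+\langle v^2 \rangle$. Since Lemma \ref{lem:s}(1) gives $\dim {\cal M}_H(v,L)^{s}=\langle v^2 \rangle$ while Lemma \ref{lem:pss}(1) gives $\dim {\cal M}_H(v,L)^{pss}\le \langle v^2 \rangle-1$, the total dimension is $\langle v^2 \rangle$, so $Q(v,L)^{ss}$ is pure of the expected dimension and hence lci; therefore so is the stack. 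In particular it is Cohen--Macaulay, so it satisfies $(S_1)$ and no irreducible component can be contained in the lower-dimensional locus ${\cal M}_H(v,L)^{pss}$. Thus ${\cal M}_H(v,L)^{s}$ is dense, and being reduced by Lemma \ref{lem:s}(1) it makes the stack generically reduced; $(R_0)$ and $(S_1)$ then give reducedness.

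For (2) I would invoke Serre's criterion, by which normality is equivalent to $(R_1)$ together with $(S_2)$. Condition $(S_2)$ is automatic from the Cohen--Macaulay property, so everything reduces to showing that the singular locus has codimension at least $2$. Since $Q(v,L)^{ss}$ is smooth exactly where the obstruction $\Ext^2(E,E)\cong \Hom(E,E(K_X))^{\vee}$ vanishes, the singular locus of the stack is $\{[E] : \Ext^2(E,E) \ne 0\}$, which is contained in ${\cal M}_H(v,L)_{sing}^{s}\cup {\cal M}_H(v,L)^{pss}$. The dimension estimate in the proof of Lemma \ref{lem:s}(2) shows $\dim {\cal M}_H(v,L)_{sing}^{s}\le \langle v^2 \rangle-2$ outside cases (i) and (ii), and Lemma \ref{lem:pss}(1) shows $\dim {\cal M}_H(v,L)^{pss}\le \langle v^2 \rangle-2$ unless $v=2v_0$ with $\langle v_0^2 \rangle=1$; away from these exceptions the singular locus has codimension at least $2$ and normality follows.

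The one delicate point, which I expect to be the main obstacle, is the overlap case $v=2v_0$ with $\langle v_0^2 \rangle=1$ (so $\langle v^2 \rangle=4$), where ${\cal M}_H(v,L)^{pss}$ drops to codimension $1$ and a priori threatens $(R_1)$ independently of $L$. Here I would analyze the obstruction along the boundary directly. A boundary sheaf is, up to $S$-equivalence, $E=E_1\oplus E_2$ with $E_i\in {\cal M}_H(v_0)^{s}$, so $\Ext^2(E,E)\cong \bigoplus_{i,j}\Hom(E_j,E_i(K_X))^{\vee}$; since $E_i(K_X)$ is again stable with Mukai vector $v_0$, a nonzero summand forces an isomorphism $E_j\cong E_i(K_X)$, and a generic boundary point, for which no such isomorphism holds, is therefore a smooth point of the stack. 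The only codimension-$1$ singular stratum is the cross term $E_1\cong E_2(K_X)$ (a copy of ${\cal M}_H(v_0)^{s}$ inside the boundary, of dimension $\langle v^2 \rangle-1$ by Lemma \ref{lem:A-est1}); an elementary Chern class computation gives $c_1(E)=2c_1(E_2)+\tfrac{r}{2}K_X$, so this stratum meets ${\cal M}_H(v,L)^{ss}$ only when $L\equiv \tfrac{r}{2}K_X\bmod 2$, which is exactly the excluded case (i). When $L\not\equiv \tfrac{r}{2}K_X\bmod 2$ this stratum is absent, and the remaining singular loci, where some factor is self-dual, are governed by ${\cal M}_H(v_0)_{sing}^{s}$; verifying that these assemble into a stratum of codimension at least $2$ (so that $(R_1)$ survives) is where I expect the analysis to require genuine care, since it rests on a sharp bound for $\dim {\cal M}_H(v_0)_{sing}^{s}$ when $\langle v_0^2 \rangle=1$. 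Together with case (ii), $\langle v^2 \rangle=2$, where ${\cal M}_H(v,L)_{sing}^{s}$ itself can reach codimension $1$, this isolates precisely the two listed exceptions, and I anticipate the bookkeeping of $\Ext^2$-vanishing along the boundary, rather than any single dimension estimate, to be the crux of the argument.
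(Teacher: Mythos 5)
Your route coincides with the paper's in all essentials: part (1) via the quot-scheme lci/pure-dimension argument combined with Lemmas \ref{lem:s} and \ref{lem:pss}, and part (2) by trapping the singular locus in ${\cal M}_H(v,L)_{sing}^s \cup {\cal M}_H(v,L)^{pss}$ and then, in the one delicate case $v=2v_0$, $\langle v_0^2 \rangle=1$, $L \not\equiv \frac{r}{2}K_X \mod 2$, analyzing $\Ext^2(E,E)\cong \bigoplus_{i,j}\Hom(E_j,E_i(K_X))^{\vee}$ at the boundary; your determinant computation showing that the cross stratum $E_1 \cong E_2(K_X)$ forces $L \equiv \frac{r}{2}K_X \mod 2$ is exactly the paper's key step. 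However, your proof is not complete: you leave open the ``diagonal'' terms, i.e.\ boundary points where some stable factor satisfies $E_i \cong E_i(K_X)$, and you predict that closing this gap requires a sharp bound on $\dim {\cal M}_H(v_0)_{sing}^s$. That is a misdiagnosis: no bound is needed, because this locus is empty. The missing observation is a parity argument. Since $\NSf(X)$ is an even lattice, $\langle v_0^2 \rangle=(\xi_0^2)-2r_0a_0$ is even whenever $r_0=\rk v_0$ is even (as then $a_0 \in {\Bbb Z}$); hence $\langle v_0^2 \rangle=1$ forces $r_0$ odd. For a sheaf of odd rank, $\det(E_i(K_X))=\det(E_i)\otimes {\cal O}_X(r_0K_X)=\det(E_i)\otimes{\cal O}_X(K_X)\ne \det(E_i)$ because $K_X$ is nontrivial $2$-torsion, so $E_i \cong E_i(K_X)$ is impossible: every diagonal term $\Hom(E_i,E_i(K_X))$ vanishes identically, and ${\cal M}_H(v_0)_{sing}^s=\emptyset$.

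With that one line your argument closes, and in a stronger form than you anticipated: in the delicate case all terms $\Hom(E_i,E_j(K_X))$, $1 \le i,j \le 2$, vanish at \emph{every} boundary point, so $\Ext^2(E,E)=0$ and the entire boundary (not merely its generic point) consists of smooth points of the stack; combined with the codimension estimate on ${\cal M}_H(v,L)_{sing}^s$ from the proof of Lemma \ref{lem:s}(2) (which gives codimension at least $3$ here, since case (i) is excluded by the hypothesis on $L$), the singular locus has codimension $\ge 2$ and $(R_1)+(S_2)$ yields normality. This is also how the paper argues, with slightly different packaging: instead of applying Serre's criterion to the lci stack, it covers ${\cal M}_H(2v_0,L)^{ss}$ by the normal open substack ${\cal M}_H(2v_0,L)^{s}$ of Lemma \ref{lem:s}(2) and an open neighborhood of the boundary that is smooth by the vanishing above; the two packagings are equivalent once the diagonal terms are handled.
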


\begin{proof}
By Lemma \ref{lem:s} and Lemma \ref{lem:pss}, (1) holds.
Moreover (2) also holds unless $v=2v_0$ with $\langle v_0^2 \rangle=1$ 
or (ii) $\langle v^2 \rangle=2$.
Therefore we shall treat the moduli stack
${\cal M}_H(2v_0,L)^{ss}$ with 
$\langle v_0^2 \rangle=1$ and $L \not \equiv \frac{r}{2}K_X \mod 2$.
%Then $\dim {\cal M}_H(2v,2L)_{sing}^s \leq 1$,
By Lemma \ref{lem:s} (2),
 ${\cal M}_H(2v_0,L)^s$ is normal.

We shall prove that ${\cal M}_H(2v_0,L)^{ss}$ is smooth
in a neighborhood of the boundary.
Since $\langle v_0^2 \rangle =1$, $\rk v_0$ is odd, which implies that
$\frac{r}{2}K_X \equiv K_X \mod 2$.
Since $2 \mid \xi$ in $\NSf(X)$, we have
$L =2D, 2D+K_X$ $(D \in \NS(X))$.
Therefore $L=2D$ by
$L \not \equiv \frac{r}{2}K_X \equiv K_X \mod 2$.
Assume that $E \in {\cal M}_H(2v_0,L)^{ss}$ is $S$-equivalent to
$E_1 \oplus E_2$. By $\det E_1=(\det E_2^{\vee})(L)$ and $\rk v_0$ is odd,
$\Hom(E_i,E_j(K_X)) =0$ for all $1 \leq i,j \leq 2$.
Thus $\Ext^2(E,E)=\Hom(E,E(K_X))^{\vee}=0$, which implies that
${\cal M}_H(2v_0,L)^{ss}$ is smooth at $E$.
Therefore ${\cal M}_H(2v_0,L)^{ss}$ is a normal stack.
\begin{NB}
In particular, ${\cal M}_H(u,0)^{ss}$ is normal,
where $u=(2,0,-1)$.
\end{NB}
\end{proof}

\begin{NB}
We set 
${\cal M}_H(v)_{sing}^s:=\{E \in {\cal M}_H(v)^s \mid E \cong E(K_X) \}$.
\begin{lem}[{\cite[Lem. 2.13]{Yamada}}]\label{lem:yamada}
If $\langle v^2 \rangle \geq 4$, then
$\dim {\cal M}_H(v)_{sing}^s \leq \langle v^2 \rangle-1$.
Moreover if 
 $\langle v^2 \rangle \geq 6$, then
$\dim {\cal M}_H(v)_{sing}^s \leq \langle v^2 \rangle-2$.
\end{lem}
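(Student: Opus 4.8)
The plan is to reduce both inequalities to the single estimate
\[
\dim {\cal M}_H(v)_{sing}^s \leq \frac{\langle v^2 \rangle}{2}+1 ,
\]
since once this holds the two conclusions are pure arithmetic: $\langle v^2\rangle \geq 4$ gives $\tfrac{\langle v^2\rangle}{2}+1 \leq \langle v^2\rangle -1$, and $\langle v^2\rangle \geq 6$ gives $\tfrac{\langle v^2\rangle}{2}+1 \leq \langle v^2\rangle -2$. So the entire content is the displayed bound. To obtain it, I would first note that for a stable $E$ one has $\Ext^2(E,E)\cong \Hom(E,E(K_X))^\vee$, which is nonzero exactly when $E\cong E(K_X)$; thus ${\cal M}_H(v)_{sing}^s$ is the non-smooth locus and I want to describe it via the covering K3 surface $\varpi:\widetilde{X}\to X$. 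Because $\chr k\neq 2$, an isomorphism $E\xrightarrow{\sim} E(K_X)$ can be rescaled so that its square, being an endomorphism of the stable sheaf $E$, equals the identity; this endows $E$ with a $\varpi_*{\cal O}_{\widetilde X}={\cal O}_X\oplus K_X$-module structure, so $E\cong \varpi_*F$ for a sheaf $F$ on $\widetilde X$ with $\varpi^*E\cong F\oplus \iota^*F$ (as in the proof of Remark \ref{rem:rho=10}). Standard descent for the étale double cover then forces $F$ to be stable with $F\not\cong\iota^*F$, and conversely such $F$ give stable $\varpi_*F$. Since $\varpi^*\varpi_*F=F\oplus\iota^*F$ recovers the unordered pair $\{F,\iota^*F\}$, the assignment $F\mapsto \varpi_*F$ is finite onto ${\cal M}_H(v)_{sing}^s$, whence $\dim {\cal M}_H(v)_{sing}^s=\max_w(\langle w^2\rangle+1)$, the maximum taken over Mukai vectors $w=v(F)$ on $\widetilde X$ with $w+\iota^*w=\varpi^*v$, using that the moduli stack of stable sheaves on the K3 surface $\widetilde X$ has dimension $\langle w^2\rangle+1$.

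It then remains to bound $\langle w^2\rangle$. Writing $w=\tfrac12\varpi^*v+\delta$, the constraint $w+\iota^*w=\varpi^*v$ says precisely that $\delta$ is $\iota$-anti-invariant. Invariant and anti-invariant classes are orthogonal for the Mukai pairing, and $\langle \varpi^*v,\varpi^*v\rangle=2\langle v^2\rangle$ since $\varpi$ has degree $2$, so
\[
\langle w^2\rangle=\frac{\langle v^2\rangle}{2}+\langle \delta^2\rangle .
\]
As $F$ and $\iota^*F$ are both summands of $\varpi^*E$ they share the rank $r/2$, so $\delta$ has vanishing rank and degree-four components; hence $\delta=(0,\eta,0)$ with $\eta$ an anti-invariant class in $\NS(\widetilde X)\otimes{\Bbb Q}$ and $\langle \delta^2\rangle=(\eta,\eta)$.

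The crux is that the anti-invariant part of $\NS(\widetilde X)$ is negative definite: $\NS(\widetilde X)$ has signature $(1,\ast)$, its one positive direction may be taken to be $\varpi^*H$, which is $\iota$-invariant, so by the Hodge index theorem the orthogonal anti-invariant part is negative definite. Therefore $(\eta,\eta)\leq 0$, giving $\langle w^2\rangle\leq \tfrac{\langle v^2\rangle}{2}$ and hence the displayed estimate; note as a consistency check that $\langle w^2\rangle$ is even on the K3 surface, so the bound $\langle w^2\rangle+1$ is odd, matching the parity asserted in the proof of Lemma \ref{lem:s}. I expect the technical heart of the argument to be the descent/structure step together with the finiteness of $F\mapsto\varpi_*F$, which must be carried out carefully for merely Gieseker-stable sheaves, while the negative-definiteness via Hodge index is the conceptual point that pins down the decisive factor $\tfrac12$.
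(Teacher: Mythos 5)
Your argument is correct, but note that the paper itself contains no proof of this lemma: it is stated with a citation to Yamada [Lem.~2.13], and the paper only invokes the resulting bound $\dim {\cal M}_H(v,L)_{sing}^s \le \frac{\langle v^2 \rangle}{2}+1$ (together with the oddness of this dimension) in the proof of Lemma \ref{lem:s}, again with a citation to Kim and Yamada. What you have written is essentially a reconstruction of that cited proof: rescaling an isomorphism $E \cong E(K_X)$ (using $\chr k \ne 2$) to obtain a $\varpi_*{\cal O}_{\widetilde{X}}$-module structure, writing $E=\varpi_*(F)$ with $F$ stable and $\varpi^*(E) \cong F \oplus \iota^*(F)$, bounding the relevant locus on the K3 side by $\langle w^2 \rangle+1$, and eliminating $\langle \delta^2 \rangle$ by negative definiteness of the anti-invariant part of $\NS(\widetilde{X})$ via the Hodge index theorem is exactly the mechanism behind the Kim--Yamada estimate, and it is the same push-pull picture the paper itself uses in the proof of Remark \ref{rem:rho=10}. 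Two cosmetic points. First, your displayed equality $\dim {\cal M}_H(v)_{sing}^s=\max_w(\langle w^2\rangle+1)$ should be an inequality $\le$: the images of the finitely many $w$ that actually occur (finitely many because the singular locus is of finite type) cover ${\cal M}_H(v)_{sing}^s$, and only the upper bound is needed. Second, the vanishing of the degree-four component of $\delta$ does not follow from $\rk F=\rk \iota^*(F)$ alone; it follows because $\iota^*$ acts trivially on $H^4(\widetilde{X},{\Bbb Q})$ (equivalently, $\chi(F)=\chi(\iota^*(F))$), so that this component is simultaneously invariant and anti-invariant, hence zero. With these adjustments your reduction to $\frac{\langle v^2 \rangle}{2}+1$ and the closing arithmetic for $\langle v^2 \rangle \ge 4$, resp.\ $\langle v^2 \rangle \ge 6$, are complete.
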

\end{NB}

\subsection{Isotropic case}
Let $v=(r,\xi,\frac{s}{2})$ be a primitive and isotropic Mukai vector
and take a general polarization $H$ with respect to $v$.

\begin{lem}\label{lem:isotropic}
%Assume that $v$ is a primitive and isotropic Mukai
%vector.
\begin{enumerate}
\item[(1)]
If $\ell(v)=1$, then 
${\cal M}_H(v)^s$ is a reduced stack of 
$\dim {\cal M}_H(v)^s=\langle v^2 \rangle=0$.
\item[(2)]
If $\ell(v)=2$ and ${\cal M}_H(v)^s \ne \emptyset$, then
${\cal M}_H(v)^s$ is smooth of $\dim {\cal M}_H(v)^s=1$.
\end{enumerate}
\end{lem}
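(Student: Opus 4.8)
The plan is to combine the deformation theory of stable sheaves with a reduction to the covering K3 surface $\widetilde{X}$, separating the two cases according to whether a stable $E$ with $v(E)=v$ is fixed by the twist $E\mapsto E(K_X)$. First I would record the numerics common to both parts. Since $v$ is primitive and $H$ is general with respect to $v$, there are no properly semi-stable sheaves, so $\mathcal{M}_H(v)^{ss}=\mathcal{M}_H(v)^s$ and every point is genuinely stable. For $E\in\mathcal{M}_H(v)^s$ the tangent space of the stack is $\Ext^1(E,E)$ and the obstruction space is $\Ext^2(E,E)\cong\Hom(E,E(K_X))^{\vee}$ by Serre duality, using that $K_X$ is numerically trivial. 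Stability gives $\Hom(E,E)=k$, and the computation in the proof of Lemma \ref{lem:stack-dim} yields $\chi(E,E)=-\langle v^2\rangle=0$; hence $\dim\Ext^1(E,E)-\dim\Ext^2(E,E)=1$. The decisive observation is that $E$ and $E(K_X)$ are both stable with the same reduced Hilbert polynomial, so any nonzero homomorphism between them is an isomorphism. Therefore $\dim\Ext^2(E,E)=\dim\Hom(E,E(K_X))$ equals $1$ when $E\cong E(K_X)$ and $0$ otherwise, and the whole problem reduces to deciding, in terms of $\ell(v)$, whether $E\cong E(K_X)$.

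For (1) I would show that $\ell(v)=1$ forces $E\not\cong E(K_X)$. If $E\cong E(K_X)$, then $E$ is a push-forward $\varpi_*F$ from $\widetilde{X}$, and by the structure of such $K_X$-invariant sheaves (the isotropic analogue of the computation in Remark \ref{rem:rho=10}, as in Kim and Yamada) the rank, first Chern class and $s$ are all even, so $\ell(v)=2$, a contradiction. Consequently $\Ext^2(E,E)=0$, the sheaf is unobstructed, $\dim\Ext^1(E,E)=1$, and the stack is smooth of dimension $\dim\Ext^1(E,E)-\dim\Hom(E,E)=0=\langle v^2\rangle$; in particular it is reduced.

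For (2), when $\ell(v)=2$ I would first show that \emph{every} stable $E$ satisfies $E\cong E(K_X)$. Pulling back to $\widetilde{X}$, the hypothesis $\ell(v)=2$ makes $\varpi^*v=2w'$ divisible by $2$, with $\langle (w')^2\rangle=\tfrac12\langle v^2\rangle=0$; if $E\not\cong E(K_X)$, then $\varpi^*E$ would be a stable sheaf on $\widetilde{X}$ with the non-primitive isotropic Mukai vector $2w'$, which is impossible, since there are no stable sheaves with non-primitive isotropic Mukai vector on a K3 surface (transform to $0$-dimensional sheaves by a Fourier--Mukai functor). Thus $\Ext^2(E,E)\cong k$ and $\dim\Ext^1(E,E)=2$, so the stack has dimension $1$ wherever it is smooth. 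Establishing this smoothness despite $\Ext^2(E,E)\ne 0$ is the main obstacle, and I would resolve it by the covering-space reduction: each such $E$ is $\varpi_*F$ for a stable $F$ on $\widetilde{X}$ with $\varpi^*E=F\oplus\iota^*F$ and $\iota^*F\not\cong F$, and $w:=v(F)$ is primitive and isotropic (again by the non-existence result). The assignment $F\mapsto\varpi_*F$ is then étale onto $\mathcal{M}_H(v)^s$ from the locus $\{\iota^*F\not\cong F\}\subseteq\mathcal{M}_{\widetilde{H}}(w)^s$, which by Mukai's theorem is a smooth surface; hence $\mathcal{M}_H(v)^s$ is smooth of stack-dimension $1$. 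The delicate points to verify carefully are the fixed-point analysis for the map $F\mapsto\varpi_*F$ (equivalently, that $\varpi_*F$ is stable exactly when $\iota^*F\not\cong F$) and the cited non-existence of stable sheaves with non-primitive isotropic Mukai vector on a K3 surface; granting these, the identification of $\mathcal{M}_H(v)^s$ with an open, free quotient of a smooth K3 moduli space gives both the dimension and the smoothness in (2).
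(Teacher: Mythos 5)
Your part (1) is sound and close in spirit to the paper's (both reduce to emptiness of the locus $E \cong E(K_X)$; the paper quotes the Kim--Yamada dimension bounds through Lemma \ref{lem:s}, while your descent argument can be made self-contained: $E \cong E(K_X)$ gives $E \cong \varpi_*F$, hence $\varpi^*v = v(F)+\iota^*v(F) \equiv 0 \bmod 2$, and $2$-divisibility of $\varpi^*v$ forces $\ell(v)=2$ --- note this uses only that the $\iota$-invariant part of $H^2(\widetilde{X},{\Bbb Z})$ is $\varpi^*(H^2(X,{\Bbb Z})_f)$, not the hypothesis \eqref{eq:rho=10} under which Remark \ref{rem:rho=10} is actually proved). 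Your part (2) is a genuinely different route: the paper produces a smooth $2$-dimensional component of $M_H(v)$ and then uses the Fourier--Mukai equivalence defined by its universal family (Bridgeland's criterion) to show every stable sheaf lies on that component, whereas you identify ${\cal M}_H(v)^s$ directly with the free $\iota$-quotient of $\{F \in {\cal M}^w_{\varpi^*(H)}(w)^s : \iota^*F \not\cong F\}$, which is the mechanism the paper itself uses in the neighboring Proposition \ref{prop:isotropic}(2) for the vector $2v$ with $\ell(v)=1$.

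There is, however, a genuine gap in part (2), at the step ``$w:=v(F)$ is primitive and isotropic (again by the non-existence result).'' The non-existence of stable sheaves with non-primitive isotropic Mukai vector rules out $v(F)=2u$ with $u$ isotropic, but what you actually need is that $v(F)$ is $\iota^*$-invariant, so that $v(F)=\tfrac{1}{2}\varpi^*v=w$ and all stable $E$ come from one and the same K3 moduli space. A priori $\delta:=v(F)-w$ is a nonzero $\iota^*$-anti-invariant class; since $H^0$ and $H^4$ are $\iota^*$-invariant, $\delta \in \NS(\widetilde{X})$, and the anti-invariant part of $\NS(\widetilde{X})$ is negative definite, so $\langle v(F)^2\rangle=\langle \delta^2\rangle \leq -2$, while stability of $F$ gives $\langle v(F)^2\rangle \geq -2$. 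The case $\langle v(F)^2\rangle=-2$ ($F$ spherical with non-invariant Mukai vector) is therefore untouched by the result you cite; if it occurred, $E=\varpi_*F$ would be a stable point of ${\cal M}_H(v)^s$ outside the image of ${\cal M}^w_{\varpi^*(H)}(w)^s$, and both your surjectivity and your smoothness/dimension conclusions would fail at it. It must be excluded explicitly, for example: $\delta^2=-2$ forces $\delta$ or $-\delta$ to be effective by Riemann--Roch on the K3, and $\iota^*\delta=-\delta$ with $\iota^*$ preserving effectivity makes both effective, a contradiction. (Equivalently, one can invoke the rank bound \cite[Lem. 2.3.6]{PerverseII}, which is exactly the input the paper uses at the corresponding point of Proposition \ref{prop:isotropic} to force $v(F)=w$.) With that one lattice argument supplied, your covering-space proof of (2) goes through.
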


\begin{proof}
Assume that $\ell(v)=1$.
Since $2 \nmid \xi$,
by the proof of Lemma \ref{lem:s}, 
we see that
$\dim {\cal M}_H(v)_{sing}^s=-1$.
Thus ${\cal M}_H(v)^s$ is reduced and
$\dim {\cal M}_H(v)^s=0$.

If $\ell(v)=2$ and ${\cal M}_H(v)^s \ne \emptyset$, then
there is a 2 dimensional component of the
moduli space $M_H(v)$, which is smooth.
Then by using the Fourier-Mukai transform,
we see that ${\cal M}_H(v)^s$ itself is smooth of $\dim {\cal M}_H(v)^s=1$.
\begin{NB}
For unnodal surface, 
there is a Enriques surface parametrizing
$E$ with $\det E \equiv (\rk E/2)K_X  \mod 2$.
By the semi-continuity, 
we have a 2-dimensional component even for a nodal surface.
Then by the theory of Fourier-Mukai transform,
it is smooth of dimension 2.
\begin{NB2}
Let $v$ be an isotropic Mukai vector.
Since the Mukai lattice is unimodular, $M_H(v)$ is a fine moduli space.
Assume that there is an irreducible component $M$
such that $\dim M \geq 2$.
Then $\Hom(E,E(K_X)) \ne 0$ for a general $E \in M$.
By the semi-continuity, $\Hom(E,E(K_X)) \ne 0$ for all $E \in M$.
Since the Zariski tangent space of $M_H(v)$ is
$\Ext^1(E,E) \cong {\Bbb C}^{\oplus 2}$ at $E \in M$,
$M_H(v)$ is smooth of dimension 2 on $M$.
Since $v(E)=v(E(K_X))$, we have $E \cong E(K_X)$ and
$\Ext^2(E,E)={\Bbb C}$.
Then the universal family ${\cal E}$ induces a Fourier-Mukai
trasnform ${\bf D}(X) \to {\bf D}(M)$ by the criterion of Bridgeland.
For $F \in M_H(v)$,
we see that $F \cong {\cal E}_x$, $x \in M$.
Hence if there is $E \in M_H(v)$ with
$\Hom(E,E(K_X))=0$, then
$\dim M_H(v)=1$. 
\end{NB2}
\end{NB}
\end{proof}

We next study the non-primitive case.
We assume that $H$ is a general polarization with respect to
$lv$.
Then $H$ is also a general polarization with
respect to $l' v$ for $1 \leq l' \leq l$.
For $E_0 \in {\cal M}_H(l_0 v)^s$,
we set
\begin{equation}
{\cal J}(l, E_0):=\{E \in {\cal M}_H(l v)^{ss} \mid 
\text{ $E$ is generated by $E_0(p K_X)$, $p \in {\Bbb Z}$ } \},
\end{equation}
where $l_0 \mid l$.
\begin{rem}
If $\ell(v)=2$, then
$E_0(K_X) \cong E_0$ for all $E_0 \in {\cal M}_H(v)^s$, and
if $\ell(v)=1$, then 
$E_0(K_X) \not \cong E_0$ for a general $E_0 \in {\cal M}_H(v)^s$.
\end{rem}

\begin{lem}\label{lem:fiber-dim}
$\dim {\cal J}(l,E_0) \leq -1$.
\end{lem}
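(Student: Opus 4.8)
The plan is to induct on the number $m:=l/l_0$ of Jordan--Hölder factors of a member of $\mathcal{J}(l,E_0)$. Since $2K_X\equiv 0$, the only twists that occur are $E_0$ and $E_0(K_X)$, and being generated by the sheaves $E_0(pK_X)$ forces $E$ to be a semistable sheaf all of whose Jordan--Hölder factors lie in the two-element set $\{E_0,E_0(K_X)\}$: a quotient of a direct sum of copies of $E_0,E_0(K_X)$ is semistable of the same reduced Hilbert polynomial, hence has its composition factors among $E_0,E_0(K_X)$ in the abelian category of semistable sheaves of that slope. It therefore suffices to bound $\dim\widetilde{\mathcal{J}}(l,E_0)$, where $\widetilde{\mathcal{J}}(l,E_0)\supseteq\mathcal{J}(l,E_0)$ denotes the stack of all such $E$. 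For $m=1$ this stack is $\{E_0,E_0(K_X)\}$ (or $\{E_0\}$ when $\ell(v)=2$), each point carrying the automorphisms $\mathbb{G}_m$, so its dimension is $-1$; this is the base case. A pleasant feature is that the argument is uniform in $\ell(v)$, since the estimate of Lemma \ref{lem:A-est1} will absorb the discrepancy in $\Ext^1$ between the two cases through its $+n$ term.

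For the inductive step, choose a stable subsheaf $E_1\in\{E_0,E_0(K_X)\}$ of $E$ (a factor of the socle) and set $E_2:=E/E_1\in\widetilde{\mathcal{J}}(l-l_0,E_0)$, giving $0\to E_1\to E\to E_2\to 0$. As in the proof of Lemma \ref{lem:pss}, the forgetful map from the filtration stack to the moduli stack does not increase dimension, so by Lemma \ref{lem:A-est1}
$$
\dim\widetilde{\mathcal{J}}(l,E_0)\le\max_n\big(\dim\mathcal{N}^n(l_0v,(l-l_0)v)+\langle l_0v,(l-l_0)v\rangle+n\big),
$$
where $\mathcal{N}^n$ is cut out by $\dim\Hom(E_1,E_2(K_X))=n$ and restricted to $E_1\in\{E_0,E_0(K_X)\}$, $E_2\in\widetilde{\mathcal{J}}(l-l_0,E_0)$. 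The decisive point is that $v$ is isotropic, whence $\langle l_0v,(l-l_0)v\rangle=l_0(l-l_0)\langle v^2\rangle=0$ and the estimate reduces to showing $\dim\mathcal{N}^n+n\le -1$ for every $n$.

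The building-block computation records, via $\langle v^2\rangle=0$, $\Hom(E_0,E_0(K_X))=0$ and the Serre duality $\Ext^2(E_2,E_1)\cong\Hom(E_1,E_2(K_X))^\vee$, that each of the two stable objects $E_0,E_0(K_X)$ contributes $-1$ to the stacky dimension. For the shallow strata this already suffices: the crude bound $\dim\mathcal{N}^n\le\dim\{E_0,E_0(K_X)\}+\dim\widetilde{\mathcal{J}}(l-l_0,E_0)\le -2$ gives $\dim\mathcal{N}^n+n\le -2+n\le -1$ whenever $n\le 1$.

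The hard part, and the main obstacle, is the deep strata where $n=\dim\Hom(E_1,E_2(K_X))$ is large. There I would establish the auxiliary estimate
$$
\dim\{E_2\in\widetilde{\mathcal{J}}(l-l_0,E_0)\mid \dim\Hom(E_1,E_2(K_X))\ge n\}\le -n ,
$$
whose mechanism is that the value $n$ forces one of $E_0,E_0(K_X)$ to occur as a subobject of $E_2$ with multiplicity $\ge n$, so that a copy of $GL_n$ sits inside the automorphism group and lowers the stacky dimension by exactly enough to defeat the $+n$ in the formula. Granting this, $\dim\mathcal{N}^n\le -1-n$ and hence $\dim\mathcal{N}^n+n\le -1$, closing the induction and yielding $\dim\mathcal{J}(l,E_0)\le\dim\widetilde{\mathcal{J}}(l,E_0)\le -1$. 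Proving this auxiliary estimate uniformly over the stratification — i.e. that each forced repetition of a fixed stable factor costs precisely one unit of dimension through its extra automorphisms — is where the real work lies; I expect it to follow from a parallel induction on $m$ with the same filtration-and-automorphism bookkeeping that produces the correction term $\max\{l_2/l_1-1,0\}$ in the proof of Lemma \ref{lem:pss}.
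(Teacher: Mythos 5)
Your overall framework (induct by peeling off a sub, use Lemma \ref{lem:A-est1}, and use isotropy of $v$ to kill the $\langle v_1,v_2\rangle$ term) is the right one, but the proof has a genuine gap exactly where you say the ``real work lies'': the auxiliary estimate
$\dim\{E_2 \mid \dim\Hom(E_1,E_2(K_X))\geq n\}\leq -n$
is never proved, and it is not a routine add-on --- it \emph{is} the lemma, in slightly disguised form. Worse, the mechanism you propose for it is incorrect: if $E_1(K_X)^{\oplus n}$ sits inside $E_2$ as the socle, there is in general no copy of $GL_n$ inside $\Aut(E_2)$. Automorphisms of $E_2$ only map \emph{to} $GL_n=\Aut(E_1(K_X)^{\oplus n})$ by restriction; they need not lift. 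For instance, a non-split extension $0\to F^{\oplus 2}\to E_2\to F(K_X)\to 0$ whose two extension classes in $\Ext^1(F(K_X),F)$ are independent has automorphism group a proper subgroup of $GL_2\times{\Bbb G}_m$ (the stabilizer of the class), so no $GL_2$ of automorphisms is available to lower the stacky dimension. The factor $-n^2$ that saves the day lives on the \emph{filtration stack} (as $-\dim\Aut$ of the subobject $F^{\oplus n}$), not on $\Aut(E_2)$, so ``each forced repetition costs one unit of dimension through its extra automorphisms'' cannot be justified the way you describe.

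The paper closes this loop differently, and the difference matters: instead of peeling off a single stable factor $E_1$, it peels off the \emph{entire} isotypic socle at once, i.e.\ the image of the evaluation map $\Hom(F,E)\otimes F\to E$ with $F\in\{E_0(pK_X)\}$ and $n=\dim\Hom(F,E)$. This makes the filtration canonical and, crucially, forces the quotient $E'$ to satisfy $\Hom(F,E')=0$, so that only $n':=\dim\Hom(F(K_X),E')$ can be nonzero; Lemma \ref{lem:A-est1} then gives
$\dim {\cal J}(l,E_0,F^{\oplus n})\leq \dim {\cal J}(l-nl_0,E_0,F(K_X)^{\oplus n'})+nn'-n^2$,
with the $-n^2$ coming from $\Aut(F^{\oplus n})=GL(n)$ on the subobject side. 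Iterating produces an alternating ($F$, $F(K_X)$, $F$, \dots) socle series with multiplicities $n_1,n_2,\dots,n_t$, and the total bound is the quadratic form $\sum_i(n_in_{i+1}-n_i^2)$, which is $\leq -\tfrac{1}{2}(n_1^2+n_t^2)\leq -1$; this is the computation quoted from \cite[(3.8)]{K-Y}. Your one-factor-at-a-time decomposition destroys this structure (the quotient can still have large $\Hom$ from the same twist you just removed), which is precisely why you are forced into the unproven auxiliary estimate. As it stands, the proposal is an outline whose critical step is both missing and supported by a false mechanism.
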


\begin{proof}
For $F \in \{E_0(p K_X) \mid p \in {\Bbb Z} \}$, we set
\begin{equation}
{\cal J}(l,E_0, F^{\oplus n}):=\{E \in {\cal J}(l,E_0) \mid
\dim \Hom(F,E)=n \}.
\end{equation}
For $E \in {\cal J}(l,E_0,F^{\oplus n})$,
we have an exact sequence
\begin{equation}\label{eq:ext1}
0 \to \Hom(F,E) \otimes F \to E \to E' \to 0
\end{equation}
and $E' \in {\cal J}(l-nl_0,E_0,F(K_X)^{\oplus n'})$ $(n' \geq 0)$.
Since
${\cal J}(l,E_0,F^{\oplus n})$ is an open substack of the stack of extensions
\eqref{eq:ext1}, %\cite[Lem. 5.2]{K-Y} 
Lemma \ref{lem:A-est1} implies 
\begin{equation}
\dim {\cal J}(l,E_0,F^{\oplus n}) \leq 
\dim {\cal J}(l-nl_0,E_0,F(K_X)^{\oplus n'})
+nn'-n^2.
\end{equation} 
\begin{NB}
$\dim \Ext^1(E',F)-\dim \Hom(E',F)=\dim \Hom(F(K_X),E')=n'$.
$\dim \Hom(F,E)=n$ is an open condition.
\end{NB}
Then the same proof of \cite[(3.8)]{K-Y} works. 
\end{proof}

\begin{prop}\label{prop:isotropic}
Assume that $X$ is an Enriques surface.
Let $v$ be an isotropic and primitive Mukai vector.
% \begin{enumerate}
%\item[(1)]
%Assume that $\dim {\cal M}_H(v)^{ss}=0$. Then
%$\dim {\cal M}_H(lv)^{ss}= [\frac{l}{2}]$.
%\item[(2)]
%Assue that $\dim {\cal M}_H(v)^{ss}=1$. Then
%$\dim {\cal M}_H(lv)^{ss}= l$.
%\end{enumerate}
\begin{enumerate}
\item[(1)]
Assume that ${\cal M}_H(l v)^s$ is non-empty. Then
$l=1,2$.
\item[(2)]
${\cal M}_H(2 v,L)^s \ne \emptyset $
if and only if $\ell(v)=1$ and $L \equiv 0 \mod 2$.
Moreover
$$
{\cal M}_H(2 v)^s=\{\varpi_*(F) \mid F \in 
{\cal M}_{\varpi^*(H)}^w(w)^s,\;
\iota^*(F) \not \cong F \},
$$ 
where 
$w=\varpi^*(v)$.
In particular, ${\cal M}_H(l v)^s$ is smooth of
dimension 1.
\item[(3)]
$\dim {\cal M}_H(lv)^{ss} \leq l$.
If $\ell(v)=1$, then
$\dim {\cal M}_H(lv)^{ss} \leq [\frac{l}{2}]$.
%In particular,
%$\dim {\cal M}(0,l f/2,ld)^{ss} \leq [\frac{l}{2}]$, where
%$(0,f/2,d)$ is primitive.
\end{enumerate} 
\end{prop}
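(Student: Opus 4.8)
The plan is to handle parts (1) and (2) through the covering K3 surface $\varpi:\widetilde X\to X$, and part (3) by an inductive dimension count resting on Lemma \ref{lem:A-est1} and Lemma \ref{lem:fiber-dim}. Throughout set $w:=\varpi^*(v)$, so that $\langle w^2\rangle=2\langle v^2\rangle=0$ and $\iota^*(w)=w$; here $w$ is primitive precisely when $\ell(v)=1$, whereas $w=2w_0$ with $w_0$ primitive when $\ell(v)=2$. For (1), let $E$ be stable with $v(E)=lv$. Then $\varpi^*(E)$ is $\iota$-invariant and semistable with respect to $\varpi^*(H)$, with Mukai vector $lw$, hence contains a stable subsheaf $F$ of the same reduced Hilbert polynomial; its Mukai vector is a multiple $mw_0$ of $w_0$, and since a K3 surface admits no stable sheaf with a non-primitive isotropic Mukai vector we get $m=1$. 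As $\varpi$ is \'etale, $\varpi_*$ is right adjoint to $\varpi^*$, so the inclusion $F\hookrightarrow\varpi^*(E)$ corresponds to a nonzero map $\varpi_*(F)\to E$. Since $\varpi_*(F)$ is semistable of the same reduced slope and $E$ is stable, this map is surjective, whence $\rk\varpi_*(F)\ge\rk E$, i.e. $2\rk F\ge l\rk v$. For $\ell(v)=1$ one has $\rk F=\rk v$, giving $l\le2$; for $\ell(v)=2$ one has $\rk F=(\rk v)/2$, giving $l\le1$. Hence $l\in\{1,2\}$, and $l=2$ only when $\ell(v)=1$.

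For (2), when $l=2$ the inequality above is an equality, so $\varpi_*(F)\to E$ is an isomorphism of torsion-free sheaves of equal rank; thus every $E\in{\cal M}_H(2v)^s$ has the form $\varpi_*(F)$ with $F\in{\cal M}_{\varpi^*(H)}^w(w)^s$, and necessarily $\iota^*(F)\not\cong F$, since otherwise $\varpi_*(F)$ is strictly semistable. Conversely, for such $F$ the sheaf $\varpi_*(F)$ is stable. Computing $c_1(\varpi_*(F))$ by the formula of \cite[Lem.\ 3.5]{Enriques} (as in Remark \ref{rem:rho=10}) and comparing its class modulo $2$ with $L$, together with the requirement that the primitive isotropic $w$ exist on $\widetilde X$, yields the two conditions $\ell(v)=1$ and $L\equiv0\bmod2$. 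Finally the assignment $F\mapsto\varpi_*(F)$ exhibits ${\cal M}_H(2v)^s$ as the quotient of the open locus $\{F\mid\iota^*(F)\not\cong F\}$ of the $2$-dimensional smooth moduli of stable sheaves on $\widetilde X$ with Mukai vector $w$ by the fixed-point-free involution $F\mapsto\iota^*(F)$; hence ${\cal M}_H(2v)^s$ is smooth of dimension $1$ as a stack.

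For (3) I induct on $l$, the cases $l\le2$ following from Lemma \ref{lem:isotropic} and part (2). Given $E\in{\cal M}_H(lv)^{ss}$, choose a stable subsheaf $E_0$ of the same reduced slope, with $v(E_0)=l_0v$ and $l_0\in\{1,2\}$ by (1), and set $E':=E/E_0\in{\cal M}_H((l-l_0)v)^{ss}$. Applying Lemma \ref{lem:A-est1} with $v_1=l_0v$, $v_2=(l-l_0)v$ and noting $\langle v_1,v_2\rangle=l_0(l-l_0)\langle v^2\rangle=0$, the locus of such extensions with $\dim\Hom(E_0,E'(K_X))=n$ has dimension $\dim{\cal N}^n(v_1,v_2)+n$, bounded by $\dim{\cal M}_H(l_0v)^s+\dim{\cal M}_H((l-l_0)v)^{ss}+n$ minus the codimension of the stratum where $\Hom$ jumps. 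Absorbing the $+n$ exactly as in the proof of Lemma \ref{lem:fiber-dim}, the estimate reduces to $d(l)\le\max\{\dim{\cal M}_H(v)^s+d(l-1),\,\dim{\cal M}_H(2v)^s+d(l-2)\}$ for $d(l):=\dim{\cal M}_H(lv)^{ss}$, the second term being present only when $\ell(v)=1$. Since $\dim{\cal M}_H(v)^s$ equals $1$ for $\ell(v)=2$ and $0$ for $\ell(v)=1$ by Lemma \ref{lem:isotropic}, and $\dim{\cal M}_H(2v)^s=1$ by part (2), this yields $d(l)\le l$ in general and $d(l)\le[l/2]$ when $\ell(v)=1$.

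The main obstacle is the bookkeeping in part (3): one must organize the stratification so that the $+n$ from each $\Hom$-jumping stratum is always compensated by its codimension, which is precisely the content transported from Lemma \ref{lem:fiber-dim}, and so that the sharp constant $[l/2]$ survives when $\ell(v)=1$, the halving reflecting that then $E_0\not\cong E_0(K_X)$ generically. Parts (1) and (2) are rigid once the adjunction $\Hom_{\widetilde X}(F,\varpi^*(E))\cong\Hom_X(\varpi_*(F),E)$ and the rank comparison are in hand; the delicate points there are verifying that the locus $\{\iota^*(F)\not\cong F\}$ is non-empty, so that stable sheaves $\varpi_*(F)$ exist, and that every prescribed $L$ with $L\equiv0\bmod2$ is realized in this way.
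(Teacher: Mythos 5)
Your overall skeleton for (1)--(2) (pull back to the covering K3, extract a stable subsheaf, push forward, compare ranks) is the same as the paper's, but the step that carries all the weight is unjustified and fails as stated: you claim that a stable subsheaf $F\subset\varpi^*(E)$ with the same reduced Hilbert polynomial has Mukai vector a multiple of $w_0$. Since $\varpi^*(H)$ is \emph{not} a general polarization on $\widetilde{X}$, a Jordan--H\"{o}lder factor of $\varpi^*(E)$ is only constrained to have the same $\varpi^*(H)$-slope and the same ratio $\chi/\rk$; its first Chern class need not be proportional to $\varpi^*(\xi)$. Nothing in your argument excludes, say, a pair of $\iota$-conjugate rigid factors with $\langle v(F)^2\rangle=-2$ and $\langle v(F),\iota^*v(F)\rangle=2$, whose sum is isotropic. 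Without $v(F)\in{\Bbb Q}w_0$ you have no upper bound on $\rk F$ (only $\rk F\le l\rk v$), and the comparison $2\rk F\ge l\rk v$ gives nothing. Your auxiliary claim that a K3 surface carries no stable sheaf with non-primitive isotropic Mukai vector is likewise left unproved, and it would be needed precisely for the non-generic polarization $\varpi^*(H)$. The paper closes exactly this hole by working with $w$-twisted stability on $\widetilde{X}$ and invoking \cite[Lem.~2.3.6]{PerverseII}, which bounds $\rk F\le\rk w$ (with equality iff $v(F)=w$) for any $w$-twisted stable $F$ of the relevant twisted slope, with no proportionality assumption; combined with the dichotomy ``$\iota^*F\cong F$ contradicts stability of $E$'' versus ``$F\oplus\iota^*F\cong\varpi^*(E)$'', this yields $lm\le 2$. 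In (2) you also name but never close the existence question: the paper gets ${\cal M}_{\varpi^*(H)}^w(w)^s\ne\emptyset$ from \cite[Cor.~1.3.3]{PerverseII} and then uses Lemma \ref{lem:isotropic} to see that the $\iota^*$-fixed locus is only $1$-dimensional, so sheaves with $\iota^*(F)\not\cong F$ exist; the congruence $L\equiv 0\bmod 2$ then comes from the computation of Remark \ref{rem:rho=10} applied with $v(F)=w$.

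For (3) your route genuinely differs from the paper's, but it is incomplete at its central step. The paper maps ${\cal M}_H(lv)^{ss}$ to the coarse space $\overline{M}_H(lv)$, notes that distinct JH factors $E_i\not\cong E_j(pK_X)$ satisfy $\Hom(E_i,E_j)=\Ext^1(E_i,E_j)=\Ext^2(E_i,E_j)=0$, so each fiber splits as a product of stacks ${\cal J}(\cdot,E_i)$, bounds each factor by $-1$ via Lemma \ref{lem:fiber-dim}, and then counts base strata ($-t+t_1+2t_2=t_2\le l$, with $t_2\le l/2$ when $\ell(v)=1$ because $2$-dimensional components force $v(E_i)=2v$). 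You instead induct on $l$ through extension strata and Lemma \ref{lem:A-est1}, and you dispose of the term $+n$ by asserting that the jump locus ${\cal N}^n$ has codimension $\ge n$ ``exactly as in the proof of Lemma \ref{lem:fiber-dim}''. That lemma, however, concerns iterated extensions of the $K_X$-twists of a \emph{single fixed} stable sheaf; to transport it to your setting you must stratify ${\cal M}_H((l-l_0)v)^{ss}$ by the multiplicity with which $E_0(K_X)$ occurs in the JH filtration of $E'$ and bound those strata, which is exactly the S-equivalence bookkeeping the paper performs and which your induction was supposed to replace. As written, your recursion $d(l)\le\max\{\dim{\cal M}_H(v)^s+d(l-1),\,\dim{\cal M}_H(2v)^s+d(l-2)\}$ produces the correct numbers, but the estimate that justifies it is missing.
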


\begin{proof}
(1)
Since $H$ is general, ${\cal M}_H(lv)^s$ is the same as
the moduli stack of $v$-twisted stable sheaves.
Let $w$ be a primitive and isotropic Mukai vector
of $\widetilde{X}$ with 
$\varpi^*(v)=m w$ $(m \in {\Bbb Z}_{>0})$.
For $E \in {\cal M}_H(l v)^s$,
$\varpi^*(E)$ is $w$-twisted semi-stable with respect to $\varpi^*(H)$.
Indeed by the uniqueness of the Harder-Narasimhan filtration 
of $\varpi^*(E)$, it is $\iota$-invariant.
Since $\varpi$ is \'{e}tale,
it comes from a filtration on $X$.

(a) Assume that $\varpi^*(E)$ is not $w$-twisted stable, and
let $F$ be a $w$-twisted stable proper subsheaf of
$\varpi^*(E)$ with 
\begin{equation}
\frac{\chi(\varpi^*(E),F(n\varpi^*(H)))}{\rk F}=
\frac{\chi(\varpi^*(E),\varpi^*(E)(n\varpi^*(H)))}{\rk \varpi^*(E)},
\;(n \in {\Bbb Z}) .
\end{equation}
Then $\iota^*(F)$ is also a $w$-twisted stable subsheaf of $\varpi^*(E)$ 
with 
\begin{equation}
\frac{\chi(\varpi^*(E),\iota^*(F)(n\varpi^*(H)))}{\rk \iota^*(F)}=
\frac{\chi(\varpi^*(E),\varpi^*(E)(n\varpi^*(H)))}{\rk \varpi^*(E)},
\;(n \in {\Bbb Z}).
\end{equation}
If $\iota^*(F) \cong F$, then
there is a subsheaf $E_1$ of $E$ such that
$F \cong \varpi^*(E_1)$,
which shows $E$ is properly
$v$-twisted semi-stable.
\begin{NB}
$\chi(\varpi^*(E),\varpi^*(*)(n\varpi^*(H)))=2\chi(E,(*)(nH))$.
\end{NB}
Hence $\iota^*(F) \not \cong F$.
We note that $\phi:F \oplus \iota^*(F) \to \varpi^*(E)$ is injective.
Indeed let $G$ be a $w$-twisted stable subsheaf of
$\ker \phi$ with 
\begin{equation}
\frac{\chi(\varpi^*(E),G(n\varpi^*(H)))}{\rk G}=
\frac{\chi(\varpi^*(E),\varpi^*(E)(n\varpi^*(H)))}{\rk \varpi^*(E)},
\;(n \in {\Bbb Z}).
\end{equation}
Then $G \to F$ and $G \to \iota^*(F)$
are isomorphic or zero.
Since $F$ and $\iota^*(F)$ are subsheaves of $\varpi^*(E)$,
we get $G \cong F$ and $G \cong \iota^*(F)$, which is a contradiction.
Therefore $\phi$ is injective.
By the $v$-twisted stability of $E$, $\phi$ is also surjective.
Thus $F \oplus \iota^*(F) \cong \varpi^*(E)$.
Then $\varpi_*(F)^{\oplus 2} \cong E \oplus E(K_X)$ implies
$\varpi_*(F) \cong E \cong E(K_X)$.
By \cite[Lem. 2.3.6]{PerverseII},
$\rk F \leq \rk w$ and the equality holds if and only if
$v(F)=w$.
\begin{NB} 
Since $\varpi^*(H)$ may be non-generic,
we consider $w$-twisted semi-stability.
By \cite[Lem. 2.3.6]{PerverseII}, we still have
$\rk F \leq \rk w$.  
\end{NB}
Hence $lm \rk w =\rk E \leq 2\rk w$, which shows $l m \leq 2$.
Moreover $lm=2$ implies $v(F)=w$.

(b) If $\varpi^*(E)$ is $w$-twisted stable, then
by \cite[Lem. 2.3.6]{PerverseII}, we have
$\rk E \leq \rk w$, which shows $l=m=1$.

(2)
In the proof of (1),
for $E \in {\cal M}_H(2v)^s$, we have 
$\ell(v)=1$ and 
$E=\varpi_*(F)$ with $\iota^*(v(F))=v(F)$.
In particular, $v(F)=w$ and
${\cal M}_H(2v)^s$ is smooth of dimension 1.
\begin{NB}
Indeed if ${\cal M}_H(2v)^s$ is singular at $E$, then
$E=\pi_*(F)$ with $\langle v(F)^2 \rangle=-2$
 and $E$ belongs to a 0-dimensional 
irreducible component ${\cal M}$ of ${\cal M}_H(2v)^s$.
Since $M_{\varpi^*(H)}^w(
If $\langle v(F)^2 \rangle=-2
\end{NB}
By Remark \ref{rem:rho=10} and $4 \mid \rk E$, we have 
$2 \mid c_1(E)$ in $\NS(X)$. 

We note that a primitive and isotropic Mukai vector $v$ 
with $\ell(v)$ corresponds to a primitive and isotropic
Mukai vector $w$ on $\widetilde{X}$ with $\iota^*(w)=w$
via $\varpi^*$.
For such a vector $w$, 
we have $M_{\varpi^*(H)}^w(w)^s \ne \emptyset$ 
(\cite[cf. Cor.1.3.3]{PerverseII}),
and the fixed point set of the $\iota^*$-action  
on $M_{\varpi^*(H)}^w(w)^s$
is 1-dimensional by Lemma \ref{lem:isotropic}.
For $F \in M_{\varpi^*(H)}^w(w)^s$ with 
$\iota^*(F) \not = F$,
$\varpi_*(F)$ is a stable sheaf with respect to $H$.
\begin{NB}
We note that $\varpi^*(\varpi_*(F))=F \oplus \iota^*(F)$ is $w$-twisted
semi-stable and if $E_1 \subset \varpi^* \varpi_*(F)$ is a subsheaf
with $\chi(w,E_1(n \varpi^*(H)))/\rk E_1 \geq 
\chi(w,F(n\varpi^*(H)))/\rk F$
implies $E_1=F,\iota^*(F)$.  
\end{NB}
Therefore (2) holds.

(3)
We have a decomposition
$$
\overline{M}_H(lv)=\bigcup_{(n_1 l_1,...,n_t l_t) \in S_l}
\prod_i S^{n_i} M_H(l_i v),
$$
where
$$
S_l:=\left\{(n_1 l_1,...,n_t l_t) \left| l_1<l_2<\cdots 
<l_t,\; n_1,...,n_t \in {\Bbb Z}_{>0},\;
\sum_i n_i l_i=l \right. \right\}. 
$$
We have a morphism
$\phi:{\cal M}_H(lv)^{ss} \to \overline{M}_H(lv)$.
\begin{NB}
Indeed let $[Q/GL(N)]$ be a description of 
${\cal M}_H(lv)^{ss}$ as a quotient stack. 
Then for a family of semi-stable sheaves ${\cal E}$
over $S$, we have a $GL(N)$-bundle ${\cal P}$
and an equivariant map ${\cal P} \to Q$.
Then ${\cal P} \to Q \to \overline{M}_H(lv)$
is $GL(N)$-invariant.
Hence we have a morphism
$S \to \overline{M}_H(lv)$. 
\end{NB} 
Let $x$ be a point of $\overline{M}_H(lv)$.
Then there are stable sheaves $E_i \in {\cal M}_H(k_i v)^s$
such that $E_i \not \cong E_j(pK_X)$ for $i \ne j$ and
 $x=\oplus_{i=1}^t \oplus_p E_i(pK_X)^{\oplus n_{ip}}$.
Since 
$$
\Hom(E_i,E_j)=\Ext^2(E_i,E_j)=0
$$
for $i \ne j$ and $\chi(E_i,E_j)=0$,
for $E \in \phi^{-1}(x)$,
there are $G_i \in {\cal J}(\sum_p n_{ip}k_i,E_i)$
and $E=\oplus_i G_i$.
By Lemma \ref{lem:fiber-dim} we see that
$\dim \phi^{-1}(x) \leq
-t$.
We note that $\dim M_H(v(E_i))^s=1,2$
by (1), (2) and Lemma \ref{lem:isotropic}.
We set
\begin{equation}
\begin{split}
t_1:= & \{i \mid \dim M_H(v(E_i))=1\},\\
t_2:= & \{i \mid \dim M_H(v(E_i))=2 \}.
\end{split}
\end{equation}
Then we have
$$
\dim {\cal M}_H(lv)^{ss} \leq \max_{x \in \overline{M}_H(lv)}
 \{ -t+t_1+2t_2\} 
=\max_{x \in \overline{M}_H(lv)} t_2 \leq l.
 $$
Moreover if $\ell(v)=1$, then
$\dim M_H(v(E_i))=2$ implies $v(E_i)=2v$,
which implies that 
$t_2 \leq l/2$. Hence the second claim also holds. 
\end{proof}

\begin{NB}
\begin{rem}\label{rem:iso2}
By the proof of Proposition \ref{prop:isotropic} (1),
for $E \in {\cal M}_H(2v)^s$, $\ell(v)=1$ and 
$E=\varpi_*(F)$ with $\iota^*(v(F))=v(F)$.
In particular ${\cal M}_H(2v)^s$ is smooth of dimension 1.
\begin{NB2}
Indeed if ${\cal M}_H(2v)^s$ is singular at $E$, then
$E=\pi_*(F)$ with $\langle v(F)^2 \rangle=-2$
 and $E$ belongs to a 0-dimensional 
irreducible component ${\cal M}$ of ${\cal M}_H(2v)^s$.
Since $M_{\varpi^*(H)}^w(
If $\langle v(F)^2 \rangle=-2
\end{NB2}
By Remark \ref{rem:rho=10}, 
$2 \mid c_1(E)$ in $\NS(X)$.

Conversely for a primitive and isotropic Mukai vector
$w$ on $\widetilde{X}$ with $\iota^*(w)=w$,
$M_{\varpi^*(H)}^w(w)^s \ne \emptyset $ 
by \cite[cf. Cor.1.3.3]{PerverseII},
and the fixed point set of the $\iota^*$-action  
on $M_{\varpi^*(H)}^w(w)^s$
is 1-dimensional by Lemma \ref{lem:isotropic}.
For $F \in M_{\varpi^*(H)}^w(w)^s$ with 
$\iota^*(F) \not = F$,
$\varpi_*(F)$ is a stable sheaf with respect to $H$.
\begin{NB2}
We note that $\varpi^*(\varpi_*(F))=F \oplus \iota^*(F)$ is $w$-twisted
semi-stable and if $E_1 \subset \varpi^* \varpi_*(F)$ is a subsheaf
with $\chi(w,E_1(n \varpi^*(H)))/\rk E_1 \geq 
\chi(w,F(n\varpi^*(H)))/\rk F$
implies $E_1=F,\iota^*(F)$.  
\end{NB2}
\end{rem}
\end{NB}

\begin{rem}\label{rem:isotropic}
Let $\pi:X \to C$ be an elliptic surface and
$mD$ be a tame multiple fiber.
Let $v:=(0,rD,d)$ be a primitive Mukai vector,i.e., $\gcd(r,d)=1$.
For a semi-stable sheaf $E$ with
$v(E)=lv$ and $\Div(E)=lrD$,
we shall show in Lemma \ref{lem:relative-moduli} that
$E$ is $S$-equivalent to $\oplus_i E_i$,
where $E_i \in {\cal M}_H(v)^s$.
Assume that $m \nmid r$. Then 
$E_i \otimes K_X \not \cong E_i$, which implies 
$\dim {\cal M}_H(v)^s=0$. 
Hence we see that
$\dim {\cal M}_H(lv)^{ss} \leq [\frac{l m_0}{m}]$,
where $m_0=\gcd(r,m)$.
\end{rem}

\section{$\mu$-stability}
In this section, we continue to assume that $X$ is an Enriques surface, and
we shall study the existence condition of $\mu$-stable
locally free sheaves.
For a Mukai vector $v$ of $\rk v>0$, we have a decomposition
$v=(lr,l\xi,\frac{s}{2})$, where 
$\gcd(r,\xi)=1$, $l \in {\Bbb Z}_{>0}$,
$s \in {\Bbb Z}$, 
$lr-s \in 2{\Bbb Z}$.
We devide into three cases:  
\begin{enumerate}
\item[A.]
There is no stable sheaf $E$ such that 
$v(E)=(r,\xi,b)$ and 
$\langle v(E)^2 \rangle=-1,-2$.
\item[B.]
There is a stable sheaf $E$ such that 
$v(E)=(r,\xi,b)$ and  
$\langle v(E)^2 \rangle=-1$.
\item[C.]
There is a stable sheaf $E$ such that 
$v(E)=(r,\xi,b)$ and $\langle v(E)^2 \rangle=-2$.
\end{enumerate}
\begin{rem}
$r$ is odd for case B and
$r$ is even for case C. 
\end{rem}

By a case by case study,
we shall prove the following result. 
\begin{thm}\label{thm:mu-s}
Let $v=(lr,l\xi,\frac{s}{2})$ be a Mukai
vector such that $\gcd(r,\xi)=1$ and $\langle v^2 \rangle \geq 0$.
%We assume that $v \ne (2,0,-1)e^\eta$ ($\eta \in \NSf(X)$).
Let $H$ be a general polarization with respect to $v$.
Then for $L \in \NS(X)$ with $[L \mod K_X] =l \xi$,
%${\cal M}_H(lr,L,\frac{s}{2})$ 
${\cal M}_H(v,L)^{ss}$ contains a $\mu$-stable
sheaf if and only if 
\begin{enumerate}
\item[A.] 
There is no stable sheaf $E$ such that 
$v(E)=(r,\xi,b)$ and 
$\langle v(E)^2 \rangle=-1,-2$
and,
$\langle v^2 \rangle \geq 0$ or
\item[B.] 
There is a stable sheaf $E$ such that 
$v(E)=(r,\xi,b)$ and  
$\langle v(E)^2 \rangle=-1$, and
$\langle v^2 \rangle \geq l^2$ or
\item[C.] 
There is a stable sheaf $E$ such that 
$v(E)=(r,\xi,b)$ and $\langle v(E)^2 \rangle=-2$, and
$\langle v^2 \rangle \geq 2l^2$. 
\end{enumerate}
Moreover if $lr>1$, then
under the same condition,
${\cal M}_H(v,L)^{ss}$ contains a $\mu$-stable locally free
sheaf.
\end{thm}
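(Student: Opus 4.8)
The plan is to realize the $\mu$-stable locus as an open substack of ${\cal M}_H(v,L)^{ss}$ and to control the dimension of its complement, namely the locus $Z$ of $\mu$-semi-stable sheaves that are not $\mu$-stable. Since $H$ is general with respect to $v$ and $\gcd(r,\xi)=1$, a $\mu$-semi-stable $E$ with $v(E)=v$ fails to be $\mu$-stable exactly when it has a subsheaf $F$ with $c_1(F)/\rk F=\xi/r$; writing $k=\rk F/r$ and using $\gcd(r,\xi)=1$ forces $v(F)\in k v_0+{\Bbb Z}\varrho_X$ for the primitive vector $v_0:=(r,\xi,b)$ (the last coordinate being irrelevant to the slope). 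Hence every member of $Z$ carries a filtration $0\subset F\subset E$ with $F$ and $E/F$ of slope $\xi/r$, whose slope-stable factors all have Mukai vector in ${\Bbb Z}_{>0}v_0+{\Bbb Z}\varrho_X$. By Lemma \ref{lem:stack-dim} every irreducible component of ${\cal M}_H(v,L)^{\mu ss}$ has dimension at least $\langle v^2 \rangle$, while Corollary \ref{cor:normal/reduced} (and Proposition \ref{prop:isotropic} when $\langle v^2\rangle=0$) identifies $\dim{\cal M}_H(v,L)^{ss}$. It therefore suffices to show $\dim Z<\langle v^2\rangle$ under the stated numerical hypothesis, giving a $\mu$-stable sheaf, and conversely that $Z$ exhausts the stack when the hypothesis fails.

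The heart of the argument is the dimension estimate for $Z$, organized along the trichotomy A, B, C. Writing $v=l v_0+n\varrho_X$ and using $\langle v_0,\varrho_X\rangle=-r$ gives $\langle v^2\rangle=l^2\langle v_0^2\rangle-2lnr$, so the cases are governed by whether $\langle v_0^2\rangle\geq0$, $=-1$, or $=-2$. In case A the slope-$\xi/r$ stable factors have non-negative square, so Lemma \ref{lem:pss}, Lemma \ref{lem:isotropic} and Proposition \ref{prop:isotropic} bound the moduli of the factors; feeding these into the filtration formula of Lemma \ref{lem:A-est1} (applied to $v_1=v(F)$, $v_2=v(E/F)$ and then iterated) yields $\dim Z<\langle v^2\rangle$, so a $\mu$-stable sheaf exists whenever $\langle v^2\rangle\geq0$. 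In cases B and C the extremal stratum of $Z$ consists of iterated extensions built from the rigid object $E_0$ with $\langle v_0^2\rangle=-1$ (isolated, $r$ odd) respectively from the object $E_0\cong E_0(K_X)$ with $\langle v_0^2\rangle=-2$ coming, via $\varpi^*E_0\cong F\oplus\iota^*F$, from a spherical sheaf on the covering K3 surface $\widetilde X$ ($r$ even). Bounding the dimension of the stack of such extensions through Lemma \ref{lem:A-est1} and comparing with $\langle v^2\rangle=-l^2-2lnr$ (case B) respectively $\langle v^2\rangle=-2l^2-2lnr$ (case C) is what produces the sharp thresholds $\langle v^2\rangle\geq l^2$ and $\langle v^2\rangle\geq2l^2$.

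For the converse, when the threshold fails I would show that the dimension count forces every slope-stable factor to be the small object $E_0$, so every semi-stable sheaf already lies in $Z$ and no $\mu$-stable sheaf occurs; the factor $2$ in case C reflects the doubling on $\widetilde X$ seen in Proposition \ref{prop:isotropic}(2). For the local-freeness assertion when $lr>1$, I would argue that the non-locally-free $\mu$-stable sheaves, those $E$ with $E^{\vee\vee}/E\neq0$, form a substack of dimension $<\langle v^2\rangle$: passing to the reflexive hull lowers the self-intersection by $2lr$ per unit length of $E^{\vee\vee}/E$, while the choice of the length-$t$ quotient contributes only $lrt$, a net decrease whenever $lr>1$. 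Hence a general $\mu$-stable sheaf is locally free.

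The main obstacle I anticipate is establishing the sharp thresholds $l^2$ and $2l^2$ in cases B and C. One must estimate the dimension of the stratum of iterated extensions of $E_0$ precisely, keeping careful track of the twists by $K_X$ and, in case C, of the descent along $\varpi$ from the spherical sheaf on $\widetilde X$, and then verify that the bound is attained exactly at the stated value of $\langle v^2\rangle$. Controlling the $\Hom$- and $\Ext$-dimensions along these filtrations via Lemma \ref{lem:A-est1}, together with the mutation by $E_0$ that lowers $\langle v^2\rangle$, is the delicate part of the argument.
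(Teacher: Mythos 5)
Your existence (``if'') direction follows the paper's route: stratify the non-$\mu$-stable locus by filtrations, bound it via Lemma \ref{lem:A-est1}, Lemma \ref{lem:pss} and Proposition \ref{prop:isotropic}, and compare with the lower bound $\langle v^2\rangle$ from Lemma \ref{lem:stack-dim}. The genuine gap is in your ``only if'' direction. You propose to prove non-existence when the threshold fails by a dimension count (``the dimension count forces every slope-stable factor to be the small object $E_0$, so every semi-stable sheaf already lies in $Z$''). This cannot work: dimension estimates bound loci from above, but they can neither show that the open locus of $\mu$-stable sheaves is empty nor force every slope-stable factor of every individual sheaf to be $E_0$. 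What is needed is a pointwise obstruction, and this is exactly how the paper argues (Lemma \ref{lem:l^2} and Lemma \ref{lem:l^2-2}): if $E$ is any $\mu$-stable sheaf with $v(E)=lv_0+b\varrho_X$, $E\not\cong F,F(K_X)$, then $\mu$-stability and equality of slopes give $\Hom(F,E)=\Hom(E,F(K_X))=0$, hence $\chi(F,E)\leq 0$; Riemann--Roch gives $\chi(F,E)=-\langle v_0,v(E)\rangle=l+br$ in case B, so $br\leq -l$ and therefore $\langle v(E)^2\rangle=l(-l-2br)\geq l^2$ (the same computation with $\langle v_0^2\rangle=-2$ gives $\geq 2l^2$ in case C). This rigidity argument against the exceptional sheaf is what produces the sharp thresholds; in the paper the inequalities $\langle v^2\rangle\geq l^2$, $\geq 2l^2$ enter the dimension estimates of Lemma \ref{lem:quot} only as \emph{hypotheses} for the existence direction --- they are not outputs of those estimates, contrary to what your second paragraph asserts. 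Without the Hom-vanishing argument your equivalence is only proved in one direction.

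Two smaller points. First, in your local-freeness count the quot-scheme of length-$t$ quotients of $E^{\vee\vee}$ has dimension $(lr+1)t$ by \eqref{eq:quot}, not $lrt$; the correct net change is $-2lrt+(lr+1)t=t(1-lr)$, which is negative precisely when $lr>1$ --- with your figure $lrt$ the count would (wrongly) go through even for $lr=1$. Second, this estimate must be corrected by the term $\delta$ of \eqref{eq:nlf} when $v+b\varrho_X$ is a multiple of the isotropic vector $w$, and the borderline case $rl_0=2$, $l=l_0$ has to be excluded using the presence of a vector of square $-1$, as in the last paragraph of the proof of Proposition \ref{prop:stable}; your sketch does not address this.
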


Although the arguments in this section are similar to \cite{Y:irred},
we repeat the arguments since several estimates are slightly different.
Throughout this section, $H$ is a general polarization
with respect to $v$. 

\subsection{Case A}
In this subsection, we shall treat case A.
Let
$v:=l(r+\xi)+a\varrho_X \in H^*(X,{\Bbb Q})$
be a Mukai vector.
We shall first estimate the dimension of various 
locally closed substacks of
${\cal M}(v)$.

\begin{lem}\label{lem:non-empty}
If ${\cal M}_H(v)^{\mu ss} \ne \emptyset$, then 
$\langle v^2 \rangle \geq 0$.
If the equality holds, then ${\cal M}_H(v)^{\mu ss}={\cal M}_H(v)^{ss}$
and $E \in {\cal M}_H(v)^{ss}$ is $S$-equivalent to
$\oplus_i E_i$, where $E_i$ are $\mu$-stable locally free sheaves wth 
$v(E_i) \in {\Bbb Q}v$.
\end{lem}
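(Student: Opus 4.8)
The plan is to analyze a $\mu$-semistable sheaf $E$ with $v(E)=v$ through the Jordan--H\"older factors of a $\mu$-stable filtration. Since $H$ is general with respect to $v$ and $\gcd(r,\xi)=1$, every $\mu$-stable factor $F_i$ satisfies $c_1(F_i)/\rk F_i=\xi/r$, so its Mukai vector has the shape $v(F_i)=r_{0,i}(r,\xi,0)+a_i\varrho_X$ with $r_{0,i}\in\mathbb{Z}_{>0}$ and $\sum_i r_{0,i}=l$. First I would record the two facts I need about each factor: (a) because $F_i$ is $\mu$-stable, hence simple, Serre duality gives $\dim\Ext^2(F_i,F_i)=\dim\Hom(F_i,F_i(K_X))\le 1$, so $\langle v(F_i)^2\rangle=-\chi(F_i,F_i)=\dim\Ext^1(F_i,F_i)-1-\dim\Ext^2(F_i,F_i)\ge -2$, with $-2$ forcing $F_i\cong F_i(K_X)$ and $-1$ forcing $F_i$ rigid; and (b) a direct computation gives $\langle v(F_i)^2\rangle=r_{0,i}\bigl(r_{0,i}(\xi^2)-2ra_i\bigr)$, so $r_{0,i}$ divides $\langle v(F_i)^2\rangle$.

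The crucial reduction then combines (a), (b) and the Case A hypothesis. Suppose some $\langle v(F_i)^2\rangle\in\{-1,-2\}$. By (b) we have $r_{0,i}\mid\langle v(F_i)^2\rangle$, and a parity check on $r_{0,i}(\xi^2)-2ra_i$ excludes $r_{0,i}=2$ for the value $-2$; hence $r_{0,i}=1$ in both cases. But then $F_i$ is a stable sheaf with $v(F_i)=(r,\xi,b)$ and $\langle v(F_i)^2\rangle\in\{-1,-2\}$, contradicting Case A. Therefore $\langle v(F_i)^2\rangle\ge 0$ for every $i$. Now I would invoke the pairing identity: writing $u=(r,\xi,0)$, all factors share $u$ up to scale, so $\sum_i v(F_i)=lu+A\varrho_X$ with $A=\sum_i a_i$, and expanding with $\langle u^2\rangle=(\xi^2)$, $\langle u,\varrho_X\rangle=-r$ gives $\langle v^2\rangle=l^2(\xi^2)-2lrA=l\sum_i\langle v(F_i)^2\rangle/r_{0,i}\ge 0$. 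This proves the first assertion.

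For the equality case, $\langle v^2\rangle=0$ forces $\langle v(F_i)^2\rangle=0$ for all $i$ (the coefficients $l/r_{0,i}$ are positive). Then $r_{0,i}(\xi^2)-2ra_i=0$ gives $a_i/r_{0,i}=(\xi^2)/(2r)$, the same value for every factor, so $v(F_i)=\tfrac{r_{0,i}}{l}v\in\mathbb{Q}v$; in particular each $F_i$ has reduced Hilbert polynomial $p(E)$, is Gieseker-stable, and the $\mu$-stable filtration is a Jordan--H\"older filtration in the Gieseker sense. Hence $E$ is Gieseker-semistable, which together with the standard inclusion $\mathcal{M}_H(v)^{ss}\subseteq\mathcal{M}_H(v)^{\mu ss}$ gives $\mathcal{M}_H(v)^{\mu ss}=\mathcal{M}_H(v)^{ss}$ and $S$-equivalence $E\sim\bigoplus_i F_i$. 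Finally I would check local freeness: if $F_i$ were not locally free, its reflexive hull fits in $0\to F_i\to F_i^{\vee\vee}\to T\to 0$ with $T$ of length $\ell>0$, whence $\langle v(F_i^{\vee\vee})^2\rangle=\langle v(F_i)^2\rangle+2\ell\langle v(F_i),\varrho_X\rangle=-2\ell r_{0,i}r$. This violates the bound in (a) unless $\ell=r_{0,i}=r=1$, and that exceptional case produces a $\mu$-stable (hence stable) locally free $F_i^{\vee\vee}$ with $v=(r,\xi,b)$ of square $-2$, again contradicting Case A. So each $F_i$ is locally free.

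I expect the main obstacle to be the reduction in the second paragraph: the content of the lemma is that the Case A hypothesis, imposed only on the reduced vector $(r,\xi,b)$, nevertheless controls all $\mu$-stable factors, whose a priori Mukai vectors $r_{0,i}(r,\xi,0)+a_i\varrho_X$ need not be reduced. The divisibility identity (b) with the parity observation is precisely what forces $r_{0,i}=1$ whenever the square is $-1$ or $-2$, and this is the step I would present most carefully; the pairing identity $\langle v^2\rangle=l\sum_i\langle v(F_i)^2\rangle/r_{0,i}$ and the equality-case bookkeeping (proportionality and local freeness) are then essentially routine.
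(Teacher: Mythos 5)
Your proof is correct and follows essentially the same route as the paper: a Jordan--H\"older filtration with respect to $\mu$-stability, proportionality of the factors' Mukai vectors (from generality of $H$ and $\gcd(r,\xi)=1$), exclusion of squares $-1,-2$ via Case A together with the divisibility/parity argument, the additivity identity $\langle v^2\rangle/l=\sum_i\langle v(E_i)^2\rangle/l_i$, and double duals for local freeness. The only cosmetic difference is in the last step, where the paper re-applies the bound $\langle v(E_i^{\vee\vee})^2\rangle\geq 0$ to the double dual and uses additivity, while you use the simplicity bound $\geq -2$ plus Case A for the edge case $\ell=r_{0,i}=r=1$; both are valid.
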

\begin{proof}
Let $E$ be a $\mu$-semi-stable sheaf of $v(E)=v$
and choose a Jordan-H\"{o}lder filtration of
$E$ with respect to $\mu$-stability
whose factors are $\mu$-stable sheaves $E_i$ $(1 \leq i \leq s)$.
%with respect to $\mu$-stability,
%where $E_i, 1 \leq i \leq s$ are $\mu$-stable sheaves.
We set
\begin{equation}
v(E_i):=l_i(r+\xi)+a_i\varrho_X, 1 \leq i \leq s.
\end{equation}
By our assumption, %\eqref{eq:A},
$\langle v(E_i)^2 \rangle=l_i(l_i(\xi^2)-2r a_i) \ne -1,-2$.
\begin{NB}
If $\langle v(E_i)^2 \rangle=-2$, then $a_i$ is even, which implies that
$l_i$ is 1.
\end{NB}
Thus $\langle v(E_i)^2 \rangle \geq 0$ for all $i$.
Since
\begin{equation}\label{eq:(2.2)}
\frac{\langle v^2 \rangle}{l}=
\sum_{i=1}^s\frac{\langle v(E_i)^2 \rangle}{l_i},
\end{equation}
we get $\langle v^2 \rangle \geq 0$.
Assume that $\langle v^2 \rangle=0$. 
Then 
$\langle v(E_i)^2 \rangle =0$ for all $i$.
Since 
\begin{equation}\label{eq:(2.3)}
\begin{split}
\frac{\langle v(E_i)^2 \rangle}{\rk(E_i)^2}=& (\xi^2)-2 \frac{a_i}{r l_i},\\
\frac{\chi(E_i)}{\rk(E_i)}=& \frac{1}{2}+\frac{a_i}{rl_i},
\end{split}
\end{equation}
we see that $\chi(E_i)/\rk(E_i)=\chi(E)/\rk(E)$ for all $i$.
Thus $E$ is semi-stable.
Since $E_i^{\vee \vee}$ are $\mu$-stable locally free sheaves
with $\langle v(E_i^{\vee \vee})^2 \rangle \geq 0$
and $0=\langle v(E_i)^2 \rangle =\langle v(E_i^{\vee \vee})^2 \rangle
+2\rk E_i \chi(E_i^{\vee \vee}/E_i)$, we see that
$\chi(E_i^{\vee \vee}/E_i)=0$. Thus 
all $E_i$ are locally free, which shows that $E$ is also locally free. 
\end{proof}
\begin{cor}\label{cor:isotropic}
If $\langle v^2 \rangle=0$, then ${\cal M}_H(v)^{\mu ss}$ 
consists of locally free sheaves.
\end{cor}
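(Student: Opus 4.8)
The plan is to read the statement off Lemma~\ref{lem:non-empty}, whose proof already contains the essential computation. Since we are in case~A and assume $\langle v^2 \rangle = 0$, Lemma~\ref{lem:non-empty} gives ${\cal M}_H(v)^{\mu ss} = {\cal M}_H(v)^{ss}$, so it suffices to show that every semi-stable sheaf $E$ with $v(E) = v$ is locally free. First I would fix a Jordan--H\"older filtration of $E$ with respect to $\mu$-stability, with $\mu$-stable factors $E_i$, and write $v(E_i) = l_i(r + \xi) + a_i \varrho_X$ as in the proof of the lemma.

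The key step is to observe that each factor $E_i$ is already locally free. Passing to the double dual, $E_i^{\vee\vee}$ is a $\mu$-stable locally free sheaf, so by case~A we have $\langle v(E_i^{\vee\vee})^2 \rangle \geq 0$. The quotient $E_i^{\vee\vee}/E_i$ has finite length, whence $\chi(E_i^{\vee\vee}/E_i) \geq 0$, with equality if and only if $E_i = E_i^{\vee\vee}$. Comparing Mukai vectors gives the identity
\[
\langle v(E_i)^2 \rangle = \langle v(E_i^{\vee\vee})^2 \rangle + 2 \rk E_i \cdot \chi(E_i^{\vee\vee}/E_i).
\]
Now $\langle v^2 \rangle = 0$ forces $\langle v(E_i)^2 \rangle = 0$ for every $i$ through \eqref{eq:(2.2)}, so the right-hand side forces $\chi(E_i^{\vee\vee}/E_i) = 0$, i.e. $E_i = E_i^{\vee\vee}$ is locally free.

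Finally I would rebuild $E$ from its filtration: $E$ is an iterated extension of the locally free sheaves $E_i$, and on the smooth surface $X$ an extension of locally free sheaves is again locally free, since the defining short exact sequence splits locally (the quotient term is locally projective). By induction on the length of the filtration, $E$ is locally free. The only mild subtlety is this last local-freeness-under-extension point; since everything of substance is already established in Lemma~\ref{lem:non-empty}, I do not expect any real obstacle, and the corollary is essentially a repackaging of that proof.
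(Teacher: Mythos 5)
Your proposal is correct and is essentially the paper's own argument: the corollary is stated without a separate proof precisely because it is contained in the final lines of the proof of Lemma~\ref{lem:non-empty}, where the identity $0=\langle v(E_i)^2 \rangle =\langle v(E_i^{\vee \vee})^2 \rangle+2\rk E_i\,\chi(E_i^{\vee \vee}/E_i)$ together with $\langle v(E_i^{\vee\vee})^2\rangle \geq 0$ forces each $\mu$-stable factor $E_i$ to be locally free, whence $E$ is locally free. Your only addition is to spell out the (standard, correct) fact that an iterated extension of locally free sheaves is locally free, which the paper leaves implicit.
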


\begin{defn}
Let $w=l_0(r+\xi)+a_0 \varrho_X$ $(l_0>0)$ be the primitive Mukai vector  
such that $\langle w^2 \rangle =0$.
Since $a_0/l_0=(\xi^2)/(2r)$,
$w$ is uniquely determined.
\end{defn}

By Lemma \ref{lem:non-empty} and Corollary \ref{cor:isotropic},
 ${\cal M}_H(w)^{\mu ss}$ consists of $\mu$-stable locally free sheaves.
Since $l_0 (\xi^2)-2a_0 r=0$,
$l_0 r$ is even. In particular, $a_0 \in {\Bbb Z}$.

\begin{lem}\label{lem:w-v}
For a Mukai vector $u=(l r,l \xi,a)$,
$r \mid \langle u,w \rangle$.
\end{lem}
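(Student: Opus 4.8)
The plan is to reduce everything to a direct computation with the explicit bilinear form. First I would record the coordinate expression of the Mukai pairing: unwinding the definition $\langle x,y \rangle=-\int_X x^{\vee} \wedge y$ for $x=(r_1,\xi_1,a_1)$ and $y=(r_2,\xi_2,a_2)$ gives
\begin{equation*}
\langle x,y \rangle=(\xi_1,\xi_2)-r_1 a_2-r_2 a_1.
\end{equation*}
This is the only structural input needed; the rest is algebra.

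Applying this to $u=(lr,l\xi,a)$ and $w=(l_0 r,l_0 \xi,a_0)$ yields
\begin{equation*}
\langle u,w \rangle=l l_0 (\xi^2)-lr a_0-l_0 r a.
\end{equation*}
Next I would feed in the defining property of $w$, namely $\langle w^2 \rangle=0$, which reads $l_0^2(\xi^2)-2 l_0 r a_0=0$, hence $l_0(\xi^2)=2r a_0$ since $l_0>0$. Substituting $l l_0(\xi^2)=2 l r a_0$ into the displayed identity produces the key cancellation
\begin{equation*}
\langle u,w \rangle=2 l r a_0 -lr a_0-l_0 r a=r(l a_0-l_0 a).
\end{equation*}
Thus $r$ divides $\langle u,w \rangle$ as soon as the factor $l a_0-l_0 a$ is an integer.

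The only point requiring a moment of care --- and the closest thing to an obstacle, though a mild one --- is this integrality of $l a_0-l_0 a$. Here $a_0 \in {\Bbb Z}$ has already been observed, so $l a_0 \in {\Bbb Z}$. For $l_0 a$ I would use that $l_0 r$ is even together with the integrality constraint $\frac{lr}{2}+a \in {\Bbb Z}$: the latter forces $2a \in {\Bbb Z}$, and a short parity case-check (according to whether $l_0$ is even, or $l_0$ is odd and hence $r$ even, forcing $a \in {\Bbb Z}$) shows $l_0 a \in {\Bbb Z}$ in every case. With $l a_0-l_0 a \in {\Bbb Z}$ established, the identity $\langle u,w \rangle=r(l a_0-l_0 a)$ gives $r \mid \langle u,w \rangle$, completing the argument.
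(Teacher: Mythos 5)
Your proof is correct and follows essentially the same route as the paper: both compute $\langle u,w\rangle=(la_0-l_0a)r$ using $\langle w^2\rangle=0$, and both verify $l_0a\in{\Bbb Z}$ by the same parity dichotomy (the paper cases on $r$ even versus $r$ odd forcing $l_0$ even, which is your case-check in mirror image), with $a_0\in{\Bbb Z}$ and $l_0r$ even as the standing inputs. The only difference is that you write out the algebra that the paper leaves implicit.
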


\begin{proof}
We note that $l_0 r$ is even and $a_0 \in {\Bbb Z}$.
If $r$ is even, then $a \in {\Bbb Z}$.
If $r$ is odd, then $l_0$ is even.
Hence $l_0 a \in {\Bbb Z}$.
Then $\langle u,w \rangle=
(la_0-l_0 a)r$ is divisible by $r$. 
\end{proof}

\begin{lem}\label{lem:stable}
\begin{enumerate}
\item[(1)]
\begin{equation}
\dim ({\cal M}_H(v)^{\mu ss} \setminus {\cal M}_H(v)^{ss})
\leq \langle v^2 \rangle-1.
\end{equation}
\item [(2)]
Assume that $\langle v^2 \rangle>0$. Then
\begin{equation}
\dim ({\cal M}_H(v)^{\mu ss} \setminus {\cal M}_H(v)^{s})
\leq \langle v^2 \rangle-1.
\end{equation}
In particular,
if ${\cal M}_H(v)^{\mu ss} \ne \emptyset$, then
${\cal M}_H(v)^s \ne \emptyset$ and
$\dim {\cal M}_H(v)^{\mu ss}=\langle v^2 \rangle$.
\end{enumerate}
\end{lem}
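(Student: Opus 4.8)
The plan is to prove (1) by induction on $\rk v=lr$, stratifying ${\cal M}_H(v)^{\mu ss}\setminus{\cal M}_H(v)^{ss}$ by the Harder--Narasimhan type with respect to Gieseker stability, and then to deduce (2) and the nonemptiness of ${\cal M}_H(v)^s$ by combining (1) with Lemma \ref{lem:pss} and Lemma \ref{lem:stack-dim}.

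First I would record the structural input from case A and the genericity of $H$. If $E$ is $\mu$-semistable with $v(E)=v$ and $0=F_0\subset F_1\subset\cdots\subset F_m=E$ is its Gieseker Harder--Narasimhan filtration, then each graded piece $\overline{F}_i:=F_i/F_{i-1}$ is Gieseker-semistable of Mukai vector $w_i=l_i(r+\xi)+a_i\varrho_X$ proportional to $v$ in the $(r,\xi)$-part (by Definition \ref{defn:mu-stability}), and $\langle w_i^2\rangle\ge 0$ by the argument of Lemma \ref{lem:non-empty}. The decisive elementary fact is the pairing identity $\langle w_i,w_j\rangle=\tfrac12(\tfrac{l_j}{l_i}\langle w_i^2\rangle+\tfrac{l_i}{l_j}\langle w_j^2\rangle)$, which shows $\langle w_i,w_j\rangle\ge 0$ in case A, with equality only when both $w_i,w_j$ are isotropic. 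Since the reduced Hilbert polynomial of $\overline{F}_i$ depends only on $a_i/l_i$ (see \eqref{eq:(2.3)}) and is strictly decreasing in $i$, while every isotropic $w_i$ has the single value $a_i/l_i=(\xi^2)/2r$, at most one graded piece is isotropic; in particular $m\ge 2$ forces at least one non-isotropic piece, whence $\langle w_i,w_j\rangle\ge 1$ for at least one pair.

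Next I would estimate the locus of fixed HN type. Because $K_X$ is numerically trivial, $\overline{F}_j(K_X)$ has the same reduced Hilbert polynomial as $\overline{F}_j$, so for $i<j$ a nonzero map $\overline{F}_i\to\overline{F}_j(K_X)$ between Gieseker-semistable sheaves of strictly decreasing slope is impossible; thus $\dim\Hom(\overline{F}_i,\overline{F}_j(K_X))=0$. Iterating Lemma \ref{lem:A-est1} with $n=0$ at each step then gives
$$
\dim(\text{HN-type locus})\le\sum_i\dim{\cal M}_H(w_i)^{ss}+\sum_{i<j}\langle w_i,w_j\rangle.
$$
For a non-isotropic piece I would bound $\dim{\cal M}_H(w_i)^{ss}\le\langle w_i^2\rangle$ by Corollary \ref{cor:normal/reduced}, and for the (at most one) isotropic piece $w_{i_0}=k_{i_0}w$ by Proposition \ref{prop:isotropic}(3). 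Using $\langle v^2\rangle=\sum_i\langle w_i^2\rangle+2\sum_{i<j}\langle w_i,w_j\rangle$, the target bound $\le\langle v^2\rangle-1$ reduces to the inequality $\dim{\cal M}_H(w_{i_0})^{ss}+1\le\sum_{i<j}\langle w_i,w_j\rangle$ (with the isotropic term absent when no piece is isotropic), which I would verify from the pairing identity, the integrality of the Mukai pairing (each positive $\langle w_i,w_j\rangle$ is $\ge1$), and Proposition \ref{prop:isotropic}(3).

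Finally, for (2) I would write ${\cal M}_H(v)^{\mu ss}\setminus{\cal M}_H(v)^s=({\cal M}_H(v)^{\mu ss}\setminus{\cal M}_H(v)^{ss})\cup{\cal M}_H(v)^{pss}$; the first set has dimension $\le\langle v^2\rangle-1$ by (1) and the second by Lemma \ref{lem:pss}(1). Since $\langle v^2\rangle>0$, Lemma \ref{lem:stack-dim} forces every component of ${\cal M}_H(v)^{\mu ss}$ to have dimension $\ge\langle v^2\rangle$, so the stable locus is nonempty and $\dim{\cal M}_H(v)^{\mu ss}=\langle v^2\rangle$. The main obstacle I anticipate is the last inequality in the HN estimate: controlling the isotropic graded piece, whose moduli can be positive-dimensional even though $\langle w_{i_0}^2\rangle=0$, so that the naive bound $\le\langle v^2\rangle-1$ can fail on its face and must be recovered from the refined isotropic estimates of Proposition \ref{prop:isotropic} together with integrality of the Mukai pairing — this is exactly where the estimates differ slightly from \cite{Y:irred}.
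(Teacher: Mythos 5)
Your overall route is the paper's route: stratify ${\cal M}_H(v)^{\mu ss}\setminus{\cal M}_H(v)^{ss}$ by Gieseker Harder--Narasimhan type, note $\Hom(\overline{F}_i,\overline{F}_j(K_X))=0$ for $i<j$ because $K_X$ is numerically trivial, compute the stratum dimension by (iterating) Lemma \ref{lem:A-est1}, bound the factor moduli by Lemma \ref{lem:pss} and Proposition \ref{prop:isotropic}, observe that at most one factor (necessarily the first) is isotropic, and deduce (2) from (1) together with Lemma \ref{lem:pss} and Lemma \ref{lem:stack-dim} — all of this matches the paper, and your reduction of the target bound to $\sum_{i<j}\langle w_i,w_j\rangle\ge\dim{\cal M}_H(w_{i_0})^{ss}+1$ is correct. (The announced ``induction on $\rk v$'' is never actually used, but that is harmless.)

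The genuine gap is at exactly the point you flag as the anticipated obstacle: your claimed ingredients — positivity from the pairing identity, integrality of the Mukai pairing (so each positive $\langle w_i,w_j\rangle$ is $\ge 1$), and Proposition \ref{prop:isotropic}(3) — do not close the isotropic case. Take $s=2$ with $w_1=ku$ isotropic, $u$ primitive with $\ell(u)=2$, and $w_2$ non-isotropic. Then Proposition \ref{prop:isotropic}(3) only gives $\dim{\cal M}_H(w_1)^{ss}\le k$, and your reduction demands $\langle w_1,w_2\rangle=k\langle u,w_2\rangle\ge k+1$, i.e.\ $\langle u,w_2\rangle\ge 2$; integrality yields only $\ge 1$, and the estimate stalls at $\dim\le\langle v^2\rangle$, short by exactly $1$. (When $\ell(u)=1$ the sharper bound $[k/2]$ of Proposition \ref{prop:isotropic}(3) does rescue you, since $k-[k/2]\ge 1$; it is the $\ell(u)=2$ branch that fails.) The missing ingredient is Lemma \ref{lem:w-v}: for Mukai vectors of the relevant proportional shape, $\langle\,\cdot\,,w\rangle$ is divisible by $r$, where $w$ is the primitive isotropic vector; combined with the fact that $r\ge 2$ in case A (if $r=1$, then $u:=(1,\xi,\frac{(\xi^2)+1}{2})$ has $\langle u^2\rangle=-1$, contradicting the case-A hypothesis), this gives $\langle u,w_2\rangle\ge r\ge 2$, whence $\langle w_1,w_2\rangle\ge 2k\ge k+1$ and the bound $\le\langle v^2\rangle-1$. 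This divisibility (the paper's \eqref{eq:v_1,v_j}) is the decisive arithmetic input, not lattice integrality; incidentally, for the all-non-isotropic pairs the paper likewise gets $\langle v_i,v_j\rangle\ge r/2\ge 1$ from the half-integer arithmetic of \eqref{eq:v_i,v_j} rather than from integrality, though there your integrality argument is an acceptable substitute.
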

\begin{proof}
By Lemma \ref{lem:pss}, it is sufficient to prove (1).
Let $F$ be a $\mu$-semi-stable sheaf of $v(F)=v$.
We assume that $F$ is not semi-stable.
Let 
\begin{equation}
 0 \subset F_1 \subset F_2 \subset \dots \subset F_s=F
\end{equation}
be the Harder-Narasimhan filtration of $F$.
We set 
\begin{equation}
 v_i:=v(F_i/F_{i-1})=l_i(r+\xi)+a_i \varrho_X, 1 \leq i \leq s.
\end{equation}
Since $\chi(F_i/F_{i-1})/\rk(F_i/F_{i-1})>
\chi(F_{i+1}/F_{i})/\rk(F_{i+1}/F_{i})$,
we get that
\begin{equation}\label{eq:a*}
 \frac{a_0}{l_0} \geq \frac{a_1}{l_1}>\frac{a_2}{l_2}> \dots
 >\frac{a_s}{l_s},
\end{equation}
where the leftmost inequality is a consequence of
$\langle w^2 \rangle=0$, $\langle v_i^2 \rangle \geq 0$ and 
\eqref{eq:(2.3)}.
Let ${\cal F}^{HN}(v_1,v_2,\dots,v_s)$ be the substack of 
${\cal M}_H(v)^{\mu ss}$
whose element $E$ has the Harder-Narasimhan filtration
of the above type.
We shall prove that
$\dim {\cal F}^{HN}(v_1,v_2,\dots,v_s) \leq \langle v^2 \rangle-1$.
Since 
$$
\Hom(F_i/F_{i-1},F_j/F_{j-1}(K_X))=0
$$
 for $i<j$,
\cite[Lem. 5.3]{K-Y} implies that
\begin{equation}
 %\begin{split}
  \dim {\cal F}^{HN}(v_1,v_2,\dots,v_s)=
  \sum_{i=1}^s \dim {\cal M}_H(v_i)^{ss}+
  \sum_{i<j}\langle v_j,v_i \rangle.
 %\end{split}
\end{equation}
For $i<j$, by using Lemma \ref{lem:non-empty} and \eqref{eq:a*},
we see that
\begin{equation}\label{eq:v_i,v_j}
\begin{split}
\langle v_i,v_j \rangle &=
l_il_j (\xi^2)-(l_ia_j+l_j a_i)r\\
&=l_il_j (\xi^2)-2l_ja_i r+
(a_il_j-a_jl_i)r\\
&=l_j(l_i (\xi^2)-2a_ir)+(a_il_j-a_jl_i)r\\
& \geq (a_il_j-a_jl_i)r \geq r/2 \geq 1,
\end{split}
\end{equation}
where the inequality $r \geq 2$ comes from our assumption.
Hence if $\langle v_i^2 \rangle>0$ for all $i$,
then, by using Lemma \ref{lem:pss},
we see that
\begin{equation}
 \dim {\cal F}^{HN}(v_1,v_2,\dots,v_s) =
 \langle v^2 \rangle-
 \sum_{i<j}\langle v_i,v_j \rangle 
 \leq \langle v^2 \rangle-1.
\end{equation} 
Assume that $\langle v_i^2 \rangle =0$, i.e. $v_i=l_i'w$, $l_i \in {\Bbb Z}$.
Then $i=1$ and 
$r \mid \langle v_j,w \rangle$ by Lemma \ref{lem:w-v}.
%$(a_1l_j-a_jl_1)$ is divisible by $l_1'$.
%Indeed if $r$ is even, then
%$a_j \in {\Bbb Z}$, and if $r$ is odd, 
%then $l_1=l_1' l_0 \in 2l_1' {\Bbb Z}$.  
Hence 
\begin{equation}\label{eq:v_1,v_j}
 \begin{split}
  \langle v_1,v_j \rangle-l_1' &=l_1'(\langle w,v_j \rangle-1)\\
  & \geq l_1'(r-1)>0.
 \end{split}
\end{equation}
In this case, by using Proposition \ref{prop:isotropic}, we see that
\begin{equation}
 \dim {\cal F}^{HN}(v_1,v_2,\dots,v_s) \leq
 \langle v^2 \rangle-
 \left(\sum_{i<j}\langle v_i,v_j \rangle-l_1'\right)
 \leq \langle v^2 \rangle-1.
\end{equation} 
Hence we get our lemma.
\end{proof}

\begin{prop}\label{prop:stable}
Assume that $\langle v^2 \rangle >0$.
Then for a general $H$,
\begin{equation}
\dim( {\cal M}_H(v)^s \setminus {\cal M}_H(v)^{\mu s})
< \langle v^2 \rangle.
\end{equation}
In particular,
${\cal M}_H(v)^{\mu s} \ne \emptyset$. 
Moreover 
there is a $\mu$-stable locally free sheaf in
each connected component.
\end{prop}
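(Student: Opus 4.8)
The plan is to stratify the complement ${\cal M}_H(v)^s \setminus {\cal M}_H(v)^{\mu s}$ — whose points are the Gieseker-stable sheaves that are $\mu$-semi-stable but not $\mu$-stable — by the Mukai vectors of a two-step $\mu$-destabilizing filtration, and to show each stratum has dimension $\leq \langle v^2 \rangle-1$. First I would observe that if $E$ lies in this complement it has a proper subsheaf with $\mu$-slope equal to $\mu(E)$; choosing a $\mu$-stable Jordan--H\"older factor $E_1 \subset E$ and using that $H$ is general (Definition \ref{defn:mu-stability}(2)), one gets $c_1(E_1)/\rk E_1 = c_1(E)/\rk E = \xi/r$, so $v(E_1)=v_1=l_1(r+\xi)+a_1\varrho_X$ with $0<l_1<l$, while $E_2:=E/E_1$ is $\mu$-semi-stable with $v(E_2)=v_2=l_2(r+\xi)+a_2\varrho_X$, $l_2=l-l_1$. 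Gieseker stability together with \eqref{eq:(2.3)} then forces $a_1/l_1 < a/l < a_2/l_2$.

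For each admissible pair $(v_1,v_2)$ the stratum is dominated by the stack of extensions $0\to E_1\to E\to E_2\to 0$ with $E_1\in {\cal M}_H(v_1)^{\mu s}$ and $E_2\in {\cal M}_H(v_2)^{\mu ss}$. By Lemma \ref{lem:A-est1}, on the generic locus where $\Hom(E_1,E_2(K_X))=0$ its dimension is at most $\dim {\cal M}_H(v_1)^{\mu s}+\dim {\cal M}_H(v_2)^{\mu ss}+\langle v_1,v_2\rangle$, the loci with $\Hom(E_1,E_2(K_X))\neq 0$ being controlled by the same $+n$ bookkeeping. Since the Case~A hypothesis gives $\langle v_1^2\rangle\geq 0$ and $\langle v_2^2\rangle\geq 0$ by Lemma \ref{lem:non-empty}, I would use Lemma \ref{lem:stable}(2) to replace each $\dim {\cal M}_H(v_i)^{\mu ss}$ by $\langle v_i^2\rangle$ when $\langle v_i^2\rangle>0$ (and Proposition \ref{prop:isotropic}, with the $-l'$ correction exactly as in the proof of Lemma \ref{lem:stable}, when a factor is a multiple of $w$), obtaining the bound $\langle v_1^2\rangle+\langle v_2^2\rangle+\langle v_1,v_2\rangle=\langle v^2\rangle-\langle v_1,v_2\rangle$.

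It then remains to check $\langle v_1,v_2\rangle\geq 1$. Writing as in \eqref{eq:v_i,v_j} $\langle v_1,v_2 \rangle = l_1\bigl(l_2(\xi^2)-2a_2 r\bigr)+(a_2 l_1-a_1 l_2)r$, the first term equals $l_1\langle v_2^2\rangle/l_2\geq 0$ and the second is strictly positive since $a_1/l_1<a_2/l_2$; because $2a_i\equiv l_i r \pmod 2$ we have $a_2 l_1-a_1 l_2\in\tfrac12{\Bbb Z}$, and Case~A forces $r\geq 2$ (a line bundle always realises $\langle v^2\rangle=-1$, so $r=1$ falls under Case~B). Hence $\langle v_1,v_2\rangle\geq r/2\geq 1$, every stratum has dimension $\leq \langle v^2\rangle-1$, and the displayed inequality follows. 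For nonemptiness, Lemma \ref{lem:pss}(2) and Lemma \ref{lem:s} give that ${\cal M}_H(v)^s\neq\emptyset$ with every component of dimension $\langle v^2\rangle$; as the non-$\mu$-stable locus is of strictly smaller dimension, ${\cal M}_H(v)^{\mu s}$ is dense in each component, so in particular is nonempty. For the local-freeness claim I would bound the non-locally-free $\mu$-stable locus via the double dual $E\mapsto(E^{\vee\vee},E^{\vee\vee}/E)$: a length-$\ell$ cokernel drops $\langle v^2\rangle$ by $2lr\ell$ while the Quot scheme of length-$\ell$ quotients contributes at most $\ell(lr+1)$, so this locus has dimension $\leq \langle v^2\rangle-\ell(lr-1)<\langle v^2\rangle$ since $lr\geq r\geq 2$; thus each connected component contains a $\mu$-stable locally free sheaf.

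I expect the main obstacle to be the isotropic strata, where a factor $v_i$ is a multiple of the primitive isotropic $w$: there $\dim {\cal M}_H(v_i)^{\mu ss}$ exceeds $\langle v_i^2\rangle=0$ and must be reined in precisely through Proposition \ref{prop:isotropic} and the divisibility estimate $r\mid\langle w,v_j\rangle$ (Lemma \ref{lem:w-v}), mirroring the delicate part of the proof of Lemma \ref{lem:stable}; verifying that the higher-$\Hom$ strata never dominate is the accompanying technical point.
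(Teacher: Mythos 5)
Your proposal reproduces the paper's own proof in all its structural choices: the same stratification $J(v_1,v_2)$ of ${\cal M}_H(v)^s \setminus {\cal M}_H(v)^{\mu s}$ by the Mukai vectors of a $\mu$-stable subsheaf with torsion-free $\mu$-semi-stable quotient, the same inequality $a_1/l_1 < a_2/l_2$ from Gieseker stability, the same use of Lemma \ref{lem:A-est1}, the same treatment of isotropic factors through Proposition \ref{prop:isotropic} and the divisibility $r \mid \langle v_1,w \rangle$ of Lemma \ref{lem:w-v}, and the same Quot-scheme count \eqref{eq:quot} for the locally-free statement; your direct verification $\langle v_1,v_2\rangle = l_1\langle v_2^2\rangle/l_2 + (a_2l_1-a_1l_2)r \geq r/2 \geq 1$, including the observation that Case A forces $r \geq 2$, is just a repackaging of \eqref{eq:v_i,v_j} and is sound.

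There is, however, one place where your write-up would fail if taken literally, and it is exactly the step the paper spends most of its effort on. You dismiss the strata with $\Hom(E_1,E_2(K_X)) = n > 0$ as "controlled by the same $+n$ bookkeeping", but Lemma \ref{lem:A-est1} adds $+n$ to the stratum dimension with $n$ as large as $l_2/l_1$, and $\langle v_1,v_2\rangle \geq 1$ alone does not absorb this: for instance with $l_1=1$, $\langle v_1^2\rangle =1$ and $v_2=l_2'w$ isotropic one has $\langle v_1,v_2\rangle = l_2/2$ while $n$ can be of size $l_2$, so the naive bound is $\langle v^2\rangle + l_2/2$, far above $\langle v^2\rangle-1$. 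What closes this in the paper is the genuinely new estimate \eqref{eq:estimate-N}: the locus ${\cal N}_n(v_1,v_2)$ where $\Hom(E_1,E_2(K_X)) \neq 0$ has codimension at least $2$ in ${\cal M}_H(v_1)^{\mu s}\times {\cal M}_H(v_2)^{\mu ss}$, proved by noting that for fixed $E_2$ only finitely many double duals $E_1^{\vee\vee}$ can occur (they are stable factors attached to $E_2$), so by \eqref{eq:quot} the family of such $E_1$ with $\dim(E_1^{\vee\vee}/E_1)=n$ has dimension at most $(\rk v_1+1)n-1$, while $\langle v_1^2\rangle \geq 2l_1rn$; moreover in the isotropic case ${\cal N}_n(v_1,v_2)=\emptyset$ unless $l_1=l_0$, which is what makes the $l_1 \neq l_0$ subcase work. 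You flagged this as "the accompanying technical point" but supplied no mechanism, so it stands as the one real gap. A smaller omission of the same flavor: your non-locally-free bound $\langle v^2\rangle-\ell(lr-1)$ tacitly assumes $\dim {\cal M}_H(v+\ell\varrho_X)^{\mu s}=\langle v^2\rangle-2lr\ell$, which fails when $v+\ell\varrho_X$ is a multiple of $w$; the paper inserts the correction $\delta$ in \eqref{eq:nlf} and excludes the residual case $rl_0=2$, $l=l_0$ precisely by the Case A hypothesis, since it would produce $u=(1,\xi,\frac{(\xi^2)+1}{2})$ with $\langle u^2 \rangle=-1$.
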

\begin{proof}
Let $E$ be a stable sheaf with $v(E)=v$ and 
$E_1$ be a $\mu$-stable subsheaf of 
$E$ such that
$E/E_1$ is torsion free.
We set
\begin{equation}
 \begin{split}
  v_1:=v(E_1)=(l_1 r, l_1 \xi,a_1),\\
  v_2:=v(E/E_1)=(l_2 r, l_2 \xi,a_2).
 \end{split}
\end{equation}
Since $\chi(E_1)/\rk E_1<\chi(E)/\rk E$,
we get $\langle v(E_1)^2 \rangle>0$ and 
\begin{equation}\label{eq:stable}
 \frac{a_1}{l_1}<\frac{a_2}{l_2}.
\end{equation}
Let $J(v_1,v_2)$ be the substack  of
${\cal M}_H(v)^s$ consisting of $E$ which has a subsheaf
$E_1 \subset E$.
By using Lemma \ref{lem:A-est1},
%\cite[Lem. 5.2]{K-Y},
 we shall estimate
$\dim J(v_1,v_2)$.
By \cite[Lem. 3.1]{K-Y},
$ \dim \Hom(E_1,(E/E_1)(K_X)) \leq l_2/l_1$.
% and if $E_1$ is general, then $\Hom(E_1,(E/E_1)(K_X))=0$.
We shall bound the dimension of the substack
\begin{equation}
 {\cal N}_n(v_1,v_2):=\{(E_1, E_2) \in {\cal M}_H(v_1)^{\mu s} \times
 {\cal M}_H(v_2)^{\mu ss} \mid \dim (E_1^{\vee \vee}/E_1)=n,\;
\dim \Hom(E_1,E_2(K_X)) \ne 0 \}.
\end{equation}
For a fixed $E_2 \in {\cal M}_H(v_2)^{\mu ss}$,
\begin{equation}
 \#\{E_1^{\vee \vee}|
 E_1 \in {\cal M}_H(v_1)^{\mu s}, \Hom(E_1,E_2(K_X)) \ne 0 \} <\infty.
\end{equation}
Indeed the double dual of the graded object associated to the
Jordan H\"{o}lder filtration with respect to $\mu$-stability
is well defined and $E_1^{\vee \vee}$ must be one of these stable factors.
For a locally free sheaf,  
let 
$\Quot_{F/X}^n$
be the quot-scheme parametrizing all quotients $F \to A$
such that $A$ is a 0-dimensional sheaf of $\chi(A)=n$.
In \cite[Thm. 0.4 and sect. 5]{Y:1}, we computed the number of rational points
of $\Quot_{F/X}^n$ over finite fields, which implies that
\begin{equation}\label{eq:quot}
\dim \Quot_{F/X}^n=(\rk F+1)n.
\end{equation}
Since $E_1 \in {\cal M}_H(v_1)^{\mu s}$ is simple, 
we see that
\begin{equation}
 \dim \{E_1 \in {\cal M}_H(v_1)^{\mu s}|\dim (E_1^{\vee \vee}/E_1)=n,
\Hom(E_1,E_2(K_X)) \ne 0 \}
 \leq (\rk v_1+1)n-1.
\end{equation}
Since $\dim {\cal M}_H(v_1)^{ss}=
\langle v_1^2 \rangle \geq 2l_1 rn$ and $r \geq 2$,
we get 
\begin{equation}\label{eq:estimate-N}
\begin{split}
\dim {\cal N}_n(v_1,v_2) \leq & \dim{\cal M}_H(v_1)^{\mu s}+
\dim {\cal M}_H(v_2)^{\mu ss}-((l_1 r-1)n+1)\\
\leq & \dim{\cal M}_H(v_1)^{\mu s}+
\dim {\cal M}_H(v_2)^{\mu ss}-2
\end{split}
\end{equation}
if $n>0$.
If $n=0$, then
the same inequality also holds, since 
$\langle v_1^2 \rangle>0=2l_1 rn$.
Moreover, if $\langle v_2^2 \rangle=0$, then
Lemma \ref{lem:non-empty} implies 
$\Hom(E_1,E_2) \ne 0$ implies $v(E_1^{\vee \vee}) \in {\Bbb Q}v_2$, which
shows $l_1=l_0$.
Therefore 
${\cal N}_n(v_1,v_2)=\emptyset$ unless $l_1=l_0$.

If $\langle v_2^2 \rangle >0$, then
Lemma \ref{lem:stable} implies that
$\dim {\cal M}_H(v_2)^{\mu ss}=\langle v_2^2 \rangle$.
Hence %\cite[Lem. 5.2]{K-Y}  
Lemma \ref{lem:A-est1} implies that 
\begin{equation}
 \begin{split}
   \dim {\cal M}_H(v)^s-\dim J(v_1,v_2)&=
   \min \left(\langle v_1,v_2\rangle-\frac{l_2}{l_1}+2,
  \langle v_1,v_2\rangle \right)\\
   &=l_1 \frac{\langle v_2^2 \rangle}{2l_2}+
   l_2\frac{\langle v_1^2 \rangle}{2l_1}-
   \max\left(\frac{l_2}{l_1}-2,0 \right)>0.
 \end{split}
\end{equation}
We next treat the case where $\langle v_2^2 \rangle=0$.
Then $v_2=l_2' w, l_2' \in {\Bbb Z}$.
By Proposition \ref{prop:isotropic},
$\dim {\cal M}_H(v_2)^{\mu ss} \leq \langle v_2^2 \rangle+l_2'$.
If $l_1=l_0$, then $l_2/l_1=l_2'$. 
In this case, by using \eqref{eq:estimate-N} and Lemma \ref{lem:w-v},
we see that
\begin{equation}
\begin{split}
   \dim {\cal M}_H(v)^s-\dim J(v_1,v_2) \geq &
\langle v_1,v_2 \rangle-l_2/l_1-l_2'+2\\
\geq & l_2'(\langle v_1,w \rangle-2)+2 >0.
\end{split}
\end{equation}
If $l_1 \ne l_0$, then since ${\cal N}_n(v_1,v_2) =\emptyset$,
we see that
\begin{equation}
% \begin{split}
   \dim {\cal M}_H(v)^s-\dim J(v_1,v_2)=
l_2'(\langle v_1,w \rangle-1) >0
 %  \min \left(l_2'(\langle v_1,w\rangle-1-1)+3,
 % l_2'(\langle v_1,w\rangle-1) \right)\\
% &=\min \{l_2'((a_0l_1-a_1l_0)r-2)+3,l_2'((a_0l_1-a_1l_0)r-1)\} >0.
% \end{split}
\end{equation}
by Lemma \ref{lem:w-v}.

We shall prove that there is a $\mu$-stable locally free sheaf.
Let ${\cal M}_H(v)^{nlf}$ be the closed substack of 
${\cal M}_H(v)^{\mu s}$ consisting of non-locally free
sheaves.
By \eqref{eq:quot},
we have 
\begin{equation}\label{eq:nlf}
\dim {\cal M}_H(v)^{nlf} \leq \max_{b>0}
(\dim {\cal M}_H(v+b \varrho_X)^{\mu s}+(rl+1)b)
\leq \langle v^2 \rangle+\delta-(rl-1),
\end{equation}
where 
\begin{equation}
\delta=
\begin{cases}
\tfrac{l}{l_0},& v+b\varrho_X=\tfrac{l}{l_0} w, \ell(w)=2,\\
[\tfrac{l}{2l_0}],& v+b\varrho_X=\tfrac{l}{l_0} w, \ell(w)=1,\\
0, & \text{ otherwise.}
\end{cases}
\end{equation}
If $\ell(w)=1$, then
$$
(rl-1)-\delta \geq \tfrac{l}{l_0}(rl_0-1/2)-1>0,
$$
since $rl_0$ is even.
If $\ell(w)=2$, then
$$
(rl-1)-\delta \geq \tfrac{l}{l_0}(rl_0-1)-1>0,
$$
unless $rl_0=2$, $l=l_0$.
In this case, we see that $r=1$ and $w=2 e^{\xi}$. 
Since $u:=(1,\xi,\frac{(\xi^2)+1}{2})$ satisfies
$\langle u^2 \rangle=-1$, this case does not occur.
Therefore $\dim {\cal M}_H(v)^{nlf}<\langle v^2 \rangle$
and the last claim holds. 

\end{proof}

\subsection{Case B}

Assume that 
$r \mid (\xi^2)+1$.
Then 
$v_0:=(r,\xi,a_0)$ is a primitive Mukai vector with
$\langle v_0^2 \rangle=-1$,
where $a_0:=\frac{(\xi^2)+1}{2r} \in {\Bbb Z}+\frac{1}{2}$.
We take a general polarization $H$ with respect to $v_0$.
Let $F$ be the $\mu$-stable locally free sheaf with $v(F)=v_0$.

\begin{lem}\label{lem:l^2}
If there is a $\mu$-stable sheaf $E$ with 
$v(E)=l v_0+b \varrho_X$, then
$\langle v(E)^2 \rangle \geq l^2$
or $E \cong F, F(K_X)$.
\end{lem}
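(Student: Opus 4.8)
The plan is to analyze a $\mu$-stable sheaf $E$ with Mukai vector $v(E)=lv_0+b\varrho_X$ by comparing it against the rigid stable bundle $F$ with $v(F)=v_0$, $\langle v_0^2\rangle=-1$. The key computational input is that
\begin{equation}
\langle v(E)^2\rangle = \langle (lv_0+b\varrho_X)^2\rangle = l^2\langle v_0^2\rangle + 2lb\langle v_0,\varrho_X\rangle + b^2\langle \varrho_X^2\rangle.
\end{equation}
Since $\langle v_0^2\rangle=-1$, $\langle \varrho_X^2\rangle=0$, and $\langle v_0,\varrho_X\rangle=-r$ (the rank of $v_0$, up to sign from the Mukai pairing convention), this gives $\langle v(E)^2\rangle = -l^2 - 2lbr$. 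The point of the lemma is that this quantity is forced to be $\geq l^2$ unless $E$ is essentially $F$ itself (possibly twisted by $K_X$). So first I would pin down exactly which sign conventions make $\langle v_0,\varrho_X\rangle=-r$, and rewrite the target inequality $\langle v(E)^2\rangle\geq l^2$ as a statement about the integer $b$ relative to $l$ and $r$.

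The heart of the argument should be a Hom-vanishing / extremality estimate playing $F$ off against $E$. Because $\langle v_0^2\rangle=-1<0$ and $F$ is the unique $\mu$-stable bundle in its class, $F$ (and its twist $F(K_X)$) tends to destabilize or over-appear in $E$. Concretely I would examine $\Hom(F,E)$ and $\Hom(E,F(K_X))\cong \Ext^2(F,E)^\vee$, and use the Riemann–Roch pairing $\chi(F,E)=-\langle v_0, v(E)\rangle$ on the Mukai lattice. Computing $\langle v_0, v(E)\rangle = \langle v_0, lv_0+b\varrho_X\rangle = l\langle v_0^2\rangle + b\langle v_0,\varrho_X\rangle = -l - br$, one gets a nonzero Euler characteristic that forces either $\Hom(F,E)\neq 0$ or $\Hom(E,F(K_X))\neq 0$. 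Since both $F$ and $E$ are $\mu$-stable of the same slope (both $v_0$ and $v(E)$ have first Chern class proportional to $\xi$ with the same ratio to the rank), any nonzero homomorphism between them is, by $\mu$-stability, either injective with torsion-free cokernel or has a controlled image. I would then run the standard dichotomy: either such a map is an isomorphism (forcing $E\cong F$ or $E\cong F(K_X)$, the exceptional conclusion), or it is a strict sub/quotient relation that I can feed into Lemma~\ref{lem:A-est1} or the slope comparisons of the previous subsection to extract the inequality $\langle v(E)^2\rangle\geq l^2$.

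The main obstacle I anticipate is handling the $K_X$-twist bookkeeping cleanly: on an Enriques surface $K_X$ is $2$-torsion and numerically trivial, so $F$ and $F(K_X)$ have the same Mukai vector modulo the $\NSf$ reduction, which means the Hom and Ext computations cannot distinguish them numerically and I must argue sheaf-theoretically that the isomorphism case genuinely lands in $\{F,F(K_X)\}$ rather than some larger family. A secondary subtlety is ruling out low values of $l$ or $b$ where the estimate $\langle v(E)^2\rangle\geq l^2$ degenerates: for $l=1$ the inequality says $\langle v(E)^2\rangle\geq 1$, and the borderline $\langle v(E)^2\rangle=-1$ case is exactly where $E$ should coincide with $F$, so the base case of the induction on $l$ must be checked by hand. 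I would organize the proof as an induction on $l$ (or on $b$), peeling off a copy of $F$ via the nonzero homomorphism to reduce $v(E)$ to $(l-1)v_0+b'\varrho_X$, applying the inductive hypothesis, and recombining the dimension/pairing bounds; the delicate step is verifying that the peeling preserves $\mu$-stability of the quotient and that the numerical recombination yields precisely $l^2$ and not a weaker bound.
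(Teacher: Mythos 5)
Your key computations are exactly the paper's: $\langle v(E)^2\rangle=l(-l-2br)$ and $\chi(F,E)=-\langle v_0,v(E)\rangle=l+br$, so the target inequality $\langle v(E)^2\rangle\geq l^2$ is \emph{equivalent} to $\chi(F,E)\leq 0$; and, like the paper, you propose to play $\Hom(F,E)$ and $\Hom(E,F(K_X))\cong\Ext^2(F,E)^{\vee}$ against $\mu$-stability. But the paper's proof is two lines because of an observation you never quite reach: when $\rk E>r$ (i.e.\ $l\geq 2$), \emph{both} Hom spaces vanish outright. A nonzero map $F\to E$ would have image of positive rank $\leq r<\rk E$; as a quotient of the $\mu$-stable $F$ that image has slope $\geq\mu(F)=\mu(E)$, while as a subsheaf of the $\mu$-stable $E$ of strictly smaller rank it must have slope $<\mu(E)$ --- contradiction, and the same argument kills $E\to F(K_X)$. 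Hence $\chi(F,E)\leq 0$, i.e.\ the inequality, with no dichotomy, no induction, and no use of Lemma \ref{lem:A-est1} (which is a dimension estimate for stacks of filtrations and has no role in this purely numerical statement about a single sheaf). The case $\rk E=r$ is then handled separately: $l=1$, and since $r$ is odd in Case B we have $E\not\cong E(K_X)$, so simplicity of $E$ gives $\langle v(E)^2\rangle=-1-2br\geq -1$, i.e.\ $b\leq 0$; if $b<0$ then $\langle v(E)^2\rangle\geq 2r-1\geq 1=l^2$, and if $b=0$ then $E$ is a rigid stable sheaf with vector $v_0$, hence $E\cong F$ or $F(K_X)$ by the uniqueness implicit in the paper's definition of $F$.

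The genuine gap is your operative plan, the induction on $l$ with ``peeling off a copy of $F$'': it can never get started, precisely because for $l\geq 2$ there is no nonzero homomorphism to peel with, and recognizing this vacuity \emph{is} the proof. Moreover, even granting a subsheaf $F\subset E$, the quotient $E/F$ need not be $\mu$-stable (the worry you flag as ``delicate'' is in fact fatal), and the numbers do not recombine: from $v(E)=v_0+v(E/F)$ one gets $\langle v(E)^2\rangle=\langle v(E/F)^2\rangle+2\langle v_0,v(E/F)\rangle-1$, so the inductive bound $\langle v(E/F)^2\rangle\geq(l-1)^2$ still leaves you needing $\langle v_0,v(E/F)\rangle\geq l$, which nothing supplies. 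Two further imprecisions: a \emph{nonzero} Euler characteristic forces nothing (if $\chi(F,E)<0$ only $\Ext^1$ is forced to be nonzero); you need $\chi(F,E)>0$, which is exactly the negation of the desired inequality, so your Hom-producing step must be run as a contrapositive. And for equal-rank $\mu$-stable sheaves of equal slope, a nonzero map is injective with \emph{torsion} ($0$-dimensional) cokernel, not torsion-free cokernel; it is then the local freeness of $F$ (for $F\hookrightarrow E$, giving $E\subseteq E^{\vee\vee}=F^{\vee\vee}=F$) or the sign of $b$ (for $E\hookrightarrow F(K_X)$) that collapses the cokernel and lands you exactly on $F$ or $F(K_X)$ --- this also disposes of your worry about a ``larger family''.
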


\begin{proof}
If $\rk E=r$, then
$l=1$ and $b \leq 0$, which implies the claim.
Assume that $\rk E>r$.
By the $\mu$-stability of $E,F$,
$\Hom(F,E)=\Hom(E,F(K_X))=0$.  
Hence $0 \geq \chi(F,E)=-\langle v_0,v(E) \rangle=
l+br$.
Since $\langle v(E)^2 \rangle=l(-l-2br)$,
we get the claim.
\end{proof}

\begin{rem}
Since $l=\gcd(\rk E,c_1(E)) \in {\Bbb Z}$, 
we have $v(E), lv_0 \in v(K(X))$, which implies $b \in {\Bbb Z}$.
\end{rem}

\begin{NB}
Lt seems that we don't need the following 15/10/4:
If $l_i$ is odd, then 
$\Ext^2(E_i,E_i)=0$.
If $l_i$ is even, then
by Lemma \ref{lem:l^2} and Lemma
\ref{lem:yamada},
$\dim {\cal M}_H(v_i)_{sing}^s+\dim \Ext^2(E_i,E_i)
\leq \langle v_i^2 \rangle$.
\end{NB}

\begin{lem}[cf. {\cite[Lem. 4.4]{Y:K3}}]\label{lem:quot}
Let $v$ be an arbitrary Mukai vector of $\rk v>0$.
Let ${\cal M}_H(v)^{\mu ss}$ be the moduli stack of 
$\mu$-semi-stable sheaves $E$ of $v(E)=v$, and
${\cal M}_H(v)^{p \mu ss}$ the closed substack of 
${\cal M}_H(v)^{\mu ss}$ consisting
of properly $\mu$-semi-stable sheaves.
We assume that $\langle v^2 \rangle \geq l^2$.
Then
\begin{enumerate}
\item[(1)]
\begin{equation}
\langle v^2 \rangle-
\dim {\cal M}_H(v)^{p \mu ss} \geq \langle v^2 \rangle/2l-l/2+1
\end{equation}
unless $r=1$ and $l=2$.
\item[(2)]
If ${\cal M}_H(v)^{\mu ss}$ is not empty,
then there is a $\mu$-stable locally free sheaf $E$ of $v(E)=v$
in each connected component,
unless $v=(2,0,-1)e^\xi$.   
\item[(3)]
If $v=(2,0,-1)e^\xi$, then
there is an irreducible component of ${\cal M}_H(v,L)^{ss}$ containing 
$\mu$-stable locally free sheaves. 
\end{enumerate}
\end{lem}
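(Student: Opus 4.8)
The plan is to establish the three parts in order, deducing (2) from the dimension estimate (1) and disposing of the single degenerate Mukai vector by hand in (3).

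For (1), I would stratify the properly $\mu$-semistable locus ${\cal M}_H(v)^{p\mu ss}$ by the Jordan--H\"older type with respect to $\mu$-stability. Every $E\in{\cal M}_H(v)^{p\mu ss}$ carries a filtration whose successive quotients are $\mu$-stable of the common slope $\xi/r$, so their Mukai vectors have the form $v_i=l_iv_0+b_i\varrho_X$ with $\sum_i l_i=l$ and $\sum_i v_i=v$. By Lemma \ref{lem:l^2} each factor either satisfies $\langle v_i^2\rangle\geq l_i^2$ (a \emph{large} factor, for which $\dim{\cal M}_H(v_i)^{\mu s}=\langle v_i^2\rangle$ by Lemma \ref{lem:stable}) or is a copy of $F$ or $F(K_X)$ (a \emph{rigid} factor, with $\langle v_i^2\rangle=-1$ and $\dim{\cal M}_H(v_0)^s=-1$ as a stack); in both cases $\dim{\cal M}_H(v_i)^{\mu s}=\langle v_i^2\rangle$. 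Feeding this into the iterated form of Lemma \ref{lem:A-est1}, the generic stratum (where $\Hom(E_i,E_j(K_X))=0$ for $i<j$) has dimension $\sum_i\langle v_i^2\rangle+\sum_{i<j}\langle v_i,v_j\rangle$, so its codimension in $\langle v^2\rangle$ equals $\sum_{i<j}\langle v_i,v_j\rangle=\tfrac12(\langle v^2\rangle-\sum_i\langle v_i^2\rangle)$. The problem thus reduces to minimizing this codimension over all admissible types.

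The core computation uses $\langle v_0^2\rangle=-1$, giving $\langle v_i,v_j\rangle=-l_il_j-r(l_ib_j+l_jb_i)$ and $\langle v_i^2\rangle=-l_i^2-2rl_ib_i$. A direct optimization subject to $\sum_i l_i=l$, $\sum_i b_i=b$, and the large/rigid constraint ($rb_i\leq-l_i$, or $l_i=1$, $b_i=0$) shows the codimension is minimized by the type consisting of one rigid factor $F$ together with one large factor of Mukai vector $(l-1)v_0+b\varrho_X$, where it equals exactly $-l-rb+1=\langle v^2\rangle/2l-l/2+1$, every other type being strictly larger. I expect this optimization — together with a check that the non-generic strata (where some $\Hom(E_i,E_j(K_X))\neq0$ contributes the $+n$ term of Lemma \ref{lem:A-est1}) do no better — to be the main technical obstacle. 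The Enriques-specific fact $H^0(X,{\cal O}_X(K_X))=0$ is what forces those correction terms to vanish between rank-one factors, and it is precisely in the all-rank-one borderline $r=1$, $l=2$ (where the large and rigid factors both have rank one, the minimizing type is no longer unique, and the estimate collapses to an equality) that the clean statement degenerates; this case is excluded from (1) and the genuinely problematic vector is recovered in (3).

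Granting (1), part (2) follows by a two-step density argument as in Proposition \ref{prop:stable}. Since $\langle v^2\rangle\geq l^2$ forces $\langle v^2\rangle/2l-l/2+1\geq1$, the estimate gives $\dim{\cal M}_H(v)^{p\mu ss}\leq\langle v^2\rangle-1$, so $\mu$-stable sheaves are dense in each connected component. Next, if $E$ is $\mu$-stable but not locally free, then $v(E^{\vee\vee})=v+n\varrho_X$ with $n=\dim(E^{\vee\vee}/E)>0$, and combining the Quot-scheme formula \eqref{eq:quot} with $\dim{\cal M}_H(v+n\varrho_X)^{\mu s}=\langle(v+n\varrho_X)^2\rangle=\langle v^2\rangle-2n\,\rk v$ yields $\dim{\cal M}_H(v)^{nlf}\leq\langle v^2\rangle-(\rk v-1)$, of positive codimension once $\rk v=lr>1$. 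Intersecting the two dense open conditions produces a $\mu$-stable locally free sheaf in each component. The single failure is $v=(2,0,-1)e^\xi$, where $\rk v=2$, $\langle v^2\rangle=4$, and the reflection $v+\varrho_X$ is isotropic, so by Lemma \ref{lem:l^2} it carries no $\mu$-stable sheaf and the double-dual step breaks down. For (3) I would treat this vector directly: here $\langle v^2\rangle=4>0$, so by Corollary \ref{cor:normal/reduced} the Gieseker moduli ${\cal M}_H(v,L)^{ss}$ is reduced of dimension $4$, and I would build $\mu$-stable locally free sheaves as general locally free extensions $0\to E_1\to E\to E_2\to0$ of rank-one sheaves whose first Chern classes are chosen asymmetric about $\xi$ (so no sub-line-bundle has slope $\geq\xi/r$ and $E$ is genuinely $\mu$-stable); verifying $v(E)=v$, local freeness for a general extension class, and that this family has dimension $\langle v^2\rangle=4$ identifies it with a dense open subset of an irreducible component, giving the claim.
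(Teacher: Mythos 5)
Your skeleton matches the paper's proof (Jordan--H\"{o}lder stratification with respect to $\mu$-stability, iterated use of Lemma \ref{lem:A-est1}, optimization over types, then density plus the Quot-scheme estimate \eqref{eq:quot} for local freeness), and your identification of the extremal type (one rigid factor $F$ plus one large factor) and of the value $\langle v^2 \rangle/2l-l/2+1$ is correct. But there is a genuine gap at exactly the point you defer as ``the main technical obstacle'': the strata where some $\Hom(E_i,E_j(K_X)) \ne 0$. Your proposed mechanism for controlling them --- that $H^0(X,{\cal O}_X(K_X))=0$ ``forces those correction terms to vanish between rank-one factors'' --- is false, and false in precisely the case that matters: if $E_2$ is a rank-one locally free factor and $E_1=I_Z \otimes E_2(K_X)$, then $\Hom(E_1,E_2(K_X)) = \Hom(I_Z,{\cal O}_X) \cong {\Bbb C} \ne 0$; what $H^0(K_X)=0$ kills is only the case $\det E_1 = \det E_2$ (or a factor mapping to its own twist). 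Consequently your diagnosis of the exclusion is also wrong: for $r=1$, $l=2$ \emph{every} generic ($n=0$) stratum has codimension exactly $\langle v^2 \rangle/4 = \langle v^2 \rangle/2l-l/2+1 \geq 1$, which would suffice for (1); what actually breaks (1) is the $n=1$ stratum $\{E_1 = I_Z \otimes E_2(K_X)\}$, whose dimension exceeds the generic one by the $+n$ term of Lemma \ref{lem:A-est1} and which, when $\langle v^2 \rangle =4$, fills out an entire component of non-locally-free sheaves (the Remark following the lemma in the paper exhibits it). The paper's proof of (1) is devoted to this issue: it separates the cases ``$\Ext^2(E_j,E_i)=0$ for some $i<j$'' and ``$\Ext^2(E_j,E_i) \cong {\Bbb C}$ for all $i<j$''; in the latter $E_i^{\vee \vee} \cong E_j^{\vee \vee}(K_X)$, and for $r>1$ a general $E_1$ is locally free, so a nonzero map $E_1 \to E_2(K_X)$ forces $E_1 \cong E_2(K_X)$, hence $v=2v_1$ with $\langle v_1^2 \rangle>0$ and the Hom vanishes generically, while for $r=1$ this escape is unavailable. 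Without some such argument, optimizing over the generic strata does not bound ${\cal M}_H(v)^{p \mu ss}$ at all.

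The same confusion propagates into (2) and (3). For $v=(2,0,-1)e^\xi$ the double-dual step does not ``break down'': since no $\mu$-stable sheaf has Mukai vector $v+n\varrho_X$ for $n \geq 1$ (by Lemma \ref{lem:l^2}), every $\mu$-stable sheaf with vector $v$ is automatically locally free. The true failure is in your \emph{first} step: one connected component of ${\cal M}_H(v,L)^{ss}$ (for $L$ the nontrivial choice of determinant) consists entirely of properly $\mu$-semi-stable, non-locally-free sheaves, so $\mu$-stable sheaves are not dense in every component; this is why the paper weakens (2) to (3) for this vector. Finally, your construction in (3) is not a proof as it stands: a rank-one subsheaf of $E$ injecting into the quotient $E_2$ may have slope up to $\mu(E_2)>\mu(E)$, so $\mu$-stability of a general ``asymmetric'' extension requires showing the extension class survives restriction to every subsheaf $F \subset E_2$ of large slope --- an avoidance argument you do not supply, and which must also be made compatible with fixing $\det E = L$ in both cases $L \equiv 0$ and $L \equiv K_X$. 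The paper instead handles $L \equiv 0$ by the Hom-vanishing $\det E_1 = \det E_2$ (so \eqref{eq:l-estimate} applies), and for $L \equiv K_X$ constructs a $\mu$-semi-stable \emph{locally free} sheaf as the unique nontrivial extension $0 \to {\cal O}_X \to E \to I_Z(K_X) \to 0$, proving local freeness from $\Ext^1(I_W(K_X),{\cal O}_X)=0$ for $c_2(I_W) \leq 1$, and then reruns the estimate of (2) on the component containing it. You would need to either complete your extension argument or adopt this route.
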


\begin{proof}
By Lemma \ref{lem:stack-dim}, we get that
\begin{equation}
\dim {\cal M}_H(v)^{\mu ss} \geq \langle v^2 \rangle. 
\end{equation}
We shall show that 
\begin{equation}
\dim {\cal M}_H(v)^{p \mu ss} \leq \langle v^2 \rangle -
(\langle v^2 \rangle/2l-l/2+1).
\end{equation}
For this purpose, we shall estimate the moduli number of
Jordan-H\"{o}lder filtrations. 
Let $E$ be a $\mu$-semi-stable sheaf of
$v(E)=v$ and let
\begin{equation}\label{eq:mu-JHF}
0 \subset F_1 \subset F_2 \subset \cdots \subset F_t=E
\end{equation}
be a Jordan-H\"{o}lder filtration of $E$ with respect to $\mu$-stability.
We set $E_i:=F_i/F_{i-1}$.
We also set
$$
v(E):=l v_0+a \varrho_X,
v(E_i):=l_i v_0+a_i \varrho_X.
$$
By Lemma \ref{lem:l^2},
$\langle v_i^2 \rangle \ne 0$.
Hence $\dim {\cal M}_H(v_i)^{\mu s}=\langle v_i^2 \rangle$.
Let $J(v_1,v_2,...,v_t)$ be the substack of ${\cal M}_H(v)^{\mu ss}$
such that $E \in {\cal M}_H(v)^{\mu ss}$ has a filtration \eqref{eq:mu-JHF}.
By using %\cite[Lem. 5.2]{K-Y}
Lemma \ref{lem:A-est1} successively,
we see that
\begin{equation}
\begin{split}
\dim J(v_1,v_2,...,v_t) \leq 
& \sum_i \dim {\cal M}_H(v_i)^{\mu s}+
\sum_{i < j} (\dim \Ext^1(E_j,E_i)-\dim \Hom(E_j,E_i))\\
=&  -\chi(E,E)+\sum_{i>j}\chi(E_j,E_i)+
\sum_{i<j}\dim \Ext^2(E_j,E_i).
\end{split}
\end{equation}
Since $\langle v(E_i),v(E_j) \rangle=-l_i l_j -r(l_i a_j+l_j a_i)$,
we see that
\begin{equation}
\sum_{i>j}\chi(E_j,E_i) = -\sum_{i>j} \langle v(E_j),v(E_i) \rangle
=-\sum_{i}\frac{(l-l_i)\langle v(E_i)^2 \rangle}{2l_i}.
\end{equation}
We set $\max_i \{l_i\}=(l-k)$.
If $\langle v(E_i)^2 \rangle=-1$ for all $i$, then
$v(E_i)=v_0$ for all $i$. Hence 
there is an integer $i_0$ such that $ \langle v(E_{i_0})^2 \rangle \geq 0$.
Since $(l-k)+(t-1) \leq \sum_i l_i=l$, we obtain that $t \leq k+1$.  
Since $l-l_i-k \geq 0$ and $\langle v(E_i)^2 \rangle \geq -1$,
 we get that
\begin{align*}
\sum_{i>j} \langle v(E_j),v(E_i) \rangle 
&=k\sum_i \frac{\langle v(E_i)^2 \rangle}{2l_i}+
\sum_i \frac{(l-l_i-k)\langle v(E_i)^2 \rangle}{2l_i}\\
&=k \frac{\langle v(E)^2 \rangle}{2l}+
\sum_i \frac{(l-l_i-k)\langle v(E_i)^2 \rangle}{2l_i}\\
& \geq  k\frac{\langle v(E)^2 \rangle}{2l}-
\sum_{i \ne i_0} \frac{(l-l_i-k)}{2}\\
& \geq k\frac{\langle v(E)^2 \rangle}{2l}-\frac{(l-1-k)k}{2}.
\end{align*}
Assume that 
$\Ext^2(E_j,E_i)=0$ for some $i<j$.
Then 
we get that 
$$
\sum_{i<j} \dim \Ext^2(E_j,E_i) \leq \frac{(k+1)k}{2}-1.
$$
\begin{NB}
Old version:
If $r>1$ or $l_i>1$ for some $i$,
then for a general filtration, there are $E_i$ and $E_j$ such that
$\Ext^2(E_j,E_i)=0$.
Therefore we get that 
$\sum_{i<j} \dim \Ext^2(E_j,E_i) \leq (k+1)k/2-1$
for a general filtration.
\end{NB}
Then the moduli number of these filtrations is bounded by
\begin{equation}\label{eq:l-estimate}
\langle v^2 \rangle- k\frac{\langle v^2 \rangle}{2l}
+\frac{(l-1-k)k}{2}+\frac{(k+1)k}{2}-1 
\leq 
\langle v^2 \rangle-1- k\left(\frac{\langle v^2 \rangle}{2l}
-\frac{l}{2}\right).
\end{equation}
Therefore we get a desired estimate for this case.

Assume that 
$\Ext^2(E_j,E_i) \cong {\Bbb C}$ for all $i<j$.
Then $E_i^{\vee \vee} \cong E_j^{\vee \vee}(K_X)$.
In particular,
$l_i=l_j$ for all $i<j$.
Suppose first that $l_i \geq 2$ for all $i$, then
$l_i=l-k$ implies $k \leq l-2$ and
$$
\sum_{i>j} \langle v_i,v_j \rangle=
k \frac{\langle v^2 \rangle}{2l}.
$$
Since $4 \leq l_1+l_2=2(l-k) \leq \sum_i l_i=l$,
we have $2 \leq l/2 \leq k$.  
Hence the dimension of these filtrations is bounded by
\begin{equation}\label{eq:moduli-number1}
\begin{split}
\langle v^2 \rangle- k\frac{\langle v^2 \rangle}{2l}
+\frac{(k+1)k}{2} 
\leq &
\langle v^2 \rangle- k\left(\frac{\langle v^2 \rangle}{2l}
-\frac{l-1}{2}\right)\\
\leq & \langle v^2 \rangle- \left(\frac{\langle v^2 \rangle}{2l}
-\frac{l}{2}+1\right).
\end{split}
\end{equation}
 
If instead $l_i=1$ for all $i$,
then we must have $t=2$, and hence $l=2$.
Indeed, if we have $h<i<j$, then
$E_h^{\vee \vee} \cong E_i^{\vee \vee} (K_X)$,
$E_h^{\vee \vee} \cong E_j^{\vee \vee} (K_X)$ and
$E_i^{\vee \vee} \cong E_j^{\vee \vee} (K_X)$, from which it follows
that 
$E_i^{\vee \vee} \cong E_i^{\vee \vee}(K_X)$.
As $\rk E_i$ is odd, this is impossible, so $t=2$ as claimed, and
$l=2$ follows from this and $l_i=1$.
%Since $\rk E_j$ is odd, we have $\Hom(E_j,E_k)=0$.
%Therefore we have $t=2$.

Assume that $r>1$. Then a general member $E_1 \in {\cal M}_H(v_1)^{\mu s}$
is locally free by a similar estimate to \eqref{eq:nlf}.
If there is a non-zero homomorphism $\phi:E_1 \to E_2(K_X)$, then
$\phi$ is injective and $\coker \phi$ is of 0-dimensional
by the $\mu$-stability of $E_1$ and $E_2$.
Since $E_1$ is locally free, $E_1 \cong E_2(K_X)$.
In particular $v_1=v_2$ and $v=2v_1$.
Since $\langle v^2 \rangle \geq l^2$,
$\langle v_1^2 \rangle=\langle v_2^2 \rangle>0$.
Then for a general locally free sheaf $E_1$, 
we have $\Hom(E_1,E_2(K_X)) =0$.
Therefore for a general filtration,
we have $\Ext^2(E_2,E_1)=0$, which shows the same estimate of
\eqref{eq:l-estimate} holds.
Therefore (1) holds.

(2)
The existence of a locally free sheaf follows from 
Lemma \ref{lem:l^2} and the 
last paragraph of the proof of
Proposition \ref{prop:stable} unless $r=1$ and $l=2$.
So we assume that $r=1$ and $l=2$.
This case is treated by Kim \cite{Kim4}.
For completeness, we give a different argument.
If $E_2$ is not locally free or
$\det E_1=\det E_2$, then $\Ext^2(E_2,E_1)=0$ for a 
general filtration, and hence 
the same estimate of \eqref{eq:l-estimate} holds.
On the other hand, if
$E_2$ is locally free and
$E_1=I_Z \otimes E_2(K_X)$ (which implies $E$ is not locally free), 
then
we only have the estimate
\begin{equation}
\langle v^2 \rangle- k\frac{\langle v^2 \rangle}{2l}
+\frac{(l-1-k)k}{2}+\frac{(k+1)k}{2} 
\leq 
\langle v^2 \rangle- \left(\frac{\langle v^2 \rangle}{2l}
-\frac{l}{2}\right).
\end{equation}
In this case, if $\langle v^2 \rangle>4$, 
then there is a $\mu$-stable locally free sheaf.

(3)
We set $v:=(2,0,-1)$.
${\cal M}_H(v,0)^{ss}$ contains a $\mu$-stable locally free sheaf
by the proof of (2).
Indeed we have $\det E_1=\det E_2$, 
which shows \eqref{eq:l-estimate}.
We next treat  
${\cal M}_H(v,K_X)^{ss}$.
By the proof of (2),
it is sufficient to construct a $\mu$-semi-stable locally free
sheaf $E$ of $v(E)=v$ and $\det E={\cal O}_X(K_X)$.
Indeed, for an irreducible component containing a locally
free sheaf, $E_1$ is a locally free sheaf and there is an
ideal sheaf of two points with
$E_2=E_1(K_X) \otimes I_{Z'}$,
which shows \eqref{eq:l-estimate}.

For the ideal sheaf $I_Z$ of two points,
we have 
$\Hom(I_Z(K_X),{\cal O}_X)=\Ext^2(I_Z(K_X),{\cal O}_X)=0$.
Hence $\Ext^1(I_Z(K_X),{\cal O}_X) \cong {\Bbb C}$.
We take a non-trivial extension
\begin{equation}\label{eq:(2,0,-1)}
0 \to {\cal O}_X \to E \to I_Z(K_X) \to 0.
\end{equation}
Since $\Ext^1(I_W(K_X),{\cal O}_X)=0$ for $c_2(I_W)=0,1$,
if $E$ is not locally free, then
$E^{\vee \vee} \cong {\cal O}_X \oplus {\cal O}_X(K_X)$,
which shows that the exact sequence \eqref{eq:(2,0,-1)} splits.
Therefore $E$ is locally free.
%Let ${\cal M}$ be an irreducible component of
%${\cal M}_H(v)^{\mu ss}$ containing $E$.
%Then $\dim({\cal M} \setminus {\cal M}_H(v)^{\mu ss})<\langle v^2 \rangle$.

\end{proof}

\begin{rem}
For surjective homomorphisms
$\phi_1:{\cal O}_X \to {\Bbb C}_x \oplus {\Bbb C}_y$
and 
$\phi_2:{\cal O}_X(K_X) \to {\Bbb C}_x \oplus {\Bbb C}_y$,
the kernel $E$ of 
$$
{\cal O}_X \oplus {\cal O}_X(K_X) 
\overset{(\phi_1,\phi_2)}{\longrightarrow}
 {\Bbb C}_x \oplus {\Bbb C}_y
$$
is a stable non-locally free sheaf.
Then they form an irreducible component of  
${\cal M}_H(v,K_X)^{ss}$ consisting of non-locally free sheaves.
Therefore ${\cal M}_H(v,K_X)^{ss}$ has at least two irreducible
components.
Combining \cite[Rem. 4.1]{Y:twist1},
${\cal M}_H(2v_0,L)^{ss}$ are reducible
if $\langle v_0^2 \rangle=1$.
\begin{NB}
We note that $\Ext^1({\cal O}_X,I_Z(K_X))={\Bbb C}^{\oplus 2}$.
Since $\Hom(I_Z(K_X),{\cal O}_X)=0$,
$E$ fitting in the extension
$$
0 \to I_Z(K_X) \to E \to {\cal O}_X \to 0
$$
is determined by ${\Bbb P}(\Ext^1({\cal O}_X,I_Z(K_X)))$.
Hence we have a 5-dimensional family of stable sheaves.

Another argument:
For $\phi_i=(\alpha_i,\beta_i)$,
$\ker(\phi_1,\phi_2)$ is parametrized by 
$(\beta_1/\alpha_1:\beta_2/\alpha_2)$.
Hence we have a 1-dimensional moduli if we fixed $x,y \in X$.  
\end{NB}

\end{rem}

By the reflection associated to $v_0$ (cf. \cite{Y:twist1}), 
we get the following result.
\begin{prop}
Assume that
$2br-l>0$, i.e.,
$\langle(lv_0-b \varrho_X)^2 \rangle=l(2br-l)>0$.
Then  
$$
{\cal M}_H(lv_0-b \varrho_X)^{ss} \cong
{\cal M}_H((2br-l)v_0^{\vee}-b \varrho_X)^{ss}.
$$
\end{prop}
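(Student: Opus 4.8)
The plan is to realize the asserted isomorphism by the reflection functor associated to the $(-1)$-class $v_0$, exactly as in \cite{Y:twist1}. First I would record the relevant properties of the bundle $F$ with $v(F)=v_0$. Since $r$ is odd in Case B, $F\not\cong F(K_X)$, so the $\mu$-stability of $F$ gives $\Hom(F,F(K_X))=0$ and hence $\Ext^2(F,F)=\Hom(F,F(K_X))^\vee=0$; combined with $\chi(F,F)=-\langle v_0^2\rangle=1$ and $\Hom(F,F)=k$, this forces $\Ext^1(F,F)=0$. Thus $F$ is rigid (indeed exceptional), and its pullback $\varpi^*F$ to the covering K3 surface is spherical, because $\langle (\varpi^*v_0)^2\rangle=2\langle v_0^2\rangle=-2$. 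This is precisely the input needed to define the reflection.

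Second, I would invoke the reflection autoequivalence $\Phi$ built from $F$ in \cite{Y:twist1} (the $\iota$-equivariant descent of the spherical twist by $\varpi^*F$, composed with the dualization $\mathbf{R}\mathcal{H}om(-,\mathcal{O}_X)$; the descent from the K3 cover is what produces the factor $2$ below). The point is to check, purely on the Mukai lattice, that $\Phi$ induces the isometry $\phi=D\circ s_{v_0}$, where $s_{v_0}(x)=x+2\langle x,v_0\rangle v_0$ and $D(x)=x^\vee$. Using $\langle v_0^2\rangle=-1$ and $\langle v_0,\varrho_X\rangle=-r$, a direct computation gives $\phi(v_0)=-v_0^\vee$ and $\phi(\varrho_X)=\varrho_X-2r\,v_0^\vee$, whence $\phi(lv_0-b\varrho_X)=(2br-l)v_0^\vee-b\varrho_X$; in particular both Mukai vectors have square $l(2br-l)$, consistent with the hypothesis.

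Third, and this is the crux, I would show that under the positivity assumption $\langle(lv_0-b\varrho_X)^2\rangle=l(2br-l)>0$ the functor $\Phi$ carries the sheaves parametrized by ${\cal M}_H(lv_0-b\varrho_X)^{ss}$ to genuine $H$-semistable sheaves with Mukai vector $(2br-l)v_0^\vee-b\varrho_X$, and conversely. Concretely, for a semistable $E$ I would establish the relevant WIT$_i$-vanishing, i.e. that $\Phi(E)$ is concentrated in a single cohomological degree, using the $\mu$-stability of both $E$ and $F$ to kill the unwanted $\Hom$ and $\Ext^2$ groups between $E$ and $F$ (the same vanishings $\Hom(F,E)=\Hom(E,F(K_X))=0$ exploited in Lemma \ref{lem:l^2}), and then deduce preservation of semistability from the categorical description of the reflection together with the positivity of $\langle v^2\rangle$. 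The inverse reflection, namely the analogous functor built from the $(-1)$-class $v_0^\vee$ realized by $F^\vee$, provides the inverse morphism of stacks, so the two moduli stacks are isomorphic.

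The main obstacle is the third step: guaranteeing that $\Phi$ sends sheaves to sheaves (the single-degree condition) and preserves semistability, rather than merely inducing a bijection on $S$-equivalence classes. This is where the sign condition $2br-l>0$ and the exceptionality of $F$ are essential, and it is exactly the content that \cite{Y:twist1} supplies in the twisted K3 setting; the remaining work is to transport those vanishing and stability arguments to the Enriques surface, using crucially that $r$ is odd so that $F$ and $F(K_X)$ are distinct $\mu$-stable bundles.
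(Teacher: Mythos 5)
Your proposal follows exactly the route the paper intends: the paper offers no proof beyond the one-line citation ``by the reflection associated to $v_0$ (cf.\ \cite{Y:twist1})'', and your construction---the contravariant reflection functor built from the exceptional bundle $F$ with $v(F)=v_0$ (equivalently, the $\iota$-equivariant descent of the spherical twist by $\varpi^*F$ composed with dualization), acting on the Mukai lattice as $D\circ s_{v_0}$---is precisely that reflection, and your computation $\phi(lv_0-b\varrho_X)=(2br-l)v_0^{\vee}-b\varrho_X$ using $\langle v_0^2\rangle=-1$, $\langle v_0,\varrho_X\rangle=-r$ is correct. The remaining WIT-vanishing and preservation-of-semistability step that you flag as the crux is likewise exactly what the paper delegates to \cite{Y:twist1}, so your write-up is, if anything, more detailed than the paper's own treatment.
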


\subsection{Case C}

Assume that there is a stable sheaf $E_0$ such that
$v(E_0)=(r,\xi,a_0)$ and $\langle v(E_0)^2 \rangle=-2$.
Thus we assume that 
$r$ is even, 
$r \mid (\xi^2)/2+1$ and
$\xi \equiv D+\frac{r}{2}K_X \mod 2$, where
$D$ is a nodal cycle.
We set $v_0=(r,\xi,a_0)$.
As in the proof of Lemma \ref{lem:l^2}, we have the following.
\begin{lem}\label{lem:l^2-2}
If ${\cal M}_H(v)^{\mu s} \ne \emptyset$, then
$\langle v^2 \rangle \geq 2l^2$ or
$v=v_0$.
\end{lem}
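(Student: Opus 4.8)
The plan is to follow the proof of Lemma \ref{lem:l^2} essentially verbatim, replacing the rigid sheaf $F$ of Case B by the stable sheaf $E_0$ of Case C and using $\langle v_0^2 \rangle = -2$ in place of $-1$. First I would record the arithmetic. Writing $v = l v_0 + b\varrho_X$ with $b \in {\Bbb Z}$ (integrality follows as in the remark after Lemma \ref{lem:l^2}, since $v, l v_0 \in v(K(X))$), and using $\langle v_0,\varrho_X \rangle = -r$ together with $\langle \varrho_X^2 \rangle = 0$, one gets
\begin{equation}
\langle v^2 \rangle = -2l^2 - 2lbr, \qquad \chi(E_0,E) = -\langle v_0,v \rangle = 2l + br.
\end{equation}
I would also note at the outset that $E_0$ is genuinely $\mu$-stable: since $\gcd(r,\xi)=1$, any proper subsheaf of equal slope would force $r \mid \rk F$ with $0 < \rk F < r$, which is impossible, so Gieseker stability upgrades to $\mu$-stability.

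Next I would split into two cases by rank. When $\rk E = r$ (so $l=1$), I would deduce $b \leq 0$ from the mere existence of a $\mu$-stable $E$: the tangent-obstruction count gives $\dim \Ext^1(E,E) = 1 + \dim \Hom(E,E(K_X)) + \langle v^2 \rangle \geq 0$, and since $E$ is stable $\dim \Hom(E,E(K_X)) \leq 1$, whence $\langle v^2 \rangle \geq -2$, i.e.\ $-2 - 2br \geq -2$, i.e.\ $b \leq 0$. If $b=0$ then $v = v_0$; if $b \leq -1$ then, because $r \geq 2$ is even, $\langle v^2 \rangle = -2 - 2br \geq -2 + 2r \geq 2 = 2l^2$, giving the claim.

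When $\rk E > r$ (so $l \geq 2$), the heart of the matter is the vanishing $\Hom(E_0,E) = \Hom(E,E_0(K_X)) = 0$. This is where the hypotheses do the real work: $v_0$ and $v$ share the slope $\xi/r$, so a nonzero map between the $\mu$-stable sheaf $E_0$ (or $E_0(K_X)$, of the same slope since $K_X$ is numerically trivial) and the $\mu$-stable $E$ of strictly larger rank produces an image that is simultaneously a proper quotient of one $\mu$-stable sheaf and a proper subsheaf of another of equal slope, contradicting $\mu$-stability exactly as in Lemma \ref{lem:l^2}. Given the vanishing, Serre duality yields $\Ext^2(E_0,E) \cong \Hom(E,E_0(K_X))^{\vee} = 0$, so $\chi(E_0,E) = -\dim \Ext^1(E_0,E) \leq 0$, that is $2l + br \leq 0$, hence $br \leq -2l$. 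Substituting into the displayed formula gives $\langle v^2 \rangle = -2l(l+br) \geq -2l\cdot(-l) = 2l^2$, as required.

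The step I expect to be the only real obstacle is this slope-based $\Hom$-vanishing in the higher-rank case; everything else is bookkeeping. The subtle point is that the slopes of $v_0$ and $v$ coincide rather than being strictly ordered, so one cannot simply invoke ``lower slope kills $\Hom$'' and must instead use the image argument together with the numerical triviality of $K_X$. I would also verify that $E_0$ possibly being non-locally free (unlike $F$ in Case B) causes no trouble, since the image argument only uses torsion-freeness of $E_0$ and $E$, which holds as both are stable.
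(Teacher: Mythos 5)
Your proof is correct and follows exactly the route the paper intends: the paper's entire proof of this lemma is the single sentence ``As in the proof of Lemma \ref{lem:l^2}, we have the following,'' and your argument is precisely that adaptation, with the rank-$r$ case handled by the dimension count giving $b\leq 0$ and the rank $>r$ case by the slope-based vanishing $\Hom(E_0,E)=\Hom(E,E_0(K_X))=0$ followed by $\chi(E_0,E)=2l+br\leq 0$. The details you fill in (integrality of $b$, the image argument for equal slopes, irrelevance of local freeness of $E_0$) are the correct ones, so there is no substantive difference from the paper's proof.
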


\begin{prop}
Assume that $\langle v^2 \rangle \geq 2l^2$.
Then ${\cal M}_H(v)^{\mu s} \ne \emptyset$.
Moreover each connected component contains a locally free sheaf.
\end{prop}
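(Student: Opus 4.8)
The plan is to follow the strategy of Lemma~\ref{lem:quot} and its K3 analogue in \cite{Y:irred}. By Lemma~\ref{lem:stack-dim} every irreducible component of ${\cal M}_H(v)^{\mu ss}$ has dimension $\geq\langle v^2\rangle$, and since $\langle v^2\rangle\geq 2l^2>0$ the stack is non-empty of dimension $\langle v^2\rangle$; so it suffices to show that the properly $\mu$-semistable locus ${\cal M}_H(v)^{p\mu ss}$ has dimension $\leq\langle v^2\rangle-1$, which forces a $\mu$-stable sheaf into every component. The computational backbone is the identity $\langle v^2\rangle=2l^2+2l\langle v,v_0\rangle$ (for $v=lv_0+a\varrho_X$), which recasts the hypothesis $\langle v^2\rangle\geq 2l^2$ as the single inequality $\langle v,v_0\rangle\geq 0$. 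I would keep this inequality in the foreground.

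Next I would run the Jordan--H\"older count. Fix a $\mu$-stable Jordan--H\"older filtration of a $\mu$-semistable $E$, with factors $E_i$ and $v(E_i)=l_iv_0+a_i\varrho_X$. As each $E_i$ is $\mu$-stable, Lemma~\ref{lem:l^2-2} (applied with $l_i$ in place of $l$) splits the factors into two types: those with $\langle v(E_i)^2\rangle\geq 2l_i^2>0$, and those with $v(E_i)=v_0$, i.e. $l_i=1$, $\langle v(E_i)^2\rangle=-2$, and $E_i$ one of the finitely many rigid locally free sheaves $E_0,E_0(K_X)$. In particular no factor is isotropic, so $\dim{\cal M}_H(v_i)^{\mu s}=\langle v_i^2\rangle$ for the first type (by Lemma~\ref{lem:s} and Proposition~\ref{prop:stable}), while for the second type rigidity and $E_0\cong E_0(K_X)$ give $\dim{\cal M}_H(v_0)^{\mu s}=-1$. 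Writing $J(v_1,\dots,v_t)$ for the stack of such filtrations and applying Lemma~\ref{lem:A-est1} successively, exactly as in Lemma~\ref{lem:quot}, yields
\begin{equation}
\dim J(v_1,\dots,v_t)\leq\langle v^2\rangle-\sum_{i<j}\langle v_i,v_j\rangle+\sum_{i<j}\dim\Ext^2(E_j,E_i)+p,
\end{equation}
where $p:=\#\{i\mid v(E_i)=v_0\}$; the extra summand $+p$, absent in Case~B, records that $\dim{\cal M}_H(v_0)^{\mu s}=-1>-2=\langle v_0^2\rangle$.

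The heart of the matter is to prove $\sum_{i<j}\langle v_i,v_j\rangle>\sum_{i<j}\dim\Ext^2(E_j,E_i)+p$. The rigid $(-2)$-factors are the main obstacle: a pair of them contributes $\langle v_i,v_j\rangle=-2$ while $\dim\Ext^2(E_j,E_i)=\dim\Hom(E_0,E_0(K_X))=1$, so in isolation each such pair worsens the estimate by $3$, and each $v_0$-factor costs a further $1$ through the term $p$. The rescue is the reformulated hypothesis: from $\langle v,v_0\rangle=\sum_i\langle v_i,v_0\rangle\geq 0$ and $\langle v_0,v_0\rangle=-2$ one gets $\sum_{i:\,\langle v_i^2\rangle>0}\langle v_i,v_0\rangle\geq 2p$, where moreover $\langle v_i,v_0\rangle=(\langle v_i^2\rangle-2l_i^2)/2l_i\geq 0$. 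Feeding this into the mixed pairings $\langle v_0,v_j\rangle$ shows that the cross terms between $v_0$-factors and positive-square factors more than absorb both the negative $v_0$-$v_0$ pairings and the $+p$, leaving a strictly positive surplus (of order $p(p+1)/2$) whenever the off-$v_0$ $\Ext^2$-terms vanish. I expect the genuinely delicate step to be the non-generic filtrations for which many $\Ext^2(E_j,E_i)$ are non-zero: there, as in Lemma~\ref{lem:quot}, one must exploit that $\Ext^2(E_j,E_i)\cong{\Bbb C}$ forces $E_i^{\vee\vee}\cong E_j^{\vee\vee}(K_X)$, which rigidifies the factors and shrinks the base ${\cal N}^n$ of Lemma~\ref{lem:A-est1} enough to restore the inequality. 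This $\Ext^2$-bookkeeping among the $(-2)$-factors, together with the split into $v_0$- and positive-square factors, is where I expect the real work to lie; the reflection attached to $v_0$ (cf. the reflection proposition preceding this one and \cite{Y:irred}) may be the cleanest device for the boundary case $\langle v,v_0\rangle=0$.

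Finally, for local freeness I would argue as in Lemma~\ref{lem:quot}(2) and the last paragraph of the proof of Proposition~\ref{prop:stable}: bound the non-locally-free locus ${\cal M}_H(v)^{nlf}$ by the quot-scheme estimate \eqref{eq:quot}, producing an inequality of the shape \eqref{eq:nlf} that gives $\dim{\cal M}_H(v)^{nlf}<\langle v^2\rangle$. Since $r$ is even (hence $lr>1$) and the excluded vector $v_0$ has $\langle v_0^2\rangle=-2<2l^2$, the exceptional low-dimensional cases that complicated Case~B do not intervene, and one concludes that every connected component of ${\cal M}_H(v)^{\mu s}$ contains a locally free sheaf.
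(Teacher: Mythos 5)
Your proposal is correct and follows essentially the same route as the paper: reduce via Lemma \ref{lem:stack-dim} to bounding the properly $\mu$-semi-stable locus, run the Jordan--H\"older count with Lemma \ref{lem:A-est1}, use Lemma \ref{lem:l^2-2} to split the factors into positive-square ones and rigid $v_0$-factors (whose stack dimension $-1=\langle v_0^2\rangle+1$ produces your $+p$ correction, the paper's $+(t-1)$), and finish local freeness by the last paragraph of the proof of Proposition \ref{prop:stable}. The combinatorial absorption argument you flag as "where the real work lies" — including the $\Ext^2$-bookkeeping among $(-2)$-factors — is exactly the computation the paper itself outsources with the phrase "by the same computation of [Y:K3, Lem.\ 4.4]," and your reformulation of the hypothesis as $\langle v,v_0\rangle\geq 0$ is a correct way to organize it.
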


\begin{proof}
By Lemma \ref{lem:stack-dim}, we get that
\begin{equation}
\dim {\cal M}_H(v)^{\mu ss} \geq \langle v^2 \rangle. 
\end{equation}
We shall show that 
\begin{equation}
\dim {\cal M}_H(v)^{p \mu ss} \leq \langle v^2 \rangle -
(\langle v^2 \rangle/2l-l+1).
\end{equation}
For this purpose, we shall estimate the moduli number of
Jordan-H\"{o}lder filtrations. 
Let $E$ be a $\mu$-semi-stable sheaf of
$v(E)=v$ and let
\begin{equation}\label{eq:mu-JHF2}
0 \subset F_1 \subset F_2 \subset \cdots \subset F_t=E
\end{equation}
be a Jordan-H\"{o}lder filtration of $E$ with respect to $\mu$-stability.
We set $E_i:=F_i/F_{i-1}$.
We also set
$$
v(E):=l v_0+a \varrho_X,
v(E_i):=l_i v_0+a_i \varrho_X.
$$
By Lemma \ref{lem:l^2-2},
$\langle v_i^2 \rangle \ne 0$.
Hence 
\begin{equation}
\dim {\cal M}_H(v_i)^{\mu s}=
\begin{cases}
\langle v_i^2 \rangle & \text{ if $v_i \ne v_0$}\\ 
\langle v_i^2 \rangle+1=-1 & \text{ if $v_i=v_0$}.
\end{cases}
\end{equation}
%
%$\dim {\cal M}_H(v_i)^{\mu s}=\langle v_i^2 \rangle+1=-1$ if $v_i=v_0$.
Since $\langle v^2 \rangle>0$, there is an integer $i_0$ such that
$v_i \ne v_0$. 
Let $J(v_1,v_2,...,v_t)$ be the substack of ${\cal M}_H(v)^{\mu ss}$
such that $E \in {\cal M}_H(v)^{\mu ss}$ has a filtration \eqref{eq:mu-JHF2}.
By using \cite[Lem. 5.2]{K-Y} successively,
we see that
\begin{equation}
\begin{split}
\dim J(v_1,v_2,...,v_t) \leq 
& \sum_i \dim {\cal M}_H(v_i)^{\mu s}+
\sum_{i < j} (\dim \Ext^1(E_j,E_i)-\dim \Hom(E_j,E_i))\\
 \leq &  -\chi(E,E)+\sum_{i>j}\chi(E_j,E_i)+
\sum_{i<j}\dim \Ext^2(E_j,E_i)+(t-1).
\end{split}
\end{equation}
By the same computation of \cite[Lem. 4.4]{Y:K3},
we get the desired estimate.
Hence the existence of a locally free sheaf follows 
by Lemma \ref{lem:l^2-2} and the last paragraph of the proof of
Proposition \ref{prop:stable}.
\end{proof}

By the $(-2)$-reflection associated to $v_0$,
we also get the following.
\begin{prop}
Assume that
$br-l>0$, i.e.,
$\langle(lv_0-b \varrho_X)^2 \rangle=2l(br-l)>0$.
Then  
$$
{\cal M}_H(lv_0-b \varrho_X)^{ss} \cong
{\cal M}_H((br-l)v_0^{\vee}-b \varrho_X)^{ss}.
$$
\end{prop}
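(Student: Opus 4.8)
The plan is to realize the asserted isomorphism as the map on moduli induced by the reflection functor attached to the rigid stable sheaf $E_0$ with $v(E_0)=v_0$ and $\langle v_0^2\rangle=-2$. Since $\langle v_0^2\rangle=-2$ gives $\chi(E_0,E_0)=2$, while a stable sheaf has $\dim\Hom(E_0,E_0)=1$, Serre duality $\Ext^2(E_0,E_0)\cong\Hom(E_0,E_0(K_X))$ forces $E_0\cong E_0(K_X)$ and $\dim\Ext^1(E_0,E_0)=0$; thus $R\Hom(E_0,E_0)\cong k\oplus k[-2]$ and, since the Serre functor of $X$ is $(\,\cdot\,)\otimes K_X[2]$, the object $E_0$ is spherical. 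First I would introduce the reflection $\Phi:=(\,\cdot\,)^{\vee}\circ T_{E_0}$, where $T_{E_0}(F)=\mathrm{Cone}(R\Hom(E_0,F)\otimes E_0\to F)$ is the spherical twist; this is a (contravariant) autoequivalence of $\mathbf{D}(X)$, and the proposition is then essentially a corollary of the stability-preservation established for such reflections in \cite{Y:twist1} and \cite{Enriques}.

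Next I would carry out the bookkeeping in the Mukai lattice. From $\langle v_0^2\rangle=-2$ and $\langle v_0,\varrho_X\rangle=-r$ one computes $\langle lv_0-b\varrho_X,v_0\rangle=br-2l$, so the twist acts by the reflection $s_{v_0}(x)=x+\langle x,v_0\rangle v_0$ as $s_{v_0}(lv_0-b\varrho_X)=(br-l)v_0-b\varrho_X$. Applying the dualization $(\,\cdot\,)^{\vee}$, which fixes $\varrho_X$ and sends $v_0$ to $v_0^{\vee}$, yields $(br-l)v_0^{\vee}-b\varrho_X$, matching the target. Since $br-l>0$ and $r>0$, both Mukai vectors have positive rank, and a direct computation shows both squares equal $2l(br-l)>0$, as recorded in the statement; because $H$ is general with respect to $lv_0-b\varrho_X$ and the reflection only shears along the ray ${\Bbb Q}v_0$ (on which slopes agree), $H$ remains general for the image vector.

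The heart of the argument is to show that $\Phi$ carries the semi-stable locus isomorphically onto the semi-stable locus, i.e.\ that for every $H$-semi-stable $E$ with $v(E)=lv_0-b\varrho_X$ the complex $T_{E_0}(E)$ is, up to shift, a single $\mu$-semi-stable sheaf with the reflected Mukai vector. Here I would exploit that $E$ and $E_0$ share the $\mu$-slope $\xi/r$, so $\mu$-stability makes every nonzero map between them an embedding onto a sub- or quotient-factor; the $v_0$-twisted stability, equivalently the sign of $\chi(E_0,E)=2l-br$ together with $br-l>0$, then forces one of $\Hom(E_0,E)$ and $\Hom(E,E_0(K_X))$ to vanish, which is exactly the $\WIT$-type vanishing making $T_{E_0}(E)$ a sheaf and guaranteeing its stability. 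This is where the real difficulty lies: controlling the Harder--Narasimhan behaviour of the transform along the boundary of the semi-stable locus and checking that no strictly semi-stable sheaf is sent to a genuine two-term complex; these estimates are the ones worked out for the analogous reflections in \cite{Y:twist1}, and the case-C numerics above are what make them apply verbatim.

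Finally, since $T_{E_0}$ is an equivalence with inverse $T_{E_0}^{-1}$ and dualization is an involution, the construction is reversible; running the same analysis for the image vector $(br-l)v_0^{\vee}-b\varrho_X$ produces the inverse morphism of stacks, so $\Phi$ descends to the claimed isomorphism ${\cal M}_H(lv_0-b\varrho_X)^{ss}\cong{\cal M}_H((br-l)v_0^{\vee}-b\varrho_X)^{ss}$.
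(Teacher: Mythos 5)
Your proposal is correct and is essentially the paper's own argument: the paper proves this proposition with the single line ``By the $(-2)$-reflection associated to $v_0$,'' which is precisely the functor you construct (the spherical twist by the rigid stable sheaf $E_0$ composed with dualization), with the stability-preservation estimates delegated to \cite{Y:twist1} exactly as you do. Your supporting computations are accurate — $E_0 \cong E_0(K_X)$ with $\Ext^*(E_0,E_0)=k\oplus k[-2]$, $\langle lv_0-b\varrho_X,v_0\rangle=br-2l$, image vector $(br-l)v_0^{\vee}-b\varrho_X$, and equal squares $2l(br-l)$ — and your explicit acknowledgement that the delicate point is ruling out two-term complexes on the bad Hom-loci is the same technical debt the paper discharges by citation.
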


\begin{NB}
\begin{cor}
Let $v=(lr,l\xi,\frac{s}{2})$ be a primitive Mukai
vector such that $\gcd(r,\xi)=1$ and $\langle v^2 \rangle \geq 0$.
For $L \in \Pic(X)$ with $c_1(L)=l \xi$,
${\cal M}_H(lr,L,\frac{s}{2})$ contains a $\mu$-stable
sheaf if and only if 
\begin{enumerate}
\item[(i)] 
There is a stable sheaf $E$ such that 
$v(E)=(r,\xi,b)$ and  
$\langle v(E)^2 \rangle=-1$, and
$\langle v^2 \rangle \geq l^2$ or
\item[(ii)] 
There is a stable sheaf $E$ such that 
$v(E)=(r,\xi,b)$ and $\langle v(E)^2 \rangle=-2$, and
$\langle v^2 \rangle \geq 2l^2$ or
\item[(iii)] 
There is no stable sheaf $E$ such that 
$v(E)=(r,\xi,b)$ and 
$\langle v(E)^2 \rangle=-1,-2$
and
$\langle v^2 \rangle \geq 0$.
\end{enumerate}
\end{cor}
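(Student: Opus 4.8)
The plan is to dispose of the three mutually exclusive and exhaustive cases A, B, C separately. Which case occurs depends only on whether there is a stable sheaf with primitive Mukai vector $(r,\xi,b)$ of square $-1$ or $-2$; since any Mukai vector satisfies $2b\equiv r\pmod 2$, one finds that square $-1$ forces $r$ odd and square $-2$ forces $r$ even, so at most one of B, C can apply to a given $v$. In cases B and C I would fix the primitive vector $v_0=(r,\xi,a_0)$ with $\langle v_0^2\rangle=-1$ (resp.\ $-2$) and write $v=lv_0+b\varrho_X$ with $b\in{\Bbb Z}$, so that $\langle v^2\rangle$ becomes an explicit quadratic in $b$. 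The ``only if'' direction in these two cases is then immediate: a $\mu$-stable $E$ with $v(E)=v$ satisfies, by Lemma \ref{lem:l^2} (case B), either $\langle v^2\rangle\geq l^2$ or $E\cong F,F(K_X)$, the latter meaning $v=v_0$ and hence $\langle v^2\rangle=-1<0$, which the standing hypothesis $\langle v^2\rangle\geq0$ excludes; Lemma \ref{lem:l^2-2} gives $\langle v^2\rangle\geq 2l^2$ in case C in the same way. In case A the bound to be proved is only $\langle v^2\rangle\geq0$, so the ``only if'' direction is vacuous.

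For the ``if'' direction the uniform tool is dimension counting. By Lemma \ref{lem:stack-dim} every irreducible component of ${\cal M}_H(v)^{\mu ss}$ has dimension $\geq\langle v^2\rangle$, and ${\cal M}_H(v)^{\mu ss}\neq\emptyset$ whenever $\langle v^2\rangle>0$ since Gieseker-stable sheaves exist (Lemma \ref{lem:pss}) and are $\mu$-semi-stable; hence it suffices to bound ${\cal M}_H(v)^{p\mu ss}$ strictly below $\langle v^2\rangle$. In case A with $\langle v^2\rangle>0$ this is exactly Proposition \ref{prop:stable}, which moreover yields a $\mu$-stable locally free sheaf in each connected component; the boundary value $\langle v^2\rangle=0$ is covered by Lemma \ref{lem:non-empty} and Corollary \ref{cor:isotropic}, where $v$ is the primitive isotropic vector $w$ and ${\cal M}_H(w)^{\mu ss}$ already consists of $\mu$-stable locally free sheaves. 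In case B the hypothesis $\langle v^2\rangle\geq l^2$ is precisely that of Lemma \ref{lem:quot}: part (1) together with Lemma \ref{lem:stack-dim} gives ${\cal M}_H(v)^{\mu s}\neq\emptyset$, while parts (2) and (3) upgrade this to a $\mu$-stable locally free sheaf, the single exceptional vector $(2,0,-1)e^\xi$ being handled by (3) via the non-split extension \eqref{eq:(2,0,-1)}. Case C under $\langle v^2\rangle\geq 2l^2$ is identical, using the case~C Proposition in place of Lemma \ref{lem:quot}.

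To pass from ${\cal M}_H(v)^{\mu ss}$ to the determinant-fixed ${\cal M}_H(v,L)^{ss}$, I would use that the strata of $c_1:{\cal M}_H(v)^{\mu s}\to\NS(X)$ are unions of connected components; since each cited statement produces a $\mu$-stable sheaf (locally free once $lr>1$) in \emph{every} connected component, the component with $c_1=L$ acquires one. This also gives the ``moreover'' clause: $r$ is even throughout case C so $lr\geq2$ automatically, while in cases A and B local freeness is asserted by Proposition \ref{prop:stable} and Lemma \ref{lem:quot}(2),(3). The excluded subcase $lr=1$ occurs only for rank-one $v$ (necessarily case B, a line bundle always being a $(-1)$-class), where every $I_Z\otimes L$ is automatically $\mu$-stable and no local freeness is claimed.

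The genuine work, and the main obstacle, lies in the sufficiency estimates behind Lemma \ref{lem:quot}(1) and its case~C analogue rather than in the assembly above. One must bound the dimension of the stratum $J(v_1,\dots,v_t)$ of sheaves with a fixed $\mu$-Jordan--H\"older type, and the $(-1)$- resp.\ $(-2)$-classes make the naive bound fail precisely when many factors coincide up to a $K_X$-twist, forcing $\Ext^2(E_j,E_i)\cong\Hom(E_i,E_j(K_X))^\vee\neq0$. Controlling these $\Ext^2$-contributions---through Lemma \ref{lem:A-est1}, the quot-scheme estimate \eqref{eq:quot}, and a careful split on the multiplicities $l_i$ and on local freeness of a generic factor---is what produces the sharp thresholds $l^2$ and $2l^2$ and isolates the truly exceptional vector $(2,0,-1)e^\xi$. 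A secondary delicate point is the isotropic boundary $\langle v^2\rangle=0$ in case A, where the Enriques-specific $\varpi_*$-construction and Proposition \ref{prop:isotropic} govern which (non-primitive) $v$ still carry a $\mu$-stable sheaf, and where one must keep track of the actual class $L$ and not merely $l\xi\bmod K_X$.
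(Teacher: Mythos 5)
Correct, and essentially the paper's own argument: you use the same A/B/C trichotomy, necessity from Lemma \ref{lem:l^2} and Lemma \ref{lem:l^2-2} (vacuous in case A given the standing hypothesis $\langle v^2\rangle\geq 0$), sufficiency from Proposition \ref{prop:stable}, Lemma \ref{lem:quot} and its unlabelled case-C analogue, with Lemma \ref{lem:non-empty} and Corollary \ref{cor:isotropic} at the isotropic boundary (where primitivity forces $v=w$), and the same passage to fixed determinant via the fact that $\mu$-stable (locally free) sheaves are produced in every connected component. Your assembly, including the $lr=1$ subcase and the exceptional vector $(2,0,-1)e^{\xi}$, is exactly the case-by-case study of Section 2, so there is nothing to add.
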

\end{NB}

\section{Moduli spaces on elliptic surfaces}\label{sect:elliptic}

\subsection{Some estimates on substacks.}
In this section, we shall study moduli spaces of semi-stable sheaves
on elliptic surfaces. Then we shall apply the results 
to the moduli spaces on Enriques surfaces,
since Enriques surfaces have elliptic fibrations.
 
Let $\pi:X \to C$ be an elliptic surface such that
every fiber is irreducible.
Let $f$ be a fiber of $\pi$.
We have a homomorphism
\begin{equation}
\begin{matrix}
\tau:& K(X)& \to& {\Bbb Z} \oplus \NS(X) \oplus {\Bbb Z}\\
& E & \mapsto & (\rk E,c_1(E),\chi(E)).
\end{matrix}
\end{equation} 
We set $K(X)_{\topo}:=K(X)/\ker \tau$.
For ${\bf e} \in K(X)_{\topo}$,
let ${\cal M}({\bf e})$ be the moduli stack of coherent sheaves $E$
whose topological invariants are ${\bf e}$.
Let ${\cal M}_H({\bf e})^{ss}$ (resp. ${\cal M}_H({\bf e})^{s}$)
be the substack of
${\cal M}({\bf e})$ consisting of 
semi-stable sheaves (resp. stable sheaves).
Let $E$ be a torsion free sheaf on $X$.
${\bf e} \in K(X)$ denotes the class of $E$ in $K(X)$.
Let $H$ be an ample divisor on $X$ and set 
$H_f:=H+nf$, where $n$ 
is a sufficiently large integer depending on ${\bf e}$.
Let $D$ be a curve on $X$ such that $(D,f)=0$.
For a coherent sheaf $F$ on $D$, 
we set 
\begin{equation}
\begin{split}
\deg(F):=& \chi(F)-\chi({\cal O}_D)=\chi(F),\\ 
%\mu(F):=& \frac{\chi(F)}{(c_1(F),H)},\\
\deg_E(F):=& \deg(E^{\vee} \otimes F)=
\rk(E) \deg F-(c_1(E),c_1(F))=\chi(E,F).
\end{split}
\end{equation}
\begin{NB}
\begin{lem}
Assume that there is no multiple fibers. Then
${\cal M}_H(0,rlf,dl)^{ss}$
consists of properly semi-stable sheaves which are $S$-equivalent to
$\oplus_{i=1}^l E_i$, $E_i \in {\cal M}_H(0,rf,d)^{ss}=
{\cal M}_H(0,rf,d)^{s}$.
In particular, $\dim {\cal M}_H(0,rlf,dl)^{ss}=l$.  
\end{lem}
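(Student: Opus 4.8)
The plan is to analyze a Jordan--H\"older filtration of a semi-stable sheaf of type $(0,rlf,dl)$ and to show that coprimality forces every stable factor to have the minimal fiber-supported class $(0,rf,d)$. Following the convention of Remark \ref{rem:isotropic}, I assume $(0,rf,d)$ is primitive, i.e. $\gcd(r,d)=1$. First I would record that for any sheaf $F$ whose support is contained in fibers one has $c_1(F)\in{\Bbb Z}_{\geq 0}f$, so that $(c_1(F),H_f)=(c_1(F),H)$; hence $H_f$-stability and $H$-stability coincide on the sheaves under consideration and I may work with $H$. Writing $h:=(f,H)>0$, a sheaf $E$ of type $(0,rlf,dl)$ has slope $\chi(E)/(c_1(E),H)=d/(rh)$, and every stable subquotient occurring in a Jordan--H\"older filtration of $E$ has this same slope.

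Let $G$ be such a stable factor. Being stable it is not a direct sum, so its support is connected; since the fibers over distinct points of $C$ are disjoint, $\Div(G)$ is supported on a single fiber $f_t$. Thus $G$ has class $(0,\rho f,\delta)$ with $\rho\geq 1$, and the slope condition $\delta/(\rho h)=d/(rh)$ together with $\gcd(r,d)=1$ forces $(\rho,\delta)=(mr,md)$ for some integer $m\geq 1$. The heart of the matter --- and the step I expect to be the main obstacle --- is to prove $m=1$. Here I would use that the fibers are integral of arithmetic genus one: on such a curve the (semi)stable sheaves of fixed slope are classified (Atiyah's description for a smooth fiber, and its analogue for a nodal or cuspidal integral fiber), and on reduced support a sheaf of rank $mr$ and degree $md$ with $m\geq 2$ is never stable, since $\gcd(mr,md)=m\geq 2$ produces a subsheaf of equal slope. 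The delicate point is to exclude a non-reduced scheme-theoretic support $\Div(G)=mf_t$; for this I would filter $G$ by the powers $I^jG$ with $I={\cal O}_X(-f_t)$, and exploit that, because there are no multiple fibers, ${\cal O}_X(f_t)|_{f_t}\cong{\cal O}_{f_t}$, so the successive quotients are sheaves on $f_t$ whose slopes are constrained by the stability of $G$; comparing their $(c_1,\chi)$ is intended to force $m\mid\gcd(r,d)$, hence $m=1$. Consequently $G$ has class $(0,rf,d)$ with reduced support $rf_t$.

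Granting this, the remaining steps are bookkeeping. Since the classes in $K(X)_{\topo}$ are additive in exact sequences, a Jordan--H\"older filtration of $E$ has exactly $l$ factors, each stable of type $(0,rf,d)$; hence $E$ is $S$-equivalent to $\bigoplus_{i=1}^l E_i$ with $E_i\in{\cal M}_H(0,rf,d)^s$. For $l\geq 2$ this exhibits $E$ as properly semi-stable, while for $l=1$ the same analysis shows no proper subsheaf of equal slope can exist, so in particular ${\cal M}_H(0,rf,d)^{ss}={\cal M}_H(0,rf,d)^s$.

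Finally, for the dimension I would treat $l=1$ first: by the support analysis above every stable sheaf of type $(0,rf,d)$ lives on a single fiber $f_t$, and over a fixed fiber the stable sheaves of coprime type $(r,d)$ form a one-dimensional family (a torsor under $\Pic^0(f_t)\cong f_t$); letting $t$ vary over the one-dimensional base $C$ yields a two-dimensional coarse space $M_H(0,rf,d)$ fibered over $C$, that is $\dim{\cal M}_H(0,rf,d)^s=1$ as a stack. For general $l$, the decomposition above identifies the open locus of sheaves isomorphic to a sum of $l$ distinct stable factors with the $l$-fold symmetric product of the one-dimensional stack ${\cal M}_H(0,rf,d)^s$, of dimension $l$, and the complementary loci (coincident factors or non-split extensions) are bounded by $l$ using Lemma \ref{lem:A-est1} and Lemma \ref{lem:fiber-dim}, exactly as in the proof of Proposition \ref{prop:isotropic}. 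Therefore $\dim{\cal M}_H(0,rlf,dl)^{ss}=l$.
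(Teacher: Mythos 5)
Your proposal is correct in substance, and it is worth noting that the paper states this lemma inside a suppressed note without an explicit proof: its intended justification is precisely the machinery you assemble, namely Lemma \ref{lem:relative-moduli} and its Corollary (every Jordan--H\"older factor is a stable sheaf on a single fiber with reduced structure and coprime invariants $(r,d)$) together with the stratified dimension count in the style of Proposition \ref{prop:isotropic}(3), via Lemma \ref{lem:fiber-dim} and Lemma \ref{lem:A-est1}. The one step where you genuinely diverge is the proof that $m=1$. The paper splits it exactly as you do, but handles both halves differently: for reduced support, Lemma \ref{lem:relative-moduli} runs your filtration idea in the form of the nonzero multiplication map $E(-kD) \to E$ played against the Harder--Narasimhan filtration of $E_{|D}/T$, using only that ${\cal O}_D(D)$ is \emph{numerically} trivial --- so the actual triviality of ${\cal O}_X(f_t)_{|f_t}$, hence the no-multiple-fiber hypothesis, is not needed for that half; and for coprimality on the reduced fiber, instead of invoking the classification of stable sheaves on integral genus-one curves (Atiyah and its nodal/cuspidal analogue, as you do), the paper realizes $f_t$ as a fiber of an auxiliary elliptic surface $X'$ \emph{with a section} and applies the relative Fourier--Mukai transform: a stable sheaf is a successive extension of kernel fibers ${\cal E}_{|X' \times \{y\}}$, which forces $\gcd(r,d)=1$ without quoting any classification theorem. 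Your route buys economy of ideas at the price of importing an external classification; the paper's buys self-containedness within its Fourier--Mukai framework. Two cautions on your write-up: your parenthetical reason that $\gcd(mr,md)=m \geq 2$ ``produces a subsheaf of equal slope'' is not by itself an argument --- non-coprimality does not formally yield a destabilizing subsheaf, and the statement is a genuine theorem about genus-one curves --- and your phrase that the filtration by $I^jG$ is ``intended to force $m \mid \gcd(r,d)$'' conflates the two halves: the filtration only shows the scheme-theoretic support is reduced, after which the classification (or the Fourier--Mukai argument) gives $m=1$. Also, on a singular integral fiber the stable sheaves of coprime type form the compactified Jacobian, which is isomorphic to $f_t$ but is not literally a $\Pic^0(f_t)$-torsor; the dimension count of $1$ per fiber, and hence your final estimate $2t-t=t \leq l$ with equality at $t=l$ distinct factors (where Lemma \ref{lem:fiber-dim} applies because $K_X$ restricts trivially to fibers in the absence of multiple fibers), is nonetheless exactly the paper's.
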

\end{NB}

\begin{defn}
A torsion free sheaf $E$ is $f$-semi-stable if
for all subsheaf $F \ne 0$ of $E$,
$$
\frac{(c_1(F),f)}{\rk F} \leq \frac{(c_1(E),f)}{\rk E}.
$$
If the inequality is strict for all subsheaf $F$ of $E$ with
$0<\rk F<\rk E$, then
$E$ is $f$-stable.
$f$-semi-stability of $E$ is equivalent 
to the semi-stability of the restriction $E \otimes k(\eta)$ of $E$ to 
the generic fiber.
Let
${\cal M}_f({\bf e})^{ss}$ 
be the stack of $f$-semi-stable sheaves $E$
with $\tau(E)={\bf e}$.
\end{defn}

\begin{rem}
${\cal M}_f({\bf e})^{ss}$ is not bounded in general.
\begin{NB}
For $E_1 \oplus E_2 \in {\cal M}_f({\bf e})^{ss}$,
$E_1(r_2 n f) \oplus E_2(-r_1 nf) \in {\cal M}_f({\bf e})^{ss}$
for all $n$, where $\rk E_i=r_i$.
\end{NB}
For a positive number $B$, 
let ${\cal M}_f({\bf e})_B^{ss}$
be the open substack of
 ${\cal M}_f({\bf e})^{ss}$ consisting of $E$
such that
for any subsheaf $F$ of $E$,
$$
\frac{(c_1(F),H)}{\rk F} \leq \frac{(c_1(E),H)}{\rk E}+B.
$$
Then ${\cal M}_f({\bf e})_B^{ss}$ is bounded 
(\cite{Ma:open}, \cite{Ma:bdd})
and
${\cal M}_f({\bf e})^{ss}=\cup_B {\cal M}_f({\bf e})_B^{ss}$.
\end{rem}

\begin{NB}
We set $\sigma:=H-m f$, where $m:=\frac{(H^2)}{2(H,f)}f$.
Then $(\sigma^2)=0$ and $H=\sigma+m f$.
Assume that $E$ is $f$-semi-stable but not $H+nf$-semi-stable.
Let $E_1$ be a subsheaf of $E$ such that 
$(\rk E c_1(E_1)-\rk E_1 c_1(E),H+nf) > 0$ and 
$(\rk E c_1(E_1)-\rk E_1 c_1(E),f) \leq 0$.
Then $\rk E c_1(E_1)-\rk E_1 c_1(E)=x \sigma+y f+D$ with
$x \leq 0$, $y \geq -\frac{(n+m)}{(H,f)}x$ and
$D \in \sigma^\perp \cap f^\perp$.
Then  $ ((\rk E c_1(E_1)-\rk E_1 c_1(E))^2)=2xy(H,f)+(D^2)
\leq -2(n+m)x^2+(D^2)$.
If there is a wall between $H+nf$ and $f$, then 
we can bound $- ((\rk E c_1(E_1)-\rk E_1 c_1(E))^2)$ by
$\Delta(E)$ (we use Bogomolov inequality). 
In this case, we may assume that $x<0$. Then
the choice of $x,D$ are bounded.
Moreover if $n \gg \Delta(E)$, then there is no choice of $x$.
Thus there is no wall.
Even if $x=0$ (which corresponds to 
$E \in {\cal M}_f({\bf e})^{ss} \setminus {\cal M}_{H_f}({\bf e})^{ss}$.
In this case, if 
$(\rk E c_1(E_1)-\rk E_1 c_1(E),H)$ is bounded, then we can also see that
the choice of $v(E_1)$ is bounded. 
\end{NB}

%
%$E \in {\cal M}_f({\bf e})^{ss}$ is $f$-stable, if
%there is no subsheaf $F$ with $\rk f<\rk E$ and
%$(c_1(F),f)/\rk F \geq (c_1(E),f)/\rk E$.

\begin{defn}
For a coherent sheaf $E$, we set
$$
\Delta(E):=2 \rk E c_2(E)-(\rk E-1)(c_1(E)^2).
$$
\end{defn}

\begin{lem}[Bogomolov inequality]\label{lem:Bogomolov}
If ${\cal M}_f({\bf e})^{ss} \ne \emptyset$, then 
$\Delta({\bf e})\geq 0$.
\end{lem}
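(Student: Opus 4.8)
The plan is to deduce the inequality from the classical Bogomolov inequality for $\mu$-semi-stable sheaves by passing to the tilted polarization $H_f=H+nf$ and then invoking the Hodge index theorem. First I would fix a torsion free sheaf $E$ with $\tau(E)={\bf e}$ which is $f$-semi-stable (such $E$ exists since ${\cal M}_f({\bf e})^{ss}\neq\emptyset$) and take its Harder--Narasimhan filtration $0=F_0\subset F_1\subset\cdots\subset F_t=E$ with respect to $\mu_{H_f}$-stability, writing $G_i:=F_i/F_{i-1}$, $r_i:=\rk G_i$ and $D_i:=c_1(G_i)$. Since $H_f$ is ample and each $G_i$ is $\mu_{H_f}$-semi-stable and torsion free, the usual Bogomolov inequality yields $\Delta(G_i)\ge 0$ for every $i$.

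The key structural point is that, after enlarging $n$ if necessary, all the factors $G_i$ share the $f$-slope of $E$. Indeed, for $n\gg 0$ the order induced by $\mu_{H_f}$ on subsheaves becomes lexicographic: it sorts first by $(c_1,f)/\rk$ and only then by $(c_1,H)/\rk$, so the $\mu_{H_f}$-Harder--Narasimhan filtration stabilizes to the refinement of the ($f$-slope) filtration by $H$-slope. As $E$ is $f$-semi-stable, this outer filtration is trivial with the single value $(c_1(E),f)/\rk E$, whence $(D_i,f)/r_i=(c_1(E),f)/\rk E$ for all $i$. Equivalently, setting $\xi_i:=D_i/r_i$, each difference $\xi_i-\xi_j$ lies in $f^{\perp}\subset\NS(X)\otimes{\Bbb Q}$.

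I would then combine two ingredients. The first is the standard discriminant identity for a filtration,
\[
\Delta(E)=r\sum_i\frac{\Delta(G_i)}{r_i}-\sum_{i<j}r_i r_j\left(\frac{D_i}{r_i}-\frac{D_j}{r_j}\right)^2,
\]
which follows by expanding $c_1(E)^2-2r\,\ch_2(E)$ and using additivity of $c_1$ and $\ch_2$ along the filtration. The second is the Hodge index theorem: because $f^2=0$, $f\neq 0$ and $(H,f)>0$, the intersection form is negative semi-definite on $f^{\perp}$, so $(\xi_i-\xi_j)^2\le 0$. Since $r/r_i>0$ and $\Delta(G_i)\ge 0$, the first sum is non-negative, and since each $(\xi_i-\xi_j)^2\le 0$ the second (subtracted) sum is non-negative as well; hence $\Delta(E)=\Delta({\bf e})\ge 0$.

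The hard part will be justifying $\Delta(G_i)\ge 0$ for the $\mu_{H_f}$-semi-stable factors: this is automatic over a field of characteristic $0$, but in positive characteristic the naive Bogomolov inequality can fail and one must instead appeal to a suitable strong-semi-stability or restriction result (e.g.\ Langer's) to secure $\Delta(G_i)\ge 0$. The remaining steps—the stabilization of the Harder--Narasimhan filtration for $n\gg 0$ and the negativity of the form on $f^{\perp}$—are robust and characteristic-free, so the whole difficulty is concentrated in that single classical input.
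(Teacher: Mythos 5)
Your proof is correct, but it uses a different decomposition than the paper's. Both arguments share the same skeleton: split $E$ into pieces whose $f$-slopes all equal $(c_1(E),f)/\rk E$, apply the classical Bogomolov inequality to each piece, and conclude via the discriminant identity, the cross terms being $\leq 0$ by the Hodge index theorem since the classes $\xi_i-\xi_j$ lie in $f^{\perp}$ (a point the paper leaves implicit). The paper's pieces are the Jordan--H\"{o}lder factors of $E$ with respect to $f$-stability (stable factors of the restriction to the generic fiber, saturated on $X$): equality of $f$-slopes is then automatic, Bogomolov applies because an $f$-stable sheaf is $H_f$-stable, and one inducts on the rank using the two-term identity. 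Your pieces are instead the $\mu_{H_f}$-Harder--Narasimhan factors for $n\gg 0$, fed into the multi-term identity in one shot; the price is the stabilization claim that these factors share the $f$-slope of $E$, which you assert rather than prove. It is true, and provable by the same large-$n$ mechanism the paper needs for ``$f$-stable $\Rightarrow$ $H_f$-stable'': if the maximal destabilizing subsheaf $F$ had $\mu_f(F)<\mu_f(E)$, then $\mu_f(E)-\mu_f(F)\geq 1/r^2$ together with $\mu_{H_f}(F)\geq\mu_{H_f}(E)$ and $f$-semi-stability would force $\mu_H(F)\geq \mu_H(E)+n/r^2$, contradicting $\mu_H(F)\leq \mu_{\max,H}(E)$ once $n$ is large; one then iterates on the quotient $E/F$, which is again $f$-semi-stable of the same $f$-slope. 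Note that enlarging $n$ is harmless here only because the conclusion $\Delta({\bf e})\geq 0$ does not involve $n$: since ${\cal M}_f({\bf e})^{ss}$ is unbounded, no uniform $n$ works for all sheaves, so one must fix a single $E$ first, as you do. Finally, your characteristic-$p$ caveat applies equally to the paper's proof, which also quotes the Bogomolov inequality for $H_f$-stable sheaves without comment; Langer's theorem (or lifting to characteristic $0$) fills this gap in both arguments.
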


\begin{proof}
If $E$ is a $f$-stable sheaf $E$, then
it is $H_f$-stable, and hence
$\Delta(E) \geq 0$ by the Bogomolov inequality.

We next treat the general case.
We note that
$E \in {\cal M}_f({\bf e})^{ss}$
is a succesive extension of 
$f$-stable sheaves $E_i$ with 
$(c_1(E_i),f)/\rk E_i=(c_1(E),f)/\rk E$.
For an extension 
$$
0 \to E_1 \to E \to E_2 \to 0
$$
of $E_i \in {\cal M}_f({\bf e}_i)^{ss}$ with
$(c_1(E_1),f)/\rk E_1=(c_1(E_2),f)/\rk E_2$,
we have
\begin{equation}
\begin{split}
\Delta(E)=&
\rk E \frac{\Delta(E_1)}{\rk E_1}+\rk E \frac{\Delta(E_2)}{\rk E_2}-
\frac{((\rk E_1 c_1(E_2)-\rk E_2 c_1(E_1))^2)}{\rk E_1 \rk E_2}\\
\geq & \rk E \frac{\Delta(E_1)}{\rk E_1}+\rk E \frac{\Delta(E_2)}{\rk E_2}.
\end{split}
\end{equation}
Hence by the induction of $\rk E$, we get the claim.
\end{proof}

\begin{lem}\label{lem:fiber-stable}
For $E \in {\cal M}_f({\bf e})^{ss}$,
there is an exact sequence
\begin{equation}\label{eq:fiber}
0 \to \widetilde{E} \to E \to F \to 0
\end{equation}
such that 
\begin{enumerate}
\item[(i)]
$\widetilde{E}_{|D}$ is a stable purely 1 dimensional 
sheaf for every fiber $D$ with reduced structure, 
\item[(ii)]
$F$ is a purely 1 dimensional sheaf supported on
fibers and 
\item[(iii)] 
$\Hom(E',F)=0$ if $E'$ is a coherent sheaf of rank $r$ on $X$
such that $E'_{|D}$ is a semi-stable sheaf of degree $(c_1(E),D)$ 
for every $D$.
\end{enumerate}
By these properties, $\widetilde{E}$ and $F$ are 
uniquely determined by $E$.   
\end{lem}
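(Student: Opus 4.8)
The plan is to build $\widetilde{E}$ from $E$ by a finite sequence of elementary modifications along the fibres on which the restriction of $E$ is impure or unstable, to use the Bogomolov inequality (Lemma \ref{lem:Bogomolov}) to see that the process terminates, and then to extract uniqueness directly from the characterising property (iii). Throughout I work under the coprimality that is standing in the applications, namely $\gcd(r,d)=1$ with $r=\rk E$ and $d=(c_1(E),f)$ (cf.\ Remark \ref{rem:isotropic}); this is what makes fibrewise stability attainable, since on an irreducible genus one fibre \emph{stable} and \emph{semi-stable} of the relevant slope then coincide and no proper subsheaf of $E_{|D}$ has slope exactly $(c_1(E),D)/r$.

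For existence, I first note that $f$-semi-stability of $E$ means $E_{|X_\eta}$ is semi-stable on the generic fibre, so $E_{|D}$ is pure and stable for all but finitely many fibres $D$. At a fibre $D$ where this fails, let $Q$ be a pure semi-stable quotient of $E_{|D}$ of slope strictly smaller than $(c_1(E),D)/r$ (for instance the minimal-slope Harder--Narasimhan factor of $E_{|D}$ when the latter is unstable). Setting $E':=\ker(E\twoheadrightarrow E_{|D}\twoheadrightarrow Q)$ gives
\begin{equation}
0\to E'\to E\to Q\to 0,
\end{equation}
with $\rk E'=r$, $(c_1(E'),f)=(c_1(E),f)$ because $(D,f)=0$, and $E'_{|X_\eta}\cong E_{|X_\eta}$; moreover $(c_1(E'),H)<(c_1(E),H)$ since $Q$ is supported on $D$ and $(D,H)>0$. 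Because the generic fibre is fixed and $\Delta(-)\geq 0$ by Lemma \ref{lem:Bogomolov}, the subsheaves produced in this way form a bounded family, so $(c_1(-),H)$ cannot decrease forever and the process terminates at a sheaf $\widetilde{E}$ whose restriction to every reduced fibre is pure and stable, which is (i). The quotient $F:=E/\widetilde{E}$ is a successive extension of the various $Q$'s, hence purely $1$-dimensional and supported on finitely many fibres, which is (ii).

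For (iii), write $F$ as a successive extension of sheaves $Q_j$, each supported on a fibre $D_j$ and fibrewise semi-stable of slope $<(c_1(E),D_j)/r$. If $E'$ has rank $r$ with $E'_{|D}$ semi-stable of degree $(c_1(E),D)$ for every $D$, then by adjunction any homomorphism $E'\to Q_j$ factors through $E'_{|D_j}\to Q_j$, a map from a semi-stable sheaf of slope $(c_1(E),D_j)/r$ to one of strictly smaller slope; such a map is $0$. Running over the filtration yields $\Hom(E',F)=0$, which is (iii). Uniqueness then follows formally: given two sequences $0\to\widetilde{E}_i\to E\to F_i\to 0$ $(i=1,2)$, disjointness of fibres gives $(c_1(F_1),D)=0$ and hence $(c_1(\widetilde{E}_1),D)=(c_1(E),D)$ for every fibre $D$, so together with (i) the sheaf $\widetilde{E}_1$ is an admissible test object in (iii) for $F_2$; thus $\Hom(\widetilde{E}_1,F_2)=0$, the composite $\widetilde{E}_1\hookrightarrow E\to F_2$ vanishes, $\widetilde{E}_1\subseteq\widetilde{E}_2$, and by symmetry $\widetilde{E}_1=\widetilde{E}_2$, $F_1=F_2$.

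The main obstacle is the existence step. One must organise the modifications at the different bad fibres into a single terminating process producing a sheaf that is stable on \emph{every} reduced fibre at once, so that a modification at one fibre does not spoil another; one must control the removal of the $0$-dimensional torsion (equivalently, the non-local-freeness of $E$ along $D$) so that $\widetilde{E}_{|D}$ is genuinely pure; and for the tame multiple fibres one must argue that stability of $\widetilde{E}_{|D_{\mathrm{red}}}$ propagates to semi-stability of $\widetilde{E}_{|D}$ on the non-reduced $D$, so that $\widetilde{E}$ qualifies as a test object in (iii). I expect the boundedness furnished by Lemma \ref{lem:Bogomolov} to drive the termination, with the strict decrease of $(c_1(-),H)$ (refined if necessary to a lexicographic invariant $((c_1(-),H),\Delta(-))$) making each step genuinely progress.
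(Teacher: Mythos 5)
Your overall route coincides with the paper's: the paper builds $\widetilde{E}$ by exactly the same iterated elementary modification $E'=\ker(E\to E_{|D}\to G)$ with $G$ a semi-stable $1$-dimensional quotient of $\deg_E(G)<0$, obtains $F$ as a successive extension of the sheaves $G$, and deduces (iii) from the same fibrewise slope comparison; your uniqueness argument (using $\widetilde{E}_1$ as an admissible test object in (iii) for $F_2$, with Lemma \ref{lem:nD-stability} to handle the non-reduced fibres) is precisely the formal argument the paper leaves implicit, and your standing coprimality $\gcd(r,d)=1$, which upgrades the semi-stability that the iteration actually delivers on each reduced fibre to the stability asserted in (i), is consistent with how the lemma is applied later. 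The one genuine defect is your termination step.

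You claim the intermediate subsheaves form a bounded family \emph{because} the generic fibre is fixed and $\Delta(-)\geq 0$, whence $(c_1(-),H)$ is bounded below. That implication is false: the sheaves $E(-kf)$, $k\geq 0$, are $f$-semi-stable, have the same restriction to the generic fibre (since ${\cal O}_X(f)$ is trivial there) and the same $\Delta\geq 0$, yet $(c_1(E(-kf)),H)\to -\infty$; indeed the paper explicitly remarks that ${\cal M}_f({\bf e})^{ss}$ is not bounded in general. What actually stops the process is the constraint $\deg_E(G)<0$, which your boundedness argument never uses. The paper makes it quantitative: since $c_1(G)$ is vertical, $(c_1(G)^2)=(c_1(G),K_X)=0$, and a direct Chern character computation gives $\Delta(E')=\Delta(E)+2\deg_E(G)$; as $\deg_E(G)=\chi(E,G)$ is a negative \emph{integer}, each modification drops $\Delta$ by at least $2$, while $E'$ remains $f$-semi-stable of the same rank and fibre degree, so Lemma \ref{lem:Bogomolov} keeps $\Delta\geq 0$ at every stage and the process halts after at most $\Delta(E)/2$ steps. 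You gesture at a lexicographic invariant $((c_1(-),H),\Delta(-))$ at the end, but without establishing the strict decrease of $\Delta$ that invariant does not help; supplying the identity $\Delta(E')=\Delta(E)+2\deg_E(G)$ is the missing idea, and it also dissolves your worry about modifications at different fibres interfering, since a single non-negative integer controls all fibres and all iterations at once (in any case $E'$ agrees with $E$ away from $D$, so a modification at one fibre cannot spoil another).
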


\begin{proof}
If $E_{|D}$ is not purely 1-dimensional or
purely 1-dimensional but not semi-stable, then
we take a surjective homomorphism 
$\phi:E \to E_{|D} \to G$ such that
$G$ is a semi-stable 1-dimensional sheaf with 
$\deg_E(G)<0$. 
We set $E':=\ker \phi$.
Then $E'$ is a $f$-semi-stable sheaf with
$\rk E'=\rk E$ and $(c_1(E'),D)=(c_1(E),D)$.
If $E'_{|D}$ is not semi-stable, then
we continue the same procedure.    
Since 
$$
0 \leq \Delta(E')=\Delta(E)+2\deg_E G<\Delta(E),
$$
we finally get a desired subsheaf $\widetilde{E}$ of $E$.
We set $F:=E/\widetilde{E}$.
Since $F$ is a successive extension
of semi-stable 1-dimensional sheaves $G$
with $\deg_E(G)<0$,
we have $\Hom(E',F)=0$.
\end{proof}

For the quotient $F$ of $E$ in \eqref{eq:fiber},
let 
\begin{equation}\label{eq:HN}
0 \subset F_1 \subset F_2 \subset \cdots \subset F_s=F
\end{equation}
be the Harder-Narasimhan filtration
of $F$ with respect to $H$.
We remark that semi-stability is independent of the choice of $H$
by the irreducibility of fibers of $\pi$.
Then 
\begin{equation}\label{eq:F_i}
\Hom(F_i/F_{i-1},F_j/F_{j-1}(K_X))=0,\;i<j.
\end{equation}
By the construction of $F$,
$\deg_E(F_i/F_{i-1})<0$ for all $i$.
In particular,
\begin{equation}\label{eq:{E}}
\Hom(\widetilde{E},F_i/F_{i-1}(K_X))=0
\end{equation}
for all $i$.
Let 
\begin{equation}
F_{i-1}=F_{i-1,0} \subset F_{i-1,1} \subset \cdots 
\subset F_{i-1,n(i-1)}=F_i=F_{i,0}
\end{equation}
be a filtration of $F_i$ such that $F_{i,j}/F_{i,j-1}$ are stable sheaves,
thus $F_i/F_{i-1}$ is $S$-equivalent to
$\oplus_{j=1}^{n(i-1)}F_{i-1,j}/F_{i-1,j-1}$.
We set $E_{i,j}:=\ker(E \to F/F_{i,j})$.
Then we have a filtration
$$
0 \subset \widetilde{E}=E_{0,0} \subset E_{0,1} \subset 
\cdots \subset E_{0,n(0)}=E_{1,0} \subset E_{1,1} \subset \cdots \subset
E_{s-1,n(s-1)}=E_s=E 
$$
such that $E_{i,j}/E_{i,j-1} \cong F_{i,j}/F_{i,j-1}$.
By Lemma \ref{lem:relative-moduli},
$F_{i,j}/F_{i,j-1}$ are stable sheaves on a 
reduced and irreducible divisor $D_{ij}$.
Since $E_{i,j}$ are torsion free and
$E_{i,j} \to F_{i,j}/F_{i,j-1}$ are surjective, we have
$$
\rk E=\rk E_{i,j} \geq \rk F_{i,j}/F_{i,j-1}.
$$
Let ${\bf f}_i \in K(X)_{\topo}$ be the class of $F_i/F_{i-1}$
and $\widetilde{\bf e} \in K(X)_{\topo}$
the class of $\widetilde{E}$.
We set 
$$
(r_{ij} D_{ij},d_{ij}):=
(c_1(F_{ij}/F_{i,j-1}),\chi(F_{ij}/F_{i,j-1})),
$$
 where 
$r_{ij},d_{ij} \in {\Bbb Z}$ and
$\gcd(r_{ij},d_{ij})=1$.
%We set $\mu_{\min}(E):=d_s /r_s$.
Then 
\begin{equation}
\begin{split}
 0<& r_{ij} \leq   r,\\
-\deg_E(F_{i,j}/F_{i,j-1})= & r_{ij} (c_1(E),D_{ij})-r d_{ij}>0,\\
\Delta(E) = & 2\sum_{i,j} (r_{ij} (c_1(E),D_{ij})-r d_{ij})
+\Delta(\widetilde{E}).
\end{split}
\end{equation}
\begin{NB}
$$
\frac{\chi_E(F_{i,j}/F_{i,j-1})}{(c_1(F_{i,j}/F_{i,j-1}),H)}=
-\frac{r_{ij}(c_1(E),D_{ij})-rd_{ij}}{r_{ij}(D_{ij},H)}=
-\frac{(D_{ij},c_1(E))}{(D_{ij},H)}+\frac{r d_{ij}}{r_{ij}(D_{ij},H)}
$$
and the first term is independent of the choice of$i,j$.
\end{NB}
Hence we see that the choice of ${\bf f}_i$ is finite.
\begin{NB}
$m_{ij} D_{ij}$ is the class of a fiber,
where $m_{ij}$ is the multiplicity.
In particular, $\deg(E_{|\pi^{-1}(t)})/m \leq 
(D_{ij},c_1(E)) \leq 
\deg(E_{|\pi^{-1}(t)})$, where $\pi$ is smooth over
$t$ and $m$ is the maximum of the multiplicity. 
\end{NB}

\begin{prop}\label{prop:codim-general}
Let ${\cal F}(\widetilde{\bf e},{\bf f}_1,...,{\bf f}_s)$
be the stack of filtrations
\begin{equation}
0 \subset \widetilde{E}=\widetilde{F}_0
 \subset \widetilde{F}_1 \subset \widetilde{F}_2
\subset \cdots \subset \widetilde{F}_{s-1} \subset 
\widetilde{F}_s=E
\end{equation}
such that
$\widetilde{E} \in {\cal M}_f(\widetilde{\bf e})^{ss}$ satisfies
(i) in Lemma \ref{lem:fiber-stable} and
$\widetilde{F}_i/\widetilde{F}_{i-1} \in {\cal M}_H({\bf f}_i)^{ss}$
for all $i$.
Then 
\begin{equation}
\dim {\cal F}(\widetilde{\bf e},{\bf f}_1,...,{\bf f}_s)
=-\sum_i \chi({\bf f}_i,\widetilde{\bf e})+
\dim {\cal M}_f(\widetilde{\bf e})^{ss}+
\sum_i {\cal M}_H({\bf f}_i)^{ss}.
\end{equation}
\end{prop}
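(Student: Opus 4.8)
The plan is to build the filtration one step at a time from the bottom and apply Lemma~\ref{lem:A-est1} at each stage. Writing $\widetilde{F}_0=\widetilde{E}$ and $G_k:=\widetilde{F}_k$, the inclusion $0\subset G_{k-1}\subset G_k$ is an extension of a sheaf of class ${\bf f}_k$ by one of class $\widetilde{\bf e}+\sum_{i<k}{\bf f}_i$. Applying Lemma~\ref{lem:A-est1} to this two-step filtration, the fibre of the forgetful map recording $(G_{k-1},G_k/G_{k-1})$ has dimension $\langle\widetilde{\bf e}+\sum_{i<k}{\bf f}_i,\,{\bf f}_k\rangle+n_k$, where $n_k=\dim\Hom(G_{k-1},(G_k/G_{k-1})(K_X))$. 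Iterating (equivalently, invoking \cite[Lem.~5.3]{K-Y} exactly as in the proof of Lemma~\ref{lem:stable}, restricted to the relevant open semi-stable substacks) yields
\[
\dim{\cal F}(\widetilde{\bf e},{\bf f}_1,\dots,{\bf f}_s)
=\dim{\cal M}_f(\widetilde{\bf e})^{ss}+\sum_i\dim{\cal M}_H({\bf f}_i)^{ss}
+\sum_k\Big(\langle\widetilde{\bf e},{\bf f}_k\rangle+\sum_{i<k}\langle{\bf f}_i,{\bf f}_k\rangle\Big)+\sum_k n_k .
\]

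Next I would show that every correction term $n_k$ vanishes, which is what upgrades the count to the stated equality. The vanishing $\Hom(\widetilde{E},(F_i/F_{i-1})(K_X))=0$ (a consequence of $\deg_E(F_i/F_{i-1})<0$ together with property (i) of Lemma~\ref{lem:fiber-stable}) and the slope vanishing \eqref{eq:F_i} give $\Hom(\widetilde{E},(G_k/G_{k-1})(K_X))=0$ and $\Hom(F_i/F_{i-1},(G_k/G_{k-1})(K_X))=0$ for $i<k$. Feeding the defining extensions of $G_{k-1}$ into the long exact $\Hom$-sequences, each piece sits between two vanishing $\Hom$-groups, which forces $\Hom(G_{k-1},(G_k/G_{k-1})(K_X))=0$, i.e. $n_k=0$. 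Since these $\Hom$-groups vanish identically over the whole stack, we never leave the $n=0$ stratum of Lemma~\ref{lem:A-est1}, so the formula is an exact equality and not merely an inequality.

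Finally I would evaluate the pairing sum. Each ${\bf f}_i$ has rank $0$ and first Chern class supported on fibres of $\pi$; since every fibre is irreducible and fibre classes satisfy $(f,f)=0$, any two such divisors meet in intersection number $0$, whence $\langle{\bf f}_i,{\bf f}_j\rangle=(c_1(F_i/F_{i-1}),c_1(F_j/F_{j-1}))=0$ for all $i,j$. Thus the double sum $\sum_{i<k}\langle{\bf f}_i,{\bf f}_k\rangle$ drops out entirely. Using the symmetry of the Mukai pairing and the identity $\chi(\,\cdot\,,\,\cdot\,)=-\langle\,\cdot\,,\,\cdot\,\rangle$, I rewrite $\sum_k\langle\widetilde{\bf e},{\bf f}_k\rangle=\sum_k\langle{\bf f}_k,\widetilde{\bf e}\rangle=-\sum_k\chi({\bf f}_k,\widetilde{\bf e})$, which produces the claimed formula.

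The step I expect to be the main obstacle is the exactness, namely confirming that all the $\Hom$-correction terms vanish identically (not just generically), so that the iterated Lemma~\ref{lem:A-est1} outputs an equality. This rests on propagating the slope/degree vanishings $\eqref{eq:F_i}$ and $\Hom(\widetilde{E},(F_i/F_{i-1})(K_X))=0$ through all the partial filtrations $G_{k-1}$, and on the clean vanishing of the fibre intersection pairing; once both are in hand, the bilinear algebra is routine.
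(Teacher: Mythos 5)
Your proof is correct and is essentially the paper's own argument: the paper records the Serre-dual vanishings $\Ext^2(F_j/F_{j-1},F_i/F_{i-1})=0$ for $i<j$ and $\Ext^2(F_i/F_{i-1},\widetilde{E})=0$ coming from \eqref{eq:F_i} and \eqref{eq:{E}}, invokes the proof of \cite[Lem. 5.3]{K-Y} (which is precisely the iterated form of Lemma \ref{lem:A-est1}, i.e.\ your step-by-step d\'{e}vissage with all correction terms $n_k=0$), and then discards the cross terms $\chi({\bf f}_j,{\bf f}_i)=0$ because the $c_1({\bf f}_i)$ are fibre classes, exactly as you do. The only cosmetic difference is that in this general elliptic-surface setting the paper works directly with $\chi(\;,\;)$ rather than the Mukai pairing $\langle\;,\;\rangle$; your conversions between the two are harmless here since the ${\bf f}_i$ and $K_X$ are supported on fibres.
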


\begin{proof}
By \eqref{eq:F_i}, \eqref{eq:{E}} and the Serre duality,
we have
\begin{equation}
\begin{split}
\Ext^2(F_j/F_{j-1},F_i/F_{i-1})=0,\;i<j\\
\Ext^2(F_i/F_{i-1},\widetilde{E})=0,\;1 \leq i \leq s.
\end{split}
\end{equation}
Then the proof of \cite[Lem. 5.3]{K-Y} implies that
\begin{equation}
\begin{split}
\dim {\cal F}(\widetilde{\bf e},{\bf f}_1,...,{\bf f}_s)
=& -\sum_i \chi({\bf f}_i,\widetilde{\bf e})-
\sum_{i<j}\chi({\bf f}_j,{\bf f}_i)+
\dim {\cal M}_f(\widetilde{\bf e})^{ss}+
\sum_i {\cal M}_H({\bf f}_i)^{ss}\\
=& -\sum_i \chi({\bf f}_i,\widetilde{\bf e})+
\dim {\cal M}_f(\widetilde{\bf e})^{ss}+
\sum_i {\cal M}_H({\bf f}_i)^{ss}.\\
\end{split}
\end{equation}
\end{proof}

\begin{lem}\label{lem:relative-moduli}
Let $D$ be a reduced and irreducible curve on $X$ with $(D^2)=0$.
For an element $G_1 \in K(X)$ with
$\rk G_1>0$,
let $E$ be a $G_1$-twisted stable purely 1-dimensional sheaf such that
$\Div(E)=rD$ and $\chi(E)=d$.
Then $E$ is a stable sheaf on $D$.
In particular $\gcd(r,d)=1$. 
\end{lem}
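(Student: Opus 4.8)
The plan is to strip the twist $G_1$ and the polarization $H$ out of the stability condition, reducing everything to an intrinsic slope on sheaves supported on multiples of the single integral curve $D$, and then to exploit $(D^2)=0$ to forbid any generic thickening of $E$ along $D$; the coprimality will come out of the genus-one geometry of $D$.

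First I would record that every nonzero subsheaf $F\subseteq E$ is again purely $1$-dimensional with support on $D$, so $c_1(F)=m_FD$, where $m_F>0$ is the length of $F$ at the generic point of $D$, and $\Div(E)=rD$ means $m_E=r$. Since $(c_1(F),K_X)=0$, Riemann--Roch gives $\chi(G_1,F)=\rk(G_1)\chi(F)-m_F(c_1(G_1),D)$ and $(c_1(F),H)=m_F(D,H)$, so the twisted slope is
\[
\frac{\chi(G_1,F)}{(c_1(F),H)}=\frac{\rk(G_1)}{(D,H)}\cdot\frac{\chi(F)}{m_F}-\frac{(c_1(G_1),D)}{(D,H)}.
\]
Because $\rk(G_1)>0$, $(D,H)>0$ and the final term is independent of $F$, the $G_1$-twisted stability of $E$ is equivalent to the intrinsic condition $\beta(F):=\chi(F)/m_F<\chi(E)/r=d/r$ for every proper nonzero subsheaf $F$, and dually $\beta(Q)>d/r$ for every proper nonzero quotient $Q$. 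In particular neither $G_1$ nor $H$ plays any role here.

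Next I would rule out a thickening. Let $s$ be the section of ${\cal O}_X(D)$ cutting out $D$ and consider $E(-D)\xrightarrow{\,s\,}E$; set $K:=\im s$, $Q:=E/K=E\otimes{\cal O}_D$ and $T:=\ker s=\operatorname{Tor}_1^{{\cal O}_X}(E,{\cal O}_D)$, the latter two being ${\cal O}_D$-modules. Along $D$ the sheaves $T$ and $Q$ both have generic rank $n:=\rk_D Q$, while $m_K=r-n$; a thickening is exactly the condition $K\neq0$, i.e.\ $n<r$. From $0\to T\to E(-D)\to K\to0$, $0\to K\to E\to Q\to0$ and $\chi(E(-D))=\chi(E)-r(D^2)=\chi(E)$ I obtain $\chi(T)=\chi(E)-\chi(K)=\chi(Q)$. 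Twisting $T\subseteq E(-D)$ by ${\cal O}_X(D)$ produces a proper nonzero subsheaf $T(D)\subseteq E$ with $m_{T(D)}=n$ and $\chi(T(D))=\chi(T)+n(D^2)=\chi(T)$, so $\beta(T(D))=\chi(T)/n=\chi(Q)/n=\beta(Q)$. But $\beta$-stability forces $\beta(T(D))<d/r$ while the proper quotient $Q$ forces $\beta(Q)>d/r$, a contradiction. Hence $K=0$, $s$ annihilates $E$, and $E$ is an ${\cal O}_D$-module.

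Finally, $E$ is then a torsion-free sheaf of rank $r$ on the integral curve $D$ with $\chi(E)=d$. Since $D$ is a fibre, $(D^2)=(D,K_X)=0$, so $p_a(D)=1$, $\chi({\cal O}_D)=0$, and $\beta$ coincides with the usual slope $\chi/\rk_D$ on $D$; thus the $\beta$-stability already proved is exactly slope-stability of $E$ on $D$. Coprimality $\gcd(r,d)=1$ then follows because a stable sheaf on an integral curve of arithmetic genus one has coprime rank and degree (Atiyah on a smooth fibre, together with the spectral/relative Fourier--Mukai description on the singular integral fibres). I expect the thickening step to be the main obstacle: the equality $\chi(T)=\chi(Q)$ is precisely where $(D^2)=0$ is indispensable, and pairing the socle $T(D)$ against the cosocle $Q$ is the device that converts a putative thickening into a violation of stability; the coprimality statement is best handled by invoking the genus-one classification rather than reproving it.
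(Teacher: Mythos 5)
Your proof is correct, and its central step is genuinely different from the paper's. Both proofs begin (yours explicitly, the paper's implicitly) by observing that every subsheaf of $E$ has $c_1=mD$, so that for $\rk G_1>0$ the twisted slope is an increasing affine function of $\chi/m$ and stability becomes the intrinsic condition $\chi(F)/m_F<d/r$. To show $E$ is an ${\cal O}_D$-module, the paper argues through the Harder--Narasimhan filtration of $E_{|D}$ modulo torsion: stability applied to the quotient onto the last HN factor gives $d/r\le d_s/r_s$, while the minimal $k$ with $s^{k+1}E=0$ yields a nonzero map $(F_i/F_{i-1})(-kD)\to E$ and hence $d_i/r_i\le d/r$; the forced equality makes $E\to F_s/F_{s-1}$ an isomorphism. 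You instead study multiplication by the defining section $s$ of $D$: with $T=\ker$, $K=\im$, $Q=\coker=E_{|D}$, the hypothesis $(D^2)=0$ gives $\chi(E(-D))=\chi(E)$, hence $\chi(T)=\chi(Q)$, and $T(D)$ and $Q$ have the same multiplicity $n$ along $D$; if $K\ne 0$ then the proper subsheaf $T(D)\subset E$ and the proper quotient $Q$ carry the same slope, contradicting stability. Your route is shorter, avoids both the HN filtration and the thickening exponent, and isolates exactly where $(D^2)=0$ enters; the paper's version is of a piece with the HN estimates it uses throughout Section 3 and identifies $E$ concretely as the minimal-slope HN factor of $E_{|D}$. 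On coprimality the two proofs lean on the same external input, namely the Fourier--Mukai theory of sheaves on integral genus-one fibers: the paper makes it concrete by embedding $D$ as a fiber of a Weierstrass surface $X'$ with a section and using the fine moduli space $Y=M_{H'}(0,r'f,d')$ to exhibit a hypothetical non-coprime stable $E$ as a successive extension of equal-slope sheaves ${\cal E}_{|X'\times\{y\}}$ (impossible for a stable sheaf), whereas you quote the classification (Atiyah on smooth fibers, the FM/spectral description on singular integral ones); this is the same fact, stated less constructively, and is no more of a gap than the paper's own appeal to the general theory. Note finally that both proofs use $p_a(D)=1$, i.e.\ $(D,K_X)=0$, which comes from the elliptic-fibration context rather than from the lemma's stated hypothesis $(D^2)=0$; you flag this correctly by saying $D$ is a fiber.
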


\begin{proof}
We note that ${\cal O}_D(D)$ is a numerically trivial
line bundle on $D$.
Let $T$ be the torsion submodule of $E_{|D}$.
Then $E':=E_{|D}/T$ has the Harder-Narasimhan filtration
\begin{equation*}
0=F_0 \subset F_1 \subset F_2 \subset \cdots \subset F_s=E'.
\end{equation*}
We set $(c_1(F_i/F_{i-1}),\chi(F_i/F_{i-1})):=(r_i D,d_i)$.
Then $r_i \in {\Bbb Z}_{>0}$, $\sum_{i=1}^s r_i \leq r$ and
$$
\frac{d_1}{r_1}>\frac{d_2}{r_2}>\cdots > \frac{d_s}{r_s}.
$$
By the $G_1$-twisted stability of $E$,
we have
\begin{equation*}
0=\frac{(\rk G_1) d-(c_1(G_1),rD)}{(rD,H)} \leq 
\frac{(\rk G_1) d_s-(c_1(G_1),r_s D)}{(r_s D,H)}.
\end{equation*}
Hence $d/r \leq d_s/r_s$.

There is a positive integer $k$ such that
$E$ is an ${\cal O}_{(k+1)D}$-module and
$E(-kD) \overset{kD}{\to} E$ is non-zero.
Then we have a non-zero homomorphism
$E_{|D}(-kD) \to E$.
Then $\Hom(F_i/F_{i-1}(-kD),E) \ne 0$ for some $i$,
which implies $d_i/r_i \leq d/r$.
Then $i=s$ and $d/r=d_s/r_s$.
By the stability of $E$,
$E \to F_s/F_{s-1}$ is an isomorphism.
In particular, $E$ is a stable sheaf on $D$.
%If $E$ is not a stable sheaf on $D$, then $E$
%s properly semi-stable with respect to 
%Simpson stabilty.
%Hence $E$ is a stable sheaf.
Since $D$ is a reduced and irreducible curve of $g(D)=1$,  
there is an elliptic surface $X'$ with a section 
such that $D$ is a fiber.
We set $m:=\gcd(r,d)$ and $(r',d'):=(r/m,d/m)$.
For a general polarization $H'$ on $X'$,
we consider the moduli space $Y:=M_{H'}(0,r'f,d')$ of 
stable sheaves $F$ of dimension 1 on $X$
whose Chern character is $(0,r' f,d')$, where
$f$ is a fiber. Let ${\cal E}$ be a universal family.
Then by a general theory of Fourier-Mukai transform,
we see that $E$ is a successeive extension of 
${\cal E}_{|X' \times \{ y \}}$ $(y \in Y)$.
Since $E$ is stable, $m=1$. Therefore $\gcd(r,d)=1$. 
\end{proof}

\begin{NB}
The case where $(c_1(E),\chi_{G_1}(E))=(rD,d)$ and $D$ is reducible 
affine A,D,E configuration.

\begin{lem}%\label{lem:relative-moduli}
Let $mD$ be a fiber of an elliptic fibration, where
$m$ is the multiplicity.
Let $E$ be a $G_1$-twisted stable purely 1-dimensional sheaf with 
$(c_1(E),\chi_G(E))=(rD,d)$ whose support is $D$.
Then $E$ is a $G_1$-twisted stable sheaf on $D$. 
\end{lem}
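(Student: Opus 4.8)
The plan is to follow the first part of the proof of Lemma~\ref{lem:relative-moduli} almost verbatim, after observing that the only geometric input used there is the numerical triviality of the line bundle $\mathcal{O}_D(D)$ on $D$. Since $mD$ is a fibre, $(mD)^2=0$ forces $(D^2)=0$, so $\mathcal{O}_D(D)$ is again a degree-$0$ line bundle on the reduced irreducible genus-$1$ curve $D$. The new feature is that $\mathcal{O}_D(D)$ is now a nontrivial $m$-torsion element of $\Pic^0(D)$ rather than the trivial bundle; this is harmless, because at every step we compare only slopes, and twisting by a degree-$0$ line bundle preserves both semistability and the Harder--Narasimhan slopes.

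First I would restrict $E$ to $D$, kill the torsion to form $E':=E_{|D}/T$, and take its Harder--Narasimhan filtration $0=F_0\subset F_1\subset\cdots\subset F_s=E'$, writing $(c_1(F_i/F_{i-1}),\chi(F_i/F_{i-1}))=(r_iD,d_i)$ with $d_1/r_1>\cdots>d_s/r_s$ and $\sum_i r_i\le r$. Exactly as in Lemma~\ref{lem:relative-moduli}, applying $G_1$-twisted stability of $E$ to the quotient $E\twoheadrightarrow E'\twoheadrightarrow F_s/F_{s-1}$ yields $d/r\le d_s/r_s$.

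Next I would run the thickening argument. Let $k+1$ be the thickness of $E$, so that multiplication by the $k$-th power of the defining section of $D$ induces a nonzero homomorphism $E_{|D}(-kD)\to E$ whose image is an $\mathcal{O}_D$-submodule of $E$. Because $\mathcal{O}_D(D)$ has degree $0$, the HN factors of $E_{|D}(-kD)$ are the $F_i/F_{i-1}(-kD)$, with the same slopes $d_i/r_i$; hence a nonzero map from one of them into the $G_1$-twisted stable sheaf $E$ forces $d_i/r_i\le d/r$. Combined with $d_1/r_1>\cdots>d_s/r_s$ and $d/r\le d_s/r_s$, this pins down $i=s$ and $d/r=d_s/r_s$. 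Then the quotient $E\twoheadrightarrow F_s/F_{s-1}$ has twisted slope equal to that of $E$, so by the strict $G_1$-twisted stability of $E$ it is an isomorphism. Thus $E\cong F_s/F_{s-1}$ is a sheaf on the reduced curve $D$, and since the notions of subsheaf and of $G_1$-twisted slope are unchanged by regarding $E$ as an $\mathcal{O}_D$-module, the given $G_1$-twisted stability of $E$ on $X$ transfers at once to $G_1$-twisted stability on $D$.

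The only genuine difference from Lemma~\ref{lem:relative-moduli} is that the concluding Fourier--Mukai argument for $\gcd(r,d)=1$ does not survive: on a multiple fibre the relevant relative compactified Jacobian is a torsor under the $m$-torsion subgroup of $\Pic^0(D)$, so a stable sheaf on $D$ need not have coprime $(r,d)$. This is precisely why the statement asserts only $G_1$-twisted stability of $E$ on $D$ and omits the coprimality conclusion. Within the argument itself, the step deserving the most care is the passage from the nonzero map $E(-kD)\to E$ (nonzero because $k+1$ equals the thickness of $E$) to a nonzero map out of the minimal HN factor: one checks that it factors through $E_{|D}(-kD)$ and, since $E$ is pure, through $(E_{|D}/T)(-kD)$, after which the slope inequalities locate the surviving contribution in $F_s/F_{s-1}$. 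I expect this bookkeeping, rather than any new geometric input, to be the main point to verify.
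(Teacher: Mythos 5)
Your proof follows the paper's own argument step for step --- restrict to $D$, kill torsion, take the Harder--Narasimhan filtration of $E_{|D}/T$, derive $d/(rD,H)\leq$ (slope of the last factor) from stability applied to the quotient, use the thickening map $E(-kD)\to E$ to produce a nonzero map from some HN factor into $E$, close the chain of slope inequalities to force $i=s$ with equality, and conclude that $E\to F_s/F_{s-1}$ is an isomorphism --- and your handling of the two delicate points (factoring the nonzero map through $(E_{|D}/T)(-kD)$ via purity of $E$, and dropping the coprimality conclusion of Lemma \ref{lem:relative-moduli}) matches the paper. However, you have silently assumed $D$ irreducible: you write the invariants of the HN factors as $(r_iD,d_i)$ and speak of ``the reduced irreducible genus-$1$ curve $D$''. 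The reduced part of a multiple fiber need not be irreducible (a ${}_mI_n$ fiber has a cycle of rational curves as its reduced part), and this is exactly the case the lemma is aimed at: the note preceding it in the source flags the reducible affine A-D-E configurations, and for irreducible $D$ the statement is already essentially contained in Lemma \ref{lem:relative-moduli}. For reducible $D$ the HN factors of $E_{|D}/T$ may be supported on proper subcurves of $D$, so their first Chern classes $\xi_i$ are not multiples of $D$, and the slopes must be taken as $d_i/(\xi_i,H)$; the paper's proof is written in precisely this generality.

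The repair is essentially notational, because your structural argument survives unchanged: what one needs is that ${\cal O}_D(D)$ is numerically trivial on $D$, i.e.\ of degree zero on \emph{every} irreducible component $C$ of $D$ --- which holds because $(mD,C)=0$ for each fiber component $C$ --- and not merely of total degree zero, which is all your $(mD)^2=0$ computation gives. With componentwise numerical triviality, twisting by $-kD$ preserves $\chi_{G_1}$ and all $H$-slopes, and the chain $d_s/(\xi_s,H)\leq d_i/(\xi_i,H)\leq d/(rD,H)\leq d_s/(\xi_s,H)$ closes exactly as in your write-up, after which stability on $X$ transfers to stability on $D$ as you say. Once the bookkeeping $(\xi_i,d_i)$ replaces $(r_iD,d_i)$, your proposal coincides with the paper's proof.
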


\begin{proof}
We note that ${\cal O}_D(D)$ is a numerically trivial
line bundle on $D$.
Let $T$ be the torsion submodule of $E_{|D}$.
Then $E':=E_{|D}/T$ has the Harder-Narasimhan filtration
\begin{equation*}
0=F_0 \subset F_1 \subset F_2 \subset \cdots \subset F_s=E'.
\end{equation*}
We set $(c_1(F_i/F_{i-1}),\chi_{G_1}(F_i/F_{i-1})):=(\xi_i,d_i)$.
Then $(\xi_i,H) \in {\Bbb Z}_{>0}$, $\sum_{i=1}^s (\xi_i,H) \leq (rD,H)$ and
$$
\frac{d_1}{(\xi_1,H)}>\frac{d_2}{(\xi_2,H)}>\cdots > \frac{d_s}{(\xi_s,H)}.
$$
By the $G_1$-twisted stability of $E$,
we have
\begin{equation*}
\frac{d}{(rD,H)} \leq 
\frac{d_s}{(\xi_s,H)}.
\end{equation*}

There is a positive integer $k$ such that
$E$ is an ${\cal O}_{(k+1)D}$-module and
$E(-kD) \overset{kD}{\to} E$ is non-zero.
Then we have a non-zero homomorphism
$E_{|D}(-kD) \to E$.
Then $\Hom(F_i/F_{i-1}(-kD),E) \ne 0$ for some $i$,
which implies $d_i/(\xi_i,H) \leq d/r$.
Then $i=s$ and $d/(rD,H)=d_s/(\xi_s,H)$.
By the stability of $E$,
$E \to F_s/F_{s-1}$ is an isomorphism.
In particular, $E$ is a stable sheaf on $D$. 
\end{proof}
\end{NB}

\begin{cor}
For $E \in {\cal M}_H(0,rf,d)^{ss}$, we have a decomposition
$E \cong \oplus_i E_i$ such that
$D_i :=\Supp(E_i)$ are fibers of $\pi$ with reduced scheme structure,  
$E_i$ are successive extension of stable sheaves $E_{ij}$ on $D_i$
with $\frac{\chi(E_{ij})}{\rk(E_{ij})(D_i,H)}=\frac{d}{r(f,H)}$,
and $D_i \cap D_j=\emptyset$ for $i \ne j$.
\end{cor}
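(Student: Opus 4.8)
The plan is to split $E$ according to the fibers in its support and then feed the stable factors over each fiber into Lemma~\ref{lem:relative-moduli}.

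First I would check that $E$ is supported on finitely many fibers. Since $c_1(E)=rf$ and $(f^2)=0$, we have $(c_1(E),f)=0$, so $\Supp E$ has no horizontal component and is a union of fibers; thus $\pi(\Supp E)=\{t_1,\dots,t_k\}$ is finite. Put $D_i:=(\pi^{-1}(t_i))_{\mathrm{red}}$, a reduced irreducible curve with $(D_i^2)=0$, and note that the closed sets $\pi^{-1}(t_1),\dots,\pi^{-1}(t_k)$ are pairwise disjoint. A coherent sheaf whose support lies in a disjoint union of closed subsets splits as the corresponding direct sum, so $E\cong\bigoplus_i E_i$ with $\Supp E_i=D_i$ and $D_i\cap D_j=\emptyset$ for $i\ne j$. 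Being a direct summand of a purely $1$-dimensional sheaf, each $E_i$ is purely $1$-dimensional.

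Next I would transport semistability to the summands. A direct sum of pure sheaves is $H$-semistable precisely when each summand is $H$-semistable with the same reduced Hilbert polynomial, so each $E_i$ is semistable with $\chi(E_i)/(c_1(E_i),H)=\chi(E)/(c_1(E),H)=d/(r(f,H))$. A Jordan--H\"older filtration of $E_i$ with respect to $H$ then realizes $E_i$ as a successive extension of stable sheaves $E_{ij}$, each with reduced slope $d/(r(f,H))$. Because $E_{ij}$ is supported on the irreducible curve $D_i$, its scheme-theoretic support is a multiple $\Div(E_{ij})=r_{ij}D_i$ for some $r_{ij}\in{\Bbb Z}_{>0}$, whence $c_1(E_{ij})=r_{ij}D_i$; the slope equality becomes $\chi(E_{ij})/(r_{ij}(D_i,H))=d/(r(f,H))$, which is exactly the assertion of the corollary with $\rk E_{ij}=r_{ij}$.

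Finally I would invoke Lemma~\ref{lem:relative-moduli} with $D=D_i$ and $G_1=\mathcal{O}_X$ (so that $G_1$-twisted stability is ordinary stability): a stable purely $1$-dimensional sheaf with $\Div(E_{ij})=r_{ij}D_i$ on the reduced irreducible curve $D_i$ with $(D_i^2)=0$ is a stable sheaf on $D_i$, and moreover $\gcd(r_{ij},\chi(E_{ij}))=1$. This shows that each factor $E_{ij}$ really is an $\mathcal{O}_{D_i}$-module, which completes the decomposition. The formal steps, namely the support splitting and the slope bookkeeping, are routine; the genuine content is ruling out stable factors that live on nonreduced thickenings $\mathcal{O}_{kD_i}$ and extracting the coprimality $\gcd(r_{ij},\chi(E_{ij}))=1$, and this is precisely what Lemma~\ref{lem:relative-moduli} supplies. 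Thus the only step warranting care is confirming that each $E_{ij}$ is pure, stable, and has scheme-theoretic support a pure multiple of the single reduced fiber $D_i$, after which the corollary is a repackaging of that lemma.
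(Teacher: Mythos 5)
Your proof is correct and takes exactly the route the paper intends: the paper states this corollary without a separate proof, as an immediate consequence of Lemma \ref{lem:relative-moduli}, and your argument (splitting $E$ along the pairwise disjoint fibers in its support, passing to Jordan--H\"older factors of each semistable summand, and applying the lemma with $G_1={\cal O}_X$ to conclude each stable factor is an ${\cal O}_{D_i}$-module with $\gcd(r_{ij},\chi(E_{ij}))=1$) is precisely the spelled-out version of that derivation. No gaps.
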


\begin{lem}\label{lem:nD-stability}
Let $D$ be a reduced and irreducible curve on $X$ with $(D^2)=0$.
For a torsion free sheaf $E$ on $X$,
$E_{|nD}$ is semi-stable if and only if 
$E_{|D}$ is semi-stable.
Moreover if $E_{|D}$ is semi-stable, then $E$ is locally free
in a neighborhood of $D$.
\end{lem}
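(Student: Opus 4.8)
The plan is to reduce both assertions to two facts: a local freeness criterion coming from depth considerations, and the numerical triviality of ${\cal O}_D(D)$ on $D$. Throughout, semi-stability of a pure $1$-dimensional sheaf $G$ is measured by the slope $\mu(G)=\chi(G)/(c_1(G),H)$, and I regard $E_{|D}$ and $E_{|nD}$ as sheaves on $X$ supported on $D$ and on $nD$ respectively.

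First I would prove that the restriction $E_{|D}$ is pure (has no $0$-dimensional torsion) if and only if $E$ is locally free in a neighbourhood of $D$, and that the same equivalence holds with $E_{|D}$ replaced by $E_{|nD}$. This is a local statement at a point $p\in D$: writing the stalk as a finitely generated torsion free module $M$ over the regular local ring $R={\cal O}_{X,p}$ of dimension $2$, and $D=\{f=0\}$, one has $E_{|D}=M/fM$ and $E_{|nD}=M/f^nM$. If $M$ is not free then, by Auslander--Buchsbaum, $\operatorname{depth}_R M=1$; since $f$ (hence $f^n$) is a non-zero-divisor on $M$, the depth lemma gives $\operatorname{depth}(M/f^nM)=0$, so the maximal ideal of the $1$-dimensional local domain ${\cal O}_{D,p}$ is an associated prime and $M/f^nM$ acquires torsion. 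Conversely, if $M$ is free the restrictions are visibly vector bundles on $D$, resp. on $nD$. This already yields the ``Moreover'' clause: a semi-stable sheaf is pure, so if $E_{|D}$ is semi-stable then $E$ is locally free near $D$; and likewise $E_{|nD}$ semi-stable forces $E$ locally free near $D$.

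Assuming now that $E$ is locally free in a neighbourhood of $D$, I would use the ${\cal I}_D$-adic filtration $E_{|nD}\supset {\cal I}_D E_{|nD}\supset\cdots\supset {\cal I}_D^{n-1}E_{|nD}\supset 0$, whose successive quotients are $E_{|D}\otimes {\cal O}_D(-kD)$ for $0\le k\le n-1$. Since $(D^2)=0$, the line bundle ${\cal O}_D(D)$ has degree $0$ on $D$, so each twist ${\cal O}_D(-kD)$ is numerically trivial; hence $E_{|D}\otimes{\cal O}_D(-kD)$ is semi-stable if and only if $E_{|D}$ is, and in that case all graded pieces share the common slope $\mu(E_{|D})$, which also equals $\mu(E_{|nD})$. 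For the ``if'' direction, $E_{|nD}$ is then an iterated extension of semi-stable sheaves of one and the same slope, hence semi-stable. For the ``only if'' direction, $E_{|D}$ is a quotient of $E_{|nD}$ of the same slope, so the last term of the Harder--Narasimhan filtration of a non-semi-stable $E_{|D}$ would provide a pure quotient of smaller slope, which pulls back to a destabilizing quotient of $E_{|nD}$; thus $E_{|nD}$ semi-stable implies $E_{|D}$ semi-stable.

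The main obstacle is the local freeness criterion: the entire filtration argument presupposes that $E$ is a vector bundle near $D$, and the real content of the lemma is precisely that semi-stability of either restriction rules out the $0$-dimensional torsion produced by a non-locally-free point. Once the depth computation is in place, the remaining steps are the standard facts that an extension of semi-stable sheaves of equal slope is semi-stable and that a quotient of smaller slope destabilizes, together with the observation that $(D^2)=0$ makes ${\cal O}_D(D)$ numerically trivial on $D$.
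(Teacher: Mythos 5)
Your proof is correct and follows essentially the same route as the paper: the paper also uses the filtration $0\subset E(-nD)\subset\cdots\subset E(-D)\subset E$, whose graded pieces are $E_{|D}\otimes {\cal O}_D(-D)^{\otimes k}$ with unchanged Euler characteristic because $(D^2)=0$, and deduces the ``Moreover'' clause from purity of $E_{|D}$. The only difference is presentational: you spell out the depth/Auslander--Buchsbaum argument behind ``pure restriction implies locally free near $D$'' (which the paper asserts without proof) and route both implications through local freeness, whereas the paper runs the filtration argument directly for torsion free $E$, which is valid since the local equation of $D$ is a non-zero-divisor on $E$.
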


\begin{proof}
We have a filtration
\begin{equation}
0 \subset E(-nD) \subset E(-(n-1)D) \subset E(-(n-2)D) \subset
\cdots \subset E(-D) \subset E.
\end{equation}
We set $L:={\cal O}_D(-D)$. Then 
$E(-kD)/E(-(k+1)D) \cong E_{|D} \otimes L^{\otimes k}$
and $\chi(E_{|D} \otimes L^{\otimes k})=\chi(E_{|D})$ for 
$0<k<(n-1)$.
Hence $E_{|nD}$ is semi-stable if and only if $E_{|D}$ 
is semi-stable. 
If $E_{|D}$ 
is semi-stable, then $E_{|D}$ is purely 1-dimensional,
which shows that $E$ is locally free in a neighborhood of $D$.
\end{proof}

\subsection{For the case of an unnodal Enriques surface}
\label{subsect:unnodal}

Let $X$ be an unnodal Enriques surface.
Let $U:={\Bbb Z}e_1+{\Bbb Z}e_2$ be a hyperbolic sublattice of the lattice
$H^2(X,{\Bbb Z})_f$.
We assume that $e_1,e_2$ are effective and $|2e_1|$ gives an elliptic fibration
$\pi:X \to {\Bbb P}^1$. 
Let $2\Pi_1,2\Pi_2$ be the multiple fibers of $\pi$.
 Let $\eta \in {\Bbb P}^1$ be the generic point of
${\Bbb P}^1$.
Let 
$u:=(r,d e_2,0)$ be a primitive and isotropic Mukai vector.
We note that $r$ is even.
We assume that $(r,d)=1$. \begin{NB} This is a strong assumption.
Indeed $(2,2e_2,0)$ is a primitive and isotropic Mukai vector. \end{NB}
For a Mukai vector $v \in (0,re_1,d)^\perp$,
we can write $v=lu+n e_1+\delta+a \varrho_X$
where $l,n,a \in {\Bbb Z}$ and $\delta \in U^\perp$.
If $v$ is primitive and $\ell(v)=2$, then 
$2 \mid l, 2 \mid n, 2 \mid \delta$ and $2 \nmid a$.
\begin{NB}
Note that $d$ is odd.
$\chi(v)=lr/2+a$ is odd.
\end{NB}
We can easily show the following claims.
\begin{lem}\label{lem:f}
Let $v_i:=l_i u+n_i e_1+\delta_i+a_i \varrho_X$ $(i=1,2)$
be two Mukai vectors with $l_i,n_i,a_i \in {\Bbb Z}$
and $\delta_i \in U^\perp$.
\begin{enumerate}
\item[(1)]
\begin{equation}
\langle v_1,v_2 \rangle
=\frac{l_2}{2l_1}\langle v_1^2 \rangle+\frac{l_1}{2l_2}\langle v_2^2 \rangle
-\frac{1}{2l_1 l_2}\langle (l_2 \delta_1-l_1 \delta_2)^2 \rangle.
\end{equation}
\item[(2)]
If $\ell(v_1)=2$, then
\begin{equation}
\langle v_1,v_2 \rangle=
(l_1 n_2+l_2 n_1)d+(\delta_1,\delta_2)-(l_1 a_2+l_2 a_1)r
\in 2{\Bbb Z}.
\end{equation}
Moreover if $\ell(v_2)=2$ also holds, then
$\langle v_1,v_2 \rangle \in 4{\Bbb Z}$.
\end{enumerate}
\end{lem}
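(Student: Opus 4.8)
The plan is to reduce both statements to the explicit form of the Mukai pairing together with a routine bilinear expansion in the lattice $U\oplus U^\perp$. Recall from the definition of the Mukai pairing that for Mukai vectors written in coordinates one has $\langle (R_1,\Xi_1,A_1),(R_2,\Xi_2,A_2)\rangle=(\Xi_1,\Xi_2)-R_1A_2-R_2A_1$. Writing $u=(r,de_2,0)$, each input vector takes the explicit shape $v_i=(l_ir,\ l_id\,e_2+n_ie_1+\delta_i,\ a_i)$, so everything becomes a finite computation once we record the relations $(e_1^2)=(e_2^2)=0$, $(e_1,e_2)=1$ (the hyperbolic Gram matrix of $U$) and $(\delta_i,e_1)=(\delta_i,e_2)=0$ (since $\delta_i\in U^\perp$).

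First I would prove the closed formula in (2). Expanding $(\Xi_1,\Xi_2)$, all terms built from $(e_1^2)$, $(e_2^2)$ or from $\delta_i$ against $e_1,e_2$ vanish, and the only survivors are $(l_1d\,e_2,n_2e_1)+(n_1e_1,l_2d\,e_2)=(l_1n_2+l_2n_1)d$ and $(\delta_1,\delta_2)$. Since $R_iA_j=l_ira_j$, this yields $\langle v_1,v_2\rangle=(l_1n_2+l_2n_1)d+(\delta_1,\delta_2)-(l_1a_2+l_2a_1)r$, the identity claimed in (2). Specializing to $v_1=v_2=v_i$ and using that $u$ is isotropic (so its contribution drops out) gives $\langle v_i^2\rangle=2l_in_id+(\delta_i^2)-2l_ira_i$.

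For (1) I would simply substitute these self-pairings into the right-hand side. Because $l_2\delta_1-l_1\delta_2\in U^\perp$ is a purely divisorial class, $\langle(l_2\delta_1-l_1\delta_2)^2\rangle=((l_2\delta_1-l_1\delta_2)^2)=l_2^2(\delta_1^2)-2l_1l_2(\delta_1,\delta_2)+l_1^2(\delta_2^2)$. Forming $\tfrac{l_2}{2l_1}\langle v_1^2\rangle+\tfrac{l_1}{2l_2}\langle v_2^2\rangle-\tfrac{1}{2l_1l_2}\langle(l_2\delta_1-l_1\delta_2)^2\rangle$ and collecting terms, the $(\delta_1^2)$ and $(\delta_2^2)$ contributions cancel and what remains is exactly the expression for $\langle v_1,v_2\rangle$ obtained above; this is a one-line algebraic simplification.

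Finally, the divisibility in (2) follows by a parity count on that formula, using the characterization recorded just before the lemma: $\ell(v_i)=2$ forces $2\mid l_i$, $2\mid n_i$, $2\mid\delta_i$, while $r$ is even throughout. If $\ell(v_1)=2$, then each summand is even, namely $(l_1n_2+l_2n_1)d$ since $l_1,n_1$ are even, $(\delta_1,\delta_2)$ since $\delta_1\in 2U^\perp$, and $(l_1a_2+l_2a_1)r$ since $r$ is even; hence $\langle v_1,v_2\rangle\in 2{\Bbb Z}$. If moreover $\ell(v_2)=2$, each summand gains a second even factor: $l_1n_2$ and $l_2n_1$ each carry two even factors, $(\delta_1,\delta_2)\in 4{\Bbb Z}$ since both $\delta_i\in 2U^\perp$, and $l_1a_2r,\ l_2a_1r$ each contain an even $l_i$ and an even $r$; hence $\langle v_1,v_2\rangle\in 4{\Bbb Z}$. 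I do not expect any genuine obstacle here: the entire content is the explicit pairing plus the lattice relations, and the only thing to watch is the bookkeeping of the factors of two in the last paragraph.
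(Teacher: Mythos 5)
Your proof is correct, and it is precisely the routine verification the paper intends: the author gives no argument at all (the lemma is prefaced only by ``We can easily show the following claims''), and the natural proof is exactly your direct expansion of the Mukai pairing using the hyperbolic relations in $U$, orthogonality of $\delta_i$, isotropy of $u$, and the parity facts $2\mid l_i,\ 2\mid n_i,\ 2\mid\delta_i$ recorded just before the lemma together with $r$ even. Your bookkeeping of the factors of two in both divisibility claims checks out, so nothing is missing.
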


%If $r$ is even and $X$ is unnodal, then
%$\langle v_1,v_2 \rangle \geq 0$.

%\begin{defn}
%Let 
%${\cal M}_f(v)^{ss}$ 
%be the stack of torsion free sheaves $E$ on $X$ 
%such that $v(E)=v$ and $E \otimes k(\eta)$ is a semi-stable
%vector bundle.
%\end{defn}
\begin{NB}
It is a union of substacks such that
$\{E \in {\cal M}(v) \mid \text{ $E_{|f}$ is semi-stable on a fiber $f$}\}$.
\end{NB}

Let $E$ be a $f$-stable sheaf with $v(E)=lu+n e_1+\delta+a \varrho_X$,
where $l,n,a \in {\Bbb Z}$ and $\delta \in U^\perp$.
Since the $f$-stability implies the $H_f$-stability, 
$\rk E=lr$ is even and $X$ is unnodal, we have
$\langle v(E)^2 \rangle \geq 0$. 
Then as in the proof of Lemma \ref{lem:Bogomolov}, by using Lemma \ref{lem:f},
we get the following inequality.
\begin{lem}\label{lem:Bogomolov2}
If ${\cal M}_f(v)^{ss} \ne \emptyset$, then
$\langle v^2 \rangle \geq 0$.
\end{lem}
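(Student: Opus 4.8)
The plan is to transcribe the proof of Lemma~\ref{lem:Bogomolov} into the Mukai lattice, using the identity of Lemma~\ref{lem:f}(1) in place of the elementary computation of $\Delta$ under extensions. The base case is already in hand: an $f$-stable sheaf is $H_f$-stable, and since its rank $lr$ is even and $X$ is unnodal, the forbidden values $\langle v^2\rangle=-1$ (which would force odd rank) and $\langle v^2\rangle=-2$ (which would force $c_1$ to contain a nodal cycle, cf. the case division A--C preceding Theorem~\ref{thm:mu-s}) cannot occur; hence $\langle v(E)^2\rangle\ge 0$ for every $f$-stable $E$. It therefore suffices to show that the inequality $\langle v^2\rangle\ge 0$ is inherited under forming extensions of $f$-stable sheaves of equal $f$-slope, and then to induct on the number of $f$-stable factors (equivalently, on $\rk E$).

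For the inductive step I would take an exact sequence $0\to E_1\to E\to E'\to 0$ in which $E_1$ is $f$-stable and $E'$ is $f$-semi-stable, both of the same $f$-slope as $E$. Writing $v(E_1)=l_1u+n_1e_1+\delta_1+a_1\varrho_X$ and $v(E')=l'u+n'e_1+\delta'+a'\varrho_X$ with $\delta_1,\delta'\in U^\perp$, the equality of $f$-slopes is exactly the condition that the $e_2$-coefficient of $c_1$ be $l_1d$, resp. $l'd$, so that the $u$-coefficients $l_1,l'$ are genuine positive integers with $l=l_1+l'$. Expanding $\langle v^2\rangle=\langle v(E_1)^2\rangle+2\langle v(E_1),v(E')\rangle+\langle v(E')^2\rangle$ and substituting Lemma~\ref{lem:f}(1) gives
\[
\langle v^2\rangle=\frac{l}{l_1}\langle v(E_1)^2\rangle+\frac{l}{l'}\langle v(E')^2\rangle-\frac{1}{l_1l'}\langle(l'\delta_1-l_1\delta')^2\rangle .
\]
Because $U^\perp$ is negative definite (it is isometric to $E_8(-1)$), the last self-intersection is $\le 0$, so the correction term is nonnegative; together with $\langle v(E_1)^2\rangle\ge 0$ from the base case and $\langle v(E')^2\rangle\ge 0$ from the induction hypothesis, this yields $\langle v^2\rangle\ge 0$.

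The only substantive inputs, exactly as in Lemma~\ref{lem:Bogomolov}, are the two sign statements that here play the role of the Hodge index theorem: the vanishing of the exceptional values $-1,-2$ at the bottom of the induction, and the negative-definiteness of $U^\perp$ that fixes the sign of the cross term $\langle(l'\delta_1-l_1\delta')^2\rangle$. I expect the base case to be the only place where geometry (unnodality together with the parity of the rank) genuinely enters; the remainder is the bilinear identity of Lemma~\ref{lem:f} combined with the decomposition of an $f$-semi-stable sheaf into $f$-stable factors of equal slope, which is routine.
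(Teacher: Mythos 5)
Your proof is correct and follows essentially the same route as the paper's: the paper's (sketched) argument consists precisely of your base case --- $f$-stability implies $H_f$-stability, and even rank together with unnodality rules out $\langle v^2\rangle=-1,-2$ --- followed by induction over extensions of equal $f$-slope factors ``as in the proof of Lemma \ref{lem:Bogomolov}, by using Lemma \ref{lem:f}'', which is exactly your replacement of the discriminant/Hodge-index computation by the identity of Lemma \ref{lem:f}(1), with the sign of the cross term fixed by the negative definiteness of $U^\perp$. Nothing essential differs.
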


\begin{prop}\label{prop:horizontal}
Assume that $r$ is even and $(r,d)=1$.
Assume that $\langle v^2 \rangle>0$. Then
${\cal M}_{H_f}(v)^{ss}$ is an open and dense substack of
${\cal M}_f(v)^{ss}$.
In particular, $\dim {\cal M}_f(v)^{ss}= \langle v^2 \rangle$.
\end{prop}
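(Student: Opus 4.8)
The plan is to treat openness as routine and to put all the weight on density, which will come from a dimension comparison. Since Gieseker $H_f$-semi-stability is an open condition in flat families, and since for $n\gg0$ (which we may enlarge, and which works uniformly over the bounded stack ${\cal M}_{H_f}(v)^{ss}$) the relation $\mu_{H_f}=\mu_H+n\mu_f$ forces every $H_f$-semi-stable sheaf to be $f$-semi-stable, ${\cal M}_{H_f}(v)^{ss}$ is an open substack of ${\cal M}_f(v)^{ss}$; by Lemma \ref{lem:pss} it is non-empty of dimension $\langle v^2\rangle$. It therefore suffices to establish two facts: (a) every irreducible component of ${\cal M}_f(v)^{ss}$ has dimension $\geq\langle v^2\rangle$, and (b) the locus of sheaves that are $f$-semi-stable but not $H_f$-semi-stable has dimension $<\langle v^2\rangle$. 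Granting these, no component of ${\cal M}_f(v)^{ss}$ can lie inside the complement, so the open substack ${\cal M}_{H_f}(v)^{ss}$ meets, and is dense in, each component, and the dimension assertion follows. For (a) I would simply repeat the deformation-theoretic estimate of Lemma \ref{lem:stack-dim}: $f$-semi-stability is open in ${\cal M}(v)$, and at any sheaf $E$ the local dimension of ${\cal M}(v)$ is at least $\langle v^2\rangle=-\chi(E,E)$, so the same lower bound passes to ${\cal M}_f(v)^{ss}$.

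The substance is (b). Given $E$ that is $f$-semi-stable but not $H_f$-semi-stable, I would take its Harder--Narasimhan filtration with respect to $H_f$; because $\mu_{H_f}=\mu_H+n\mu_f$ with $n$ large and $E$ is $f$-semi-stable, every graded factor has the same $f$-slope as $E$. Since $(r,d)=1$, this forces each factor class to have the shape $v_i=l_iu+n_ie_1+\delta_i+a_i\varrho_X$ with $\delta_i\in U^\perp$ and $\sum_i v_i=v$, the $H_f$-slopes (equivalently the $\mu_H$) being strictly decreasing. Stratifying by the type $(v_1,\dots,v_t)$, I would bound each stratum by the filtration formula of Lemma \ref{lem:A-est1}, iterated as in Lemma \ref{lem:stable}. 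The required vanishing $\Hom(E_i/E_{i-1},(E_j/E_{j-1})(K_X))=0$ for $i<j$ holds because $K_X$ is numerically trivial, so the twist preserves the strictly ordered $H_f$-slopes. When all $\langle v_i^2\rangle>0$ this gives
\begin{equation*}
\dim{\cal F}^{HN}(v_1,\dots,v_t)=\sum_i\dim{\cal M}_{H_f}(v_i)^{ss}+\sum_{i<j}\langle v_j,v_i\rangle=\langle v^2\rangle-\sum_{i<j}\langle v_i,v_j\rangle .
\end{equation*}

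By Lemma \ref{lem:f}(1) together with the negative-definiteness of $U^\perp$ on an unnodal surface, each $\langle v_i,v_j\rangle$ is a non-negative integer, vanishing only if $\langle v_i^2\rangle=\langle v_j^2\rangle=0$ and $l_j\delta_i=l_i\delta_j$. If every pair vanished, then all $\langle v_i^2\rangle=0$, whence $\langle v^2\rangle=\sum_i\langle v_i^2\rangle+2\sum_{i<j}\langle v_i,v_j\rangle=0$, contradicting $\langle v^2\rangle>0$. Hence $\sum_{i<j}\langle v_i,v_j\rangle\geq1$ and the stratum has dimension $\leq\langle v^2\rangle-1$, as required.

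The one delicate point, and what I expect to be the main obstacle, is the presence of isotropic factors $\langle v_i^2\rangle=0$, for which $\dim{\cal M}_{H_f}(v_i)^{ss}$ exceeds $\langle v_i^2\rangle$ by the defect bounded in Proposition \ref{prop:isotropic}. To absorb these I would argue exactly as in the isotropic case of Lemma \ref{lem:stable}: writing such a factor as $v_i=l_i'w_i$ with $w_i$ primitive and isotropic, the divisibility and parity constraints of Lemma \ref{lem:f}(2) (in the spirit of Lemma \ref{lem:w-v}) force $\langle w_i,v_j\rangle$ to be large enough that the cross-terms $\langle v_i,v_j\rangle$ dominate the defect $l_i'$, keeping each stratum strictly below $\langle v^2\rangle$. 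Verifying that this bookkeeping closes uniformly over all filtration types, in particular in the mixed case where only some factors are isotropic and where $\ell(v_i)=2$, is the technical crux; once it is carried out, (a) and (b) combine to yield openness, density, and $\dim{\cal M}_f(v)^{ss}=\langle v^2\rangle$.
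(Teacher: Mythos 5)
Your overall strategy coincides with the paper's: openness of ${\cal M}_{H_f}(v)^{ss}$, the lower bound $\geq \langle v^2\rangle$ on every component from the deformation estimate of Lemma \ref{lem:stack-dim}, and an upper bound on the Harder--Narasimhan strata of the complement via Lemma \ref{lem:A-est1}, with non-negativity of the cross-terms $\langle v_i,v_j\rangle$ coming from Lemma \ref{lem:f}(1), Lemma \ref{lem:Bogomolov2} and the negative definiteness of $U^\perp$. The problem is that the part you defer --- ``verifying that this bookkeeping closes uniformly over all filtration types'' --- is exactly the substance of the paper's proof, and your sketch of it does not close. You propose to absorb the defect of an isotropic factor $v_i=k_iu_i$ by its cross-term with another factor, ``exactly as in the isotropic case of Lemma \ref{lem:stable}''. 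That works in the mixed case: if some $v_p$ has $\langle v_p^2\rangle>0$, then $\langle v_p,u_i\rangle\geq 1$, and $\geq 2$ when $\ell(u_i)=2$ by Lemma \ref{lem:f}(2), so $\langle v_p,v_i\rangle\geq k_i$ resp. $2k_i$ strictly dominates the defect $[k_i/2]$ resp. $k_i$ supplied by Proposition \ref{prop:isotropic}. But it fails as stated when \emph{every} HN factor is isotropic: there is then no non-isotropic partner to pair against, and individual cross-terms $\langle v_i,v_j\rangle=k_ik_j\langle u_i,u_j\rangle$ may vanish for distinct primitive isotropic $u_i\neq u_j$. The paper treats this case by a separate matching argument: from $\langle v^2\rangle>0$ and Lemma \ref{lem:f}(1) one gets $\langle v,v_i\rangle>0$ for every $i$, hence each $i$ has a partner $n(i)$ with $\langle v_i,v_{n(i)}\rangle>0$; the divisibility in Lemma \ref{lem:f}(2) then forces $\langle v_i,v_{n(i)}\rangle\geq 2^{\epsilon}k_ik_{n(i)}$ with $\epsilon=\ell(u_i)+\ell(u_{n(i)})-2$, which strictly exceeds the sum of the two defects, and summing over the matching yields \eqref{eq:HN-hor}. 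Without this (or an equivalent) argument, your step (b) is not established.

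Two further points you gloss over but which the argument needs. First, ${\cal M}_f(v)^{ss}$ is not bounded, so the stratification and dimension count must be carried out on the bounded substacks ${\cal M}_f(v)_B^{ss}$ (on which the set of HN types is finite), density being checked for each $B$. Second, the identity $\dim{\cal M}_{H_f}(v_i)^{ss}=\langle v_i^2\rangle$ for the non-isotropic factors, which your stratum formula uses via Lemma \ref{lem:pss}, requires the polarization to be general with respect to each $v_i$; the paper arranges this by perturbing $H$ in a neighbourhood after fixing the finitely many HN types. Both are routine, but neither can simply be omitted.
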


\begin{proof}
We set $H:=H_f$. 
It is sufficient to prove the claim
for bounded substacks ${\cal M}_f(v)_B^{ss}$,
$B \in {\Bbb Q}$.
For $F \in {\cal M}_f(v)^{ss}$,
let 
\begin{equation}
 0 \subset F_1 \subset F_2 \subset \dots \subset F_s=F
\end{equation}
be the Harder-Narasimhan filtration of $F$.
Then the choice of $v(F_i/F_{i-1})$ $(1 \leq i \leq s)$
is finite. 
Replacing $H$ by $H'$ in a neighborhood of $H$,
we may assume that $H$ is a general polarization
with respect to $v(F_i/F_{i-1})$ $(1 \leq i \leq s)$.
We set 
\begin{equation}\label{eq:v-fiber}
 v_i:=v(F_i/F_{i-1})=l_i u+n_i e_1+\delta_i+a_i \varrho_X, 1 \leq i \leq s.
\end{equation}
By Lemma \ref{lem:Bogomolov2}, $\langle v_i^2 \rangle \geq 0$
for all $i$.
Let ${\cal F}^{HN}(v_1,v_2,\dots,v_s)$ be the substack of 
${\cal M}_H(v)^{\mu ss}$
whose element $E$ has the Harder-Narasimhan filtration
of the above type.
We shall prove that
\begin{equation}\label{eq:HN-hor}
\dim {\cal F}^{HN}(v_1,v_2,\dots,v_s) < \langle v^2 \rangle.
\end{equation}
Since $\Hom(F_i/F_{i-1},F_j/F_{j-1}(K_X))=0$ for $i<j$,
\cite[Lem. 5.3]{K-Y} implies that
\begin{equation}
 \begin{split}
  \dim {\cal F}^{HN}(v_1,v_2,\dots,v_s)=&
  \sum_{i=1}^s \dim {\cal M}_H(v_i)^{ss}+
  \sum_{i<j}\langle v_j,v_i \rangle\\
  =& \langle v^2 \rangle-\left(\sum_{i<j}\langle v_j,v_i \rangle+
 \sum_{i=1}^s (\langle v_i^2 \rangle-\dim {\cal M}_H(v_i)^{ss})\right).
 \end{split}
\end{equation}
If $v_i$ is isotropic, then 
we write $v_i=k_i u_i$, where $u_i$ is primitive and 
$k_i \in {\Bbb Z}_{>0}$.
Since $\gcd(r,d)=1$, $\dim {\cal M}_H(u_i)^{ss}=0,1$
according as $\ell(u_i)=1,2$.
\begin{NB}
Old version, which is wrong!!
$\ell(u_i)=1$.
It may happen that $\ell(u_i)=2$.
Thus we may have $\dim {\cal M}_H(u_i)^{ss}=1$.
\end{NB}
By Proposition \ref{prop:isotropic},
$\dim {\cal M}_H(v_i)^{ss} \leq k_i/2,k_i$.
If there is $v_p$ with $\langle v_p^2 \rangle>0$,
then Lemma \ref{lem:f} implies
$\langle v_p,v_i \rangle=k_i \langle v_p,u_i \rangle \geq k_i,2k_i$
according as $\ell(u_i)=1,2$.
Hence \eqref{eq:HN-hor} holds.
%$\dim {\cal F}^{HN}(v_1,v_2,\dots,v_s)<\langle v^2 \rangle$.
Assume that all $v_i$ are isotropic.
By Lemma \ref{lem:f},
$\langle v,v_i \rangle>0$ for all $i$.
Hence for all $i$, there is a positive integer $n(i)$ such that
$\langle v_i, v_{n(i)} \rangle>0$.
We set $\epsilon:=\ell(u_i)+\ell(u_j)-2$.
Then 
$$
2^{\epsilon} k_i k_j-\dim{\cal M}_H(v_i)^{ss}-\dim{\cal M}_H(v_j)^{ss}
>0.
$$
\begin{NB}
$k_i k_j-k_i/2-k_j/2=k_i(k_j-1)/2+k_j(k_i-1)/2
\geq 0$ and the equality holds only if $k_i=k_j=1$.
In this case, $k_i k_j-[k_i/2]-[k_j/2]=1$.
We also have $2k_i k_j-k_i-k_j/2>0$ and
 $4k_i k_j-k_i-k_j>0$.
\end{NB}
Hence
$$
\sum_{i \in \{i \mid i<n(i)\}} \langle v_i,v_{n(i)} \rangle
>\sum_i \dim {\cal M}_H(v_i)^{ss}.
$$
Therefore \eqref{eq:HN-hor} holds.
%$\dim {\cal F}^{HN}(v_1,v_2,\dots,v_s)<\langle v^2 \rangle$. 
\end{proof}

\begin{lem}\label{lem:horizontal-isotropic}
If $v=lu+n e_1+\delta+a \varrho_X$ is isotropic, then
we have $\dim {\cal M}_f(v)^{ss} \leq \left[\frac{l}{2}\right]$.
\end{lem}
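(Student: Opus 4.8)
The plan is to repeat the Harder--Narasimhan analysis of Proposition \ref{prop:horizontal}, but to exploit $\langle v^2\rangle=0$ to force every HN piece to be isotropic. As there, I would first reduce to a bounded substack ${\cal M}_f(v)_B^{ss}$, on which only finitely many HN types (with respect to $H=H_f$) occur, and, after perturbing $H$ within a neighbourhood of $H_f$, assume $H$ is general with respect to all the pieces that appear. Fix an HN stratum: for $E$ in it, with filtration $0\subset F_1\subset\cdots\subset F_s=E$ and $v_i:=v(F_i/F_{i-1})$, the pieces are $H_f$-semistable, hence $f$-semistable of the same $f$-slope as $v$, so that (as in Proposition \ref{prop:horizontal}) $v_i\in(0,re_1,d)^\perp$ and we may write $v_i=l_iu+n_ie_1+\delta_i+a_i\varrho_X$ with $l_ir=\rk(F_i/F_{i-1})>0$, whence $l_i>0$; moreover $\langle v_i^2\rangle\ge0$ by Lemma \ref{lem:Bogomolov2}. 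Exactly as in that proof, \cite[Lem. 5.3]{K-Y} and the vanishing $\Hom(F_i/F_{i-1},F_j/F_{j-1}(K_X))=0$ for $i<j$ give
\[
\dim{\cal F}^{HN}(v_1,\dots,v_s)=\sum_i\dim{\cal M}_H(v_i)^{ss}+\sum_{i<j}\langle v_j,v_i\rangle .
\]

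The decisive step is to prove that every $v_i$ is isotropic. Since $\langle v^2\rangle=0$,
\[
\sum_{i<j}\langle v_i,v_j\rangle=\frac12\Bigl(\langle v^2\rangle-\sum_i\langle v_i^2\rangle\Bigr)=-\frac12\sum_i\langle v_i^2\rangle\le0 .
\]
On the other hand $\delta_i\in U^\perp$, which is negative definite, so $\langle(l_j\delta_i-l_i\delta_j)^2\rangle\le0$, and Lemma \ref{lem:f}(1) together with $l_i,l_j>0$ yields
\[
\langle v_i,v_j\rangle=\frac{l_j}{2l_i}\langle v_i^2\rangle+\frac{l_i}{2l_j}\langle v_j^2\rangle-\frac{1}{2l_il_j}\langle(l_j\delta_i-l_i\delta_j)^2\rangle\ge0
\]
for all $i\ne j$. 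A sum of nonnegative terms that is $\le0$ must vanish termwise; hence $\langle v_i^2\rangle=0$ for every $i$ and $\langle v_i,v_j\rangle=0$ for all $i<j$. In particular $\dim{\cal F}^{HN}(v_1,\dots,v_s)=\sum_i\dim{\cal M}_H(v_i)^{ss}$.

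It remains to bound each isotropic piece. Writing $v_i=k_iw_i$ with $w_i$ primitive and $w_i=l_i'u+\cdots$, the positivity $l_i>0$ forces $l_i'\ge1$ and hence $k_i\le l_i$. By Proposition \ref{prop:isotropic}(3), $\dim{\cal M}_H(v_i)^{ss}\le[k_i/2]$ when $\ell(w_i)=1$ and $\dim{\cal M}_H(v_i)^{ss}\le k_i$ when $\ell(w_i)=2$. In the first case $[k_i/2]\le l_i/2$; in the second the description of primitive vectors with $\ell=2$ forces the $u$-coefficient $l_i'$ to be even, so $l_i'\ge2$ and $k_i\le l_i/2$. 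Either way $\dim{\cal M}_H(v_i)^{ss}\le l_i/2$, so
\[
\dim{\cal F}^{HN}(v_1,\dots,v_s)\le\sum_i\frac{l_i}{2}=\frac{l}{2} .
\]
Since the left-hand side is an integer it is at most $[l/2]$, and taking the maximum over the finitely many HN types and the supremum over $B$ gives $\dim{\cal M}_f(v)^{ss}\le[l/2]$.

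The step I expect to be the main obstacle is the passage from $\langle v^2\rangle=0$ to the isotropy of all the pieces: everything hinges on the nonnegativity of $\langle v_i,v_j\rangle$, which is precisely the combination of Lemma \ref{lem:f}(1), the negative definiteness of $U^\perp$, and the positivity of the ranks $l_i$. Were a single piece to have $\langle v_i^2\rangle>0$, it would contribute a positive $\tfrac12\langle v_i^2\rangle$ to the dimension and no bound of the form $[l/2]$ could survive. Once the pieces are known to be isotropic and mutually orthogonal, the remaining estimate is routine bookkeeping with Proposition \ref{prop:isotropic}(3) and the parity of $l_i'$ in the $\ell=2$ case.
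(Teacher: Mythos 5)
Your proof is correct and takes essentially the same route as the paper: stratify ${\cal M}_f(v)^{ss}$ by Harder--Narasimhan type, observe that every factor $v_i$ is isotropic so the cross terms $\langle v_i,v_j\rangle$ vanish, and bound each stratum by $\sum_i l_i/2$ using Proposition \ref{prop:isotropic}(3) together with the parity constraint ($2 \mid l_i'$ when $\ell(u_i)=2$), exactly as in the paper's one-line estimate. The only difference is that you explicitly justify, via Lemma \ref{lem:f}(1), the negative definiteness of $U^\perp$ and Lemma \ref{lem:Bogomolov2}, the isotropy and mutual orthogonality of the $v_i$, which the paper asserts without comment.
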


\begin{proof}
Any $v_i$ in \eqref{eq:v-fiber} is isotropic and 
$\dim {\cal M}_H(v_i)^{ss}=[k_i/2],k_i$ accordng as
$\ell(u_i)=1,2$.
If $\ell(v_i)=2$, then $l_i \geq 2k_i$.
Hence $\dim {\cal M}_H(v_i)^{ss} \leq l_i/2$ for all $i$.
Therefore 
$$
\dim {\cal F}^{HN}(v_1,v_2,...,v_s)=
\sum_i \dim {\cal M}_H(v_i)^{ss} \leq \frac{l}{2}.
$$
\end{proof}

\begin{defn}\label{defn:flat}
\begin{enumerate}
\item[(1)]
Let 
${\cal M}_H(v)_*^{ss}$ be the open substack of
${\cal M}_H(v)^{ss}$ consisting of $E$ such that
$E_{|\pi^{-1}(t)}$ is semi-stable for all $t \in {\Bbb P}^1$.
By Lemma \ref{lem:nD-stability},
${\cal M}_H(v)^{ss}$ consisting of $E$ such that
$E_{|\pi^{-1}(t)_{\mathrm{red}}}$ 
is semi-stable for all $t \in {\Bbb P}^1$,
where $\pi^{-1}(t)_{\mathrm{red}}$ is the reduced part of
$\pi^{-1}(t)$.
\item[(2)]
Let 
${\cal M}_f(v)_*^{ss}$ be the open substack of
${\cal M}_f(v)^{ss}$ consisting of $E$ such that
$E_{|\pi^{-1}(t)_{\mathrm{red}}}$ 
is semi-stable for all $t \in {\Bbb P}^1$.
\end{enumerate}
\end{defn}

\begin{prop}\label{prop:flat}
We set $v:=lu+n e_1+\delta+a \varrho_X$, where
$l,n,a \in {\Bbb Z}$, $l>0$ and $\delta \in U^\perp$.
Then ${\cal M}_f(v)_*^{ss}$ is an open and dense substack
of ${\cal M}_f(v)^{ss}$.
\end{prop}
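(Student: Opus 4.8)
The plan is to take openness of ${\cal M}_f(v)_*^{ss}$ for granted (it is open in ${\cal M}_f(v)^{ss}$ by Definition \ref{defn:flat}) and to prove density by a dimension count on the complement. First I would identify the complement precisely: for $E\in{\cal M}_f(v)^{ss}$ apply Lemma \ref{lem:fiber-stable} to obtain the canonical subsheaf $\widetilde E\subset E$ with $\widetilde E_{|D}$ stable on every reduced fibre $D$ and with $F:=E/\widetilde E$ supported on fibres. By Lemma \ref{lem:nD-stability}, $E_{|\pi^{-1}(t)_{\mathrm{red}}}$ is semistable for all $t$ exactly when $E$ is locally free along every fibre with semistable restriction, and by the uniqueness in Lemma \ref{lem:fiber-stable} this holds iff $F=0$. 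Hence ${\cal M}_f(v)^{ss}\setminus{\cal M}_f(v)_*^{ss}=\{E\mid F\neq 0\}$, and it remains to bound the dimension of this locus.

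Before stratifying I would make a preliminary reduction. Writing $w_i:=v(F_i/F_{i-1})$ for the Harder--Narasimhan factors of $F$ (each a $1$-dimensional sheaf supported on fibres, hence isotropic, with $\langle w_i,w_j\rangle=0$ for all $i,j$ since the $c_1(w_i)$ are proportional to the fibre class of square $0$) and $\tilde v:=v(\widetilde E)$, I get $\langle v^2\rangle=\langle\tilde v^2\rangle+2\sum_i\langle\tilde v,w_i\rangle$. Since $\deg_E(F_i/F_{i-1})<0$ by the construction of $F$, we have $\langle v,w_i\rangle>0$, and orthogonality of the $w_i$ gives $\langle\tilde v,w_i\rangle=\langle v,w_i\rangle>0$; combined with $\langle\tilde v^2\rangle\ge 0$ (Lemma \ref{lem:Bogomolov2}) this forces $\langle v^2\rangle>0$ whenever $F\neq0$. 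Thus if $v$ is isotropic the complement is empty and there is nothing to prove; otherwise $\dim{\cal M}_f(v)^{ss}=\langle v^2\rangle$ by Proposition \ref{prop:horizontal}, so it suffices to bound each stratum of the complement strictly below $\langle v^2\rangle$.

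Next I would stratify the complement by the discrete type $(\widetilde{\bf e},{\bf f}_1,\dots,{\bf f}_s)$; on each bounded piece ${\cal M}_f(v)^{ss}_B$ only finitely many types occur, and by uniqueness of $\widetilde E$, $F$ and of the HN filtration the stratum of a given type has dimension $\dim{\cal F}(\widetilde{\bf e},{\bf f}_1,\dots,{\bf f}_s)=\sum_i\langle w_i,\tilde v\rangle+\dim{\cal M}_f(\widetilde{\bf e})^{ss}+\sum_i\dim{\cal M}_H({\bf f}_i)^{ss}$ computed in Proposition \ref{prop:codim-general}. Writing $w_i=k_iu_i$ with $u_i$ primitive, Proposition \ref{prop:isotropic} bounds $\dim{\cal M}_H(w_i)^{ss}$ by $[k_i/2]$ or $k_i$ according as $\ell(u_i)=1$ or $2$, while $\langle\tilde v,w_i\rangle=k_i\langle\tilde v,u_i\rangle\ge k_i$, and $\ge 2k_i$ when $\ell(u_i)=2$ because then $\langle\tilde v,u_i\rangle$ is a positive even integer by Lemma \ref{lem:f}(2). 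When $\langle\tilde v^2\rangle>0$ one has $\dim{\cal M}_f(\widetilde{\bf e})^{ss}=\langle\tilde v^2\rangle$ by Proposition \ref{prop:horizontal}, whence
\begin{equation}
\langle v^2\rangle-\dim{\cal F}(\widetilde{\bf e},{\bf f}_1,\dots,{\bf f}_s)
=\sum_i\bigl(\langle\tilde v,w_i\rangle-\dim{\cal M}_H(w_i)^{ss}\bigr)>0,
\end{equation}
since each summand is $\ge\lceil k_i/2\rceil>0$ (for $\ell(u_i)=1$) or $\ge k_i>0$ (for $\ell(u_i)=2$), and $F\neq0$ guarantees at least one term. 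Together with openness this yields density.

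The main obstacle is the remaining sub-case $\langle\tilde v^2\rangle=0$ (with $\langle v^2\rangle>0$), where $\widetilde E$ is isotropic and Proposition \ref{prop:horizontal} must be replaced by the weaker bound $\dim{\cal M}_f(\widetilde{\bf e})^{ss}\le[\tilde l/2]$ of Lemma \ref{lem:horizontal-isotropic}, $\tilde l$ being the $u$-coordinate of $\tilde v$. Here I would play the same isotropic bookkeeping used in the proof of Proposition \ref{prop:horizontal}: exploit $\langle\tilde v,w_i\rangle\ge 2\dim{\cal M}_H(w_i)^{ss}$ and the divisibility of the Mukai pairings from Lemma \ref{lem:f} to absorb the extra term $[\tilde l/2]$, using that $\langle v^2\rangle=2\sum_i\langle\tilde v,w_i\rangle$ is then correspondingly large. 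Making this estimate close uniformly — rather than via the clean cancellation available once $\langle\tilde v^2\rangle>0$ — is the delicate point, and is exactly where the standing hypotheses $r$ even and $\gcd(r,d)=1$ (which pin down $\ell$ and the parity of pairings through Lemma \ref{lem:f}) are needed.
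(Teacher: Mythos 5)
Your route is the paper's own: identify the boundary of ${\cal M}_f(v)_*^{ss}$ with the locus $F\ne 0$ via Lemma \ref{lem:fiber-stable}, stratify by the type $(\widetilde{\bf e},{\bf f}_1,\dots,{\bf f}_s)$ coming from the filtration \eqref{eq:HN}, compute stratum dimensions with Proposition \ref{prop:codim-general}, bound the fibre-supported factors with Proposition \ref{prop:isotropic}, and split according to whether $\langle\tilde v^2\rangle>0$ (Proposition \ref{prop:horizontal}) or $\langle\tilde v^2\rangle=0$ (Lemma \ref{lem:horizontal-isotropic}). Your treatment of the case $\langle\tilde v^2\rangle>0$ is correct and matches the paper, and your preliminary remark that the boundary is empty when $v$ is isotropic is correct (and not made explicit in the paper).

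However, the case $\langle\tilde v^2\rangle=0$ is a genuine gap: you name the needed inequality but do not prove it, and the bounds you actually record are too weak to close it. You use $\langle\tilde v,w_i\rangle=k_i\langle\tilde v,u_i\rangle\ge k_i$ (resp.\ $\ge 2k_i$ when $\ell(u_i)=2$), together with $\langle\tilde v,w_i\rangle\ge 2\dim{\cal M}_H(w_i)^{ss}$. With only these, the defect $[l/2]$ cannot be absorbed: already for $s=1$, $k_1=1$, $\ell(u_1)=1$ and $l\ge 2$ they give at best $k_1-[k_1/2]-[l/2]=1-[l/2]\le 0$. The missing observation is that the pairing carries a factor of $l$: writing $w_i=k_i(0,r_ie_1,d_i)$ with $(0,r_ie_1,d_i)$ primitive, a direct computation in the hyperbolic lattice gives $\langle\tilde v,w_i\rangle=lk_i(r_id-rd_i)$, where $r_id-rd_i>0$ because $\deg_E(F_i/F_{i-1})<0$, and $r_id-rd_i\ge 2$ (it is even) when $\ell(u_i)=2$, since then $r_i$ and $r$ are both even (this is the content of Lemma \ref{lem:f}(2)). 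This is exactly what the paper exploits: since $\tilde v$ has the same $u$-coordinate $l$ as $v$ (passing from $E$ to $\tilde E$ only changes the $e_1$- and $\varrho_X$-components, so $[\tilde l/2]=[l/2]$), each boundary stratum has codimension at least
$\min\left\{lk_i-\left[\tfrac{k_i}{2}\right]-\left[\tfrac{l}{2}\right],\;2lk_i-k_i-\left[\tfrac{l}{2}\right]\right\}$,
and an elementary check shows this is positive for all $l,k_i\ge 1$. So the gap is repairable with ingredients you already cite, but as written the delicate case of the proposition — the one you correctly flag — is not established.
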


\begin{proof}
For $E \in {\cal M}_f(v)^{ss}$, we have the filtration \eqref{eq:fiber}.
For the filtration \eqref{eq:HN},
we set 
$$
v_i:=v(F_i/F_{i-1})=k_i (0,r_i e_1,d_i), 
$$
where 
$(0,r_i e_1,d_i)$ are primitive.
By Proposition \ref{prop:codim-general},
\begin{equation}
\langle v^2 \rangle-\dim {\cal F}(\tilde{v},v_1,...,v_s)=
\sum_i lk_i(r_i d-r d_i)-
\sum_i \dim {\cal M}_H(v_i)^{ss}-(\dim {\cal M}_f(\tilde{v})^{ss}-
\langle \tilde{v}^2 \rangle).
\end{equation}
We first assume that $\tilde{v}$ is isotropic.
Then $\dim {\cal M}_f(\tilde{v})^{ss}-\langle \tilde{v}^2 \rangle
\leq [\tfrac{l}{2}]$ by Lemma \ref{lem:horizontal-isotropic}.
If $\ell(v_i)=2$, then 
$r_i$ is even.
In this case, we have $(r_i d-r d_i) \in 2{\Bbb Z}$.
Hence
\begin{equation}
\begin{split}
& lk_i(r_i d-r d_i)-
\dim {\cal M}_H(v_i)^{ss}-(\dim {\cal M}_f(\tilde{v})^{ss}-
\langle \tilde{v}^2 \rangle)\\
\geq & \min \left \{lk_i-\left[\frac{k_i}{2}\right]-\left[\frac{l}{2}\right],
2lk_i-k_i-\left[\frac{l}{2} \right] \right\}>0.
\end{split}
\end{equation}
If $\tilde{v}$ is not isotropic, then
by using Proposition \ref{prop:horizontal},
we get $\langle v^2 \rangle-\dim {\cal F}(\tilde{v},v_1,...,v_s)>0$.
Therefore our claim holds.
\end{proof}

\begin{NB}
Assume that $l=1$ and $v=(2,\xi,a)$ with $(\xi,e_1)=1$.
If $k_1=1$ and $s=1$, 
then for $v_1=(0,e_1,0)$ and $v_1=(0,4e_1,1)$,
$\langle v^2 \rangle-\dim {\cal F}(\tilde{v},v_1,...,v_s)=1$.
If $k_1=2$, then
for $2v_1=(0,2e_1,0)$,
$\langle v^2 \rangle-\dim {\cal F}(\tilde{v},v_1,...,v_s)=1$.

Assume that $l=2$.
Then $k_i=1$, $s=1$ and $r_i d-r d_i=1$.

\end{NB}

By the proof of Proposition \ref{prop:flat},
we can compute the boundary components of ${\cal M}_f(v)_*^{ss}$.
Indeed we see that
\begin{equation}
lk_i(r_i d-r d_i)-
\dim {\cal M}_H(v_i)^{ss}-(\dim {\cal M}_f(\tilde{v})^{ss}-
\langle \tilde{v}^2 \rangle)=1
\end{equation}
implies that $(l,k_i)=(1,1),(1,2),(2,1)$.
\begin{NB}
Indeed
if $l \geq 3$, then
$lk_i-[k_i/2]-[l/2] \geq 
lk_i-k_i/2-l/2=l(k_i-1)/2+k_i(l-1)/2 \geq
k_i+3(k_i-1)/2 \geq 2$ for $k_i >1$, and
$lk_i-[k_i/2]-[l/2] \geq l-[l/2] \geq 2$
for $k_i=1$.
We also have 
$2lk_i-k_i-[l/2]\geq 2l-1-[l/2] \geq 2$ for $l \geq 2$.

If $l=2$ and $k_i \geq 2$, then
$lk_i-[k_i/2]-[l/2] \geq l(k_i-1)/2+k_i(l-1)/2 \geq (l+k_i)/2 \geq 2$.
If $l=2$ and $k_i=1$, then
$lk_i-[k_i/2]-[l/2]=1$.

If $l=1$, then
$lk_i-[k_i/2]-[l/2]=k_i-[k_i/2] \geq 2$ for $k_i \geq 3$ and
$lk_i-[k_i/2]-[l/2]=k_i-[k_i/2]=1$ for $k_i=1,2$. 
\end{NB}
If $\langle v^2 \rangle-\dim {\cal F}(\tilde{v},v_1,...,v_s)=1$,
then we see that
$s=1$ and $r_i d-r d_i=1$ for $\ell(v_i)=1$
and $s=1$ and $r_i d-r d_i=2$ for $\ell(v_i)=2$.
Thus a general member $E$ of ${\cal M}_f(v)^{ss} 
\setminus {\cal M}_f(v)_*^{ss}$
fits in an extension
\begin{equation}\label{eq:fiber2}
0 \to E' \to E \to F \to 0
\end{equation}
where $E' \in {\cal M}_f(v_1)^{ss}$ and $F \in {\cal M}_H(v_2)^{ss}$.

Assume that $l=1$.
We take integers $(p,q)$ such that
$0<p \leq r$ and $pd-rq=1$
and set $u_1:=(0,p e_1,q)$.
Let
${\cal F}(v-u_1,u_1)^s$ be the open substack
parameterizing torsion free sheaves $E$ fitting 
in the extension \eqref{eq:fiber2}
such that $\Div(F)=p \Pi_i$. 
Then it defines a divisor on ${\cal M}_f(v)^{ss}$.
Let ${\cal D}_i$ $(i=1,2)$ be the divisor.

We note that 
${\cal M}_H(2u_1,pf)^s$ consists of stable locally free sheaves
of rank $p$ and degree $2q$ on a smooth fiber.
Let ${\cal F}(v-2u_1,2u_1)^s$ be the open substack
parameterizing torsion free sheaves $E$ fitting 
in the extension \eqref{eq:fiber2}
such that $\Div(F)=p f$.
Then it defines a divisor ${\cal D}_3$ on ${\cal M}_f(v)^{ss}$.

We set
$u_2:=(0,2p' e_1,2q')$, where $0<p' \leq r$,
$(p',q')=(p \pm r/2,q \pm d/2)$.
Then $u_2$ is a primitive Mukai vector
with $\langle v,u_2 \rangle=(2p')d-r(2q')=2$.
${\cal M}_H(u_2,p' f)^s$ consists of stable locally free sheaves
of rank $p'$ and degree $2q'$ on a smooth fiber.
Let ${\cal F}(v-u_2,u_2)^s$ be the open substack 
parameterizing torsion free sheaves $E$
fitting in the extension \eqref{eq:fiber2}
such that $\Div(F)=p'f$.
Then it defines a divisor ${\cal D}_4$ 
on ${\cal M}_f(v)^{ss}$.

Since ${\cal M}_f(v)^{ss}={\cal M}_{H_f}(v)^{ss}$,
we have
\begin{equation}\label{eq:decompo}
{\cal M}_{H_f}(v)^{ss} ={\cal M}_{H_f}(v)_*^{ss} \cup
{\cal D}_1 \cup {\cal D}_2 \cup {\cal D}_3 \cup {\cal D}_4
\end{equation}
up to codimension 2.

\begin{ex}\label{ex:u}
For $v=(2,e_2+ne_1+\delta,a)$ with $u=(2,e_2,0)$,
we have $(p,q)=(1,0)$ and $(p',q')=(p,q)+\frac{1}{2}(r,d)=(2,\frac{1}{2})$.
In particular
$u_1=(0,e_1,0)$ and $u_2=(0,4e_1,1)$.
As we shall see in the next section, \eqref{eq:decompo}
holds without removing codimension 2 subset.
\end{ex}

\section{Irreducibility}\label{sect:irreducible}
\subsection{Unnodal case}
Assume that $X$ is an unnodal Enriques surface
and $f$ a smooth fiber of an elliptic fibration
$\pi:X \to {\Bbb P}^1$.
Let $v=(r,\xi,\frac{s}{2})$ be a primitive Mukai vector such that
$r$ is even.
%satisfying
%$r$ is even and there is a pair of intgers
%$(r_1,d_1)$ such that $r_1$ is even, $\gcd(r_1,d_1)=1$ and
%$(\xi,r_1 e_1)-rd_1=0$.
Then for $L \in \NS(X)$ with $[L \mod K_X]=\xi$,
we have an equality of "Hodge polynomials" of the stacks 
defined in \cite{Y:twist1} (see \cite[Prop. 2.4, Thm. 2.6]{Enriques}):
$$
e({\cal M}_H(v,L+\tfrac{r}{2}K_X)^{ss})=e({\cal M}_H(v',L'+K_X)^{ss}),
$$
where $v'=(2,\zeta,\frac{s'}{2})$, $[L' \mod K_X]=\zeta$,
$L \equiv L' \mod 2$, $\langle v^2 \rangle=\langle {v'}^2 \rangle$ and
$\zeta = 0$ if $\ell(v)=2$.
\begin{NB}
\begin{ex}
For $(r_1,d_1)=(2,1)$, if $(\xi,e_1)=1$, then 
$v=(2,\xi,\frac{s}{2})$ and $v=(4,2\xi,\frac{s}{2})$
satisfies the conditions.
$e({\cal M}_H(4,2(e_2+me_1),\frac{s}{2})^{ss})=
e({\cal M}_H(2,0,-2(m-\frac{s}{2}))^{ss})$,
where $s/2$ is odd.
\end{ex}
\end{NB}
Assume that $\ell(v)=\ell(v')=2$.
Then
$v'=(2,0,-2n)$ for some $n \in {\Bbb Z}$.
We set $v''=(4,2(e_2+(n+1)e_1),1)$. Then
$e({\cal M}_H(v',L'+K_X)^{ss})=e({\cal M}_H(v'',L'')^{ss})$,
where $L' \equiv L'' \mod 2$.
%If $\xi=e_2+(n+1)e_1$, then
%$e({\cal M}_H(4,2\xi,1)^{ss})=e({\cal M}_H(2,0,-2n)^{ss})$.
In order to prove the irreducibility of
${\cal M}_H(v,L)^{ss}$, it is sufficient to prove the
irredicibility for the following two cases:
\begin{enumerate}
\item[(1)]
$v=(2,e_2+ne_1+\delta,a)$ 
%$r=2$ and $\ell(v)=1$, 
\item[(2)]
 $v=(4,2(e_2+(n+1)e_1),1)$.
\end{enumerate}
In particular, $u=(2,e_2,0)$ in the notation of subsection \ref{subsect:unnodal}. 
We note that $M_H(0,f,1)$ is a fine moduli space
and it is isomorphic to $X$.
$M_H(0,f,1)$ parametrizes torsion free
sheaves of rank 1 on a reduced and irreducible fiber
$\pi^{-1}(t)$ and stable vector bundles of rank 2 and degree 1 on
$\Pi_i$.
Thus
$$
M_H(0,f,1)=\bigcup_{t \in {\Bbb P}^1_0} 
\overline{\Pic}_{\pi^{-1}(t)}^1 \cup M_{\Pi_1}(2,1) \cup M_{\Pi_2}(2,1)
$$
where $\pi$ has reduced fibers over ${\Bbb P}^1_0$,
$\overline{\Pic}_{\pi^{-1}(t)}^1$ are the compactified Jacobian
of degree 1 and 
$M_{\Pi_i}(2,1)$ are the moduli space of stable vector bundles
of rank 2 and degree 1 on $\Pi_i$.  
We take an identification $M_H(0,f,1) \cong X$.  
Let 
${\cal E}$ be a universal family on $X \times X$.
By \cite{Br:1},
\begin{equation}\label{eq:FM1}
{\cal E}_{|X \times \{ x \}} \otimes K_X \cong {\cal E}_{|X \times \{ x \}},
\; x \in X 
\end{equation}
and
\begin{equation}
\begin{matrix}
\Phi_{X \to X}^{{\cal E}}:& {\bf D}(X) & \to & {\bf D}(X)\\
& E & \mapsto & {\bf R}\Hom_{p_2}(p_1^*(E) \otimes {\cal E})
\end{matrix}
\end{equation}
is an equivalence, that is, a Fourier-Mukai transform,
where $p_i:X \times X \to X$  $(i=1,2)$ are projections.
We consider a contravariant Fourier-Mukai transform
\begin{equation}
\begin{matrix}
\Psi:& {\bf D}(X) & \to & {\bf D}(X)\\
& E & \mapsto & {\bf R}\Hom_{p_2}(p_1^*(E),{\cal E}).
\end{matrix}
\end{equation}
Since $\Psi({\cal O}_X)$ is a line bundle, replacing the universal family,
we may assume that
$\Psi({\cal O}_X)={\cal O}_X$.
We set $\Psi^i(E):=H^0(\Psi(E)[i]) \in \Coh(X)$.

\begin{lem}\label{lem:Psi}
$\Psi(0,0,1)=(0,2e_1,1)$, 
$\Psi(0,4e_1,1)=(0,-2e_1,1)$ and $\Psi(0,e_1,0)=(0,-e_1,0)$.
\end{lem}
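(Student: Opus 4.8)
The three identities are assertions about the induced map $\psi$ on the Mukai lattice determined by $\Psi$, so the plan is to compute $\psi$ on these classes. Writing the covariant relative transform $\Phi(G):={\bf R}p_{2*}(p_1^*(G)\otimes{\cal E})$, relative Grothendieck duality gives $\Psi(E)\cong\Phi(E^\vee)$ for perfect $E$; hence on Mukai vectors $\psi(x)=\phi(x^\vee)$, where $\phi$ is the ${\Bbb Q}$-linear cohomological action of $\Phi$ (and I use $v(E^\vee)=v(E)^\vee$, valid since $\td_X$ is numerically self-dual). Since $\varrho_X^\vee=\varrho_X$ and $(0,e_1,0)^\vee=(0,-e_1,0)$, and since
\begin{equation}
(0,4e_1,1)=4\,(0,e_1,0)+\varrho_X,
\end{equation}
it suffices to determine $\phi(\varrho_X)$ and $\phi(0,e_1,0)$; the three values of $\psi$ then follow by linearity.

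First I would compute $\phi(\varrho_X)=v(\Phi({\Bbb C}_x))$ for a point $x$ on a reduced fibre $F_t=\pi^{-1}(t)$. Because ${\cal E}$ is the universal family of $M_H(0,f,1)\cong X$, its restriction over the fibre $M_t$ of the target elliptic fibration is a relative degree-$1$ sheaf, so $\Phi({\Bbb C}_x)={\cal E}_{|\{x\}\times M_t}$ is a line bundle on $M_t$, a fibre of class $f=2e_1$; reading off rank, $c_1$ and Euler characteristic gives $\phi(\varrho_X)=(0,2e_1,1)$. Next, taking $G={\cal O}_{F_t}$, a relative degree-$0$ line bundle, $\Phi({\cal O}_{F_t})={\bf R}p_{2*}({\cal E}_{|F_t\times M_t})$ is again a line bundle on $M_t$, whence $\phi(0,2e_1,0)=(0,2e_1,0)$; since $\phi$ is ${\Bbb Q}$-linear and the lattice is torsion free, $\phi(0,e_1,0)=(0,e_1,0)$. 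Combining, $\psi(\varrho_X)=\phi(\varrho_X)=(0,2e_1,1)$, $\psi(0,e_1,0)=\phi(0,-e_1,0)=(0,-e_1,0)$, and by the displayed relation $\psi(0,4e_1,1)=4\,(0,-e_1,0)+(0,2e_1,1)=(0,-2e_1,1)$.

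The step that needs the most care is the normalization of the Euler-characteristic components: that $\Phi({\Bbb C}_x)$ has $\chi=1$ while $\Phi({\cal O}_{F_t})$ has $\chi=0$ depends on the precise twist of ${\cal E}$. This is exactly what $\Psi({\cal O}_X)={\cal O}_X$, equivalently $\phi(1,0,\tfrac12)=(1,0,\tfrac12)$, is meant to fix, and I would verify that this constraint forces the degree of $p_{2*}({\cal E}_{|F_t\times M_t})$ to be $0$. A second delicate point is rigour over the two multiple fibres $2\Pi_1,2\Pi_2$, where $M_t$ is the relative compactified Jacobian of a non-reduced fibre (with the subtlety that the normal bundle ${\cal O}_{\Pi_i}(\Pi_i)$ is nontrivial $2$-torsion) and the elementary fibrewise picture must be replaced by the relative theory of \cite{Br:1}. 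Here the divisibility identity $2\phi(0,e_1,0)=\phi(0,2e_1,0)$ is precisely what lets me deduce $\phi(0,e_1,0)$ from a computation on a \emph{smooth} fibre, thereby sidestepping a direct transform of ${\cal O}_{\Pi_i}$. Settling these normalization and degenerate-fibre issues is the crux; the remainder is the linear bookkeeping above.
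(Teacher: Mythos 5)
Your proposal is correct and is, at bottom, the same computation as the paper's: both determine the induced action on two fibre-supported classes, pin the $\chi$-components by the normalization $\Psi({\cal O}_X)={\cal O}_X$, and extend by linearity on the Mukai lattice. The differences are minor but worth recording. For the second generator the paper avoids any fibrewise cohomology: it invokes the inversion property $\Psi({\cal E}_{|X \times \{x\}})=k_x[-2]$ from the Fourier--Mukai package of \cite{Br:1}, which gives $\Psi(0,2e_1,1)=(0,0,1)$ formally, whereas you compute $\Phi({\cal O}_{F_t})$ geometrically on a reduced fibre; your $2$-divisibility step $2\phi(0,e_1,0)=\phi(0,2e_1,0)$, used to sidestep the multiple fibres $2\Pi_i$, is exactly parallel to the paper's identity $2(0,e_1,0)=(0,2e_1,1)-(0,0,1)$. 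The one step you explicitly defer --- that the normalization forces $\chi(\Phi({\Bbb C}_x))=1$ and $\deg p_{2*}({\cal E}_{|F_t\times M_t})=0$ --- is settled in the paper by a one-line use of the invariance of the Euler pairing under the contravariant equivalence: $1=\chi({\cal O}_X,k_x)=-\langle \Psi(k_x),\Psi({\cal O}_X)\rangle=-\langle (0,2e_1,c),(1,0,\tfrac{1}{2})\rangle=c$, and the same identity applied to $0=\chi({\cal O}_X,{\cal O}_{F_t})$ yields your degree-$0$ claim directly. Note also that your single normalization really does suffice for both constants, as you suspected: retwisting ${\cal E}$ by $p_2^*(L)$ shifts $\chi(\Phi({\Bbb C}_x))$ and $\chi(\Phi({\cal O}_{F_t}))$ by the same integer $(c_1(L),f)$, so fixing one fixes the other; with that one-liner supplied, your argument closes completely and agrees with Lemma \ref{lem:Psi}.
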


\begin{proof}
We note that 
$\Psi(k_x)={\cal E}_{|\{x\} \times X}[-2]$.
Hence $c_1(\Psi(k_x))=2e_1$.
Since $1=\chi({\cal O}_X,k_x)=
-\langle \Psi(k_x),\Psi({\cal O}_X) \rangle$,
we have $\Psi(0,0,1)=(0,2e_1,1)$.
Since $\Psi({\cal E}_{|X \times \{x \}})=k_x[-2]$,
$\Psi(0,2e_1,1)=(0,0,1)$.
Since 
\begin{equation}
\begin{split}
(0,4e_1,1)=& 2(0,2e_1,1)-(0,0,1),\\
2(0,e_1,0)=& (0,2e_1,1)-(0,0,1),
\end{split}
\end{equation}
we have 
\begin{equation}
\begin{split}
\Psi(0,4e_1,1)=& (0,-2e_1,1),\\
\Psi(0,e_1,0)=& (0,-e_1,0).
\end{split}
\end{equation}
\end{proof}

For cases (1) and (2),
%$v=(2,\zeta,b)$ with $(\zeta,e_1)=1$ or
%$v=(4,2(e_2+(n+1)e_1),1)$, 
by \cite[Prop. 3.4.5]{PerverseII}, $\Psi$ induces an isomorphism
${\cal M}_H(v)^{ss} \to {\cal M}_{H'}^{G'}(w)^{ss}$,
where $w=(0,\xi, a)$ with $(\xi,e_1)=1,2$,
${\cal M}_{H'}^{G'}(w)^{ss}$ is the moduli space 
of $G'$-twisted semi-stable sheaves,
and $H' \in \NS(X)_{\Bbb Q}$ and $G' \in K(X)$ 
depend on the choice of $H$ and $v$.
\begin{rem}
Since $H$ is a general polarization, 
${\cal M}^G_H(v)^{ss}$ is independent of the choice of
$G$. Hence we do not need to consider twisted semi-stability. 
\end{rem}
We have a support map
\begin{equation}
\begin{matrix}
\varphi: & {\cal M}_{H'}^{G'}(w,L)^{ss} & \to & |L|\\
& E & \mapsto & \det E.
\end{matrix}
\end{equation}

\begin{lem}\label{lem:FM-flat}
${\cal M}_H(v)_*^{ss}$ is isomorphic to the open substack
${\cal M}_{H'}^{G'}(w)^{ss}_*$ of 
${\cal M}_{H'}^{G'}(w)^{ss}$  
consisting of 
semi-stable sheaves $E$ such that
 $\Div(E)$ is flat over ${\Bbb P}^1$. 
\end{lem}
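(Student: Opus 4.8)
The plan is to reduce the statement to a fiberwise computation of the transform $\Psi$ along the elliptic fibration. Recall that $X \cong M_H(0,f,1)$ and that every sheaf parametrized by $M_H(0,f,1)$ is supported on a single fiber of $\pi$; under the identification $M_H(0,f,1)\cong X$ the resulting support map is exactly $\pi$. Hence the universal family ${\cal E}$ is supported on the fiber product $X\times_{{\Bbb P}^1}X$, and $\Psi$ is a relative integral transform over ${\Bbb P}^1$. Writing $i_t\colon X_t\hookrightarrow X$ for the inclusion of a fiber $X_t=\pi^{-1}(t)$ and $\Psi_t$ for the fiberwise transform defined by the kernel ${\cal E}|_{X_t\times X_t}$, the first step is to record the base-change compatibility $Li_t^*\Psi(E)\cong\Psi_t(Li_t^*E)$, valid because ${\cal E}$ is flat over each factor and compatible with $\pi$ (cohomology and base change for the relative transform, as in \cite{PerverseII} and \cite{Br:1}).

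Next I would observe that, since $w=(0,\xi,a)$, the object $\Psi(E)$ is, up to the shift given by its WIT index, a purely $1$-dimensional sheaf whose scheme-theoretic support $\Div(\Psi(E))$ is a curve in the class $\xi$. Such a curve is flat over ${\Bbb P}^1$ if and only if $\Div(\Psi(E))\to{\Bbb P}^1$ is finite, i.e.\ if and only if it contains no fibral component, i.e.\ if and only if $\Psi(E)|_{X_t}$ is $0$-dimensional for every $t\in{\Bbb P}^1$. By the base-change isomorphism this is equivalent to $\Psi_t(E|_{X_t})$ being $0$-dimensional for all $t$. Thus the lemma is reduced to the fiberwise equivalence: $\Psi_t(E|_{X_t})$ is $0$-dimensional $\iff$ $E|_{X_t}$ is semistable.

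It then remains to establish this fiberwise equivalence. Since $\langle v,(0,f,1)\rangle=(\xi,f)-r=0$ (which is why $\rk\Psi(E)=0$), the restriction $E|_{X_t}$ has slope $(\xi,f)/r=1$ in both cases (1) and (2). On a smooth fiber this is the classical Fourier--Mukai theory on an elliptic curve: a slope-$1$ sheaf is WIT with $0$-dimensional transform precisely when it is semistable, for a strictly destabilizing subsheaf has slope $>1$ and hence a positive-rank (fibral) transform. For the singular and multiple fibers one argues in the same way using the description of ${\cal E}$ on the half-fibers $\Pi_i$ (where $M_H(0,f,1)$ restricts to $M_{\Pi_i}(2,1)$) together with Bridgeland's relative transform \cite{Br:1}; the non-reduced fibers $2\Pi_i$ are handled by Lemma \ref{lem:nD-stability}, which reduces semistability on $2\Pi_i$ to semistability on the reduced half-fiber $\Pi_i$, a reduction already built into Definition \ref{defn:flat}. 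Combining these equivalences shows that the isomorphism $\Psi\colon{\cal M}_H(v)^{ss}\to{\cal M}_{H'}^{G'}(w)^{ss}$ of \cite[Prop.\ 3.4.5]{PerverseII} carries the open substack ${\cal M}_H(v)_*^{ss}$ onto ${\cal M}_{H'}^{G'}(w)^{ss}_*$; applying the same argument to the inverse transform gives the reverse containment, so the two open substacks are identified.

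The main obstacle is precisely this fiberwise equivalence at the special fibers, where the naive elliptic-curve argument must be replaced by the relative theory of \cite{Br:1}: one has to verify that on a singular or a multiple fiber a sheaf failing semistability really does acquire a vertical component in the transformed support, rather than merely changing the length of a $0$-dimensional transform. Once this is secured, the global statement follows formally from base change and the openness of both loci.
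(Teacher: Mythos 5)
Your overall strategy --- treat $\Psi$ as a relative transform over ${\Bbb P}^1$ and argue fiber by fiber --- is the same one underlying the paper's proof, and your forward direction is essentially the paper's: there, semistability of $E_{|\pi^{-1}(t)}$ implies that $E_{|\pi^{-1}(t)}$ is a successive extension of the kernel sheaves ${\cal E}_{|X \times \{x\}}$, whence $\Hom(E,{\cal E}_{|X \times \{x\}})=0$ for a general $x$ in each fiber, so $\Supp(\Psi^1(E))$ contains no fiber and $\Div(\Psi^1(E))$ is flat. Note that for this the paper needs only pointwise vanishing of ${\bf R}\Hom(E,{\cal E}_{|X\times\{x\}})$ (using $\chi(E,{\cal E}_{|X\times\{x\}})=0$ and $\Ext^2(E,{\cal E}_{|X\times\{x\}})=0$), not the full derived base-change isomorphism $Li_t^*\Psi(E)\cong\Psi_t(Li_t^*E)$ that you invoke, which for the contravariant transform requires additional (if standard) justification.

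The genuine gap is in your converse. You reduce it to the implication ``$E_{|X_t}$ unstable $\Rightarrow$ $\Psi_t(E_{|X_t})$ has a positive-rank, fibral component,'' prove it only for smooth fibers, and then yourself flag the singular and multiple fibers as ``the main obstacle \dots\ one has to verify''; that verification is never carried out, and it is not a verbatim repetition of the smooth-fiber argument, since over $2\Pi_i$ the sheaves ${\cal E}_{|X\times\{x\}}$ are stable rank-$2$, degree-$1$ bundles on $\Pi_i$ rather than rank-$1$ sheaves, and over singular fibers they include non-locally-free points of a compactified Jacobian, so the needed non-vanishing $\Hom(Q,{\cal E}_{|X\times\{x\}})\ne 0$ for a destabilizing quotient $Q$ requires separate arguments. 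The paper's proof is structured precisely so that this hard implication is never needed: for $L \in {\cal M}_{H'}^{G'}(w)^{ss}_*$ it computes the inverse transform explicitly, $E=\Ext^1_{p_1}(p_2^*(L),{\cal E})=p_{1*}(p_2^*(L^{\vee}[1])\otimes {\cal E})$, and observes that since $\Supp(L^{\vee}[1])$ is finite over ${\Bbb P}^1$, the restriction of $E$ to each fiber is a successive extension of the sheaves ${\cal E}_{|X\times\{x\}}$ with $x \in \Supp(L^{\vee}[1]) \cap \pi^{-1}(t)$, hence semistable; thus only the easy implication (extension of kernel sheaves $\Rightarrow$ semistable) is ever used, on every fiber type at once. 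To complete your proof, either carry out the unstable-implies-fibral-component step on singular and multiple fibers, or replace your converse by this inverse-transform computation.
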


\begin{proof}
As we remarked, 
$\Psi$ induces an isomorphism
${\cal M}_H(v)^{ss} \to {\cal M}_{H'}^{G'}(w)^{ss}$.

For $E \in {\cal M}_H(v)_*^{ss}$,
the semi-stability of $E_{|\pi^{-1}(t)}$ implies that 
$E_{|\pi^{-1}(t)}$ is a successive extension of 
${\cal E}_{|X \times \{x \}}$ $(x \in \pi^{-1}(t))$.
Hence 
$\Hom(E,{\cal E}_{|X \times \{ x\}})=0$ for a general point of
$x \in \pi^{-1}(t)$.
Hence $\Supp(\Psi^1(E))$ does not contain a fiber, which implies
$\Div(\Psi^1(E))$ is flat over ${\Bbb P}^1$.

Conversely for $L \in {\cal M}_{H'}^{G'}(w)^{ss}_*$,
$L^*:=L^{\vee}[1]$ is a purely 1-dimensional sheaf on $X$.
Hence  
$E:=\Ext_{p_1}^1(p_2^*(L),{\cal E})=p_{1*}(p_2^*(L^*) \otimes {\cal E})$ 
is a locally free sheaf on $X$ such that $E_{|\pi^{-1}(t)}$
is semi-stable for all $t \in {\Bbb P}^1$.
Therefore the claim holds.
\end{proof}

\begin{NB}
For $(4,2D,2n+1)=2(2,D,n)+(0,0,1)$ with $(D,e_1)=1$,
$\Phi(4,2D,2n+1)=2(0,D',n')+(0,2e_1,1)$.
Hence the first Chern class is divisible by 2.
\end{NB}

For case (1), we have
$w=(0,\xi,a)$ with $(\xi,e_1)=1$.
\begin{NB}
$w e^{ke_1}=(0,\xi,a)e^{ke_1}=(0,\xi,a+k)$.
Thus we can take arbitrary $a$.
\end{NB}
We take $L \in \NS(X)$ with $[L \mod K_X]= \xi$.
Then for $E \in {\cal M}_{H'}^{G'}(w,L)^{ss}_*$,
$\Div(E)$ is integral.
Let $|L|_*$ be the open subscheme parametrizing
integral curves.  Then
$\varphi:{\cal M}_{H'}^{G'}(w,L)_*^{ss} \to |L|_*$ is a family of compactified
jacobians over $|L|_*$.
By \cite{AIK}, all fibers are irreducible of dimension
$(L^2)/2$.
Hence ${\cal M}_{H'}^{G'}(w,L)^{ss}_*$ is irreducible.
Thus by Proposition \ref{prop:flat},
we have the following. 
\begin{prop}\label{prop:case(1)}
 ${\cal M}_{H'}^{G'}(w,L)^{ss}$ is irreducible.
\end{prop}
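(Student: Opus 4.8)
The plan is to reduce the irreducibility of ${\cal M}_{H'}^{G'}(w,L)^{ss}$ to that of its open substack ${\cal M}_{H'}^{G'}(w,L)^{ss}_*$ of sheaves whose support is flat over ${\Bbb P}^1$, and then to exhibit the latter as irreducible by presenting it as a relative compactified Jacobian. First I would use that in case (1) we have $(\xi,e_1)=1$, so that for $E \in {\cal M}_{H'}^{G'}(w,L)^{ss}_*$ the support $\Div(E)$ meets a general fibre in a single point; hence flatness over ${\Bbb P}^1$ forces $\Div(E)$ to be an integral curve in the linear system $|L|$. Consequently the support map restricts to a morphism $\varphi:{\cal M}_{H'}^{G'}(w,L)^{ss}_* \to |L|_*$ onto the open locus $|L|_*$ of integral curves, and since $H'$ is general I may (by the remark following the isomorphism) forget the twist and regard the fibres of $\varphi$ as moduli of stable torsion-free rank-one sheaves on the curve.

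Next I would analyse $\varphi$ fibrewise. Over an integral curve $C \in |L|_*$ the fibre of $\varphi$ is a compactified Jacobian of $C$, and by \cite{AIK} the compactified Jacobian of an integral curve is irreducible of the expected dimension. Thus $\varphi$ is a fibration with irreducible equidimensional fibres over the irreducible base $|L|_*$, and it follows that ${\cal M}_{H'}^{G'}(w,L)^{ss}_*$ is irreducible.

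Finally I would upgrade this to irreducibility of the whole stack by a density argument. Under the Fourier-Mukai equivalence $\Psi$, Lemma \ref{lem:FM-flat} identifies ${\cal M}_{H'}^{G'}(w,L)^{ss}_*$ with the flat locus ${\cal M}_H(v)^{ss}_*$ and the full stack with ${\cal M}_H(v)^{ss}$. Proposition \ref{prop:horizontal} shows ${\cal M}_{H_f}(v)^{ss}$ is open and dense in ${\cal M}_f(v)^{ss}$, while Proposition \ref{prop:flat} shows ${\cal M}_f(v)^{ss}_*$ is open and dense in ${\cal M}_f(v)^{ss}$; intersecting these two open dense substacks shows ${\cal M}_{H_f}(v)^{ss}_*$ is dense in ${\cal M}_{H_f}(v)^{ss}$, and transporting through $\Psi$ gives that ${\cal M}_{H'}^{G'}(w,L)^{ss}_*$ is dense in ${\cal M}_{H'}^{G'}(w,L)^{ss}$. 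A space carrying a dense irreducible open substack is irreducible, which yields the claim. I expect the density step to be the only real point of care: one must guarantee that the non-flat boundary introduces no new irreducible component, which is precisely the dimension estimate of Proposition \ref{prop:flat} (the boundary strata have strictly smaller dimension), and one must track the correct stability notion ($f$-, $H_f$-, and twisted) carefully through $\Psi$ so that the open dense loci on the two sides genuinely correspond.
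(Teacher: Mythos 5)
Your proof is correct and is essentially the paper's own argument: the paper likewise uses $(\xi,e_1)=1$ plus flatness of the support to conclude that $\Div(E)$ is integral, views $\varphi:{\cal M}_{H'}^{G'}(w,L)^{ss}_* \to |L|_*$ as a family of compactified Jacobians with irreducible fibers of dimension $(L^2)/2$ by \cite{AIK}, and then invokes Proposition \ref{prop:flat} (together with Proposition \ref{prop:horizontal} and Lemma \ref{lem:FM-flat}) to see that the flat locus is dense, whence irreducibility of the whole stack. One cosmetic slip: since $f=2e_1$, the support meets a general fibre in two points rather than one; integrality instead follows because any horizontal component $C$ satisfies $(C,e_1)\geq 1$, so $(\xi,e_1)=1$ excludes both a second horizontal component and a multiple curve.
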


We note that $|L| \setminus |L|_*$ is a divisor consisting of three irreducible
components:
\begin{equation}
\begin{split}
&\{ D \in |L| \mid \Pi_i \subset D \}\;\;(i=1,2),\\
&\{D  \in |L| \mid \pi^{-1}(t) \subset D, t \in {\Bbb P}^1 \}.
\end{split}
\end{equation}
Hence \eqref{eq:decompo} holds without removing codimension 2 subsets. 
Let $u_1$ and $u_2$ be the Mukai vectors in Example \ref{ex:u}.
From the exact sequence
$$
0 \to \widetilde{E} \to E \to F \to 0 
$$
in \eqref{eq:fiber2} with $F \in {\cal M}_H(u_1,p\Pi_i)^s 
\cup {\cal M}_H(2u_1,pf)^s \cup
{\cal M}_H(u_2,p' f)^s$
and $\widetilde{E} \in {\cal M}_f(v-v(F))_*^{ss}$,
%\begin{enumerate}
%\item
%
\begin{equation}
\Ext^2(F,{\cal E}_{|X \times \{ x\}})=
\Hom({\cal E}_{|X \times \{ x\}},F)^{\vee}=0
\end{equation}
for all $x \in X$
by \eqref{eq:FM1}, $\frac{1}{2}=\frac{d}{r}>\frac{q}{p}=0$ and
$\frac{1}{2}=\frac{d}{r}>\frac{q'}{p'}=\frac{1}{4}$.
Since $\Hom(F,{\cal E}_{|X \times \{ x\}})=0$
for a general $x \in X$, 
%\end{enumerate}
we see that 
$$
\Psi(\widetilde{E})[1], \Psi(E)[1],\Psi(F)[1] \in \Coh(X)
$$
and we have an exact sequence
$$
0 \to \Psi^1(F) \to \Psi^1(E) \to \Psi^1(\widetilde{E}) \to 0.
$$
By using Lemma \ref{lem:Psi},
we have the following description of the boundary divisors.

\begin{enumerate}
\item
For a general member $E \in \Psi^1({\cal D}_i)$ $(i=1,2)$,
$\Div(E)=\Pi_i+C$, where $C$ is flat over $\pi$.
\item
For a general member $E \in \Psi^1({\cal D}_i)$ $(i=3,4)$,
$\Div(E)=f+C$, where 
$C$ is flat over $\pi$.
\end{enumerate}
By this description of the boundary,
we have the following claim
\begin{prop}[{cf.\cite[Assumption 2.16]{Sacca}}]\label{prop:sacca}
We set
$$
|L|_{nr}:=\{D \in |L| \mid \text{ $D$ is not reduced}\}
$$
and 
${\cal M}_{H'}^{G'}(w,L)_{nr}=\varphi^{-1}(|L|_{nr})$.
Then 
$\codim_{{\cal M}_{H'}^{G'}(w,L)}({\cal M}_{H'}^{G'}(w,L)_{nr}) \geq 2$.
\end{prop}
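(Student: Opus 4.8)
The plan is to combine the explicit description of the boundary divisors $\Psi^1({\cal D}_i)$ obtained just above with the fact that on the dense open substack ${\cal M}_{H'}^{G'}(w,L)_*^{ss}$ the support $\Div(E)$ is integral, so that the whole non-reduced locus is pushed into the codimension-$1$ boundary. Write ${\cal D}_i':=\Psi^1({\cal D}_i)$. Since $w=(0,\xi,a)$ with $(\xi,e_1)=1$, every $E \in {\cal M}_{H'}^{G'}(w,L)_*^{ss}$ has integral, hence reduced, support, so $\varphi({\cal M}_{H'}^{G'}(w,L)_*^{ss}) \subseteq |L| \setminus |L|_{nr}$. By the decomposition \eqref{eq:decompo}, which in case (1) holds without discarding any codimension-$2$ locus,
$$
{\cal M}_{H'}^{G'}(w,L)^{ss} = {\cal M}_{H'}^{G'}(w,L)_*^{ss} \cup {\cal D}_1' \cup {\cal D}_2' \cup {\cal D}_3' \cup {\cal D}_4',
$$
each ${\cal D}_i'$ of codimension $1$. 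Therefore ${\cal M}_{H'}^{G'}(w,L)_{nr} \subseteq \bigcup_{i=1}^4 {\cal D}_i'$, and it suffices to show that ${\cal M}_{H'}^{G'}(w,L)_{nr} \cap {\cal D}_i'$ is a proper closed substack of ${\cal D}_i'$ for each $i$.

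Next I would run through the four divisors using the descriptions (1) and (2) above. For $i=1,2$ a general member has $\Div(E)=\Pi_i+C$, where $\Pi_i$ is the reduced half-fiber and $C$ is a flat multisection with $(c_1(C),e_1)=1$; a flat $C$ contains no vertical component, so $\Pi_i \not\subset C$, and for general $E$ the curve $C$ is integral, whence $\Pi_i+C$ is reduced. Thus ${\cal M}_{H'}^{G'}(w,L)_{nr}$ meets ${\cal D}_i'$ only in the proper closed locus where $C$ degenerates. For $i=3,4$ a general member has $\Div(E)=f+C$ with $f$ a reduced (smooth) fiber and $C$ flat, so $\Div(E)$ is non-reduced only when $f$ specializes to one of the two multiple fibers $2\Pi_1,2\Pi_2$, again a codimension-$\geq 1$ condition on ${\cal D}_i'$. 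In each case ${\cal M}_{H'}^{G'}(w,L)_{nr}\cap{\cal D}_i'$ has dimension $\leq \dim {\cal D}_i' - 1 = \dim {\cal M}_{H'}^{G'}(w,L) - 2$, and summing over the finitely many $i$ gives $\codim_{{\cal M}_{H'}^{G'}(w,L)}({\cal M}_{H'}^{G'}(w,L)_{nr}) \geq 2$.

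The step I expect to be the main obstacle is the second one: one must certify that, within each boundary divisor, the vertical part of $\Div(E)$ is a \emph{reduced} half-fiber (for ${\cal D}_1',{\cal D}_2'$) or a reduced fiber (for ${\cal D}_3',{\cal D}_4'$) for the general member, so that the non-reduced members are genuinely non-generic. This is precisely what the explicit Fourier--Mukai description of the boundary — built on Proposition \ref{prop:flat} and Lemma \ref{lem:Psi}, together with the irreducibility in Proposition \ref{prop:case(1)} — supplies. Arguing this way lets me avoid the more delicate route of bounding the fiber dimension of $\varphi$ directly over $|L|_{nr}$, where the compactified Jacobians over the non-reduced curves $2\Pi_i+R$ could a priori jump in dimension; the total-space estimate through the divisors ${\cal D}_i'$ bypasses that difficulty entirely.
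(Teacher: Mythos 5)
Your proposal is correct and follows essentially the same route as the paper: the paper's (very terse) proof is precisely to invoke the decomposition \eqref{eq:decompo}, which in case (1) holds without discarding codimension-2 loci, together with the Fourier--Mukai description of the general members of $\Psi^1({\cal D}_i)$ as having support $\Pi_i+C$ or $f+C$ with $C$ flat over $\pi$, so that the non-reduced locus lies in a proper closed substack of each boundary divisor. Your verification that $C$ is forced to be integral by $(C,e_1)=1$ and that non-reducedness on ${\cal D}_3',{\cal D}_4'$ only occurs when the fiber degenerates to $2\Pi_i$ is exactly the content the paper leaves implicit in the phrase ``By this description of the boundary.''
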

\begin{NB}
Hence \cite[Thm. 5.1]{Sacca} implies the following.
\begin{thm}
Let $X$ be an Enriques surface such that \eqref{eq:rho=10} holds.
For a primitive Mukai vector $v=(r,L,\frac{s}{2})$ such that $\ell(v)=1$ and
$\langle v^2 \rangle \geq 4$,
 $b_2(M_H(v))=11$, where $H$ is a general polarization. 
\end{thm}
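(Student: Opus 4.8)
The plan is to reduce the computation to the rank $0$ (purely $1$-dimensional) case, where the relative compactified Jacobian structure lets \cite[Thm. 5.1]{Sacca} apply, its sole hypothesis \cite[Assumption 2.16]{Sacca} being exactly Proposition \ref{prop:sacca}.

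First I would record that $M_H(v,L)$ is a smooth projective variety. Since $v$ is primitive and $H$ is general, ${\cal M}_H(v,L)^{ss}={\cal M}_H(v,L)^s$, so $\overline{M}_H(v,L)=M_H(v,L)$ is projective. As $\ell(v)=1$, Remark \ref{rem:rho=10} (which uses \eqref{eq:rho=10}) shows ${\cal M}_H(v,L)^s$ is smooth of dimension $\langle v^2 \rangle$; passing to the coarse space, $M_H(v,L)$ is smooth projective of dimension $\langle v^2 \rangle+1 \geq 5$. Consequently its rational cohomology is pure, and $b_2(M_H(v,L))$ is read off from the degree-$2$ part of the Hodge polynomial $e(M_H(v,L))$, all Hodge numbers $h^{p,q}$ with $p+q=2$ entering with the same sign $(-1)^{p+q}=1$, so without cancellation.

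Next I would invoke \cite{Enriques} to pass to rank $0$. The Fourier--Mukai and deformation arguments there (the ones producing identities such as $e({\cal M}_H(v,L+\tfrac{r}{2}K_X)^{ss})=e({\cal M}_H(v',L'+K_X)^{ss})$ used in Section \ref{sect:irreducible}) yield a Mukai vector $w=(0,\xi_0,\tfrac{s_0}{2})$ with $\xi_0$ primitive and a divisor $L_0$, $[L_0 \bmod K_X]=\xi_0$, such that $\langle w^2 \rangle=\langle v^2 \rangle$ and $e({\cal M}_H(v,L)^{ss})=e({\cal M}_{H_0}(w,L_0)^{ss})$ for a general $H_0$. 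On the stable loci the moduli stacks are ${\Bbb G}_m$-gerbes over their coarse spaces, so the stack polynomials and the coarse-space polynomials differ by the same fixed factor; hence $e(M_H(v,L))=e(M_{H_0}(w,L_0))$. As $M_{H_0}(w,L_0)$ is again smooth and projective, comparing the degree-$2$ parts gives $b_2(M_H(v,L))=b_2(M_{H_0}(w,L_0))$.

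Finally, $M_{H_0}(w,L_0)$ is the relative compactified Jacobian over $|L_0|$, which is the setting of \cite[Thm. 5.1]{Sacca}. Its hypothesis \cite[Assumption 2.16]{Sacca}, that the locus over non-reduced curves has codimension $\geq 2$, is Proposition \ref{prop:sacca} (valid because $\xi_0$ is primitive). Together with \eqref{eq:rho=10} and $\langle w^2 \rangle=\langle v^2 \rangle\geq 4$, this gives $b_2(M_{H_0}(w,L_0))=11$, hence $b_2(M_H(v,L))=11$. The main obstacle is the reduction step rather than the appeal to Sacca: one must check that the equivalences of \cite{Enriques} can be arranged to land on an admissible $w$ (with $\xi_0$ primitive and $\langle w^2\rangle\geq 4$ preserved) and that they transfer an invariant strong enough to control $b_2$. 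Smoothness and projectivity of both spaces is what upgrades the stacky Hodge-polynomial identity to an equality of second Betti numbers, so it is essential that $\ell(v)=1$, which places us outside the excluded case of Remark \ref{rem:rho=10} and thereby guarantees smoothness.
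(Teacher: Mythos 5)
Your proposal follows essentially the same route as the paper: the paper's own justification is exactly Proposition \ref{prop:sacca} (verifying \cite[Assumption 2.16]{Sacca}) fed into \cite[Thm. 5.1]{Sacca}, with the transfer to positive rank done through the Fourier--Mukai/deformation equivalences of \cite{Enriques} and Section \ref{sect:irreducible}; your additions (smoothness via Remark \ref{rem:rho=10} and $\ell(v)=1$, projectivity from primitivity of $v$ and generality of $H$, purity upgrading the stacky Hodge-polynomial identity to an equality of $b_2$'s after dividing out the common ${\Bbb G}_m$-gerbe factor) are precisely the details the paper leaves implicit, and they are correct.

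One corner of the statement escapes your argument as written. The hypothesis $\ell(v)=1$ is automatic when $r$ is odd, so the statement as phrased includes odd rank, and there the reduction to a rank-zero vector $w=(0,\xi_0,\tfrac{s_0}{2})$ with $\langle w^2\rangle=\langle v^2\rangle$ is impossible for parity reasons: $\xi_0^2$ is even since the Enriques lattice $\NSf(X)$ is even, whereas for odd $r$ one has $s$ odd and $\langle v^2\rangle=(\xi^2)-rs$ odd. Since the Hodge-polynomial identities preserve $\langle v^2\rangle$ (the dimension), no chain of such equivalences can land on rank zero, so Sacca's theorem cannot be reached in that case. This is harmless but must be patched differently: for odd rank the reduction of \cite{Y:twist1}/\cite{Enriques} goes to $w=(1,0,-\tfrac{s}{2})$, i.e.\ to $\Hilb^n_X$ with $\langle v^2 \rangle =2n-1\geq 4$, where $b_2=b_2(X)+1=11$ follows directly from the standard description $H^2(\Hilb^n_X)=H^2(X)\oplus \wedge^2 H^1(X)\oplus {\Bbb Z}\delta$ (as in the Appendix), with no appeal to Sacca. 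Note that the paper's parallel claim in the introduction imposes $2\mid r$ and $2\nmid \xi$, which is exactly the even-rank situation your argument does handle; there your reduction lands on case (1) of Section \ref{sect:irreducible}, $w=(0,\xi_0,a)$ with $(\xi_0,e_1)=1$, so $\xi_0$ is primitive as required by Proposition \ref{prop:sacca}, and the rest of your argument goes through.
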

\end{NB}

We next treat case (2).
Since $\Psi(v)=2\Psi((2,e_2+(n+1)e_1,0))+(0,2e_1,1)$ by
Lemma \ref{lem:Psi},
we set $w=(0,2\xi,a)$ with $(2,a)=1$.
Let $L$ be a divisor with $[L \mod K_X]=\xi$.
We shall prove the irreducibility of ${\cal M}_{H'}^{G'}(w,2L)^{ss}$
and ${\cal M}_{H'}^{G'}(w,2L+K_X)^{ss}$.
In order to prove the irreducibility of ${\cal M}_{H'}^{G'}(w,2L)^{ss}$,
we consider the support map
$\varphi:{\cal M}_{H'}^{G'}(w,2L)^{ss} \to |2L|$.
We set
\begin{equation}
\begin{split}
N_1:= & \{ D \in |2L| \mid D=2C \},\\
N_2:= & \{ D \in |2L| \mid D=C_1+C_2, (C_1,e_1)=(C_2,e_1)=1, \, C_1 \ne C_2 \}.
\end{split}
\end{equation}

\begin{equation}
{\cal M}_i:=\{ E \in {\cal M}_{H'}^{G'}(w,2L)^{ss}_* \mid \Div(E) \in N_i \}. 
\end{equation}

\begin{lem}
$\dim {\cal M}_1 \leq \frac{7}{2}(L^2)-1$.
\end{lem}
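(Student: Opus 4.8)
The strategy is to reduce every $E\in\mathcal M_1$ to a two-step filtration by stable rank-one sheaves of class $\xi$ sitting on the curves of $|L|$, and to bound the resulting stack of filtrations by the dimension formula of Lemma~\ref{lem:A-est1}. The point is that although $\Div(E)=2C$ is a non-reduced (ribbon) curve, the pure sheaves on it are built from rank-one data on the reduced curve $C$, whose moduli are already understood.

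First I would produce the filtration. Since $\Div(E)=2C$ is the Fitting divisor, the stalk of $E$ at the generic point $\eta$ of $C$ has length two over the discrete valuation ring $\mathcal O_{X,\eta}$; hence $E$ admits a two-step filtration $0\to A\to E\to B\to 0$ in which $A$ and $B$ are rank-one torsion-free sheaves on $C$. For the sheaves that are generically free on the ribbon one takes $A=tE$, where $t$ is a local equation of $C$, while for the sheaves that are generically of rank two on $C$ one takes a sub-line-bundle; both phenomena occur, because $\chi(E)=a$ is odd, which excludes genuine line bundles on the ribbon $2C$ (these would force the two graded pieces to differ in degree exactly by $(C^2)$, making $\chi$ even). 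Being of rank one on the integral curve $C$, the sheaves $A,B$ are automatically stable, so they lie in $\mathcal M((0,\xi,\ast))^s$; writing $v_A=v(A),\ v_B=v(B)$, both have $\operatorname{ch}=(0,C,\ast)$, so $\langle v_A,v_B\rangle=(C^2)=(L^2)$, and by boundedness of semistable $E$ only finitely many pairs $(v_A,v_B)$ arise.

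Next I would record the numerics and apply Lemma~\ref{lem:A-est1}. By the support map to $|L|$ (whose dimension is $\tfrac{(L^2)}2$ once $h^1(L)=0$) together with the fibrewise compactified Jacobian of the genus $p_a(C)=\tfrac{(L^2)}2+1$ curve $C$, one gets $\dim\mathcal M((0,\xi,\ast))^s=\tfrac{(L^2)}2+(p_a(C)-1)=(L^2)$. Lemma~\ref{lem:A-est1} then gives that the stack of filtrations has dimension $\dim\mathcal N^n(v_A,v_B)+\langle v_A,v_B\rangle+n$ with $n=\dim\Hom(A,B(K_X))$. On the generic stratum $n=0$ this is at most $\dim\mathcal M(v_A)^s+\dim\mathcal M(v_B)^s+\langle v_A,v_B\rangle=2(L^2)+(L^2)=3(L^2)$, and since the forgetful map from filtrations to sheaves surjects onto $\mathcal M_1$ we conclude
\begin{equation*}
\dim\mathcal M_1\le 3(L^2)\le\tfrac72(L^2)-1,
\end{equation*}
the last inequality holding because $(L^2)=\langle v^2\rangle/4\ge2$.

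The step I expect to be the main obstacle is the uniform control of the positive-dimensional jumping strata: for $n\ge1$ one must verify $\dim\mathcal N^n+n\le2(L^2)$, so that $\dim\mathcal F^n\le3(L^2)$ for all $n$. This follows from the semistability of $E$, which forces $\deg A$ and $\deg B$ to differ by $O(1)$; then $B\otimes K_X\otimes A^{\vee}$ has near-zero degree and $\Hom(A,B(K_X))$ can only jump along Brill–Noether loci of compensating codimension, exactly as in the $H_k$-analysis of the proof of Lemma~\ref{lem:pss}. One must also absorb the non-integral members of $N_1$, over which rank-one stability and the genus estimate degenerate; these sweep out lower-dimensional subloci of $|L|$ and contribute strictly less. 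In every case there is ample slack, since the clean count already beats the stated bound $\tfrac72(L^2)-1$ by $\tfrac12(L^2)$; indeed a more careful count fixing a single curve $C$ sharpens the estimate to $\tfrac52(L^2)$.
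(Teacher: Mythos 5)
Your skeleton (a two-step filtration of $E$ by rank-one torsion-free sheaves on $C$, then Lemma \ref{lem:A-est1}) is also the paper's, but both quantitative pillars of your count fail, and they are exactly where the lemma lives. The claim that semistability forces $\chi(A)$ and $\chi(B)$ to differ by $O(1)$ is wrong: semistability gives only the one-sided bound $\chi(A)\le a/2\le\chi(B)$. The two-sided bound comes from the ribbon structure --- multiplication by the equation of $C$ gives an injection $B(-C)\hookrightarrow A$ up to torsion (the paper's $E_0(-C)\hookrightarrow E_1$) --- and it reads $\chi(B)-\chi(A)=(C^2)-k$ with $k\ge 0$; the paper then needs the parity of $a$ (odd, while $(C^2)$ is even) to force $k\ge 1$. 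Hence $\deg(B\otimes K_X\otimes A^{\vee})$ is not ``near zero'' but can be as large as $(C^2)-1=2g-3$, and by Riemann--Roch every pair with $k<(C^2)/2$ has $n=\dim\Hom(A,B(K_X))\ge (C^2)/2-k>0$. For those numerical types your stratum ${\cal N}^0$ is \emph{empty}, so the $3(L^2)$ bound you derive on it bounds nothing; the whole weight falls on the large-$n$ strata that you defer.

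For those strata the estimate you need, $\dim{\cal N}^n+n\le 2(L^2)$, is a Brill--Noether codimension statement. Martens' theorem gives it when $C$ is smooth, but no such estimate is available for compactified Jacobians of singular integral curves, which occur in a divisor of $|L|$ and carry the extremal strata (note that every $C$ here is automatically integral, since $\Div(E)$ is flat over ${\Bbb P}^1$ and $(C,e_1)=1$; the real dichotomy is smooth versus singular, not integral versus non-integral). The $H_k$-analysis of Lemma \ref{lem:pss} does not transfer: there a nonzero map between equal-slope semistable sheaves exhibits a Jordan--H\"{o}lder factor, whereas here the maps are injections with cokernel of length up to $(C^2)-1$. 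The paper is built to avoid Brill--Noether theory altogether: it uses the crude bound $n\le\chi(B)-\chi(A)+1\le(C^2)$ (valid on any integral curve by stability of rank-one sheaves, and requiring $k\ge1$), and it recovers the deficit by (i) keeping both factors on the \emph{same} curve, so pairs vary in the fibered product over $|C|$ of dimension $\tfrac32(C^2)$ rather than your full product of dimension $2(L^2)$, and (ii) noting that singular curves form a divisor in $|C|_*$, which yields $\bigl(\tfrac32(C^2)-1\bigr)+(C^2)+(C^2)=\tfrac72(C^2)-1$. Even with (i) and the crude bound, smooth curves would give $\tfrac72(C^2)$ --- one too many --- which is why the paper invokes Inaba's dimension formula for pure sheaves on the ribbon ($2(C^2)$ for fixed smooth $C$) instead of any filtration count there. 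Finally, your treatment of the case where $E$ is an ${\cal O}_C$-module of rank two (``take a sub-line-bundle'') gives no control on the sub-line-bundle's degree; the paper pins it down via a section ${\cal O}_C(k)\to E$, once more using that $a$ is odd. Without the parity argument, the fibered-product and singular-divisor bookkeeping (or Inaba's theorem), and this degree control, the bound $\tfrac72(L^2)-1$ is out of reach; at $(L^2)=2$ your target $3(L^2)$ already equals $\tfrac72(L^2)-1$, leaving no slack to absorb any of these losses.
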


\begin{proof}
Let $E \in {\cal M}_{H'}^{G'}(w,2L)^{ss}$ satisfy 
$\Div(E)=2C$, $C \in |L +\epsilon K_X|$ $(\epsilon=0,1)$.
If $E$ is not an ${\cal O}_C$-module, then we have an exact sequence
\begin{equation}\label{eq:2C}
0 \to E_1 \to E \to E_0 \to 0
\end{equation}  
and an injective homomorphism
$E_0(-C) \to E_1$.
Indeed, let $T$ be the 0-dimensional submodule of $E_{|C}$.
Then $E_0=E_{|C}/T$ and $E_1=\ker(E \to E_0)$.
Since $E_1$ is purely 1-dimensional,
$E_1/(E_0(-C))$ is $0$-dimensional
and $E_0(-C)$ is an ${\cal O}_C$-module,
$E_1$ is also an ${\cal O}_C$-module.
\begin{NB}
\begin{equation}
\begin{CD}
E_1 @>{C}>> E_1(C)\\
@AAA @AAA\\
E_0(-C) @>>> 0
\end{CD}
\end{equation}
\end{NB}
If $C$ is smooth, the locus of $E$ fitting into
\eqref{eq:2C} is of dimension 
$2(C^2)=4g(C)-4$ by \cite[Prop. 2.1]{Inaba}. Since $\dim |C|=(C^2)/2$,
it is of dimension $5(L^2)/2$. 
Assume that $C$ is singular.
We set $v(E_i)=v_i:=(0,C,a_i)$.
Since $v(E_0(-C))=(0,C,a_0-(C^2))$, we have
$$
v(E_1)=v_1=(0,C,a_0-(C^2)+k),\; k \geq 0.
$$
Since $2a_0-(C^2)+k=a$ is odd,
$k \geq 1$.
\begin{NB}
If $a$ is even, we have $k \geq 0$.
\end{NB}
Since $E_0$ and $E_1$ are stable sheaves on $C$,
$$
\dim \Hom(E_1,E_0(K_X)) \leq 
\chi(E_0)-\chi(E_1)+1=(C^2)-k+1 \leq (C^2).
$$
\begin{NB}
Since $K_X$ is numerically frivial,
$\chi(E_0(K_X))=\chi(E_0)$.
\end{NB}
\begin{NB}
If $a$ is even, we have 
$$
\dim \Hom(E_1,E_0(K_X)) \leq 
\chi(E_0)-\chi(E_1)+1=(C^2)-k+1 \leq (C^2)+1.
$$
\end{NB}
We set
\begin{equation}
M_i:=\{E \in {\cal M}_{H'}^{G'}(v_i,C)_*^{ss} \mid \text{ $\Div(E)$ is singular} \}. 
\end{equation} 
%Then $\dim M_i \leq (C^2)-1$ by \cite{AIK}.
Since all fibers of $\varphi:{\cal M}_{H'}^{G'}(v_i,C)_*^{ss} \to |C|_*$ 
are of dimension $(C^2)/2$ by \cite{AIK},
we get
\begin{equation}
\dim M_0 \times_{|C|} M_1+\langle v_0,v_1 \rangle+(C^2) \leq
\frac{7}{2}(C^2)-1,
\end{equation}
which shows the locus of $E$ fitting into
\eqref{eq:2C} such that $E_i \in M_i$ is at most 
of dimension $7(C^2)/2-1$ by Lemma \ref{lem:A-est1}. 
%$$
%\dim J(v_1,v_2) \leq 7(C^2)/2-1.
%$$
\begin{NB}
$\dim M_0 \times_{|C|} M_1 \leq (C^2)/2-1+2((C^2)/2)$,
since all fibers of $M_i \to |C|$ is of dimension 
$g(C)-1=(C^2)/2$ by \cite{AIK}. 

More rough estimate:
\begin{equation}
\dim M_0+\dim M_1+\langle v_0,v_1 \rangle+(C^2) \leq
4(C^2)-2.
\end{equation}
If $a$ is even, then
$\dim M_0 \times_{|C|} M_1+\langle v_0,v_1 \rangle+(C^2) \leq
\frac{7}{2}(C^2)$.
\end{NB}

Assume that $E$ is an ${\cal O}_C$-module.
If $C$ is smooth, then
$E$ is a stable locally free sheaf of rank 2 on $C$.
Hence 
the dimension is 
$\dim |C|+4(g(C)-1)=5(C^2)/2$.
We assume that $C$ is singular.
We set $a=2k+1$. 
Let ${\cal O}_C(1)$ be a line bundle of degree 1 on $C$.
Since $\chi(E(-k))=1$,
we have a homomorphism ${\cal O}_C(k) \to E$.
Hence we get an exact sequence
\begin{equation}\label{eq:CC}
0 \to E_1 \to E \to E_0 \to 0
\end{equation}
such that $E_0$ and $E_1$ are torsion free of rank 1 and
$E_1$ contains ${\cal O}_C(k)$.
We set $v(E_i)=v_i=(0,C,a_i)$.
Then 
\begin{equation}
\begin{split}
a_1=& \chi(E_1)=\chi({\cal O}_C(k))+l\; (l \geq 0),\\
a_0=& a-a_1=2k+1-\chi({\cal O}_C(k))-l.
\end{split}
\end{equation}
Since $\chi({\cal O}_C(k))=k-\frac{(C^2)}{2}$,
we have 
$$
\chi(E_0)-\chi(E_1)=(2k+1)-2\chi({\cal O}_C(k))-2l=
1+(C^2)-2l.
$$
Hence $\dim \Hom(E_1,E_0(K_X)) \leq
(C^2)+2-2l$.
\begin{NB}If $a$ is even, we set $a=2k+2$.
Then $$
\chi(E_1)-\chi(E_0)=(2k+1)-2\chi({\cal O}_C(k))-2l=
1+(C^2)-2l+1.
$$
\end{NB}
If $l \geq 1$, then
\begin{equation}
\dim M_0 \times_{|C|} M_1+\langle v_0,v_1 \rangle+(C^2) \leq
\frac{7}{2}(C^2)-1.
\end{equation}
If $l=0$, then $E_1={\cal O}_C(k)$.
Hence 
\begin{equation}
-1+\dim M_1+\langle v_0,v_1 \rangle+(C^2)+2 \leq
-1+(C^2)-1+2(C^2)+2=3(C^2).
\end{equation}
Since $(C^2)/2 \geq 1$,
$7(C^2)/2-1-3(C^2) \geq 0$.
Hence the locus of $E$ fitting into \eqref{eq:CC} is at most of 
dimension $7(C^2)/2-1$ by Lemma \ref{lem:A-est1}.
Therefore we get the claim.
\end{proof}

\begin{lem}\label{lem:reducible}
$\dim {\cal M}_2 \leq 4(L^2)-2$.
\end{lem}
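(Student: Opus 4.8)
The plan is to follow the pattern already used for the estimate of ${\cal M}_1$ and for the K3 case: stratify ${\cal M}_2$ by the classes and Euler characteristics of the two components of the support, decompose each sheaf along that support, and apply Lemma~\ref{lem:A-est1}. First I would pin down the geometry of the support. For $E \in {\cal M}_2$ we have $\Div(E)=C_1+C_2 \in N_2$, and since $E$ lies in the flat locus ${\cal M}_{H'}^{G'}(w,2L)^{ss}_*$, the divisor $C_1+C_2$ carries no fiber component. Because $X$ is unnodal, every horizontal irreducible curve $\Gamma$ satisfies $(\Gamma,e_1)\geq 1$; together with $(C_i,e_1)=1$ this forces each $C_i$ to be integral. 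Hence $C_1\neq C_2$ are distinct integral curves meeting in a finite set, and passing to the torsion-free quotient of $E_{|C_2}$ produces an exact sequence $0\to E_1\to E\to E_2\to 0$ in which $E_i$ is a rank $1$ torsion-free (hence stable) sheaf on $C_i$ with $v(E_i)=(0,C_i,a_i)$ and $a_1+a_2=a$. There are only finitely many admissible pairs of classes with $[C_1]+[C_2]=2L$ and $(C_i,e_1)=1$, and finitely many splittings $a=a_1+a_2$ allowed by semistability, so it suffices to bound each stratum.

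For a fixed stratum I set $v_i:=v(E_i)$ and invoke Lemma~\ref{lem:A-est1}, which bounds the stack of such filtrations by $\dim{\cal N}^n(v_1,v_2)+\langle v_1,v_2\rangle+n$, where $n=\dim\Hom(E_1,E_2(K_X))$. The key observation is that $n=0$: any $\phi\colon E_1\to E_2(K_X)$ has image an ${\cal O}_{C_1\cap C_2}$-module, hence $0$-dimensional, and the pure $1$-dimensional sheaf $E_2(K_X)$ has no nonzero $0$-dimensional subsheaf, so $\phi=0$. The subsheaf $E_1\subset E$ is intrinsic, so the filtration stack maps injectively to ${\cal M}_2$; and since the relative compactified Jacobian over $|C_i|$ has stack dimension $\dim|C_i|+(g(C_i)-1)=(C_i^2)/2+(C_i^2)/2=\langle v_i^2\rangle$, we get $\dim{\cal N}^n(v_1,v_2)\leq (C_1^2)+(C_2^2)$. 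As $\langle v_1,v_2\rangle=(C_1,C_2)$, this yields $\dim{\cal M}_2\leq (C_1^2)+(C_2^2)+(C_1,C_2)$, which, using $(C_1+C_2)^2=(2L)^2=4(L^2)$, equals $4(L^2)-(C_1,C_2)$.

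The whole estimate therefore reduces to the intersection bound $(C_1,C_2)\geq 2$, and this is the step I expect to carry the real content. Writing $C_i=e_2+\alpha_i e_1+\delta_i$ with $\delta_i\in U^\perp$ (the condition $(C_i,e_1)=1$ fixes the $e_2$-coefficient to be $1$) and $\xi=e_2+m e_1+\epsilon$, the relation $C_1+C_2\equiv 2\xi$ gives $\alpha_1+\alpha_2=2m$ and $\delta_1+\delta_2=2\epsilon$. A direct computation then gives $(C_1,C_2)=(L^2)-((\delta_1-\epsilon)^2)$. Since $U^\perp$ is negative definite, $((\delta_1-\epsilon)^2)\leq 0$, so $(C_1,C_2)\geq (L^2)\geq 2$. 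Combining with the previous paragraph gives $\dim{\cal M}_2\leq 4(L^2)-2$, as claimed.

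The main obstacle is thus not a single hard estimate but assembling the routine pieces correctly: verifying that the flat condition plus unnodality really does make each $C_i$ integral (so that the decomposition is clean and $n=0$), and that the stratification into finitely many pairs $(v_1,v_2)$ is legitimate. Once this is in place, the codimension-two gain comes for free from the elementary lattice inequality $(C_1,C_2)=(L^2)-((\delta_1-\epsilon)^2)\geq (L^2)$, powered by the negative-definiteness of $U^\perp$. I would present the lattice computation last, after the dimension bookkeeping, since it is exactly the input that converts the crude bound $4(L^2)-(C_1,C_2)$ into the desired $4(L^2)-2$.
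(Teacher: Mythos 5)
Your proof is correct, and its skeleton is the same as the paper's: the exact sequence $0 \to E_1 \to E \to E_2 \to 0$ coming from the torsion-free quotient of $E_{|C_2}$, the integrality of $C_1,C_2$ from flatness, finiteness of the strata $(v_1,v_2)$ via (semi)stability, the vanishing $\Hom(E_1,E_2(K_X))=0$ because the supports are distinct integral curves, and then Lemma \ref{lem:A-est1} plus the dimension $(C_i^2)$ for the moduli of rank-one sheaves on the curves, giving $\dim {\cal M}_2 \leq (C_1^2)+(C_2^2)+(C_1,C_2)=4(L^2)-(C_1,C_2)$. Where you genuinely diverge is the final intersection bound. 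The paper proves $(C_1,C_2)\geq 2$ by a case analysis: if $(C_1^2),(C_2^2)>0$, the Hodge index theorem together with evenness of self-intersections on an Enriques surface gives $(C_1,C_2)^2 \geq (C_1^2)(C_2^2)\geq 4$; if some $(C_i^2)=0$, then $(C_1,C_2)=2(L,C_i)\geq 2$, which uses positivity of $L$ against the effective curve $C_i$. You instead decompose the numerical classes in $U \oplus U^{\perp}$ and use negative definiteness of $U^{\perp}\cong E_8(-1)$ to obtain the closed formula $(C_1,C_2)=(L^2)-((\delta_1-\epsilon)^2)\geq (L^2)$; this is a cleaner, uniform argument (no cases), it does not use effectivity of the $C_i$ at all, and it gives the stronger bound $\dim {\cal M}_2 \leq 3(L^2)$. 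Both routes tacitly assume $(L^2)\geq 2$ — the paper through $(L,C_i)\geq 1$, you explicitly — which is harmless in context, since the lemma is applied with $\langle w^2 \rangle = 4(L^2) = \langle v^2 \rangle$ a positive multiple of $8$, the isotropic case being handled separately. So: same decomposition and bookkeeping, but a genuinely different, and in fact sharper, proof of the one inequality that carries the content.
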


\begin{proof}
For $E \in  {\cal M}_2$, we have an exact sequence
\begin{equation}
0 \to E_1 \to E \to E_2 \to 0
\end{equation}
where $\Div E=C_1+C_2$, $\Div(E_1)=C_1$ and $\Div(E_2)=C_2$.
By the flatness of $\Div(E)$,
$C_1$ and $C_2$ are integral curves and $C_1 \ne C_2$. 
By the stability of $E$,
we have
\begin{equation}\label{eq:chi}
\frac{a}{(2L,H)} \leq \frac{\chi(E_2)}{(C_2,H)},\;
\frac{\chi(E_2)-(C_1,C_2)}{(C_2,H)} \leq \frac{a}{(2L,H)}.
\end{equation}
\begin{NB}
$E_2:=E_{|C_2}/T$, where $T$ is the torsion submodule.
$E(-C_1) \to E$ induces a homomorphism
$E_{|C_2}(-C_1) \to E$.
Since $E$ is purely 1-dimensional,
it induces an injection $E_2(-C_1) \to E$.
\end{NB}
We set $v_i:=v(E_i)=(0,C_i,a_i)$.
By \eqref{eq:chi},
 the choice of $v_i$ is finite.
Since $(C_i,e_1)=1$,
$v_i$ are primitive with
$\ell(v_i)=1$. Hence
$\dim {\cal M}_{H'}^{G'}(v_i)^{ss}=\langle v_i^2 \rangle=(C_i^2)$.
\begin{NB}
Even if $v_i$ is isotropic,
$\dim {\cal M}_{H'}^{G'}(v_i)=\langle v_i^2 \rangle$.
\end{NB}
Let $J(v_1,v_2)$ be the substack of ${\cal M}_2$
such that $E_i \in {\cal M}_{H'}^{G'}(v_i,C_i)^{ss}$.
Since $\Hom(E_1,E_2(K_X))=0$,
\begin{equation}
\begin{split}
\dim J(v_1,v_2) =& \dim {\cal M}_{H'}^{G'}(v_1,C_1)^{ss}
+\dim {\cal M}_{H'}^{G'}(v_2,C_2)^{ss}+(C_1,C_2)\\
=&
(C_1^2)+(C_2^2)+(C_1,C_2)=4(L^2)-(C_1,C_2).
\end{split}
\end{equation}
We note that $(C_1^2),(C_2^2) \geq 0$.
If $(C_1^2),(C_2^2)>0$, then
$(C_1,C_2)^2 \geq (C_1^2)(C_2^2) \geq 4$.
Hence $(C_1,C_2) \geq 2$.
If one of $(C_i^2)=0$, then
$(C_1,C_2)=(2L,C_i)-(C_i^2)=2(L,C_i) \geq 2$. 
Therefore 
$\dim J(v_1,v_2) \leq 4(L^2)-2$.
\end{proof}

We set 
${\cal M}_{H'}^{G'}(w,2L)^{ss}_0:=
{\cal M}_{H'}^{G'}(w,2L)^{ss}_* \setminus ({\cal M}_1 \cup {\cal M}_2)$.
\begin{prop}\label{prop:case(2-1)}
${\cal M}_{H'}^{G'}(w,2L)^{ss}_0$ is an open and dense substack of
${\cal M}_{H'}^{G'}(w,2L)^{ss}$.
In particular, it is irreducible.
\end{prop}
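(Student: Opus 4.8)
The plan is to assemble the three ingredients already prepared, namely the dimension estimates for the non-reduced and reducible loci, the density of the flat locus (transported through the Fourier--Mukai transform $\Psi$), and the irreducibility of compactified Jacobians of integral curves. First I would record that the ambient stack is equidimensional of the right dimension: since $w=(0,2\xi,a)$ we have $\langle w^2\rangle=(2\xi)^2=4(L^2)$, every irreducible component of ${\cal M}_{H'}^{G'}(w,2L)^{ss}$ has dimension $\geq\langle w^2\rangle$ by the deformation estimate (the rank-$0$ analogue of Lemma \ref{lem:stack-dim}), and the reverse bound together with reducedness follows as in Corollary \ref{cor:normal/reduced}. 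Hence ${\cal M}_{H'}^{G'}(w,2L)^{ss}$ is reduced and pure of dimension $4(L^2)$, which is what legitimizes discarding lower-dimensional loci in the density step.

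Next I would show that ${\cal M}_{H'}^{G'}(w,2L)^{ss}_*$ is open and dense. Openness is immediate, flatness of $\Div(E)$ over ${\Bbb P}^1$ being an open condition. For density I transport the density statement from the $X$-side: by Proposition \ref{prop:horizontal} and Proposition \ref{prop:flat} the substack ${\cal M}_{H_f}(v)_*^{ss}$ is open and dense in ${\cal M}_f(v)^{ss}={\cal M}_H(v)^{ss}$, and Lemma \ref{lem:FM-flat} identifies this with ${\cal M}_{H'}^{G'}(w)^{ss}_*$ inside ${\cal M}_{H'}^{G'}(w)^{ss}$; fixing $\det=2L$ and $\chi=a$ yields the claim.

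Then I would cut out the bad loci. Inside the $*$-locus the divisor $\Div(E)$ contains no fiber, so since $(2L,e_1)=2$ it is either integral, a double $2C$ (the locus ${\cal M}_1$), or $C_1+C_2$ with $C_1\neq C_2$ integral and $(C_i,e_1)=1$ (the locus ${\cal M}_2$); consequently ${\cal M}_{H'}^{G'}(w,2L)^{ss}_0=\varphi^{-1}(|2L|_{\mathrm{int}})$, which is open because the integral locus is open in $|2L|$. By the estimate $\dim{\cal M}_1\leq\tfrac{7}{2}(L^2)-1$ and by Lemma \ref{lem:reducible} one has $\dim({\cal M}_1\cup{\cal M}_2)<4(L^2)$, so on each irreducible component $Z$ (of dimension $4(L^2)$, by the first step) the subset $Z\cap{\cal M}_{H'}^{G'}(w,2L)^{ss}_0$ is a nonempty dense open subset; hence ${\cal M}_{H'}^{G'}(w,2L)^{ss}_0$ is open and dense in ${\cal M}_{H'}^{G'}(w,2L)^{ss}$.

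Finally I would prove irreducibility of ${\cal M}_{H'}^{G'}(w,2L)^{ss}_0$, which forces the whole stack (its closure) to be irreducible. For integral $D\in|2L|$ every $E$ with $\Div(E)=D$ is a rank-$1$ torsion-free, hence $G'$-twisted stable, sheaf on $D$, so $\varphi^{-1}(D)=\overline{\Pic}^a(D)$, which is irreducible by \cite{AIK}. Since the base $|2L|_{\mathrm{int}}$ is irreducible and all fibers of $\varphi$ are irreducible of constant dimension, the same argument as in Proposition \ref{prop:case(1)} shows ${\cal M}_{H'}^{G'}(w,2L)^{ss}_0$ is irreducible. The main obstacle is the combination of the first two steps: one must know that ${\cal M}_{H'}^{G'}(w,2L)^{ss}$ is equidimensional of dimension $4(L^2)$ and that the flat locus is genuinely dense, so that the positive-codimension loci ${\cal M}_1,{\cal M}_2$ can be removed without disconnecting or dropping a component; once this is secured, the compactified-Jacobian fibration over the integral locus makes irreducibility formal.
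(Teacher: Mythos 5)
Your proposal is correct and follows essentially the same route as the paper's (implicit) proof: density of the flat locus transported through the Fourier--Mukai transform (Proposition \ref{prop:flat} and Lemma \ref{lem:FM-flat}), the two dimension estimates for ${\cal M}_1$ and ${\cal M}_2$, and the compactified-Jacobian fibration over the locus of integral curves, concluded as in Proposition \ref{prop:case(1)} via \cite{AIK}. The only cosmetic point is that the equidimensionality and reducedness of the rank-zero stack are most cleanly obtained by transporting Lemma \ref{lem:stack-dim} and Corollary \ref{cor:normal/reduced} through the isomorphism ${\cal M}_H(v)^{ss}\cong{\cal M}_{H'}^{G'}(w)^{ss}$ rather than by invoking direct rank-zero analogues; in fact only the lower bound on the dimension of irreducible components is needed for the density step, and that bound does hold verbatim for rank zero.
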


For any $C_1+C_2 \in |2L+K_X|$,
$C_1 \ne C_2$.
We have a similar estimate as in Lemma \ref{lem:reducible}.
Hence we get
\begin{prop}\label{prop:case(2-2)}
${\cal M}_{H'}^{G'}(w,2L+K_X)^{ss}$ is irreducible.
\end{prop}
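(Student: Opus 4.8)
The plan is to run the same argument used for ${\cal M}_{H'}^{G'}(w,2L)^{ss}$ in Proposition \ref{prop:case(2-1)}, exploiting the fact that in the present twisted linear system the non-reduced locus disappears entirely. First I would introduce the support morphism $\varphi:{\cal M}_{H'}^{G'}(w,2L+K_X)^{ss} \to |2L+K_X|$ and recall, via Lemma \ref{lem:FM-flat} and Proposition \ref{prop:flat} transported through the Fourier--Mukai equivalence $\Psi$, that the starred substack ${\cal M}_{H'}^{G'}(w,2L+K_X)^{ss}_*$ of sheaves whose support is flat over ${\Bbb P}^1$ is open and dense, of dimension $\langle w^2 \rangle=(2L)^2=4(L^2)$.

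The decisive new point is that $|2L+K_X|$ has no non-reduced member. Indeed, if $2C \in |2L+K_X|$ then $2(C-L) \equiv K_X$ in $\NS(X)$; writing $M:=C-L$, the relation $2M=K_X$ forces $M$ to be torsion (the free quotient $\NSf(X)$ is torsion free), so $M \in \{0,K_X\}$, and in either case $2M=0 \ne K_X$, a contradiction since $K_X$ is the nonzero $2$-torsion class. Hence every $\Div(E)$ arising in the starred locus is either integral or of the form $C_1+C_2$ with $C_1 \ne C_2$ integral and $(C_i,e_1)=1$ — exactly the dichotomy recorded just before the statement. In particular the only boundary stratum to control is the reducible locus ${\cal M}_2$; there is no analogue of the stratum ${\cal M}_1$, so the non-reduced analysis (where the parity of $a$ entered) is simply vacuous here.

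I would then set ${\cal M}_0:={\cal M}_{H'}^{G'}(w,2L+K_X)^{ss}_* \setminus {\cal M}_2$, the integral-support locus. Over the open subscheme $|2L+K_X|_* \subset |2L+K_X|$ of integral curves, $\varphi$ restricts to a family of compactified Jacobians whose fibers are irreducible of dimension $(2L)^2/2=2(L^2)$ by \cite{AIK}; since $|2L+K_X|_*$ is irreducible, ${\cal M}_0$ is irreducible of dimension $4(L^2)$. For the bound on ${\cal M}_2$ I would repeat verbatim the computation of Lemma \ref{lem:reducible}: for $E$ with $\Div(E)=C_1+C_2$ set $v_i=(0,C_i,a_i)$, use $\Hom(E_1,E_2(K_X))=0$, $\dim {\cal M}_{H'}^{G'}(v_i,C_i)^{ss}=(C_i^2)$, and Lemma \ref{lem:A-est1}, obtaining $\dim J(v_1,v_2)=(C_1^2)+(C_2^2)+(C_1,C_2)=4(L^2)-(C_1,C_2)$. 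Since every irreducible curve on an Enriques surface has even self-intersection (adjunction with $K_X$ numerically trivial), $(C_i^2)\ge 0$ yields $(C_1,C_2)\ge 2$: if both $(C_i^2)>0$ they are $\ge 2$ and Hodge index gives $(C_1,C_2)^2 \ge (C_1^2)(C_2^2)\ge 4$, while if $(C_1^2)=0$ then $(C_1,C_2)=(C_1,2L+K_X)=2(L,C_1)\ge 2$. Thus $\dim {\cal M}_2 \le 4(L^2)-2 < 4(L^2)$.

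Combining these, ${\cal M}_0$ is an irreducible open substack whose complement ${\cal M}_2$ in the starred locus is of strictly smaller dimension; as ${\cal M}_{H'}^{G'}(w,2L+K_X)^{ss}_*$ has pure dimension $4(L^2)$, ${\cal M}_0$ meets and is dense in every component, so the starred locus is irreducible, and hence so is ${\cal M}_{H'}^{G'}(w,2L+K_X)^{ss}$. The only step requiring genuine care is the elementary lattice argument ruling out non-reduced members of $|2L+K_X|$, which is precisely what guarantees that Lemma \ref{lem:reducible}'s estimate is the whole story; everything else is a transcription of the $2L$ case.
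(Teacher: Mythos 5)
Your proposal is correct and follows exactly the paper's route: the paper's own proof consists of observing that no member of $|2L+K_X|$ is of the form $2C$ (so the non-reduced stratum ${\cal M}_1$ is empty) and then invoking the estimate of Lemma \ref{lem:reducible} as in Proposition \ref{prop:case(2-1)}. Your explicit lattice argument ruling out $2C \in |2L+K_X|$ and your transcription of the integral/reducible dichotomy simply spell out the details the paper leaves implicit.
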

By Propositions \ref{prop:case(1)}, \ref{prop:case(2-1)} 
and \ref{prop:case(2-2)},
Theorem \ref{thm:intro:irred} holds, if $X$ is unnodal.  

\begin{rem}\label{rem:non-prim1}
For $w=(0,2\xi,2b)$,
we define ${\cal M}_i (\subset {\cal M}_{H'}^{G'}(w,2L)^{ss}_*)$ 
in a similar way.
Then we see that
$\dim {\cal M}_1 \leq \frac{7}{2}(L^2)$ and
$\dim {\cal M}_2 \leq 4(L^2)-2$.
Hence ${\cal M}_{H'}^{G'}(w,2L)^{ss}$ and 
${\cal M}_{H'}^{G'}(w,2L+K_X)^{ss}$
are irreducible.
In particular, ${\cal M}_{H}(2v,L)^{ss}$ 
is irreducible, where $v$ is primitive, $\rk v$ is even and
$\ell(v)=1$. 
\end{rem}

\begin{rem}\label{rem:non-prim2}
For a divisor $D':=2^m D+K_X$ such that $(D \in \NS(X))$ is primitive,
$D'$ is not linearly equivalent to
$pC$, $p \geq 2$.
Hence we also see that 
${\cal M}_{H}(2^m v,L)^{ss}$ is irreducible,
if $v$ is primitive and
$L \ne 2L'$, $L' \in \NS(X)$.
\begin{NB}
$E=2F$ iff
$\Psi(E)=2\Psi(F)$, and
$E=2F$ iff $L= 2L'$ for $E \in {\cal M}_H(2^mv,L)^{ss}$. 
\end{NB}  
\end{rem}

\subsection{General cases}

We shall treat a general case by the arguments in \cite[sect. 3]{Enriques}.
Let $({\cal X}, {\cal H}) \to S$ be a general
deformation of $(X,H)$ such that
a general member is not nodal and
$({\cal X}_0,{\cal H}_0)=(X,H)$ $(0 \in S)$.
Let ${\cal L}$ be a family of divisors such that
${\cal L}_0=L \in \NS(X)$.
Then we have a family of moduli spaces of semi-stable
sheaves
$\phi:{M}_{({\cal X},{\cal H})}(v,{\cal L}) \to S$.
Since $\Pic({\cal X}_s)=H^2({\cal X}_s,{\Bbb Z})$ is locally constant,
we may assume that ${\cal H}_s$ is general with respect to $v$ for all 
$s \in S$. 
Let $M_0$ be the open subscheme of 
${M}_{({\cal X},{\cal H})}(v)$ such that 
$E \in \Coh({\cal X}_s)$ belongs to $M_0$ 
if and only if $\Hom(E,E(K_{{\cal X}_s}))=0$. 
Then $M_0$ is smooth over $S$.
By Lemma \ref{lem:s}, $M_0$ is
a dense subscheme of  
${M}_{({\cal X},{\cal H})}(v,{\cal L})$.
Since $(M_0)_s$ is irreducible for any unnodal
surface ${\cal X}_s$,
$M_0$ is irreducible, which implies
${M}_{({\cal X},{\cal H})}(v,{\cal L})$ is also irreducible.
By the Zariski connectedness theorem,
all fibers are connected.
In particular, $M_H(v,L)$ is connected.
If $\langle v^2 \rangle \geq 4$, then
$M_H(v,L)$ is irreducible by
Lemma \ref{lem:s} and Lemma \ref{lem:pss} (1).
Therefore we get the following. 
\begin{thm}\label{thm:irred}
Let $v=(r,\xi,a)$ 
be a primitive Mukai vector such that $r$ is even.
Then ${\cal M}_H(v,L)^{ss}$ is connected for a general $H$.
Moreover  if $\langle v^2 \rangle \geq 4$, then
${\cal M}_H(v,L)^{ss}$ is irreducible for a general $H$.
\end{thm}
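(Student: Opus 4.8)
The plan is to deduce the general (possibly nodal) case from the unnodal case treated in Subsection~\ref{subsect:unnodal} by a deformation argument, following \cite[sect.~3]{Enriques}. First I would choose a general deformation $({\cal X},{\cal H}) \to S$ of $(X,H)$ over an irreducible base $S$, with $({\cal X}_0,{\cal H}_0)=(X,H)$ for some $0 \in S$, arranged so that the geometric generic fibre ${\cal X}_\eta$ (and a general special fibre) is unnodal; I would also spread $L$ out to a family of divisors ${\cal L}$ with ${\cal L}_0=L$. Since $\Pic({\cal X}_s)=H^2({\cal X}_s,{\Bbb Z})$ is locally constant along $S$, after shrinking $S$ I may assume that ${\cal H}_s$ is general with respect to $v$ for every $s \in S$, so that the relative moduli scheme $\phi:{M}_{({\cal X},{\cal H})}(v,{\cal L}) \to S$ makes sense and its fibre over $0$ is $M_H(v,L)$.

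Next I would pass to the smooth part. Let $M_0 \subset {M}_{({\cal X},{\cal H})}(v,{\cal L})$ be the open locus of sheaves $E$ on some ${\cal X}_s$ with $\Hom(E,E(K_{{\cal X}_s}))=0$; then $\phi|_{M_0}$ is smooth over $S$, and by Lemma~\ref{lem:s} its complement has positive codimension in every fibre, so $M_0$ is fibrewise dense and hence dense in ${M}_{({\cal X},{\cal H})}(v,{\cal L})$. The crucial point is that $M_0$ is irreducible: smoothness of $\phi|_{M_0}$ makes it flat and open, so every irreducible component of $M_0$ dominates the irreducible base $S$, and the components dominating $S$ correspond bijectively to the irreducible components of the generic fibre $(M_0)_\eta$. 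The latter is irreducible because ${\cal X}_\eta$ is unnodal, by Propositions~\ref{prop:case(1)}, \ref{prop:case(2-1)} and \ref{prop:case(2-2)}. Therefore $M_0$, and with it ${M}_{({\cal X},{\cal H})}(v,{\cal L})$, is irreducible.

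Finally I would descend to the central fibre. The total space ${M}_{({\cal X},{\cal H})}(v,{\cal L})$ is irreducible and proper over $S$ with connected generic fibre, so Zariski's connectedness theorem forces every fibre to be connected; in particular $M_H(v,L)$ is connected, which gives (1). For (2), suppose $\langle v^2 \rangle \geq 4$. Since $v$ is primitive it is not of the shape $2v_0$ with $\langle v_0^2 \rangle=1$, and $\langle v^2 \rangle>2$, so the two exceptional cases of Corollary~\ref{cor:normal/reduced} are excluded and $M_H(v,L)$ is normal; being connected and normal, it is irreducible.

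The step I expect to be the main obstacle is the irreducibility of $M_0$, i.e.\ ruling out extra irreducible components sitting entirely over the nodal locus of $S$. This is exactly where the smoothness (hence flatness and openness) of $\phi|_{M_0}$ is indispensable: flatness guarantees that every component dominates $S$, so that the irreducibility of the single unnodal generic fibre propagates to the whole family, while the dimension estimate of Lemma~\ref{lem:s} is what makes $M_0$ dense and lets the conclusion descend from ${M}_{({\cal X},{\cal H})}(v,{\cal L})$ to its central fibre $M_H(v,L)$.
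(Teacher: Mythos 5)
Your proposal is correct and takes essentially the same route as the paper: the same deformation $({\cal X},{\cal H},{\cal L})\to S$ to the unnodal case, the same smooth open locus $M_0$ of sheaves with $\Hom(E,E(K_{{\cal X}_s}))=0$ made dense via Lemma~\ref{lem:s}, irreducibility of $M_0$ propagated from the unnodal fibres, Zariski connectedness for part (1), and normality plus connectedness for part (2). The only differences are presentational: you spell out the flatness/domination argument (via the generic fibre) that the paper leaves implicit, and you invoke Corollary~\ref{cor:normal/reduced} for normality where the paper cites its ingredients, Lemma~\ref{lem:s} and Lemma~\ref{lem:pss}~(1).
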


\begin{rem}\label{rem:irred}
Let $v$ be a primitive Mukai vector.
By \cite[Rem. 4.1]{Y:twist1},
the proof of \cite[Thm. 2.6]{Enriques} and
\cite[Rem. 2.19]{Enriques}, we have
$$
e({\cal M}_H(mv)^{ss})=e({\cal M}_H(m w)^{ss}),
$$
where $w=(1,0,-\frac{s}{2})$ if $\rk v$ is odd, 
and $w=(2,\xi_i,-\frac{s}{2})$
($1 \leq i \leq 2^{10}$)
with
$$
\{ \xi_i \mod 2 | 1 \leq i \leq 2^{10} 
\}=\NSf(X) \otimes {\Bbb Z}/2{\Bbb Z}=
({\Bbb Z}/2{\Bbb Z})^{\oplus 10}
$$
 if $\rk v$ is even.
By the works of Gieseker-Li \cite{G-L} or O'Grady \cite{O},
moduli stacks are asymptotically irreducible:

If $\rk w=1$, then 
there is $N_1(m)$ such that 
${\cal M}_H(mw,L)^{ss}$ is irreducible
for $\langle w^2 \rangle \geq  N_1(m)$.
Assume taht $\rk w=2$.
For each $(m \rk w,m \xi_i)$, there is $N(m,\xi_i)$ such that 
${\cal M}_H(mw,L)^{ss}$ is irreducible if
$\langle w^2 \rangle \geq  N(m,\xi_i)$.
We set $N_2(m):=\max_i N(m,\xi_i)$.
Then 
${\cal M}_H(mv,L)^{ss}$ is irreducible if
$\langle v^2 \rangle \geq N(m):=\max\{N_1(m),N_2(m)\}$.
\end{rem}

\section{Appendix}\label{sect:appendix}

\subsection{}
Let $\pi:X \to C$ be an elliptic surface.
Let ${\bf e} \in K(X)_{\topo}$ be the class of a coherent sheaf $E$
with $\rk E=r$ and $(c_1(E),f)=d$.
Assume that $\gcd(r,d)=1$.
Then ${\cal M}_{H_f}({\bf e})^{ss}={\cal M}_{H_f}({\bf e})^{s}$
is smooth of dimension 
$-\chi({\bf e},{\bf e})+p_g$, 
where $p_g:=\dim H^2(X,{\cal O}_X)$ is the geometric genus of $X$.
In this case, Bridgeland showed that a 
suitable relative Fourier-Mukai transform
induces a birational map between $M_{H_f}({\bf e})$ and
the moduli of stable sheaves of rank 1.
In this section, we shall slightly refine the correspondence. 
We assume that every fiber is irreducible and 
there is no multiple fiber.  
Then we have a refinement of Proposition \ref{prop:codim-general}.
Let 
$$
0 \subset F_1 \subset F_2 \subset \cdots \subset F_s=F
$$
be the filtration in \eqref{eq:HN}.
We set 
$$
(c_1(F_i/F_{i-1}),\chi(F_i/F_{i-1})):=
l_i(r_i f,d_i),
$$
 where $l_i,r_i,d_i \in {\Bbb Z}$,
$\gcd(r_i,d_i)=1$ and $l_i>0$.
We set $\mu_{\min}(E):=d_s /r_s$.
%Let $\widetilde{\bf e} \in K(X)_{\topo}$
%be the class of $\widetilde{E}$. 
%Let ${\cal F}(\widetilde{\bf e},{\bf f}_1,...,{\bf f}_s)$ 
%be the substack of ${\cal M}({\bf e})$
%consisting of $E \in {\cal M}({\bf e})$ such that 
%for the exact sequence \eqref{eq:fiber} of $E$,
%$\widetilde{E} \in {\cal M}_{H_f}(\widetilde{\bf e})^{ss}$
%and $F$ has a filtration
%\eqref{eq:HN} with $F_i/F_{i-1} \in {\cal M}_{H_f}({\bf f}_i)^{ss}$.
We define ${\cal F}(\widetilde{\bf e},{\bf f}_1,...,{\bf f}_s)$
as in Proposition
\ref{prop:codim-general}.
Then we have 
\begin{prop}\label{prop:codim}
$\codim {\cal F}(\widetilde{\bf e},{\bf f}_1,...,{\bf f}_s)
=\sum_{i} l_i((r_i d-rd_i)-1)$.
\end{prop}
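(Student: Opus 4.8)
The plan is to compute the codimension inside ${\cal M}_f({\bf e})^{ss}$, whose dimension equals that of the smooth stack ${\cal M}_{H_f}({\bf e})^{ss}$ by density of the $H_f$-stable locus, and to read $\dim {\cal F}(\widetilde{\bf e},{\bf f}_1,\dots,{\bf f}_s)$ off Proposition \ref{prop:codim-general}. Substituting that formula, I would start from
\[
\codim {\cal F}(\widetilde{\bf e},{\bf f}_1,\dots,{\bf f}_s)
=\Bigl(\dim {\cal M}_f({\bf e})^{ss}-\dim {\cal M}_f(\widetilde{\bf e})^{ss}\Bigr)
+\sum_i\chi({\bf f}_i,\widetilde{\bf e})-\sum_i\dim {\cal M}_H({\bf f}_i)^{ss},
\]
and then evaluate the three groups of terms separately.

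The key step is a Riemann--Roch bookkeeping that simplifies drastically because the fibers satisfy $(f^2)=0$ and, in the absence of multiple fibers, $K_X$ is vertical so that $(f,K_X)=0$. Writing ${\bf f}=\sum_i{\bf f}_i$ for the class of $F$ and noting that ${\bf f}_i$ is represented by a rank $0$ sheaf supported on fibers with $c_1=l_ir_if$ and $\chi=l_id_i$, while $\widetilde{\bf e}$ has rank $r$ and $(c_1(\widetilde{\bf e}),f)=d$, I expect
\[
-\chi({\bf f}_i,\widetilde{\bf e})=-\chi(\widetilde{\bf e},{\bf f}_i)=l_i(r_id-rd_i),
\qquad \chi({\bf f},{\bf f})=0 .
\]
Since ${\bf e}$ and $\widetilde{\bf e}$ share the coprime fiber-degree data and hence lie in moduli of the same smooth type, the additive constant (the $p_g$-term and the stacky shift) cancels in the difference; bilinearity of $\chi$ applied to ${\bf e}=\widetilde{\bf e}+{\bf f}$ then gives
\[
\dim {\cal M}_f({\bf e})^{ss}-\dim {\cal M}_f(\widetilde{\bf e})^{ss}
=-\chi(\widetilde{\bf e},{\bf f})-\chi({\bf f},\widetilde{\bf e})-\chi({\bf f},{\bf f})
=2\sum_i l_i(r_id-rd_i).
\]

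The last ingredient is $\dim {\cal M}_H({\bf f}_i)^{ss}=l_i$. Here $(0,r_if,d_i)$ is primitive and isotropic with $\gcd(r_i,d_i)=1$, so by Lemma \ref{lem:relative-moduli} a stable sheaf of this class is a stable bundle on a single fiber; these form a $2$-dimensional moduli space (a relative moduli over $C$ whose fibers are the elliptic fibers), that is, a $1$-dimensional stack. A semistable sheaf of class ${\bf f}_i=l_i(0,r_if,d_i)$ is $S$-equivalent to a direct sum of $l_i$ such stable pieces, and balancing the $2l_i$ moduli of the pieces against the $l_i$-dimensional generic automorphism group yields $\dim {\cal M}_H({\bf f}_i)^{ss}=l_i$. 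Substituting the three evaluations and using $\chi({\bf f}_i,\widetilde{\bf e})=-l_i(r_id-rd_i)$, I would conclude
\[
\codim {\cal F}=2\sum_i l_i(r_id-rd_i)-\sum_i l_i(r_id-rd_i)-\sum_i l_i
=\sum_i l_i\bigl((r_id-rd_i)-1\bigr).
\]

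The step I expect to be the main obstacle is justifying $\dim {\cal M}_H({\bf f}_i)^{ss}=l_i$ and, hand in hand with it, checking that the additive constants in $\dim {\cal M}_f({\bf e})^{ss}$ and $\dim {\cal M}_f(\widetilde{\bf e})^{ss}$ really are equal so that they cancel; this hinges on $\widetilde E$ having the same coprime fiber type as $E$ and on the relative-moduli description of isotropic classes supported on fibers. Once these structural facts are secured, the Euler-form identities are routine consequences of $(f^2)=0$ and $(f,K_X)=0$, and the final count is pure arithmetic.
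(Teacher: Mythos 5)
Your proposal is correct and follows essentially the same route as the paper's proof: apply Proposition \ref{prop:codim-general}, evaluate $-\chi({\bf f}_i,\widetilde{\bf e})=l_i(r_id-rd_i)$ and $\chi(\widetilde{\bf e},{\bf f}_i)=\chi({\bf f}_i,\widetilde{\bf e})$ by Riemann--Roch using $(f^2)=(f,K_X)=0$, take $\dim {\cal M}_H({\bf f}_i)^{ss}=l_i$ (which the paper also asserts without proof), and compare with the smooth dimension $-\chi(\cdot,\cdot)+p_g$ valid for both ${\bf e}$ and $\widetilde{\bf e}$ by coprimality of $(r,d)$. The one imprecise phrase is your appeal to ``density of the $H_f$-stable locus'' in ${\cal M}_f({\bf e})^{ss}$: density can actually fail, since boundary strata with $r_id-rd_i=1$ have codimension $0$; the paper avoids this by measuring the codimension directly against $\dim {\cal M}_{H_f}({\bf e})^{ss}=-\chi({\bf e},{\bf e})+p_g$, and your computation goes through verbatim once read that way.
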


\begin{proof}
\begin{NB}
We first note that
$E_i \otimes \omega_X \cong E_i$
for $E_i \in {\cal M}_H({\bf f}_i)^{ss}$.
By \eqref{eq:F_i}, \eqref{eq:{E}} and the Serre duality,
we have
\begin{equation}
\begin{split}
\Ext^2(F_j/F_{j-1},F_i/F_{i-1})=0,\;i<j\\
\Ext^2(F_i/F_{i-1},\widetilde{E})=0,\;1 \leq i \leq s.
\end{split}
\end{equation}
Then the proof of \cite[Lem. 5.3]{K-Y} implies that
\begin{equation}
\begin{split}
\dim {\cal F}(\widetilde{\bf e},{\bf f}_1,...,{\bf f}_s)
=& -\sum_i \chi({\bf f}_i,\widetilde{\bf e})-
\sum_{i<j}\chi({\bf f}_j,{\bf f}_i)+
\dim {\cal M}_{H_f}(\widetilde{\bf e})^{ss}+
\sum_i {\cal M}_H({\bf f}_i)^{ss}\\
=& -\sum_i \chi({\bf f}_i,\widetilde{\bf e})+
\dim {\cal M}_{H_f}(\widetilde{\bf e})^{ss}+
\sum_i {\cal M}_H({\bf f}_i)^{ss}\\
=& \sum_i l_i(r_i d-rd_i)+\dim {\cal M}_{H_f}(\widetilde{\bf e})^{ss}
+\sum_i l_i.
\end{split}
\end{equation}
\end{NB}
By Proposition \ref{prop:codim-general} and
$\dim {\cal M}_H({\bf f}_i)^{ss}=l_i$,
\begin{equation}
\dim {\cal F}(\widetilde{\bf e},{\bf f}_1,...,{\bf f}_s)
= \sum_i l_i(r_i d-rd_i)+\dim {\cal M}_{H_f}(\widetilde{\bf e})^{ss}
+\sum_i l_i.
\end{equation}
Since $\chi(\widetilde{\bf e},{\bf f}_i)=
\chi({\bf f}_i,\widetilde{\bf e})$,
we get
\begin{equation}
\begin{split}
\dim {\cal M}_{H_f}({\bf e})^{ss}=&
-\chi({\bf e},{\bf e})+p_g\\
=&-\chi(\widetilde{\bf e},\widetilde{\bf e})-
\sum_i 2\chi(\widetilde{\bf e},{\bf f}_i)+p_g\\
=& \dim {\cal M}_{H_f}(\widetilde{\bf e})^{ss}+
2\sum_i l_i(r_i d-rd_i).
\end{split}
\end{equation}
Hence the claim holds.
\end{proof}

Let $Y:=M_H(0,r' f, d')$ be a fine moduli space of stable sheaves
on $X$ and
${\bf P}$ a universal family on
$X \times Y$.
$p_X:X \times Y \to X$ and $p_Y:X \times Y \to Y$
denote the projections.  
We consider a contravariant functor
\begin{equation}\label{eq:(r',d')}
\begin{matrix}
\Phi_{X \to Y}^{{\bf P}} \circ D_X:& {\bf D}(X) &
\to & {\bf D}(Y)\\
& E & \mapsto &
{\bf R}\Hom_{p_Y}(p_X^*(E),{\bf P}).
 \end{matrix}
\end{equation}
By the Grothendieck-Serre duality,
$D_Y \circ \Phi_{X \to Y}^{{\bf P}} \cong
\Phi_{X \to Y}^{{\bf P}^{\vee}[2] \otimes p_X^*(\omega_X)}
\circ D_X$.
Hence $\Phi_{Y \to X}^{{\bf P}} \circ D_Y$ is the
inverse of $\Phi_{X \to Y}^{{\bf P}} \circ D_X$.
\begin{NB}
We note that ${\bf P} \otimes p_X^*(\omega_X)=
{\bf P} \otimes p_Y^*(\omega_Y)$.
Since $\Phi_{Y \to X}^{{\bf P}^{\vee}[2] \otimes p_Y^*(\omega_Y)}
\Phi_{X \to Y}^{{\bf P}}=\id_X$ and
$\Phi_{X \to Y}^{{\bf P}}
\Phi_{Y \to X}^{{\bf P}^{\vee}[2] \otimes p_X^*(\omega_X)}
=\id_Y$, we get 
$\Phi_{Y \to X}^{{\bf P}} \circ D_Y 
\Phi_{X \to Y}^{{\bf P}} \circ D_X=\id_X$ and
$\Phi_{X \to Y}^{{\bf P}} \circ D_X
\Phi_{Y \to X}^{{\bf P}} \circ D_Y=\id_X$.
\end{NB}
Assume that
$r'd-rd'>0$.
Then

\begin{lem}\label{lem:isom}
For $E \in {\cal M}_{H_f}({\bf e})^{ss}$,
$\Phi_{X \to Y}^{{\bf P}}(E^{\vee})[1] \in \Coh(Y)$.
Moreover if $\mu_{\min}(E) \geq d'/r'$, then
$\Phi_{X \to Y}^{{\bf P}}(E^{\vee})[1]$ is torsion free.
In particular, $\Phi_{X \to Y}^{{\bf P}}(E^{\vee})[1]$ is
stable.
\end{lem}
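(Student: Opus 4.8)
The plan is to prove the statement fibrewise over $Y$ and then globalise. First I would restrict to a closed point $y \in Y$, which corresponds to a stable sheaf $P_y$ with $v(P_y)=(0,r'f,d')$ supported on an integral fibre $f_y$ of $\pi$. By cohomology and base change for the kernel $\mathbf P$, there is a canonical isomorphism $\Phi_{X\to Y}^{\mathbf P}(E^\vee)\otimes^{\mathbf L} k(y) \cong \mathbf R\Hom_X(E,P_y)$, whose cohomology sheaves are $\Ext^i_X(E,P_y)$ in degrees $0,1,2$. Thus it suffices to show that $\mathbf R\Hom_X(E,P_y)$ is concentrated in degree $1$ for every $y$: this is exactly the condition that $E^\vee$ is $\IT_1$, so that $\Phi_{X\to Y}^{\mathbf P}(E^\vee)$ is a single sheaf placed in degree $1$ and flat over $Y$, i.e. $\Phi_{X\to Y}^{\mathbf P}(E^\vee)[1]\in\Coh(Y)$.

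The degree $2$ contribution vanishes for free: by Serre duality $\Ext^2_X(E,P_y)\cong\Hom_X(P_y,E\otimes\omega_X)^\vee$, and this is $0$ because $P_y$ is purely one dimensional while $E\otimes\omega_X$ is torsion free. Hence concentration in degree $1$ is equivalent to $\Hom_X(E,P_y)=0$ for all $y$. To establish this vanishing I would invoke the canonical filtration of Lemma \ref{lem:fiber-stable}, namely $0\to\widetilde E\to E\to F\to 0$ with $\widetilde E|_{D}$ stable for every reduced fibre $D$ and with $F$ supported on fibres. Since $F$ is supported on fibres one has $(c_1(\widetilde E),f)=(c_1(E),f)=d$, so $\widetilde E|_{f_y}$ is a stable sheaf of fibre slope $d/r$; because $r'd-rd'>0$ gives $d/r>d'/r'$, adjunction for $f_y\hookrightarrow X$ together with the stability of $\widetilde E|_{f_y}$ and of $P_y$ forces $\Hom_X(\widetilde E,P_y)=\Hom_{f_y}(\widetilde E|_{f_y},P_y)=0$. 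Therefore $\Hom_X(E,P_y)=\Hom_X(F,P_y)$, and I would control the latter through the Harder--Narasimhan factors of $F$, whose slopes are all $<d/r$: factors of slope $>d'/r'$ admit no nonzero map to the stable sheaf $P_y$, while the minimal slope factors, which have slope $\geq\mu_{\min}(E)$, are precisely what the hypothesis $\mu_{\min}(E)\geq d'/r'$ is meant to control (here $\gcd(r',d')=1$ identifies any factor of slope exactly $d'/r'$ with a translate of $P_y$). This yields $\Hom_X(E,P_y)=0$ for all $y$ and hence the sheaf claim.

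For the refinement, once $\widehat E:=\Phi_{X\to Y}^{\mathbf P}(E^\vee)[1]$ is known to be a sheaf, I would prove torsion freeness by transporting a hypothetical torsion subsheaf through the inverse equivalence $\Phi_{Y\to X}^{\mathbf P}\circ D_Y$. A nonzero torsion subsheaf of $\widehat E$ is supported on fibres of the elliptic fibration induced on $Y$ (together with finitely many points), and under the inverse transform it would produce a fibre supported subquotient of $E$ of slope $<d'/r'$; the assumption $\mu_{\min}(E)\geq d'/r'$ says that $E$ has no such subquotient, so $\widehat E$ is torsion free. Finally, restricting over the generic point of $C$, $\widehat E$ is the Fourier--Mukai transform of the stable sheaf $E\otimes k(\eta)$ on an elliptic fibre, and a Chern character computation shows that $\widehat E$ has rank one; a rank one torsion free (twisted) sheaf on the surface $Y$ is automatically $G'$-twisted stable, giving the last assertion. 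The hardest part is the vanishing $\Hom_X(E,P_y)=0$ for \emph{every} $y$, and more pointedly the torsion freeness step: both hinge on ruling out maps into, respectively torsion arising from, the minimal slope fibre factors of $E$, and it is exactly at the boundary case of slope equal to $d'/r'$ that the hypothesis $\mu_{\min}(E)\geq d'/r'$ and the coprimality $\gcd(r',d')=1$ must be used with care.
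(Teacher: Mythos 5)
Your reduction of the first claim to the vanishing $\Hom_X(E,{\bf P}_{|X \times \{y\}})=0$ for \emph{every} $y\in Y$ (the $\IT_1$ condition) is a genuine gap, for two reasons. First, the claim $\Phi_{X \to Y}^{{\bf P}}(E^{\vee})[1] \in \Coh(Y)$ is asserted for every $E \in {\cal M}_{H_f}({\bf e})^{ss}$, with no hypothesis on $\mu_{\min}(E)$, yet your proof of it invokes $\mu_{\min}(E) \geq d'/r'$. Without that hypothesis $E$ may have a semi-stable fiber-supported quotient $G$ of slope $<d'/r'$, and then Riemann--Roch on the fiber gives $\chi(G,{\bf P}_{|X \times \{y\}})>0$ while $\Ext^1(G,{\bf P}_{|X \times \{y\}})=\Hom({\bf P}_{|X \times \{y\}},G)^{\vee}=0$, so $\Hom_X(E,{\bf P}_{|X \times \{y\}})\ne 0$ for a whole one-dimensional family of $y$: $\IT_1$ fails badly although the conclusion still holds. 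Second, even under the hypothesis, $\IT_1$ fails in the boundary case $\mu_{\min}(E)=d'/r'$: as your own parenthetical remark shows, the minimal Harder--Narasimhan factor $F_s/F_{s-1}$ then has Jordan--H\"{o}lder factors of the form ${\bf P}_{|X \times \{y'\}}$, and the map to a quotient factor gives $\Hom_X(E,{\bf P}_{|X \times \{y'\}}) \ne 0$ at those finitely many $y'$ (the case $y=y'$ is not excluded by stability). This is precisely the case where the transform is torsion free but \emph{not} locally free, and it genuinely occurs in the application: Proposition \ref{prop:Hilb} only removes the locus $\mu_{\min}(E)<q/p$. The paper never uses $\IT_1$; it proves concentration in degree $1$ in WIT style: $H^2=0$ by Serre duality (as you did), and $H^0=\Hom_{p_Y}(p_X^*(E),{\bf P})$ is a torsion-free sheaf on $Y$ (flatness of ${\bf P}$ over $Y$) which vanishes at a general $y$ because $E_{|f}$ is semi-stable on a general fiber, hence $H^0=0$. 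Torsion freeness of $H^1$ then follows because $\operatorname{Tor}_1(H^1,k(y)) \cong \Hom_X(E,{\bf P}_{|X \times \{y\}})$ is nonzero for only finitely many $y$ (the Jordan--H\"{o}lder factors of $F_s/F_{s-1}$), while $\operatorname{Tor}_2(H^1,k(y))=0$ rules out $0$-dimensional torsion.

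Two further points. Your alternative torsion-freeness argument (transporting a torsion subsheaf through the inverse equivalence) is Bridgeland's strategy in spirit, but as written it is not a proof: the assertion that such a subsheaf produces a fiber-supported subquotient of $E$ of slope \emph{strictly} less than $d'/r'$ is exactly what must be checked at the boundary slope $d'/r'$, and no computation is given. Finally, the rank of $\Phi_{X \to Y}^{{\bf P}}(E^{\vee})[1]$ is $r'd-rd'$, which the lemma only assumes to be positive; your ``rank one'' claim holds only in the setting of Proposition \ref{prop:Hilb} (where $dp-rq=1$). In general the stability assertion requires the standard argument that a torsion-free sheaf whose restriction to a general fiber of $Y \to C$ is stable (coprimality of the fiber invariants being preserved by the transform, since $\gcd(r,d)=1$) is stable with respect to the suitable polarization $H'_f$.
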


\begin{proof}
We note that
$$
{\bf P}_{|X \times \{y \}} \otimes K_X \cong {\bf P}_{|X \times \{y \}}
$$
for all $y \in Y$ by the general theory
of Fourier-Mukai transforms \cite{Br:1}.
By the Serre duality and the torsion freeness of $E$,
$$
\Ext^2(E,{\bf P}_{|X \times \{y \}})=
\Hom({\bf P}_{|X \times \{y \}},E)^{\vee}=0.
$$
Hence $H^2(\Phi_{X \to Y}^{{\bf P}}(E^{\vee}))=0$.
We note that
$\Hom(E,{\bf P}_{|X \times \{y \}})=0$, if
$E_{|\pi^{-1}(\pi(y))}$ is semi-stable.
Since $E_{|f}$ is semi-stable for a general fiber
of $\pi$,
$\Hom(E,{\bf P}_{|X \times \{y \}})=0$ for a general $y \in Y$.
Since $H^0(\Phi_{X \to Y}^{{\bf P}}(E^{\vee}))$ is torsion free,
$H^0(\Phi_{X \to Y}^{{\bf P}}(E^{\vee}))=0$.
Therefore 
$\Phi_{X \to Y}^{{\bf P}}(E^{\vee})[1] \in \Coh(Y)$.

Assume that $\mu_{\min}(E) \geq d'/r'$.
If $\Hom(E,{\bf P}_{|X \times \{y \}}) \ne 0$,
then $F_s/F_{s-1}$ in \eqref{eq:HN} is a semi-stable sheaf
with $\mu(F_s/F_{s-1})= d'/r'$ and
we have a surjective homomorphism
$F_s/F_{s-1} \to {\bf P}_{|X \times \{y \}}$.
Assume that $F_s/F_{s-1}$ is $S$-equivalent to $\oplus_{i=1}^k E_i$,
where $E_i$ are stable 1-dimensional sheaves with
$\mu(E_i)=d'/r'$. 
Then ${\bf P}_{|X \times \{y \}} \in \{E_1,...,E_k \}$.
Therefore $H^1(\Phi_{X \to Y}^{{\bf P}}(E^{\vee}))$ is
torsion free.
\end{proof}

\begin{NB}
\begin{cor}
We set
$$
M_{H_f}({\bf e})_0:=\{E \in M_{H_f}({\bf e})| \mu_{\min}(E) \geq d'/r' \}.
$$
Then we have a morphism
\begin{equation}
\begin{matrix}
M_{H_f}({\bf e})_0 & \to & M_{H'_f}({\bf e'})\\
E & \mapsto & \Phi_{X \to Y}^{{\bf P}}(E^{\vee})[1],
\end{matrix}
\end{equation}
where ${\bf e}' \in K(Y)_{\topo}$ is the class
of $\Phi_{X \to Y}^{{\bf P}}(E^{\vee})[1]$.
\end{cor}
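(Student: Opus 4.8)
The plan is to promote the fiberwise statement of Lemma~\ref{lem:isom} to a statement over the whole parameter space and then to appeal to the universal property of the target moduli scheme. Since $\gcd(r,d)=1$, the appendix gives ${\cal M}_{H_f}({\bf e})^{ss}={\cal M}_{H_f}({\bf e})^{s}$, so $M_{H_f}({\bf e})$ is a fine moduli scheme and carries a universal sheaf ${\cal E}$ on $M_{H_f}({\bf e})\times X$. The locus $M_{H_f}({\bf e})_0$ is open, because the condition $\mu_{\min}(E)\ge d'/r'$ is open by the upper semicontinuity of the Harder--Narasimhan polygon in a flat family. Write $M:=M_{H_f}({\bf e})_0$ and let $p_{MX},p_{MY},p_{XY}$ denote the projections from $M\times X\times Y$.

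First I would form the relative contravariant Fourier--Mukai transform
\begin{equation}
{\cal G}:={\bf R}(p_{MY})_*\left({\bf R}{\cal H}om\bigl(p_{MX}^*{\cal E},\,p_{XY}^*{\bf P}\bigr)\right)\in{\bf D}(M\times Y).
\end{equation}
By base change for the derived direct image, the restriction of ${\cal G}$ to $\{[E]\}\times Y$ is ${\bf R}\Hom_{p_Y}(p_X^*E,{\bf P})=\Phi_{X\to Y}^{{\bf P}}(E^{\vee})$. Thus Lemma~\ref{lem:isom} applies fiber by fiber: for every $[E]\in M$ the complex ${\cal G}|_{\{[E]\}\times Y}$ is concentrated in a single cohomological degree, and ${\cal G}|_{\{[E]\}\times Y}[1]$ is a torsion free, hence stable, sheaf on $Y$.

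Next I would invoke cohomology and base change. Since the cohomological amplitude of the fiberwise transform is constant---this is the $\mathrm{WIT}_1$ condition supplied by Lemma~\ref{lem:isom}---the standard argument for relative Fourier--Mukai transforms shows that ${\cal G}[1]$ is a single coherent sheaf ${\cal F}$ on $M\times Y$, flat over $M$, whose formation commutes with arbitrary base change. Consequently ${\cal F}$ is a flat family of stable sheaves on $Y$ with ${\cal F}|_{\{[E]\}\times Y}\cong\Phi_{X\to Y}^{{\bf P}}(E^{\vee})[1]$; its fiberwise topological invariant is the locally constant, hence constant, class ${\bf e}'\in K(Y)_{\topo}$ obtained from ${\bf e}$ by the induced map on topological $K$-groups. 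The family ${\cal F}$ therefore defines a classifying morphism $M\to M_{H'_f}({\bf e}')$ which on points is $E\mapsto\Phi_{X\to Y}^{{\bf P}}(E^{\vee})[1]$, as required.

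I expect the main obstacle to be precisely the base-change step. One has to check that the fiberwise concentration in degree one really forces ${\cal G}[1]$ to be a coherent sheaf that is flat over $M$ and whose restriction to each fiber recovers the transform computed pointwise; equivalently, that the relevant higher relative $\Ext$-sheaves vanish uniformly over $M$. This is the technical heart of the argument and is where the relative WIT formalism, in the spirit of \cite{Br:1}, must be applied with care.
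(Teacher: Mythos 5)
Your argument is correct and matches the paper's intended route: the paper states this corollary (immediately after Lemma~\ref{lem:isom}) with no written proof, the implicit mechanism being exactly the relative Fourier--Mukai plus cohomology-and-base-change formalism you spell out, with fineness of $M_{H_f}({\bf e})$ from $\gcd(r,d)=1$ and Lemma~\ref{lem:isom} supplying the fiberwise $\WIT_1$, torsion-freeness and stability. Your additional checks --- openness of $M_{H_f}({\bf e})_0$, flatness of ${\cal G}[1]$ and compatibility with base change in the spirit of \cite{Br:1}, and constancy of ${\bf e}'$ via the induced map on $K(\;)_{\topo}$ --- are precisely the standard details the paper leaves implicit, so there is nothing to correct.
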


By Proposition \ref{prop:codim},
we have the following. 
\begin{lem}
$$
%\codim \{E \in M_{H_f}({\bf e})| \mu_{\min}(E)<d'/r' \}
\codim_{M_{H_f}({\bf e})}(M_{H_f}({\bf e}) \setminus M_{H_f}({\bf e})_0)
\geq r_{\min} d-rd_{\min}-1,
$$
where $r_{\min}>0$ and $d_{\min}$ are relatively prime
integers with $d_{\min}/r_{\min}=\mu_{\min}(E)$.
\end{lem}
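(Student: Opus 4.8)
The plan is to reduce the estimate to the codimension formula of Proposition~\ref{prop:codim} and then to extract the contribution of the Harder--Narasimhan factor of minimal slope. First I would recall that, by Lemma~\ref{lem:fiber-stable}, every $E \in M_{H_f}({\bf e})$ determines a canonical subsheaf $\widetilde{E}$ that is fibrewise stable together with a quotient $F = E/\widetilde{E}$ supported on fibres, and that $\mu_{\min}(E) = d_s/r_s$ is the slope of the minimal factor $F_s/F_{s-1}$ of the filtration \eqref{eq:HN}. If $E \notin M_{H_f}({\bf e})_0$ then $\mu_{\min}(E) < d'/r'$; in particular $F \neq 0$ and $s \geq 1$, and $E$ lies in one of the locally closed substacks ${\cal F}(\widetilde{\bf e},{\bf f}_1,\dots,{\bf f}_s)$ whose last factor has slope $< d'/r'$. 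Since the decomposition of Lemma~\ref{lem:fiber-stable} and the filtration \eqref{eq:HN} are canonical, these substacks stratify the complement of $M_{H_f}({\bf e})_0$, and it is enough to bound the codimension of each stratum meeting it.

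Next I would invoke Proposition~\ref{prop:codim}, which gives
\begin{equation*}
\codim {\cal F}(\widetilde{\bf e},{\bf f}_1,\dots,{\bf f}_s) = \sum_i l_i\bigl((r_i d - r d_i) - 1\bigr).
\end{equation*}
The essential positivity comes from the construction of $F$: each $F_i/F_{i-1}$ is a semistable $1$-dimensional quotient with $\deg_E(F_i/F_{i-1}) = l_i(r d_i - r_i d) < 0$, so $r_i d - r d_i$ is a positive integer, i.e.\ $r_i d - r d_i \geq 1$. Hence every summand $l_i\bigl((r_i d - r d_i) - 1\bigr)$ is nonnegative.

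I would then isolate the index $s$. Writing $r_{\min} = r_s$ and $d_{\min} = d_s$ (coprime, as $\gcd(r_s,d_s)=1$), the minimal slope satisfies $d_{\min}/r_{\min} < d'/r' < d/r$, where the last inequality is the standing hypothesis $r'd - rd' > 0$. Thus $r_{\min}d - r d_{\min} > 0$, so $(r_{\min}d - r d_{\min}) - 1 \geq 0$; discarding the remaining nonnegative summands and using $l_s \geq 1$ yields
\begin{equation*}
\codim {\cal F}(\widetilde{\bf e},{\bf f}_1,\dots,{\bf f}_s) \geq l_s\bigl((r_{\min}d - r d_{\min}) - 1\bigr) \geq r_{\min}d - r d_{\min} - 1.
\end{equation*}
As $\gcd(r,d)=1$, the moduli space $M_{H_f}({\bf e})$ is fine and smooth and $\dim{\cal M}_{H_f}({\bf e})^{s} = \dim M_{H_f}({\bf e}) - 1$, so codimensions in the stack and in the scheme agree; this gives the claimed bound.

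The only delicate points, which I expect to be bookkeeping rather than a genuine obstacle, are verifying that the substacks ${\cal F}(\widetilde{\bf e},{\bf f}_1,\dots,{\bf f}_s)$ really exhaust the complement of $M_{H_f}({\bf e})_0$ --- so that no sheaf with $\mu_{\min} < d'/r'$ escapes the estimate --- and confirming the sign in $\deg_E(F_i/F_{i-1}) < 0$, which is what forces each bracket to be nonnegative and hence lets the minimal factor alone control the codimension. Both follow from the canonicity in Lemma~\ref{lem:fiber-stable}.
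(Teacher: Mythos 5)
Your proposal is correct and takes essentially the same route as the paper: the paper's entire justification for this lemma is ``By Proposition \ref{prop:codim}'', and your argument is precisely that deduction with the implicit bookkeeping made explicit. In particular, your key observations --- that the complement of $M_{H_f}({\bf e})_0$ is covered by the strata ${\cal F}(\widetilde{\bf e},{\bf f}_1,\dots,{\bf f}_s)$ coming from the canonical filtration of Lemma \ref{lem:fiber-stable}, that each summand $l_i\bigl((r_i d - r d_i)-1\bigr)$ is nonnegative because $\deg_E(F_i/F_{i-1})<0$, and that the minimal-slope factor alone yields the bound $r_{\min}d - rd_{\min}-1$ --- are exactly what the paper's one-line proof relies on.
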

\end{NB}

\begin{prop}\label{prop:Hilb}
Let ${\bf e}' \in K(Y)$ be the class 
of an ideal sheaf $I_Z \in \Hilb_Y^b$.
Then
there is a (contravariant) Fourier-Mukai transform
${\bf D}(X) \to {\bf D}(Y)$ which induces an isomorphism
$$
{\cal M}_{H_f}({\bf e})^{ss} \setminus {\cal Z} \to 
{\cal M}_{H'_f}({\bf e}')^{ss} \setminus {\cal Z}',
$$
where $2b=-\chi({\bf e},{\bf e})+\chi({\cal O}_X)$,
${\cal Z} \subset {\cal M}_{H_f}({\bf e})^{ss}$
and 
${\cal Z}' \subset {\cal M}_{H'_f}({\bf e}')^{ss}$ are
closed substacks with
$$
\dim {\cal Z}, \dim {\cal Z}' \leq \dim {\cal M}_{H'_f}({\bf e})^{ss}-2.
$$
\end{prop}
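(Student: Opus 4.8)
The plan is to construct the required transform as the contravariant Fourier--Mukai functor $\Phi:=\Phi_{X \to Y}^{{\bf P}} \circ D_X$ of \eqref{eq:(r',d')}, and to control the excluded loci by the slope stratification of Proposition \ref{prop:codim}. First I would fix the discrete data. Since $\gcd(r,d)=1$, I would choose a coprime pair $(r',d')$ with $r'd-rd'=1$ and, after replacing the relative transform by its dual if necessary, with $0<2r' \le r$; then $Y:=M_H(0,r'f,d')$ and the cohomological action of $\Phi$ carries the rank-$r$ class ${\bf e}$ to a class ${\bf e}'$ of rank $r'd-rd'=1$, that is, to the class of an ideal sheaf. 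The integer $b$ is forced by the identity $\chi(\Phi(A),\Phi(B))=\chi(B,A)$ valid for a contravariant transform: taking $A=B=E$ gives $\chi({\bf e}',{\bf e}')=\chi({\bf e},{\bf e})$, and since $\chi({\bf e}',{\bf e}')=\chi({\cal O}_Y)-2b$ for an ideal sheaf of length $b$ while $\chi({\cal O}_Y)=\chi({\cal O}_X)$, we recover $2b=\chi({\cal O}_X)-\chi({\bf e},{\bf e})$, the value in the statement.

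Next I would produce the isomorphism. By Lemma \ref{lem:isom}, for every $E \in {\cal M}_{H_f}({\bf e})^{ss}$ the object $\Phi(E)=\Phi_{X \to Y}^{{\bf P}}(E^{\vee})[1]$ is a genuine rank-$1$ coherent sheaf on $Y$, and it is torsion free, hence stable, as soon as $\mu_{\min}(E) \ge d'/r'$. Setting ${\cal Z}:=\{E \in {\cal M}_{H_f}({\bf e})^{ss} \mid \mu_{\min}(E)<d'/r'\}$, the transform therefore restricts to a morphism $\Phi\colon {\cal M}_{H_f}({\bf e})^{ss}\setminus {\cal Z} \to {\cal M}_{H'_f}({\bf e}')^{ss}$, which is injective because $\Phi$ is an equivalence with inverse $\Phi_{Y \to X}^{{\bf P}} \circ D_Y$ (recorded just before Lemma \ref{lem:isom}). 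Let $U'$ be its image and ${\cal Z}':={\cal M}_{H'_f}({\bf e}')^{ss}\setminus U'$. The inverse equivalence restricted to $U'$ is again given by a Fourier--Mukai transform landing in coherent sheaves, so $\Phi$ is an isomorphism onto $U'$; it remains only to bound $\dim {\cal Z}$ and $\dim {\cal Z}'$.

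Here the normalization $2r' \le r$ does the work. By Lemma \ref{lem:fiber-stable} every $E \in {\cal Z}$ lies in one of the strata ${\cal F}(\widetilde{\bf e},{\bf f}_1,\dots,{\bf f}_s)$ of Proposition \ref{prop:codim}, whose minimal-slope piece ${\bf f}_s=l_s(r_sf,d_s)$ satisfies $d_s/r_s=\mu_{\min}(E)<d'/r'$. Writing $N_s:=r_sd-rd_s$ and $M_s:=d'r_s-r'd_s$, the relation $r'd-rd'=1$ gives $r_s=r'N_s-rM_s$, and $r_s>0$ together with $M_s \ge 1$ forces $N_s>(r/r')M_s \ge 2M_s \ge 2$, hence $N_s \ge 3$. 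Proposition \ref{prop:codim} then yields
$$
\codim {\cal F}(\widetilde{\bf e},{\bf f}_1,\dots,{\bf f}_s)
=\sum_i l_i(r_id-rd_i-1) \ge l_s(N_s-1) \ge 2,
$$
so $\dim {\cal Z} \le \dim {\cal M}_{H_f}({\bf e})^{ss}-2$. Since $Y$ is again an elliptic surface with irreducible fibres and no multiple fibre, the identical estimate on the dual fibration bounds ${\cal Z}'$, completing the proof.

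The main obstacle I anticipate is not the $X$-side estimate above but the bookkeeping needed to make the two sides match. One must verify that $Y$ genuinely inherits the hypotheses of this section (irreducible fibres, no multiple fibre, and a \emph{fine} rank-$1$ moduli), that passing to the dual transform really permits the choice $2r' \le r$ while keeping $r'd-rd'>0$ and ${\bf e}'$ of rank $1$, and that the rank-$1$ degenerations on $Y$ are governed by a codimension formula of the same shape as Proposition \ref{prop:codim}, so that the symmetric argument applies verbatim. Each of these is routine but requires care; the essential numerical input, that a minimal-slope degeneration below $d'/r'$ costs codimension at least two, is exactly the inequality $N_s \ge 3$ secured by $2r' \le r$.
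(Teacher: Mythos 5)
Your argument in the range where a pair $(r',d')$ with $r'd-rd'=1$ and $0<2r'\le r$ actually exists is correct and is essentially the paper's own proof: the same contravariant transform $\Phi_{X \to Y}^{{\bf P}} \circ D_X$, the same locus ${\cal Z}=\{E \mid \mu_{\min}(E)<d'/r'\}$, and the same codimension count via Proposition \ref{prop:codim}. Your inequality $N_s\ge 3$ is exactly the paper's observation (via \cite[Lem. 5.1]{Y:K3}) that $r_id-rd_i\in\{1,2\}$ together with $0<r_i\le r$ forces $(r_i,d_i)\in\{(p,q),(2p,2q)\}$, both of slope $q/p$.

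The gap is in your normalization step. Solutions of $r'd-rd'=1$ satisfy $r'\equiv p \pmod r$, where $(p,q)$ is the minimal solution with $0<p<r$; hence a solution with $2r'\le r$ exists if and only if $p\le r/2$. When $p>r/2$ no such pair exists, and "replacing the transform by its dual" does not rescue the claim in the form you state it: the dual trick means working with the class ${\bf e}^{\vee}$, whose fiber degree is $-d$, for which the usable pair is $(r-p,q-d)$ satisfying $(-d)(r-p)-r(q-d)=1$; in the original orientation this is $r'd-rd'=-1<0$, so you cannot simultaneously keep $r'd-rd'>0$, contrary to what your closing paragraph asserts is "routine" to verify. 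Concretely, the resulting transform is the contravariant one applied to $E^{\vee}$, i.e.\ covariant on $E$, and $E\mapsto E^{\vee}$ is only well behaved on the locus of locally free sheaves. This is precisely why the paper's proof splits into two cases: for $p>r/2$ it enlarges ${\cal Z}$ by the non-locally-free locus and checks that this locus has codimension $r-1\ge 2$, which holds only because $p>r/2$ forces $r\ge 3$ (for $r=2$ one always has $p=1\le r/2$). That extra estimate is an essential ingredient missing from your proposal, not bookkeeping. A second, smaller point: you define ${\cal Z}'$ as the complement of the image, so to obtain both its closedness and the bound $\dim {\cal Z}'\le \dim {\cal M}_{H'_f}({\bf e}')^{ss}-2$ you still need a slope-threshold characterization of the image, i.e.\ the inverse-direction analogue of Lemma \ref{lem:isom} (the paper instead defines ${\cal Z}'$ explicitly by such a threshold); your appeal to "the identical estimate on the dual fibration" presupposes this characterization rather than proving it.
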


\begin{proof}
Let $(p,q)$ be a pair of integers such that 
$dp-rq=1$ and $0<p<r$.
We first assume that $p \leq r/2$.
If two integers $x,y$ safisfy $dx-ry=m$ $(m=1,2)$, then
$(x,y)=m(p,q)+n(r,d)$ where $n \in {\Bbb Z}$.
If $0<x \leq r$, then we get $n=0$, i.e., $(x,y)=m(p,q)$.
If a pair $(x,y)$ of integers safisfy $x>0$ and $y/x<d/r$, then
we have $y/x \leq q/p$ or $x \geq r+p>r$ by
\cite[Lem. 5.1]{Y:K3}.

For the filtration \eqref{eq:HN},
we have $0<r_i \leq r$. Hence we have
\begin{equation}
\frac{q}{p} \geq \frac{d_1}{r_1}>\frac{d_2}{r_2}> \cdots >\frac{d_s}{r_s}.
\end{equation}
Assume that 
\begin{equation}
\codim {\cal F}(\widetilde{\bf e},{\bf f}_1,...,{\bf f}_s)
=\sum_{i} l_i((r_i d-rd_i)-1) \leq 1.
\end{equation}
Then $r_i d-r d_i=1,2$ and $0<r_i \leq r$ imply that
$(r_i,d_i)=(p,q),(2p,2q)$.
We set
$$
{\cal Z}:=\{E \in {\cal M}_{H_f}({\bf e})^{ss} \mid
\mu_{\min}(E)<q/p\}.
$$
% $Z$ be the closed substack of ${\cal M}_{H_f}({\bf e})^{ss}$
%consisting of $E$ with $\mu_{\min}(E)<q/p$.
Then $\dim {\cal Z} \leq \dim {\cal M}_{H_f}({\bf e})^{ss}-2$
by Proposition \ref{prop:codim}.
For $(r',d')=(p,q)$, we consider the Fourier-Mukai
transform \eqref{eq:(r',d')}.
Since $\rk \Phi_{X \to Y}^{{\bf P}[1]}({\bf e}^{\vee})=1$,
we may assume that $\Phi_{X \to Y}^{{\bf P}[1]}({\bf e}^{\vee})={\bf e}'$.
We note that $\tau({\bf P}_{|\{x \} \times Y})=(0,r'f,-r)$.
We set
$$
{\cal Z}':=\{E \in {\cal M}_{H_f}({\bf e}')^{ss} \mid
\mu_{\min}(E)<-r \}.
$$
% $Z'$ be the closed substack of ${\cal M}_{H_f}({\bf e}')^{ss}$
%consisting of $E$ with $\mu_{\min}(E)<-r$.
Since $r \geq 2$, we have
$\dim {\cal Z}' \leq \dim {\cal M}_{H_f}({\bf e})^{ss}-2$ by
Proposition \ref{prop:codim}.
Hence $\Phi_{X \to Y}^{{\bf P}[1]} \circ D_X$ induces an isomorphism
$$
{\cal M}_{H_f}({\bf e})^{ss} \setminus {\cal Z} \to
{\cal M}_{H'_f}({\bf e}')^{ss} \setminus {\cal Z}'
$$
by Lemma \ref{lem:isom}.
We next assume that $p>r/2$.
In this case, we have $r \geq 3$.
Then
the closed substack $W$ of ${\cal M}_{H_f}({\bf e})^{ss}$
consisting of non-locally free sheaves is of codimension $r-1 \geq 2$.
For the topological invariant ${\bf e}^{\vee}$, 
$(x,y)=(r-p,q-d)$ safisfies
$0<r-p<r/2$ and $(-d)x-ry=1$.
Applying the first part, we have a similar isomorphism.
Thus $\Phi_{X \to Y}^{{\bf P}^{\vee}[1]}$ induces an isomorphism
$$
{\cal M}_{H_f}({\bf e})^{ss} \setminus {\cal Z} \to 
{\cal M}_{H'_f}({\bf e}')^{ss} \setminus {\cal Z}',
$$
where $(r',d')=(r-p,d-q)$,
${\cal Z}$ consists of $E$ which is non locally free
or $\mu_{\min}(E) <(d-q)/(r-p)$ and
${\cal Z}'$ consists of $F$ with $\mu_{\min}(F) \leq -r$. 
\end{proof}

For ${\bf e} \in K(X)_{\topo}$,
we set 
$$
K(X)_{\bf e}:=\{\alpha \in K(X) \mid \chi(\alpha,{\bf e})=0 \}.
$$
We have a homomorphism
\begin{equation}
\begin{matrix}
\theta_{\bf e}:& K(X)_{\bf e} & \to & \Pic(M_{H_f}({\bf e}))\\ 
& \alpha & \mapsto & \det p_{!}({\cal E} \otimes p_X^*(\alpha^{\vee})),
\end{matrix}
\end{equation}
where ${\cal E}$ is a universal family. We note that
$\theta_{\bf e}$ can be defined even if there is no universal family
by using a family on a quot-scheme.
For the Fouruer-Mukai transform $\Phi$ in Proposition
\ref{prop:Hilb}, we have a commutative diagram
\begin{equation}\label{eq:comm}
\begin{CD}
K(X)_{\bf e} @>{\Phi}>> K(Y)_{{\bf e}'}\\
@V{\theta_{\bf e}}VV @VV{\theta_{{\bf e}'}}V\\
\Pic(M_{H_f}({\bf e})) @= \Pic(M_{H'_f}({\bf e}'))
\end{CD}.
\end{equation}

\begin{cor}
Assume that $\dim M_{H_f}({\bf e})\geq 4+q(X)$ and $k={\Bbb C}$.
Then we have an exact sequence
$$
0 \longrightarrow \ker \tau \longrightarrow
 K(X)_{\bf e} \overset{\theta_{\bf e}}{\longrightarrow}
 \Pic(M_{H_f}({\bf e}))/\Pic(\Alb(M_{H_f}({\bf e}))) \longrightarrow 0.
$$
\end{cor}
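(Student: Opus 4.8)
The plan is to transport the whole statement to a Hilbert scheme of points via the Fourier--Mukai correspondence of Proposition \ref{prop:Hilb} and then read off the answer from the classical description of $\Pic$ of a Hilbert scheme. First I would record the reductions that hold before any computation. Since $\gcd(r,d)=1$, the space $M_{H_f}({\bf e})$ is smooth (hence normal) of the dimension quoted at the start of this section, and the same holds for $M_{H'_f}({\bf e}')$; consequently a line bundle defined away from a closed subset of codimension $\geq 2$ extends uniquely. By Proposition \ref{prop:Hilb} the transform $\Phi$ induces an isomorphism $M_{H_f}({\bf e})\setminus Z\cong M_{H'_f}({\bf e}')\setminus Z'$ with $\codim Z,\codim Z'\geq 2$, where ${\bf e}'$ is the class of an ideal sheaf $I_Z\in\Hilb_Y^{b}$ and $2b=-\chi({\bf e},{\bf e})+\chi({\cal O}_X)$; the hypothesis $\dim M_{H_f}({\bf e})\geq 4+q(X)$ is exactly what forces $b$ large enough for the codimension-$2$ estimates of that proposition and for Fogarty's computation below. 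Restriction then gives $\Pic(M_{H_f}({\bf e}))\cong\Pic(M_{H'_f}({\bf e}'))$, and $\Phi$ matches the Albanese varieties together with the subgroups $\Pic(\Alb(-))$. By the commutative diagram \eqref{eq:comm} the maps $\theta_{\bf e}$ and $\theta_{{\bf e}'}$ agree under the isometry $\Phi\colon K(X)_{\bf e}\overset{\sim}{\to}K(Y)_{{\bf e}'}$, which preserves $\ker\tau$ since $\Phi$ is induced by a kernel on $X\times Y$ and hence respects the topological filtration. So it suffices to establish the exact sequence for $M_{H'_f}({\bf e}')$.

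Second, the two outer ends of the sequence are formal. The inclusion $\ker\tau\subset K(X)_{\bf e}$ makes sense: if $\tau(\alpha)=0$ then $\ch(\alpha)=0$ on a surface, so $\chi(\alpha,{\bf e})=0$. For the middle, Grothendieck--Riemann--Roch expresses $c_1(\det p_!({\cal E}\otimes p_X^*\alpha^\vee))$ through $\ch(\alpha)$ and $\ch({\cal E})$ only, and on a surface $\ch_2$ is recovered from $(\rk,c_1,\chi)=\tau$; the remaining freedom in $\alpha$, namely the $\Pic^0(X)$-part of its determinant, produces line bundles pulled back from $\Alb(M_{H'_f}({\bf e}'))$. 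Hence the composite $\bar\theta_{\bf e}\colon K(X)_{\bf e}\to\Pic(-)/\Pic(\Alb(-))$ factors through $\tau$, so $\ker\tau\subset\ker\bar\theta_{\bf e}$ and the left map is injective. The genuine content is therefore exactness in the middle, $\ker\bar\theta_{\bf e}=\ker\tau$, together with surjectivity of $\bar\theta_{\bf e}$.

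Finally, for $M_{H'_f}({\bf e}')$ I would invoke Fogarty's isomorphism $\Pic(\Hilb_Y^{b})\cong\Pic(Y)\oplus{\Bbb Z}\delta$ (with $2\delta$ the boundary divisor) and $\Alb(\Hilb_Y^{b})\cong\Alb(Y)$, valid for $b\geq 2$, whence $\Pic(\Hilb_Y^{b})/\Pic(\Alb(\Hilb_Y^{b}))\cong\NS(Y)\oplus{\Bbb Z}\delta$. Taking the universal ideal sheaf as ${\cal E}$, I would compute $\theta_{{\bf e}'}$ on explicit generators: a line-bundle class on $Y$ goes to the associated tautological divisor, recovering $\NS(Y)$, while the class of a point goes to a nonzero multiple of $\delta$, after correcting by a rank/Euler-characteristic term so as to land in $K(Y)_{{\bf e}'}$. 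This would give surjectivity of $\bar\theta_{{\bf e}'}$ and, since the induced pairing on $\NS(Y)\oplus{\Bbb Z}\delta$ is nondegenerate, would pin the kernel down to the $\tau$-trivial part $\ker\tau$. I expect this last step --- carrying out the Chern-class computation of $\theta_{{\bf e}'}$ on $\Hilb_Y^{b}$, verifying that both $\delta$ and all of $\NS(Y)$ are hit while the kernel collapses exactly to $\ker\tau$, and keeping the irregular $\Pic^0$/Albanese bookkeeping correct throughout --- to be the main obstacle.
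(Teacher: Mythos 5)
Your proposal follows the same route as the paper's proof: reduce to the Hilbert scheme class ${\bf e}'$ via Proposition \ref{prop:Hilb} together with the compatibility diagram \eqref{eq:comm} (using that the moduli spaces are smooth, so $\Pic$ is insensitive to removing codimension-$2$ loci), and then conclude from the known structure of $\Pic$ and the topology ($\pi_1$, $H^1$, $H^2$) of $\Hilb_Y^b$. The paper compresses your final step --- Fogarty-type description of $\Pic(\Hilb_Y^b)$ plus the computation of $\theta_{{\bf e}'}$ on generators, showing it hits $\NS(Y)\oplus{\Bbb Z}\delta$ with kernel exactly $\ker\tau$ --- into the citations \cite[sect. 1]{OS} and \cite[sect. 3.2]{univ}.
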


\begin{proof}
By Proposition \ref{prop:Hilb} and \eqref{eq:comm},
it is sufficient to prove the claim for ${\bf e}'$.
In this case, 
we note that
$\pi_1(\Hilb_Y^b) \cong \pi_1(Y)$ (\cite[sect. 1]{OS}),
$H^1(\Hilb_Y^b,{\Bbb Z})=H^1(Y,{\Bbb Z})$ and
$$
H^2(\Hilb_Y^b,{\Bbb Z})=H^2(Y,{\Bbb Z}) \oplus \wedge^2 H^1(Y,{\Bbb Z})
\oplus {\Bbb Z}\delta,
$$
where $2\delta$ is the exceptional divisor of the Hilbert-Chow map.
Then it is easy to see that
the claim holds (cf. \cite[sect. 3.2]{univ}).
\end{proof}

\begin{NB}
Assume that $0<r' \leq r$.
If $r' d-rd' \geq 2$, then
there is a pair $(r'',d'')$ of integers
such that $0<r'' \leq r$ and $r'' d-d'' r=1$.
Then $d/r>d''/r'' \geq d'/r'$ and 
$d/r<(d-d'')/(r-r'') \leq (d-d')/(r-r')$.
Replacing $(r',d')$ by $(r'',d'')$, 
we may asume that $r' d-rd'=1$.
Since $0 \leq r-r_{\min}<r$,
 $r_{\min} d-rd_{\min}=1,2$ if and only if
$(r_{\min},d_{\min})=(2r''-r,2d''-d),
(r'',d''), 2(r'',d'')$.
Obviously $(r_{\min},d_{\min})=(r'',d'')$ does
not hold. 
If $r \leq 2r''$, then
$(r_{\min},d_{\min})=(2r''-r,2d''-d)$
and $d'/r'>d_{\min}/r_{\min}$.
In this case, $(d-d_{\min})/(r-r_{\min})<
(d-d')/(r-r')$.

If $r \geq 2r''$, then
$(r_{\min},d_{\min})=2(r'',d'')$
and $d_{\min}/r_{\min}=d''/r'' \geq d'/r'$. 

In particular, if $r \geq 2r''$, then
$r_{\min} d-rd_{\min} \geq 3$.

If $r''>2r$, then $r \geq r''>2$.
Hence the locus of non-locally sheaves of
$M_{H_f}(r,\xi,a)$ is at lease of codimension 2.
We set
$Y:=M_H(0,(r-r')f,d-d')$ and consider 
$\Phi_{X \to Y}^{{\bf P}^{\vee}}$.
Then we have a birational map
$M_{H_f}(r,\xi,a) \cdots \to M_{H'_f}(1,\eta,b) \cong
\Hilb_Y^n$ which is
isomorphic up to codimension 2,
where $(\eta,f)=r$.

\begin{rem}
For a non-locally free sheaf $E$,
$H^2(\Phi_{X \to Y}^{{\bf P}^{\vee}}(E))$ has a
torsion. 
\end{rem}

If $r'' \leq 2r$, then we set
$Y:=M_H(0,r'f,d')$ and consider 
$\Phi_{X \to Y}^{{\bf P}} \circ D_X$.
Then we have a birational map
$M_{H_f}(r,\xi,a) \cdots \to M_{H'_f}(1,\eta,b) \cong
\Hilb_Y^n$ which is
isomorphic up to codimension 2, where
$(\eta,f)=r$.
\end{NB}

\begin{rem}\label{rem:unstable}
If $r'>r$ and $dr'-rd'=1$, then
$\Phi_{X \to Y}^{{\bf P}[1]}(E^{\vee})$ is not torsion free
for any $E \in {\cal M}_{H_f}({\bf e})^{ss}$.
\end{rem}

Assume that there is a multiple fiber $mf_0$.
Let $F$ be a semi-stable sheaf on $mf_0$ and
set $\tau(F)=l_i(0,r_i f_0,d_i)$.
Then $\dim {\cal M}_H(0,l_i r_i f_0,l_i d_i)^{ss} \leq [l_i m_0/m]$
by Remark \ref{rem:isotropic},
where $m_0=\gcd(r_i,m)$.
Assume that $(p,q)$ satisfies $mp \leq r$.
If $r_i d/m-r d_i=1$ $(0<r_i \leq r$), then $(r_i,d_i)=(mp,q)$.
If $r_i d/m-r d_i=2$ $(0<r_i \leq r$) and $m \mid r_i$, then we also 
have $(r_i,d_i)=(2mp,2q)$ by $0<2p,r_i/m \leq r$.
If $r_i d/m-r d_i \geq 3$, then
$$
l_i(r_i d/m-r d_i)-l_i \geq 2l_i \geq 2.
$$
If $r_i d/m-r d_i=2$ and $m_0<m$, then we have 
$$
l_i(r_i d/m-r d_i)-l_i m_0/m \geq 3l_i/2 \geq 2.
$$
Therefore there is a closed substack
${\cal Z}$ of ${\cal M}_{H_f}({\bf e})^{ss}$
such that 
$\Phi_{X \to Y}^{{\bf P}[1]}(E)$ is torsion free
for $E \in  {\cal M}_{H_f}({\bf e})^{ss} \setminus {\cal Z}$ 
and $\dim {\cal Z} \leq \dim {\cal M}_{H_f}({\bf e})^{ss}-2$.
Then we also see that Proposition \ref{prop:Hilb}
holds if $mp \leq r$.
In particular, for an unnodal Enriques surface,
Proposition \ref{prop:Hilb} holds.

Let $X$ be an unnodal Enriques surface.
As we remarked in Remark \ref{rem:unstable},
a relative Fourier-Mukai transform does not preserve stability
for any member of ${\cal M}_{H_f}(v)^{ss}$ in general.
However if $v_0$ is a primitive Mukai vector with even $\rk v_0$,
all relative Fourier-Mukai transforms preserve stability
for a general member of ${\cal M}_{H_f}(mv_0)^{ss}$ by Proposition \ref{prop:flat}.
   
In particular, if $r$ is even and $(r,(c_1,e_1))=1$, then
$M_{H_f}(r,c_1,\frac{s}{2})$ is birationally equivalent to
$M_{H_f}(2,\zeta,\frac{s'}{2})$, where $\zeta \in \Pic(X)$ and 
$(\zeta^2)-2s'=(c_1^2)-2r$.

\begin{rem}
If $r=4$, then
Nuer \cite[sect. 6]{N} constructed birational maps
of the moduli spaces by using $(-1)$-reflections.
\end{rem}

\begin{NB}
\begin{proof}
We set $d:=(c_1,e_1)$.
We note that $u:=(0,re_1,d)$ is a primitive and isotropic Mukai vector
with $\ell(u)=2$. Hence
$M_H(0,re_1,d) \cong X$ and a universal family ${\cal E}$ 
defines a Fourier
Mukai transform $\Psi_{(r,d)}$.
So we have an isomorphism
$M_{H_f}(r,c_1,\frac{s}{2}) \cong M_{H_f}^{\alpha}(0,L,b)$,
where $(L,e_1)=1$ and
$\alpha$ depends on the universal family.
\begin{NB2}
For a point $y$ in a smooth fiber $\pi^{-1}(x)$,
$(\rk E_{|\pi^{-1}(x)},\deg E_{|\pi^{-1}(x)})=(r,2d)
=2(\rk {\cal E}_{|\{y\} \times X}, \deg {\cal E}_{|\{y\} \times X})$.
\end{NB2}
On the other hand,
$M_{H_f}^{\alpha}(0,L,b)_*$ is independent of the choice of
$\alpha$.
Hence by using $\Psi^{-1}_{(2,1)}$,
we get a birational morphism
$M_{H_f}^{\alpha}(0,L,b)_* \to M_{H_f}(2,\zeta,\frac{s'}{2})$.
Thus $\Psi_{(2,1)}^{-1} \circ \Psi_{(r,d)}$ 
gives a desired birational map.
\end{proof}
\end{NB}

\begin{NB}
If $\ell(r,c_1,\frac{s}{2})=1$, then
th canonical bundle is trivial.
\end{NB}

\begin{NB}
Let $G$ be an (contravariant) autoequivalence group
of ${\bf D}(X)$ generated by $\Psi_{(r,d)}$,
where $r$ is even and $\gcd(r,d)=1$.
Then 
$\Psi \in G$ induces a birational map of
$M_{H_f}(v) \cdots \to M_{H_f}(\Psi(v))$,
where $(\rk v,(c_1(v),e_1))=l(r',d')$ with 
$r'$ is even and $\gcd(r',d')=1$.
\end{NB}

\end{document}